\def\visible<#1>{}  
\newcommand\ifpdf
\let\ParaSign=\S
\DeclareMathOperator    \aff                    {aff}
\DeclareMathOperator    \cl                     {cl}
\DeclareMathOperator    \conv           {conv}
\DeclareMathOperator    \intr                   {int}
\DeclareMathOperator    \relint         {rel\,int}
\DeclareMathOperator    \verts          {vert}
\newcommand{\old}[1]{{}}
\newcommand{\bb}{\mathbb}
\newcommand{\R}{\bb R}
\newcommand{\Q}{\bb Q}
\newcommand{\Z}{\bb Z}
\newcommand{\N}{\bb N}
\newcommand\st{\mid}
\newcommand\bigst{\mathrel{\big|}}
\newcommand\biggst{\mathrel{\bigg|}}
\def\ve#1{\mathchoice{\mbox{\boldmath$\displaystyle\bf#1$}}
{\mbox{\boldmath$\textstyle\bf#1$}}
{\mbox{\boldmath$\scriptstyle\bf#1$}}
{\mbox{\boldmath$\scriptscriptstyle\bf#1$}}}
\newcommand{\bpi}{\bar \pi}
\newcommand{\setcond}[2]{\left\{\, #1 \,\st\, #2 \,\right\}}
\newcommand{\I}{\mathcal{P}}  
\renewcommand{\P}{\mathcal{P}}
\renewcommand{\S}{\mathcal{S}}
\newcommand{\T}{\mathcal T}
\newcommand{\merge}{\mathbin{\lozenge}}
\newcommand{\mergeProj}{\mathbin{\lozenge_n^1}}
\newcommand\FundaTriangleLower{{}^{}_{\ve0}\EquiTriangleLower}  
\newcommand\FundaTriangleUpper{{}^{}_{\ve0}\EquiTriangleUpper}  
\newcommand{\bPizero}{\bar \Pi^E_{\mathrm{zero}(\T)}(\R^k, \Z^k)}
\newcommand{\bPizeroOne}{\bar \Pi^E_{\mathrm{zero}(\T)}(\R, \Z)}
\newcommand{\rx}{{\ve r}}
\newcommand{\x}{{\ve x}}
\newcommand{\y}{{\ve y}}
\renewcommand{\v}{{\ve v}}
\newcommand{\g}{{\ve g}}
\renewcommand{\u}{{\ve u}}
\renewcommand{\a}{{\ve a}}
\newcommand{\f}{{\ve f}}
\newcommand{\0}{{\ve 0}}
\renewcommand{\t}{{\ve t}}
\newcommand{\w}{{\ve w}}
\newcommand{\cve}{{\ve c}}
\newcommand{\rr}{{\ve r}}
\newcommand{\gp}{{\ve {\bar g}}}
\newcommand{\gt}{{\ve {\tilde g}}}
\newcommand{\gs}{{\ve  g}}
\newcommand{\Ball}{B}
\newcommand{\B}{B}
\def\st{\mid}
\newenvironment{psmallmatrixbig}{\bigl(\smallmatrix}{\endsmallmatrix\bigr)}
\newcommand\InlineFrac[2]{#1/#2}  
\newcommand\ColVec[3][\relax]
\let\frac=\InlineFrac\begin{psmallmatrixbig}#2\vphantom{/}\\#3\vphantom{/}\end{psmallmatrixbig}\egroup
\let\frac=\InlineFrac\begin{psmallmatrixbig}\ifx#200\else#2/#1\fi\\\ifx#300\else#3/#1\fi\end{psmallmatrixbig}\egroup
\newtheorem{theorem}{Theorem}[section]
\newcommand\MkNewTheorem[2]{%
  \newtheorem{#1}{#2}
  \expandafter\def\csname c@#1\endcsname{\c@theorem}
  \expandafter\def\csname p@#1\endcsname{\p@theorem}
  \expandafter\def\csname the#1\endcsname{\thetheorem}
  \expandafter\def\csname #1name\endcsname{#2}
}
\theoremstyle{definition}
\let\savemathbb=\mathbb
\let\OurMathBbAux=\mathbb
\DeclareRobustCommand\OurMathBb{\OurMathBbAux}
\let\mathbb=\OurMathBb
\let\bfseries=\undefined
\DeclareRobustCommand\bfseries
\let\OurMathBbAux=\mathbf}
\def\@thm#1#2#3{%
  \ifhmode\unskip\unskip\par\fi
  \normalfont
  \trivlist
  \let\thmheadnl\relax
  \let\thm@swap\@gobble
  \thm@notefont{\fontseries\mddefault\upshape\unboldmath\let\OurMathBbAux=\savemathbb}
  \thm@headpunct{.}
  \thm@headsep 5\p@ plus\p@ minus\p@\relax
  \thm@space@setup
  #1
  \@topsep \thm@preskip               
  \@topsepadd \thm@postskip           
  \def\@tempa{#2}\ifx\@empty\@tempa
    \def\@tempa{\@oparg{\@begintheorem{#3}{}}[]}%
  \else
    \refstepcounter{#2}%
    \def\@tempa{\@oparg{\@begintheorem{#3}{\csname the#2\endcsname}}[]}%
  \fi
  \@tempa
}
\renewcommand{\pod}[1]
{\allowbreak\mathchoice{\mkern18mu}{\mkern8mu}{\mkern8mu}{\mkern8mu}(#1)}
\chardef\Myunderscore=`\_
\newcommand\underscore{\Myunderscore\allowbreak}
\DeclareRobustCommand\sage[1]{\texttt{#1}}
\DeclareRobustCommand\sagefunc[1]{\pgfkeys{/sagefunc/#1}}
\newcommand\NEWRESULT{$\clubsuit$}
\newcommand\TheoremNEWRESULT{New result \NEWRESULT}
\title[Infinite Group Problem]{Light on the Infinite Group
  Relaxation}
\author{Amitabh Basu} 
\address{Amitabh Basu: Dept.~of Applied Mathematics and Statistics, The Johns Hopkins University}
\email{basu.amitabh@jhu.edu}
\author{Robert Hildebrand}
\address{Robert Hildebrand: Institute for Operations Research, Dept. of Mathematics, ETH Z\"urich, Switzerland}
\email{robert.hildebrand@ifor.math.ethz.ch}
\author{Matthias K\"oppe}
\address{Matthias K\"oppe: Dept.\ of Mathematics, University of California, Davis}
\email{mkoeppe@math.ucdavis.edu}
\date{$\relax$Revision: 1763 $ - \ $Date: 2015-03-24 13:37:42 -0700 (Tue, 24 Mar 2015) $ $\!\!\!}
\begin{document}
 \newcommand{\tgreen}[1]{\textsf{\textcolor {ForestGreen} {#1}}}
 \newcommand{\tred}[1]{\texttt{\textcolor {red} {#1}}}
 \newcommand{\tblue}[1]{\textcolor {blue} {#1}}

\maketitle
\begin{abstract}
  This is a survey on the infinite group problem, an infinite-dimensional
  relaxation of integer linear optimization problems introduced by Ralph Gomory
  and Ellis Johnson in their groundbreaking papers titled \emph{Some continuous
    functions related to corner polyhedra I, II} [Math.\@ Programming
  3~(1972), 23--85, 359–389].
  The survey presents the infinite group problem in the modern context of cut
  generating functions.  It focuses on the recent developments, such as algorithms for testing extremality and breakthroughs for the $k$-row problem for general $k\geq 1$ that extend previous work on the single-row and two-row problems. The survey also includes some previously unpublished results; among other
  things, it unveils piecewise linear extreme functions with more than four different slopes.
  An interactive companion program, implemented in
  the open-source computer algebra package Sage, 
  provides an updated compendium of known extreme functions.
\end{abstract}

\thispagestyle{empty}\enlargethispage{7ex}

{\scriptsize
\tableofcontents
}

\clearpage

\section{Introduction}

A recent line of activity in integer programming research is the development
of cutting plane theory for general purpose mixed-integer linear
programs. Although this theory was initiated by Gomory's seminal
work~\cite{MR0102437,Gom60,gomory1963algorithm,gomory1965relation,gom,gomory2007atoms}
in integer programming, in the 1980s this general theory was overshadowed by
the success of cutting planes for specially structured combinatorial
optimization problems such as the TSP, the stable set problem and the knapsack
problem. A re-evaluation of Gomory's cutting planes in the
1990s~\cite{balas96gomory} led to a renewed interest in general purpose
cutting plane theory. A key turning point in the 2000s was the emphasis on the
so-called {\em multi-row cuts}, which hold the promise of making significant
breakthroughs in algorithms for solving large-scale mixed-integer
programs. The last decade has witnessed considerable progress for multi-row
cuts. This recent research is collectively referred to under the label of
{\em cut-generating functions}, a term coined by Cornu\'ejols \cite{conforti2013cut}. 

A central problem and a driving force behind this line of work has been the
so-called {\em infinite group problem} (or \emph{infinite relaxation}), 
introduced by Gomory and Johnson in two seminal papers in 1972~\cite{infinite,infinite2}. In this sense, the infinite group problem was a visionary contribution that anticipated this modern trend in integer programming decades earlier.  To make further progress in the elaborate research program of cut-generating functions, it is imperative to understand the infinite group problem even better. The bulk of Gomory and Johnson's contributions were in the {\em single-row infinite group problem}, and until recently the theory behind the {\em multi-row infinite group problem} was mostly in the dark. With the modern focus on multi-row cuts within cut-generating functions, it is very important to understand the multi-row infinite group problem. The last decade has seen some excellent progress in this question, and this survey attempts to present this story.

\subsection{Cut-generating function pairs} We begin with a quick overview of the cut-generating function approach to unifying cutting plane theory. Let $d \in \N$ and $I$ be a fixed subset of $\{1, \ldots, d\}$. A mixed-integer optimization problem of the form
\begin{equation}
  \label{eq:orig-ip}
  \max \{ \, \ve c \cdot\ve x \st A \ve x = \ve b,\ \ve x \in \R^d_+, \ x_i \in\Z \quad \forall i\in I \,\}
\end{equation}
is first solved by ignoring the integrality constraints and using the simplex algorithm. This leads to a simplex tableau reformulation:
\begin{equation}\label{eq:simplex-tab}
  A_B^{-1} A_N\ve x_N = A_B^{-1} \ve b\, -\, \ve x_B, \quad \ve x_{B\cap I}
  \in \Z_+^{B\cap I},\ \ve x_{B \setminus I} \in \R_+^{B \setminus I},\ \ve x_{N\cap I} \in \Z_+^{N \cap I},\ \ve x_{N \setminus I} \in \R_+^{N \setminus I}
\end{equation}
where the subscripts $B$ and $N$ denote the basic and non-basic parts of the
solution~$\ve x$ and matrix~$A$, respectively.  
The following change of notation will be convenient: let $k = |B|$, $m = |N \setminus I|$, $\ell = |N \cap I|$, let $R$ denote the submatrix of $A^{-1}_BA_N$ indexed by $N \setminus I$, $P$ denote the submatrix of $A^{-1}_BA_N$ indexed by $N \cap I$, and set $\bar{S} = A^{-1}_B\ve b - (\Z_+^{B\cap I}\times \R_+^{B \setminus I})$.  Then in the new notation, we describe system~\eqref{eq:simplex-tab} as
\begin{equation}
	\label{eq:def mixed-int set}
	X_{\bar{S}}(R,P) := \setcond{(\ve s,\ve y) \in \R_+^m \times \Z_+^\ell }{
          R\ve s + P\ve y\in \bar{S} }.
\end{equation}
In the following, we will consider general systems of the form~\eqref{eq:def
  mixed-int set}, where $m, \ell \in \Z_+$ and $k \in \N$, $R \in \R^{k
  \times m}$ and $P \in \R^{k \times \ell}$ are matrices, and $\bar{S}$ is a
closed subset of $\R^k$ such that $\0 \not\in \bar{S}$. Instead of using
the full simplex tableau~\eqref{eq:simplex-tab}, one can as well consider
relaxations of~\eqref{eq:simplex-tab}, for example by taking a subset of the rows only. In the simplest case, one focuses on only one row, i.e., $k=1$. 

We denote the columns of matrices $R$ and $P$ by $\ve r^1,\ldots,\ve r^m$ and $\ve p^1,\ldots, \ve p^{\ell},$ respectively.
Given $k \in \N$ and $\bar{S} \subseteq \R^k$, a \emph{cut-generating function pair (or simply, cut-generating pair) $(\psi, \pi)$} for $\bar{S}$ is a pair of functions $\psi, \pi\colon\R^n \to \R$ such that 
\begin{equation}
	\label{psi pi ineq}
	\sum_{i=1}^m\psi(\ve r^i) s_i + \sum_{j=1}^\ell\pi(\ve p^j) y_j \ge 1
\end{equation}
is a valid inequality (also called a \emph{cutting plane} or {\em cut}) for the set $X_{\bar{S}}(R,P)$ for every choice of $m, \ell \in \Z_+$ and for all matrices $R \in \R^ {k \times m}$ and $P \in \R^ {k \times \ell}$. We emphasize that cut-generating pairs depend on $k$ and $\bar{S}$ and do \emph{not} depend on $m,\ell$, $R$ and $P$. {\em A priori} it is not clear that such cut-generating function pairs can exist. However, it has been observed that for many special cases of model~\eqref{eq:def mixed-int set} the convex hull of points in $X_{\bar{S}}(R,P)$ can be completely described using cut-generating functions, i.e., not only do they exist, but they are sufficient for the purposes of optimization from a theoretical perspective.

Gomory and Johnson's joint work in the 1970s~\cite{infinite,infinite2}, together with Johnson's independent results~\cite{johnson} in the same decade, shows that cut-generating pairs can be understood by studying infinite-dimensional
convex sets parameterized by $k \in \N$ and $\bar{S} \subseteq \R^k$. For any index
set $I$ (not necessarily finite), $\R^I$ will denote the vector space of all
real-valued functions with domain $I$, and $\R^{(I)}$ will denote the subspace
of real-valued functions with domain $I$ that have finite support, i.e.,
functions that take value zero except on a finite set.\footnote{This notation
  for functions of finite support is used, for example, in~\cite{hitchhiker}.} For example, $\R^{(\R^k)}$ is the set of all functions $s \colon \R^k \to \R$ with finite support. The object of interest is
\begin{equation}
  \label{eq:inf-dim}
  X_{\bar{S}} := \biggl\{\, (s,y) \in \R^{(\R^k)} \times \R^{(\R^k)} \biggst \sum_{\ve r \in \R^k} \ve r s(\ve r) + \sum_{\ve p \in \R^k}\ve p y(\ve p) \in  \bar{S},\,\, s(\ve r) \in \R_+ \;\forall \ve r \in \R^k,\ y(\ve p) \in \Z_+ \;\forall \ve p\in \R^k \,\biggr\}.
\end{equation}

The convex hull of points in $X_{\bar{S}}$ is an infinite-dimensional convex
set in $\R^{(\R^k)} \times \R^{(\R^k)}$ that contains the convex hull of every $X_{\bar{S}}(R,P)$
(for every choice of $R$ and $P$) as a finite-dimensional face. Cut generating
function pairs can then be interpreted as halfspaces in the vector space
$\R^{(\R^k)} \times \R^{(\R^k)}$ that contain $X_{\bar{S}}$. 

\subsection{Approaches to understanding cut-generating function pairs}\label{s:diff-approach}

The setting of $\conv(X_{\bar{S}})$ where $\bar{S}$ is a translate of $\Z^k$
has received the most attention in the literature.\footnote{This model is
  called the \emph{mixed-integer infinite relaxation}, for example in the
  survey \cite{corner_survey}, or sometimes the \emph{mixed-integer group
    problem}, but we shall not use either of these terms in the remainder of
  our survey.}  
Fix a point $\f \in \R^k \setminus \Z^k$ and let $\bar{S} = \f + \Z^k$.
Two distinct approaches have emerged within the study of the
facial structure of $\conv(X_{\bar{S}})$, which we will compare below.

\begin{enumerate}
\item {\bf The infinite group problem.} Gomory and Johnson, in their work
  in~\cite{infinite,infinite2}, study the \emph{infinite group problem}, 
  which appears as the face of $\conv(X_{\bar{S}})$
  given by $\conv(X_{\bar{S}}) \cap \big\{\,(s,y) \in \R^{(\R^k)} \times
  \R^{(\R^k)} \st s = 0 \,\big\}$. This produces cut-generating functions $\pi
  \colon \R^k \to \R$ that are useful for the study of {\em pure} integer
  optimization problems.  The structure of these functions $\pi$ can be very
  complicated; it is the main topic of our survey. 

  By Johnson's fundamental work~\cite{johnson}, we know that 
  these functions~$\pi$ can then be easily lifted to
  strong cut-generating pairs $(\psi, \pi)$ for mixed-integer optimization
  problems using closed form formulas.
    
\item {\bf Intersection cuts.}  Another approach to cut-generating pairs has
  its roots in Balas' work on {\em intersection cuts}~\cite{bal} and Balas and
  Jeroslow's work on {\em monoidal strengthening}~\cite{baljer}.  More recent
  work by Andersen, Louveaux, Weismantel, and Wolsey~\cite{alww} renewed the
  interest in this approach.  Borozan and Cornu\'ejols~\cite{BorCor} put it in
  the framework of cut generating functions, and Dey and Wolsey~\cite{dw2008}
  interpreted monoidal strengthening in this setting.
  This line of research was developed further in many papers,
  including~\cite{bccz,bccz2,basu2013unique,basu-cornuejols-koeppe:unique-minimal-liftings-simplicial,ccz,dey2010constrained}.  

  Consider again the case $\bar{S} = \f + \Z^k$.
  Then the face of $\conv(X_{\bar{S}})$ given by
  $\conv(X_{\bar{S}}) \cap \big\{\,(s,y) \in \R^{(\R^k)} \times \R^{(\R^k)}
  \st y = 0 \,\big\}$ is studied first,\footnote{This model is called the
    \emph{continuous infinite relaxation}, for example in the survey \cite{corner_survey}, or sometimes the \emph{continuous
      group problem}.} giving cut-generating functions $\psi \colon
  \R^k \to \R$. 
  They are obtained as the gauge functions of maximal lattice-free convex
  bodies. 
  The functions~$\psi$ are then lifted to cut-generating pairs $(\psi, \pi)$ for $X_{\bar S}$. 

The advantage of the intersection cut approach, compared with Gomory--Johnson's infinite group problem, is that
the gauge functions $\psi$ can be evaluated using simpler formulas. Further,
generalizations have been studied in which the
set $\bar{S}$ is allowed to be more general than just a translated lattice --
the most frequently studied $\bar{S}$ is of the form $C \cap (\f + \Z^k)$
where $C$ is a convex subset of $\R^k$ and $\f \in \R^k \setminus \Z^k$ (for
example, $C = \R^k_+$ would correspond to model~\eqref{eq:simplex-tab}).  In
this case, the cut-generating functions are obtained from so-called maximal
$\bar{S}$-free convex sets.

The drawback of the intersection cut approach is that lifting a gauge function
$\psi$ to a strong cut-generating pair $(\psi,\pi)$ can be rather difficult.
This difficulty has been recently studied
by~\cite{basu2013unique,basu-cornuejols-koeppe:unique-minimal-liftings-simplicial,ccz,averkov2014unique}. Moreover,
this approach produces a much smaller subset of cut-generating pairs as
compared with the infinite group approach when $\bar{S}$ is a translated
lattice. In this case, there exist undominated cut-generating pairs $(\psi,
\pi)$ where $\psi$ is not the gauge of a {\em maximal} lattice-free set --
these can still be obtained in the context of the infinite group problem. In
contrast the approach outlined above starts with a function~$\psi$ that is the
gauge function of a maximal lattice-free set, and so the approach cannot
derive such cut-generating functions.
\end{enumerate}

\begin{remark}  The study of cut-generating functions for $k=1$ is referred to
  as the {\em single-row} problem, and the general $k\geq 2$ case is referred
  to as the {\em multi-row} problem in the literature. Algorithms used in practice for solving mixed-integer problems have so far used only insights from the single-row problem. It is believed that the general multi-row analysis can lead to stronger cutting planes that can significantly boost the performance of state-of-the-art algorithms.
\end{remark}

\subsection{Outline of the survey} We will survey the recent progress made on
the infinite group problem approach described in~\autoref{s:diff-approach}. We
view this as a follow-up to two excellent surveys, the first by Conforti,
Cornu\'ejols, and Zambelli~\cite{corner_survey}, which discusses the basic structure of the corner
polyhedron and its relation with cut-generating functions, and the
second by Richard and Dey~\cite{Richard-Dey-2010:50-year-survey}, which
focuses on the group-theoretic approach. Our survey focuses on the milestones
that have been reached
since~\cite{corner_survey,Richard-Dey-2010:50-year-survey} were
written. Although we do not
intend~\cite{corner_survey,Richard-Dey-2010:50-year-survey} to be
prerequisites to this article, the reader who is familiar with the material
from~\cite{corner_survey,Richard-Dey-2010:50-year-survey} will certainly have
a better context for the current article.  The reader may use
  \autoref{tab:notation} in \autoref{s:appendix-notation}
as a reference to notation in these surveys and other literature.

Section~\ref{sec:basic-notions} formally introduces the problem, the main objects of study such as {\em valid functions, minimal valid functions, extreme functions, and facets}, and their basic properties. We conclude the section with a discussion of families of valid functions and some open questions (\autoref{s:taxonomy}). The discussion references a compendium that summarizes known families from the literature (\autoref{s:appendix-compendium}), and contains some previously unknown families such as extreme functions with 5 slopes and some discontinuous extreme functions with left and right discontinuity at the origin. Section~\ref{sec:piecewise} introduces the notation and concepts from discrete geometry required for analyzing the problem, and collects foundational techniques for the general $k$-row problem. Section~\ref{sec:foundations} surveys higher-dimensional variants of the celebrated {\em Interval Lemma}. Section~\ref{s:sufficient-cond} introduces one of the most general sufficient conditions for the fundamental notion of {\em extremality}, illustrating how all the techniques introduced in the previous sections come together to analyze extremality. Section~\ref{s:limits} investigates some analytic properties of the problem and demonstrates the use of analytical ideas to construct extreme functions. Sections~\ref{sec:one-two-dim} and~\ref{sec:alg-restriction-finite-groups} discuss important algorithmic and structural results known for the one-row and two-row problems. These results are based on recent breakthroughs in~\cite{basu-hildebrand-koeppe:equivariant,basu-hildebrand-koeppe:equivariant-2,bhk-IPCOext}.

We highlight results that are new in this survey with the annotation ``\TheoremNEWRESULT''. To the best of our knowledge, these do not appear elsewhere in the literature.

Due to constraints of space, we must limit the topics covered in this survey. We briefly mention some of the important highlights in the literature that are not discussed in this survey. A wealth of results on the {\em finite group problem} are closely related to the infinite group problem. We invite the reader to explore the survey by Richard and Dey~\cite{Richard-Dey-2010:50-year-survey} for more details about this direction. Furthermore, we focus on the structural results of the infinite group problem, as opposed to the implementation of these results in algorithms to solve integer programming problems. This includes the so-called {\em shooting experiments} discussed in~\cite{gomory2003corner} to empirically judge quality of the cutting planes, and the discussion of relative strength in~\cite[section 6]{tspace}.

%
%
%
%
%
%
%

\section{The Infinite Group Problem}\label{sec:basic-notions}

As stated in~\autoref{s:diff-approach}, Gomory and Johnson introduced the so-called {\em infinite group problem}. 
It has its roots in Gomory's \emph{group problem}~\cite{gom}, which was
introduced by him as an algebraic relaxation of pure integer linear
optimization problems. 
We introduce this next as it will be useful for formulating many of our results in a unified language. One considers an
abelian group $G$, written additively, and studies
the set of functions $y \colon G \to \R$ satisfying the following constraints:  
\begin{equation}
  \label{GP} 
  \begin{aligned}
    &\sum_{\rx \in G} \rx\, y(\rx) \in \ve f + S \\
    &y(\rx) \in \mathbb{Z}_+ \ \ \textrm{for all $\rx \in G$}  \\
    &y \textrm{ has finite support}, 
  \end{aligned}
\end{equation}
where $S$ is a subgroup of $G$ and $\ve f$ is a given element in $G\setminus S$; so $\ve
f + S$ is the coset containing the element~$\ve f$. 
We are interested in studying the convex hull $R_{\ve f}(G,S)$ of
the set of all functions $y\colon G \to \R$ satisfying the constraints in~\eqref{GP}. $R_{\ve f}(G,S)$ is a convex subset of the vector space $\R^{(G)}$, which is infinite-dimensional when $G$ is an infinite group, i.e., of infinite order. The nomenclature {\em $k$-row infinite group problem} is reserved for the situation when $G =\R^k$ is taken to be the group
of real $k$-dimensional vectors under addition, and $S= \Z^k$ is the subgroup of the
integer vectors
. When $k=1$, we refer to it as the {\em single-row infinite group problem}. Recall that the connection with the cut-generating function model~\eqref{eq:def mixed-int set} is made by setting $\bar S = \f + S$, whence we get $R_\f(G,S)$ as the projection of $\conv(X_{\bar{S}}) \cap \big\{(s,y) \in \R^{(\R^k)} \times \R^{(\R^k)} \st s = 0 \big\}$ onto the $y$ space.

\begin{remark} 
  \label{rem:finite-group}
  Note that there is a correspondence between the sets $R_\f(G,S)$ and $R_{\bar
    \f}(G/S,0)$ where $G/S$ is the quotient group with respect to the (normal)
  subgroup $S$ and $\bar \f$ is the element corresponding to the coset $\f +
  S$, by standard aggregation of variables.\footnote{Indeed, $y \in R_\f(G,S)$
    gives an element $\bar y \in \R_{\bar \f}(G/S,0)$ 
    by setting $\bar y(C) = \sum_{\rx \in C} y(\rx)$ for every coset $C \in
    G/S$. In the other direction, given $\bar y \in \R_{\bar \f}(G/S,0)$ we get
    a solution $y \in R_\f(G,S)$ by simply picking a canonical representative
    $\rx_C$ for each coset $C \in G/S$ and setting $y(\rx_C) = \bar y(C)$.
    From aggregation of variables it follows that the strongest valid
    inequalities for the convex hull of 
    $R_\f(G,S)$ will have identical coefficients on any coset; see
    \autoref{thm:minimal}.}
  
  In the earlier literature on the infinite group problem, the
  aggregated formulation $R_{\bar \f}(\R^k/\Z^k, 0)$ was used.  The quotient
  $\R^k/\Z^k$ is the $k$-dimensional torus; it can be identified with the
  half-open unit cube $[0,1)^k$, using coordinatewise arithmetic modulo 1.  
  In this survey, however, we follow the trend in the recent literature
  \cite{corner_survey,basu-hildebrand-koeppe:equivariant,basu-hildebrand-koeppe:equivariant-2,bhk-IPCOext}
  to work with $R_\f(\R^k,\Z^k)$ instead.  This removes the need for
  complicated notation for mapping between elements of~$\R^k$ and elements of
  $\R^k/\Z^k$ (see \autoref{tab:notation} for an overview), 
  and for complicated geometric notions, such as ``wrap-around'' line segments in
  Johnson's \emph{cylindrical space} \cite{tspace}, in favor of the standard mathematical
  language of periodic, locally finite polyhedral complexes on~$\R^k$
  (\autoref{sec:def-complex-piecewise}).   We pay a small price for the simplicity
  and precision of this approach:  We will often work with infinite objects where finite objects
  would suffice.  However, it is very easy to go back to finite objects in the
  moments when we want to state algorithms.

  The aggregated formulation is still of interest for the case where $G/S$ is
  a finite group, as then $R_{\bar \f}(G/S,0)$ is finite-dimensional and thus
  amenable to polyhedral techniques.  This case is referred to as a {\em
    finite group problem}; it will appear in \autoref{s:finite-group}.
  Due to the correspondence between the sets
  $R_\f(G,S)$ and $R_{\bar \f}(G/S,0)$, we shall also refer to $R_\f(G,S)$ as
  a finite group problem whenever $S$ has finite index in $G$, i.e., $G/S$ is a
  finite group.  
\end{remark}

 \subsection{Valid inequalities and valid functions} 
 
Following Gomory and Johnson, we are interested in the description of $R_{\ve f}(G,S)$ as
the intersection of halfspaces in $\R^{(G)}$.  We first describe the general
form that these halfspaces take and then a standard normalization that leads
to the idea of cut-generating functions.  

\subsubsection{Valid inequalities}
Any halfspace in $\R^{(G)}$ is given by a pair $(\pi, \alpha)$, where $\pi \in \R^G$ and $\alpha \in
\R$, and the halfspace is the set of all $y \in \R^{(G)}$ that satisfy $\sum_{\rx \in G} \pi(\rx)y(\rx) \geq
\alpha$. The left-hand side of the inequality is a finite sum because $y$ has finite
support. Such an inequality is called a {\em valid inequality} for $R_{\ve f}(G,S)$
if $\sum_{\rx \in G} \pi(\rx)y(\rx) \geq \alpha$ for all $y \in R_{\ve
  f}(G,S)$, i.e., $R_{\ve f}(G,S)$ is contained in the halfspace defined by
$(\pi, \alpha)$. Note that the set of all valid inequalities $(\pi,\alpha)$ is a cone in
the space $\R^G \times \R$.  

\subsubsection{Sign of the coefficients of valid inequalities} If $S$ has
finite index in $G$, then it can be shown that if $(\pi, \alpha)$ gives a
valid inequality, then $\pi \geq 0$. An even stronger statement is easily seen to be true: if $\rx \in G$ is such that there exists $n \in \N$ satisfying $n \rx \in S$, then $\pi(\rx) \geq 0$ \cite[section 5]{corner_survey}.  However,  when this is not the case, there may exist valid inequalities $(\pi,\alpha)$ where $\pi$ takes negative values. We give an explicit example below for the one-row infinite group problem ($G = \R$ and $S=\Z$). 

It is well-known that there exist functions $h\colon \R \to \R$ such that they
satisfy $h(a+b) = h(a) + h(b)$ for all $a, b \in \R$ and whose graph is dense
in $\R^2$. These are the non-regular solutions to the so-called {\em Cauchy
  functional equation}~\cite[chapter 2, Theorem 3]{aczel1989functional}. This functional equation is discussed further in \autoref{sec:foundations}.

\begin{prop}[\TheoremNEWRESULT]\label{prop:neg-pi}
Let $f$ be any rational number. Let $h\colon\R \to \R$ be any function such
that $h(a+b) = h(a) + h(b)$ for all $a,b \in \R$ and the graph of $h$ is dense
in $\R^2$. Define $\pi^*\colon\R \to \R$ as $\pi^*(a) = h(a) - h(1)a$
for all $a \in \R$.
Then $(\pi^*, 0)$ defines an \emph{implicit equality} of~$\R_{\ve f}(G,S)$, 
i.e., the equation 
$$ \sum_{\rx \in G} \pi^*(\rx)y(\rx) = 0\quad\text{holds for $y \in \R_{\ve f}(G,S)$}.$$
Thus both $(\pi^*, 0)$ and $(-\pi^*, 0)$ define valid inequalities for
$\R_{f}(G,S)$. 
Moreover $\pi^*$ has a dense graph in $\R^2$. \end{prop}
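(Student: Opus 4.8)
The plan is to exploit the additivity of $\pi^*$ that it inherits from $h$, together with the rationality of $f$. First I would record two elementary structural facts about $\pi^*$. Since $a \mapsto h(1)a$ is additive, $\pi^*(a) = h(a) - h(1)a$ is a difference of additive functions, hence itself additive: $\pi^*(a+b) = \pi^*(a) + \pi^*(b)$ for all $a,b \in \R$. Moreover $\pi^*(1) = h(1) - h(1) = 0$, and additivity then forces $\pi^*(n) = n\,\pi^*(1) = 0$ for every $n \in \Z$, so $\pi^*$ is $\Z$-periodic, i.e.\ $\pi^*(\rx + n) = \pi^*(\rx)$ for all $\rx \in \R$ and $n \in \Z$.

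Next I would evaluate the linear functional $y \mapsto \sum_{\rx \in \R} \pi^*(\rx) y(\rx)$ on a feasible integer solution $y$ of~\eqref{GP}. Because each $y(\rx)$ is a nonnegative integer and $\pi^*$ is additive, we have $\pi^*(\rx)\,y(\rx) = \pi^*\bigl(y(\rx)\,\rx\bigr)$, and pulling the (finite) sum inside gives
\[ \sum_{\rx \in \R} \pi^*(\rx)\, y(\rx) = \pi^*\!\Bigl(\sum_{\rx \in \R} \rx\, y(\rx)\Bigr). \]
The feasibility constraint says $\sum_{\rx} \rx\, y(\rx) = f + n$ for some $n \in \Z$, so by $\Z$-periodicity the right-hand side equals $\pi^*(f)$. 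Here the hypothesis that $f$ is rational enters decisively: an additive function is $\Q$-linear, i.e.\ $h(p/q) = (p/q)\,h(1)$ for $p/q \in \Q$ (from $q\,h(p/q) = h(p) = p\,h(1)$), whence $\pi^*(f) = h(f) - h(1)f = 0$. Thus the functional vanishes on every feasible integer point.

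To pass to the convex hull $R_{\ve f}(G,S)$, I would use that $y \mapsto \sum_{\rx} \pi^*(\rx)\, y(\rx)$ is linear and that every element of $R_{\ve f}(G,S)$ is a finite convex combination of feasible integer points (each of finite support, hence so is the combination, so the functional is well defined). Linearity then yields $\sum_{\rx} \pi^*(\rx)\, y(\rx) = 0$ for all $y \in R_{\ve f}(G,S)$, which is precisely the asserted implicit equality; in particular both $(\pi^*, 0)$ and $(-\pi^*, 0)$ are valid inequalities, since each expresses $0 \ge 0$ on $R_{\ve f}(G,S)$. Finally, for the density of the graph, I would observe that the shear map $(a,t) \mapsto (a,\, t - h(1)a)$ is a linear homeomorphism of $\R^2$ carrying the graph of $h$ onto the graph of $\pi^*$; since the graph of $h$ is dense by hypothesis and homeomorphisms preserve density, the graph of $\pi^*$ is dense as well.

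The argument is a chain of short observations rather than one hard step; the only place demanding genuine care is recognizing that it is the rationality of $f$ which collapses $\pi^*(f)$ to $0$. Were $f$ irrational, $\pi^*(f)$ would generically be nonzero (indeed the values $\pi^*(f+n)$ would be dense), and $(\pi^*,0)$ would fail to be an implicit equality. I would therefore make sure to invoke the $\Q$-linearity of $h$ explicitly rather than leave it implicit.
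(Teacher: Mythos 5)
Your proposal is correct and follows essentially the same route as the paper's proof: additivity of $\pi^*$ lets you pull the finite sum inside, rationality of $f$ (via $\Q$-linearity of $h$) gives $\pi^*(f+w)=0$ for $w\in\Z$, and the density of the graph of $\pi^*$ is inherited from that of $h$. You merely make explicit a few steps the paper leaves implicit---the $\Z$-periodicity of $\pi^*$, the passage from feasible integer points to their convex hull by linearity, and the shear homeomorphism argument for density---which is a fair elaboration rather than a different approach.
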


\begin{proof}
Using additivity, $h(a) = h(1)a$ for all rational $a$ and therefore we have $\pi^*(f + w) = 0$ for any $w \in \Z$. Moreover, since $h(a+b) = h(a) + h(b)$ for all $a,b \in \R$, we also have $\pi^*(a+b) = \pi^*(a) + \pi^*(b)$ for all $a,b \in \R$. Consider any $y \in \R^{(\R)}$ such that $\sum_{r \in \R} r\, y(r) = f + w$ for some $w \in \Z$, and $y(r) \in \Z_+$ for all $r \in \R$. Then $0 = \pi^*(f + w) = \pi^*(\sum_{r \in \R} r\, y(r)) = \sum_{r \in \R} \pi^*(r)\, y(r)$. This establishes that $(\pi^*, 0)$ defines an implicit equality for $R_f(G,S)$. The graph of $\pi^*$ is dense in $\R^2$ because the graph of $h$ is dense in $\R^2$.\end{proof}

In fact, for the infinite group problem $R_{\ve f}(\R^k,\Z^k)$ with
rational~$\ve f$ we show that the
set of implicit equalities (equivalently, the lineality space of the cone of
valid inequalities) 
consists of the $(\pi, \alpha)$ such that $\pi$ is additive and $\alpha =
0$. The discussion above says that for any valid inequality given by the pair
$(\pi, \alpha)$ we have $\pi(\rx) \geq 0$ for every $\rx \in \Q^k$.  

\begin{prop}[\TheoremNEWRESULT]\label{prop:additive}
Let $\ve f$ be a rational vector. A pair $(\pi, \alpha) \in \R^{\R^k} \times \R$ satisfies $\sum_{\rx \in \R^k} \pi(\rx)y(\rx) = \alpha$ for all $y \in R_{\ve f}(\R^k,\Z^k)$ if and only if $\pi$ is additive, i.e., $\pi(\rx^1) + \pi(\rx^2) = \pi(\rx^1 + \rx^2)$ for all $\rx^1, \rx^2 \in \R^k$, and $\alpha = 0$.
\end{prop}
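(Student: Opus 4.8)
The plan is to prove the two implications separately; the reverse implication is routine, whereas the forward implication (that an implicit equality must be additive with $\alpha=0$) carries the real content and is where rationality of $\ve f$ is used. For the reverse direction, suppose $\pi$ is additive and $\alpha=0$. Since $R_{\f}(\R^k,\Z^k)$ is the convex hull of the integer solutions of~\eqref{GP} and the map $y\mapsto\sum_{\rx}\pi(\rx)y(\rx)$ is linear, it suffices to check the equality on a single such solution~$y$. For such a $y$, additivity gives $\pi(n\rx)=n\pi(\rx)$ for $n\in\Z_+$, whence the homomorphism identity $\sum_{\rx}\pi(\rx)y(\rx)=\pi\bigl(\sum_{\rx}\rx\,y(\rx)\bigr)=\pi(\ve f+\w)$, where $\w\in\Z^k$ is the integer vector with $\sum_{\rx}\rx\,y(\rx)=\ve f+\w$. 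It remains to see that $\pi$ vanishes on the coset $\ve f+\Z^k$. Here additivity makes $\pi$ a $\Q$-linear map, so it vanishes on all of $\Q^k$ as soon as it vanishes on $\Z^k$; and since $\ve f$ is rational, $\ve f+\w\in\Q^k$, giving $\pi(\ve f+\w)=0$. This is the delicate point of the converse: one must know $\pi$ is zero on $\Z^k$ (the periodicity modulo $\Z^k$ of the cut-generating functions in the group problem), and this together with $\Q$-linearity and rationality of~$\ve f$ is exactly what forces the coset values to vanish.

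For the forward direction, assume $(\pi,\alpha)$ is an implicit equality, so that both $(\pi,\alpha)$ and $(-\pi,-\alpha)$ are valid inequalities for $R_{\f}(\R^k,\Z^k)$. By the sign observation recorded just before the proposition, every valid inequality satisfies $\pi\ge 0$ on $\Q^k$; applying this to both $\pm\pi$ forces $\pi$ to vanish on $\Q^k$. Evaluating the implicit equality on the single-point solution with $y(\ve f)=1$ gives $\pi(\ve f)=\alpha$, and since $\ve f\in\Q^k$ we conclude $\alpha=0$. Thus the normalization constant is pinned down immediately from the sign structure, using only that $\ve f$ is rational.

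It then remains to prove full additivity, $\pi(\rx^1)+\pi(\rx^2)=\pi(\rx^1+\rx^2)$ for \emph{all} $\rx^1,\rx^2\in\R^k$, and this is the step I expect to be the main obstacle. Evaluating the equality on the two-point solution with $y(\rx^1)=y(\rx^2)=1$ only yields additivity for pairs with $\rx^1+\rx^2\in\ve f+\Z^k$; to reach arbitrary $\rx^1,\rx^2$ one introduces a third point. Concretely, set $\rx^3=\ve f-\rx^1-\rx^2$, so that both $\{\rx^1,\rx^2,\rx^3\}$ and $\{\rx^1+\rx^2,\rx^3\}$ support feasible solutions (each has $\rx$-sum equal to $\ve f$, hence lies in $\ve f+\Z^k$). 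Applying the implicit equality to these two solutions gives $\pi(\rx^1)+\pi(\rx^2)+\pi(\rx^3)=\alpha=\pi(\rx^1+\rx^2)+\pi(\rx^3)$, and subtracting yields the desired relation. The only care needed is bookkeeping when some of $\rx^1,\rx^2,\rx^3,\rx^1+\rx^2$ coincide, which is handled by recording coincidences as integer multiplicities in $y$ rather than as distinct support points; this contributes the routine, not the essential, difficulty.
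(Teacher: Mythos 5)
Your ``only if'' direction is correct and is essentially the paper's own argument: the paper evaluates the implicit equality on the two feasible solutions $e_{\rx^1+\rx^2}+e_{\f-\rx^1-\rx^2}$ and $e_{\rx^1}+e_{\rx^2}+e_{\f-\rx^1-\rx^2}$ and subtracts, exactly as you do. Your route to $\alpha=0$ is a harmless variant: you apply the sign observation to both $\pi$ and $-\pi$ to get $\pi|_{\Q^k}\equiv 0$ outright, whereas the paper first derives additivity and then combines one-sided nonnegativity on $\Q^k$ with $\pi(\rx)+\pi(-\rx)=\pi(\0)=0$; both are fine, and your multiplicity bookkeeping remark is correct (the paper's notation $e_{\rx^1}+e_{\rx^2}$ handles coincidences automatically).

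The genuine gap is in your ``if'' direction, at exactly the point you flagged as delicate: you assert that $\pi$ vanishes on $\Z^k$, attributing this to ``the periodicity modulo $\Z^k$ of the cut-generating functions in the group problem.'' But in this proposition $\pi$ is an arbitrary element of $\R^{\R^k}$; it is not assumed to be valid, minimal, or periodic, and additivity alone does not yield $\pi|_{\Z^k}\equiv 0$. Nor can this step be repaired from the stated hypotheses: take $k=1$ and $\pi(x)=x$, which is additive, and $\alpha = 0$; then for the feasible point $y=e_f$ one gets $\sum_{r}\pi(r)y(r)=f\neq 0$, so $(\pi,0)$ is not an implicit equality. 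In other words, the ``if'' direction as literally stated needs the additional hypothesis that $\pi$ vanishes on $\Z^k$ (equivalently, given additivity, on $\Q^k$ --- a property that the ``only if'' direction does establish for implicit equalities). The paper's own one-line proof of this direction, which defers to ``the same calculations as in the proof of Proposition~\ref{prop:neg-pi},'' silently relies on the same missing property: the function $\pi^*(a)=h(a)-h(1)a$ of that proposition vanishes on $\Q$ by construction, which is precisely what a general additive $\pi$ lacks. So your instinct about where the content lies was right, but the step you needed is not provable; the correct fix is to add the hypothesis $\pi|_{\Z^k}\equiv 0$ to the statement, after which your argument (and the paper's) goes through verbatim.
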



\begin{proof}
The ``if'' direction can be proved using the same calculations as in the proof of Proposition~\ref{prop:neg-pi}. We prove the ``only if'' direction.

For $\rx \in \R^k$, let $e_\rx$ denote the finite support function which takes value 1 at $\rx$ and 0 everywhere else. Then $y^1 = e_{\rx^1 + \rx^2} + e_{\ve f - \rx^1 - \rx^2} \in R_{\ve f}(\R^k,\Z^k)$. Therefore, $\alpha = \sum_{\rx \in \R^k} \pi(\rx)y^1(\rx) = \pi(\rx^1 + \rx^2) + \pi(\ve f  - \rx^1 - \rx^2)$. Similarly, $e_{\rx^1} + e_{\rx^2} + e_{\ve f - \rx^1 - \rx^2} \in R_{\ve f}(\R^k,\Z^k)$ and therefore $\alpha = \pi(\rx^1) + \pi(\rx^2) + \pi(\ve f - \rx^1 - \rx^2)$. Therefore, $\pi(\rx^1) + \pi(\rx^2) = \pi(\rx^1 + \rx^2)$. 

Additive functions take value $0$ at the origin: $\pi(\ve 0) + \pi(\ve 0) = \pi(\ve 0)$ which implies $\pi(\ve 0) = 0$. Since, $\pi(\rx) \geq 0$ for every $\rx \in \Q^k$ and for any rational $\rx$, $\pi(\rx) + \pi(-\rx) = \pi(\ve 0) = 0$ we must have $\pi(\rx) = 0$ for every rational $\rx$. Thus, using the fact that $e_{\ve f} \in R_{\ve f}(\R^k,\Z^k)$, we have $\alpha = \sum_{\rx \in \R^k} \pi(\rx)e_{\ve f}(\rx) = \pi(\ve f) = 0$ since $\ve f$ is rational. \end{proof}

We next show that the intersection of all halfspaces of the form $\sum_{\rx \in G} \pi(\rx)y(\rx) \geq \alpha$ with $\pi \geq 0$ is a much larger superset of $R_{\ve f}(G,S)$. Our example is for $R_f(\R,\Z)$.

\begin{prop}[\TheoremNEWRESULT]\label{prop:nonneg-insufficient}
Let $f$ be any rational number. Let $h\colon\R \to \R$ be any function such
that $h(a+b) = h(a) + h(b)$ for all $a,b \in \R$ and the graph of $h$ is dense
in $\R^2$. Define $\pi^*\colon\R \to \R$ as $\pi^*(a) = h(a) - h(1)a$ for all
$a \in \R$. Let $r^1, \ldots, r^k$ be a finite set of real numbers such that
$\pi^*(r^i) < 0$ for all $i =1, \ldots, k$. Define $y^* \in \R^{(\R)}$ as
$y^*(r) = 1$ if $r \in \{r^1, \ldots, r^k\}\cup \{f\}$, and $y^*(r) = 0$
otherwise. Then \begin{enumerate} \item $y^*$ violates the implicit equality $\sum_{r \in G}
\pi^*(r)\, y(r) = 0$ and thus, does not lie in $R_{\ve f}(\R,\Z)$, \item $y^*$
satisfies all valid inequalities $\sum_{r \in \R} \pi(r)y(r) \geq \alpha$ where
$\pi \geq 0$. \end{enumerate}
\end{prop}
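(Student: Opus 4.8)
The plan is to read off both claims almost directly from Proposition~\ref{prop:neg-pi}, which already supplies everything we need about $\pi^*$: that it is additive, that $(\pi^*,0)$ is an implicit equality of $R_f(\R,\Z)$, and in particular (taking $w=0$ in the identity $\pi^*(f+w)=0$) that $\pi^*(f)=0$. The only genuinely new input is the interplay between these facts and the specific choice of the test point $y^*$.

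For claim (1) I would simply evaluate the implicit equality at $y^*$. Since $y^*$ is the indicator of the set $\{r^1,\dots,r^k\}\cup\{f\}$, and since $\pi^*(f)=0$ forces each $r^i$ to be distinct from $f$ (because $\pi^*(r^i)<0$), the sum collapses to
\[
\sum_{r\in\R}\pi^*(r)\,y^*(r)=\pi^*(f)+\sum_{i=1}^k\pi^*(r^i)=\sum_{i=1}^k\pi^*(r^i)<0,
\]
which is strictly negative because every summand is. As Proposition~\ref{prop:neg-pi} guarantees that every element of $R_f(\R,\Z)$ satisfies this equation with value $0$, the point $y^*$ cannot lie in $R_f(\R,\Z)$.

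For claim (2) the key observation is that the single-atom function $e_f$ (value $1$ at $f$, value $0$ elsewhere) already lies in $R_f(\R,\Z)$, since $\sum_r r\,e_f(r)=f\in f+\Z$ and $e_f(r)\in\Z_+$ with finite support. Hence for \emph{any} valid inequality $(\pi,\alpha)$ we automatically get $\pi(f)\ge\alpha$. If in addition $\pi\ge 0$, then adding the nonnegative contributions of the $r^i$ can only increase the left-hand side:
\[
\sum_{r\in\R}\pi(r)\,y^*(r)=\pi(f)+\sum_{i=1}^k\pi(r^i)\ \ge\ \pi(f)\ \ge\ \alpha.
\]
Thus $y^*$ satisfies every valid inequality with nonnegative coefficients.

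I do not anticipate a real obstacle: once Proposition~\ref{prop:neg-pi} is in hand, both parts are one-line computations, and the heart of the matter is only the structural point that validity against a nonnegative $\pi$ is controlled entirely by the trivial feasible point $e_f$, whereas the sign-changing implicit equality $\pi^*$ is what actually cuts $y^*$ off. The one detail worth stating explicitly is that the $r^i$ are forced to avoid $f$, so that the vanishing term $\pi^*(f)=0$ does not interfere with the strictly negative contributions in part~(1).
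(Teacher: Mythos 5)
Your proof is correct and follows essentially the same route as the paper's: evaluate the implicit equality $(\pi^*,0)$ at $y^*$ to get a strictly negative value for part (1), and use $e_f \in R_f(\R,\Z)$ together with $\pi \geq 0$ to get $\sum_r \pi(r)y^*(r) \geq \pi(f) \geq \alpha$ for part (2). Your explicit remark that $\pi^*(f)=0$ forces $r^i \neq f$ is a small clarification the paper leaves implicit, but it does not change the argument.
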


\begin{proof}
Observe that $\sum_{r \in G} \pi^*(r)\, y^*(r) = \sum_{i=1}^k \pi^*(r^i) +
\pi^*(f) = \sum_{i=1}^k \pi^*(r^i) < 0$. By Proposition~\ref{prop:neg-pi},
$(\pi^*,0)$ is an implicit equality for $R_{f}(\R,\Z)$ and therefore, $y^* \not\in R_{f}(\R,\Z)$. 

On the other hand, for any valid inequality given by $(\pi, \alpha)$ such that $\pi \geq 0$, we have $\pi(f) \geq \alpha$ (since $e_f \in R_{f}(\R,\Z)$). So, for any such valid function $\pi$, we have $\sum_{r \in \R} \pi(r)y^*(r) = \sum_{i=1}^k \pi(r^i) + \pi(f) \geq \pi(f) \geq \alpha$ (since $\pi \geq 0$).\end{proof}

The above example takes points that do not satisfy the implicit
equalities, i.e., we consider points outside the affine hull of the
feasible region. If we restrict ourselves to satisfy the implicit equalities,
are the nonnegative valid inequalities sufficient? This is an open question. 

\begin{openquestion}
  Is every valid inequality $(\pi,\alpha)$ the sum of a nonnegative valid
  inequality $(\pi^+,\alpha)$ and an implicit equality $(\pi^=, 0)$? 
\end{openquestion}


\subsubsection{Valid functions} Since data in finite-dimensional integer programs is usually rational, and this is our main motivation for studying the infinite group problem, it is
customary to concentrate on valid inequalities with $\pi \geq 0$; then we can choose, after a scaling, $\alpha = 1$ (otherwise, the inequality is implied by the nonnegativity of $y$). Thus, we only focus on
valid inequalities of the form $\sum_{\rx \in G} \pi(\rx)y(\rx) \geq 1$ with $\pi \geq 0$. Such functions $\pi \in \R^G$ are called {\em valid functions} for $R_{\ve f}(G,S)$. We remind the reader that this choice comes at a price because of \autoref{prop:nonneg-insufficient}; however, it can be shown that for rational {\em corner polyhedra}, which form an important family of relaxations for integer programs, all valid inequalities are restrictions of nonnegative valid functions for the infinite group problem. See~\cite{corner_survey} for a discussion.


%
 \subsection{Minimal functions, extreme functions and facets}\label{s:minimal-def}
 Gomory and Johnson~\cite{infinite,infinite2} defined a hierarchy on the set of valid
 functions, capturing the strength of the corresponding valid inequalities, which we summarize now.
 
\begin{figure}[tp]\centering
  \input{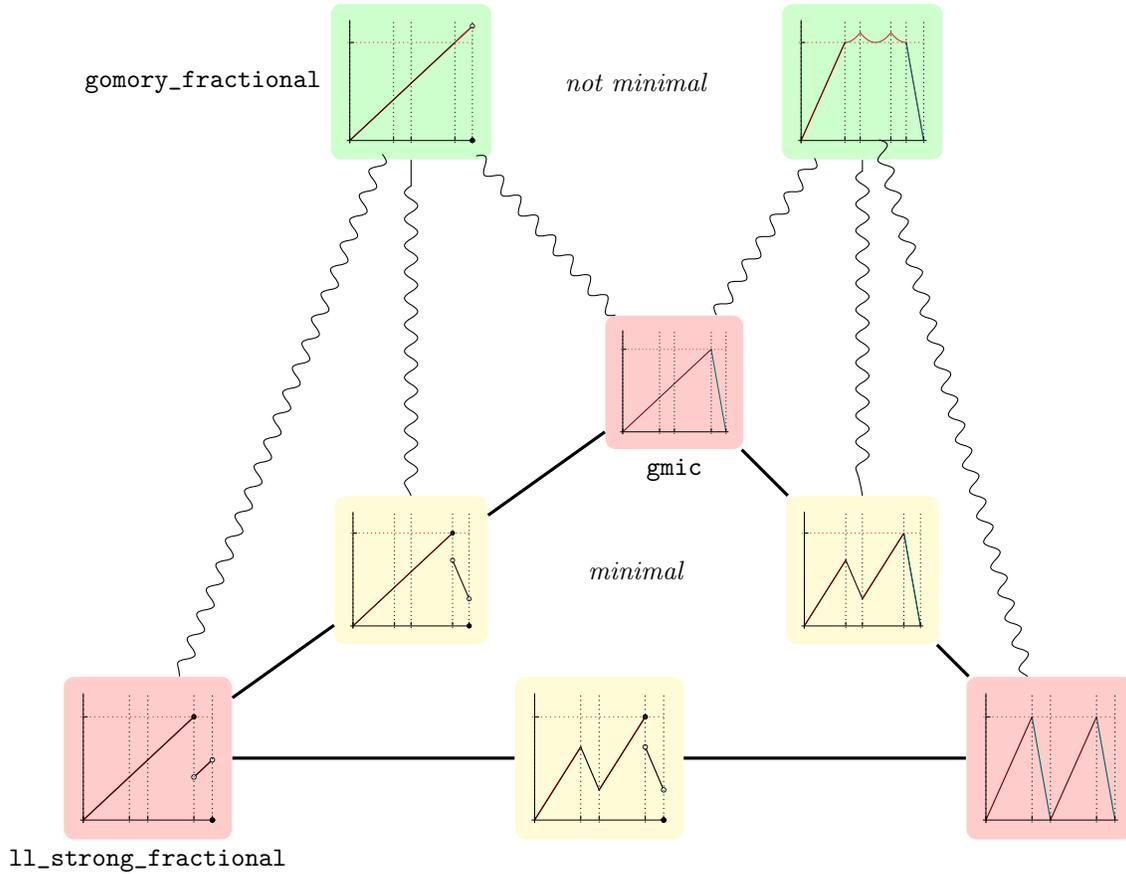}
  \caption{The hierarchy of valid, minimal, and extreme functions by example
    for the case $R_f(\R,\Z)$.
    Pairwise convex combinations (\emph{solid lines forming the bottom triangle}) of three extreme functions
    (\emph{graphs on red background at the corners}) give non-extreme, minimal functions
    (\emph{graphs on yellow background on the edges}).  
    These functions dominate (\emph{wavy lines}) various non-minimal, valid functions
    (\emph{graphs on green background, top}).  Even without checking the
    dominance, it is easy to see that the functions shown on the top cannot be
    minimal: they have some function values larger 
    than~$1$ (\emph{international orange}), but minimal valid functions are upper bounded by~1 by \autoref{thm:minimal}.
    Since minimal valid functions for $R_f(\R,\Z)$ are periodic with respect to~$\Z$, we only show
    the interval $[0,1]$.
  }
  \label{fig:hierarchy}
\end{figure}

\begin{figure}[tp]
  \subfigure[]{\label{subfig:infinite-hierarchy}
  \begin{tikzpicture}
  [font=\small,
  valid/.style={rounded corners,fill=green!20},
  minimal/.style={rounded corners,fill=yellow!20},
  extreme/.style={rounded corners,fill=red!20},
  convexcomb/.style={very thick},
  domination/.style={},
  facet/.style={rounded corners,fill=blue!20},
  weakfacet/.style={rounded corners,fill=orange!60}]
  \draw[valid] (0,0)--(0,6)--(7,6)--(7,0)--cycle;
  \draw[minimal] (.5,1)--(.5,5)--(6.5,5)--(6.5,1)--cycle;
  \draw[extreme] (2.5,3) ellipse (1.75 and 1.25);
  \draw[weakfacet, opacity = .5] (4.5,3) ellipse (1.75 and 1.25);
   \draw (2.5,3) ellipse (1.75 and 1.25);
    \draw (4.5,3) ellipse (1.75 and 1.25);
  \draw[facet] (3.5,3) ellipse (.75 and .75);
  
  \node at (3.5, 5.5) { Valid Functions};
  \node at (3.5, 4.6) { Minimal Functions};
  \node at (1.8, 3.25) { Extreme};
  \node at (1.8, 2.75) { Functions};
  \node at (5.1, 3.25) {Weak};
  \node at (5.1, 2.75) {Facets};
  \node at (3.5, 3) { Facets};

\end{tikzpicture}
}
\subfigure[]{\label{subfig:finite-hierarchy}
  \begin{tikzpicture}
  [font=\small,
  valid/.style={rounded corners,fill=green!20},
  minimal/.style={rounded corners,fill=yellow!20},
  extreme/.style={rounded corners,fill=red!20},
  convexcomb/.style={very thick},
  domination/.style={},
  facet/.style={rounded corners,fill=blue!20},
  weakfacet/.style={rounded corners,fill=orange!20}]
  \draw[valid] (0,0)--(0,6)--(7,6)--(7,0)--cycle;
  \draw[minimal] (1,1)--(1,5)--(6,5)--(6,1)--cycle;
  \draw[extreme] (3.5,3) ellipse (2 and 1.25);
  
  \node at (3.5, 5.5) { Valid Functions};
  \node at (3.5, 4.6) { Minimal Functions};
  \node at (3.5, 3.5) { Extreme Functions};
  \node at (3.5, 2.5) { Weak Facets};
  \node at (3.5, 3) { Facets};
\end{tikzpicture}  
}
\caption{The hierarchy of valid, minimal, and extreme functions and facets
    and weak facets. (a) General case. (b) Situation in the finite-dimensional
    case and in the case of continuous piecewise linear functions with
    rational breakpoints.   
  }
  \label{fig:hierarchy-with-facets}
\end{figure}
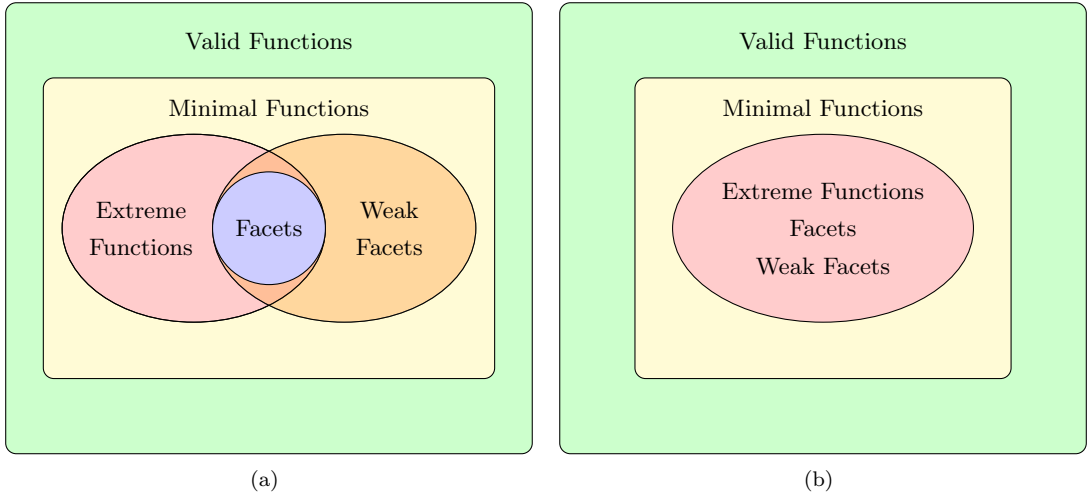

\subsubsection{Minimal functions}
A valid function $\pi$ for $R_{\ve f}(G,S)$ is said to be
\emph{minimal} for $R_{\ve f}(G,S)$ if there is no valid function $\pi' \neq \pi$
such that $\pi'(\rx) \le \pi(\rx)$ for all $\rx \in G$.  For every valid
function $\pi$ for $R_{\ve f}(G,S)$, there exists a minimal valid function $\pi'$
such that $\pi' \leq \pi$ \cite[Theorem~1.1]{bhkm}, and thus non-minimal valid
functions are redundant in the description of $R_{\ve f}(G,S)$.  
Note that $\pi'$ is not uniquely determined (\autoref{fig:hierarchy}).

Minimal
functions for $R_{\ve f}(G,S)$ were characterized by Gomory for the case where
$S$ has finite index in~$G$ in~\cite{gom}, and later for $R_{\ve f}(\R,\Z)$ by Gomory and
Johnson~\cite{infinite}. We state these results in a unified notation in the
following theorem. 

A function $\pi\colon G \rightarrow \mathbb{R}$ is \emph{subadditive} if
$\pi(\x + \y) \le \pi(\x) + \pi(\y)$ for all $\x,\y \in G$. We say that  $\pi$ is
\emph{symmetric} (or \emph{satisfies the symmetry condition}) if $\pi(\x) +
\pi(\f - \x) = 1$ for all $\x \in G$.

\begin{theorem}[Gomory and Johnson \cite{infinite}] \label{thm:minimal} Let $G$ be an abelian group, $S$ be a subgroup of $G$ and $\f \in G\setminus S$. Let
  $\pi \colon G \rightarrow \mathbb{R}$ be a nonnegative function. Then $\pi$
  is a minimal valid function for $R_{\ve f}(G,S)$ if and only if $\pi(\ve z) = 0$ for
  all $\ve z\in S$, $\pi$ is subadditive, and $\pi$ satisfies the symmetry
  condition. (The first two conditions imply that $\pi$ is periodic modulo
  $S$, that is, $\pi(\x) = \pi(\x + \ve z)$ for all $\ve z \in S$, and the symmetry condition implies that the values of minimal functions are bounded between $0$ and $1$.) 
\end{theorem}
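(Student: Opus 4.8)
The plan is to prove both implications, handling the backward direction (the three conditions imply minimality) first because it is the more routine one, and then extracting the conditions from minimality.

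For the backward direction, assume $\pi \ge 0$, $\pi(\ve z) = 0$ for all $\ve z \in S$, and that $\pi$ is subadditive and symmetric. First I would record that subadditivity together with $\pi(\ve z)=0$ on $S$ forces periodicity: $\pi(\x+\ve z) \le \pi(\x)+\pi(\ve z)=\pi(\x)$ and $\pi(\x)\le\pi(\x+\ve z)+\pi(-\ve z)=\pi(\x+\ve z)$. To see that $\pi$ is valid it suffices, by linearity of $y\mapsto\sum_\rx\pi(\rx)y(\rx)$ and the fact that $R_{\ve f}(G,S)$ is a convex hull, to check the inequality on a single integer point $y$; extending subadditivity to the finite multiset of atoms of $y$ gives $\sum_\rx\pi(\rx)y(\rx)\ge\pi\!\big(\sum_\rx\rx\,y(\rx)\big)=\pi(\f+\ve s)=\pi(\f)$ for some $\ve s\in S$, and evaluating symmetry at $\x=\0$ gives $\pi(\f)=1-\pi(\0)=1$. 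For minimality, suppose $\pi'\le\pi$ is valid. Since $e_{\x}+e_{\f-\x}$ is feasible, validity of $\pi'$ gives $\pi'(\x)+\pi'(\f-\x)\ge 1=\pi(\x)+\pi(\f-\x)$; combined with $\pi'\le\pi$ this forces $\pi'(\x)=\pi(\x)$ for every $\x$, so $\pi'=\pi$.

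For the forward direction, assume $\pi$ is minimal. The guiding principle throughout is a lowering argument: if at some point the value of $\pi$ can be strictly decreased while staying valid, then $\pi$ is not minimal. I would first obtain $\pi(\ve z)=0$ on $S$: given a feasible $y$ and $\ve z\in S$, deleting the $\ve z$-atoms of $y$ yields a feasible $\tilde y$ (its modular sum changes by $-\ve z\,y(\ve z)\in S$) with $\sum_\rx\pi(\rx)\tilde y(\rx)\ge 1$; hence resetting the value of $\pi$ at $\ve z$ to $0$ keeps it valid, so by minimality $\pi(\ve z)=0$. Next I would prove subadditivity by passing to the subadditive closure $\theta(\rx)=\inf\{\sum_i\pi(\rx_i):\sum_i\rx_i=\rx\}$: it satisfies $0\le\theta\le\pi$, is subadditive by concatenating decompositions, and is valid because replacing every atom of a feasible $y$ by a near-optimal decomposition produces a feasible point whose $\pi$-value approximates $\sum_\rx\theta(\rx)y(\rx)$ from above. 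Minimality then gives $\theta=\pi$, so $\pi$ is subadditive, and periodicity follows as before.

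The main obstacle is the symmetry condition, and specifically its upper bound $\pi(\x)+\pi(\f-\x)\le 1$ (the lower bound $\ge 1$ is immediate from feasibility of $e_{\x}+e_{\f-\x}$). A naive single-point lowering fails here, because a feasible solution may use $\x$ with large multiplicity, so the key is to track that multiplicity. Fixing $\x_0$, I would compute the largest value $v$ to which $\pi(\x_0)$ can be lowered: for a feasible $y$ with $y(\x_0)=t$, subadditivity makes the binding constraint $vt+\pi(\f-t\x_0)\ge 1$, since the minimum of the remaining $\pi$-mass over atoms summing to $\f-t\x_0$ equals $\pi(\f-t\x_0)$ and is attained by $t\,e_{\x_0}+e_{\f-t\x_0}$. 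Hence the maximal admissible lowering is $v_{\min}=\sup_{t\ge 1}\frac{1-\pi(\f-t\x_0)}{t}$, and minimality forces $\pi(\x_0)=v_{\min}$. Because $\pi\ge 0$ makes the numerator at most $1$, the ratio is at most $1/t\to 0$, so this supremum is attained at some finite $t^\ast$. Finally subadditivity gives $\pi(\f-t^\ast\x_0)\ge\pi(\f-\x_0)-(t^\ast-1)\pi(\x_0)$, and substituting this into $t^\ast\pi(\x_0)=1-\pi(\f-t^\ast\x_0)$ collapses to $\pi(\x_0)\le 1-\pi(\f-\x_0)$, the desired upper bound. I expect the bookkeeping for this multiplicity-aware lowering step, together with verifying that the supremum is attained, to be the delicate part of the argument.
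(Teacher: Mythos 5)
Your overall route is the same one the paper itself relies on (the paper gives no proof of \autoref{thm:minimal}, deferring to \cite[Theorem~5.4]{corner_survey}): an easy backward direction, a lowering argument for $\pi|_S \equiv 0$, and a multiplicity-aware lowering at a single point for symmetry, using the two standard ingredients $\sum_{\rx\neq\x_0}\pi(\rx)y(\rx)\ge\pi(\f-t\x_0)$ and $\pi(\f-\x_0)\le\pi(\f-t\x_0)+(t-1)\pi(\x_0)$. Your backward direction and the $\pi|_S\equiv 0$ step check out, and your subadditive-closure argument (via $\theta(\rx)=\inf\{\sum_i\pi(\rx_i):\sum_i\rx_i=\rx\}$) is a clean variant of the usual single-violation repair that avoids its case distinctions.

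However, the symmetry step has a genuine gap at its two load-bearing claims: ``minimality forces $\pi(\x_0)=v_{\min}$'' and ``this supremum is attained at some finite $t^\ast$.'' Both fail when $v_{\min}\le 0$, i.e., when $\pi(\f-t\x_0)\ge 1$ for every $t\ge 1$. First, valid functions are nonnegative by definition in this paper, so when $v_{\min}<0$ the lowered function is not a valid function, and minimality only yields $\pi(\x_0)=\max(v_{\min},0)=0$. Second, your attainment argument (ratios bounded by $1/t\to 0$) proves attainment only when $v_{\min}>0$; if $v_{\min}=0$, the supremum of strictly negative ratios need not be attained, so there is no $t^\ast$ to substitute into the final computation. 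This corner case is not vacuous inside the proof: at this stage you have not yet excluded values of $\pi$ larger than $1$, and the point $\x_0=\0$ exhibits the failure concretely. Symmetry at $\0$ requires $\pi(\f)\le 1$, but if $\pi(\f)>1$ your run at $\x_0=\0$ produces only the negative ratios $(1-\pi(\f))/t$, an unattained supremum equal to $0$, and the would-be conclusion $\pi(\0)\le 1-\pi(\f)$ is exactly the statement you cannot yet prove. The repair is short and is precisely the swap used in the standard argument: at most one point of each pair $\{\x_0,\f-\x_0\}$ can be bad, because $v_{\min}(\x_0)\le 0$ implies (by minimality, lowering $\pi(\x_0)$ to $0$, which is admissible since all constraints read $\pi(\f-t\x_0)\ge 1$) that $\pi(\x_0)=0$, and hence $v_{\min}(\f-\x_0)\ge 1-\pi(\x_0)=1>0$; running your argument at the good point of the pair then delivers $\pi(\x_0)+\pi(\f-\x_0)\le 1$, which together with the feasibility lower bound finishes the proof. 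Without this swap, the argument as written is incomplete.
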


See \cite[Theorem~5.4]{corner_survey} for a proof. 

\begin{figure}

\begin{center}
\includegraphics[width=.31\linewidth]{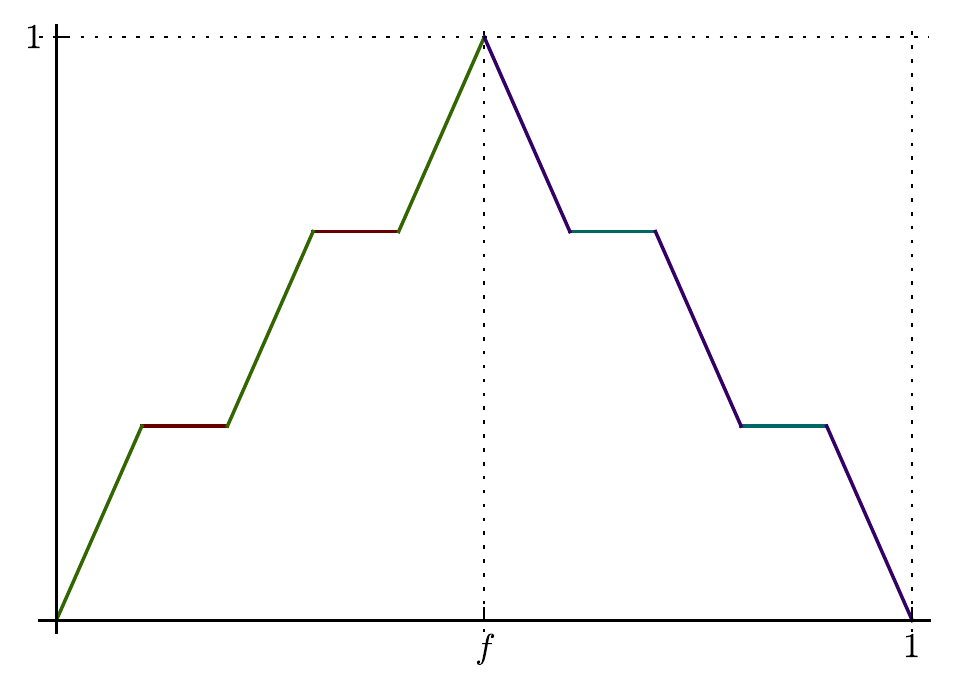}\quad
\includegraphics[width=.31\linewidth]{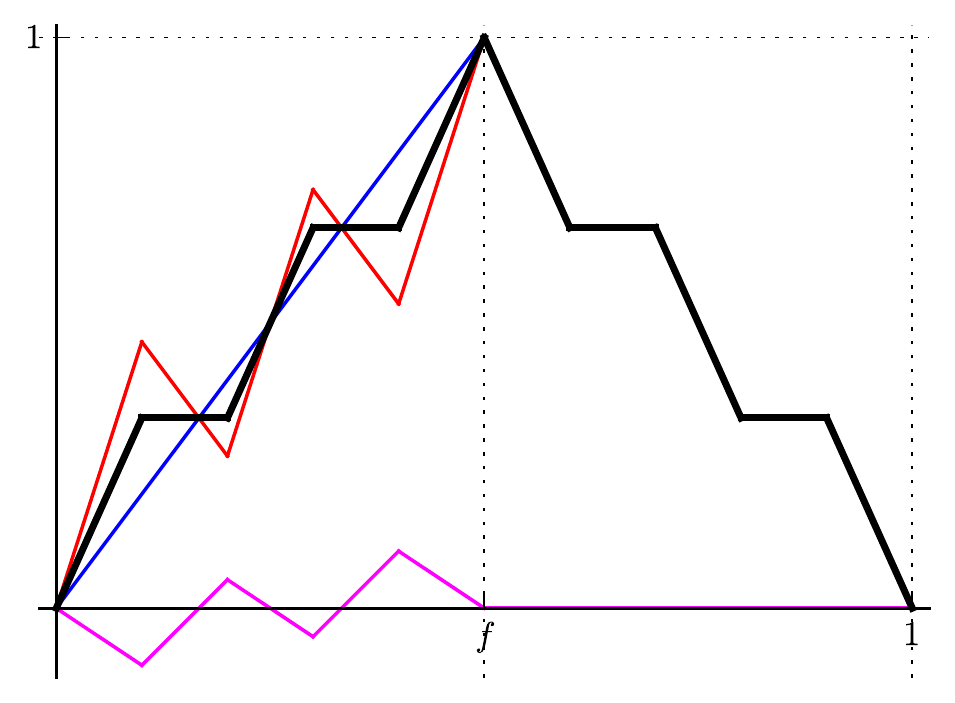}\quad
\includegraphics[width=.31\linewidth]{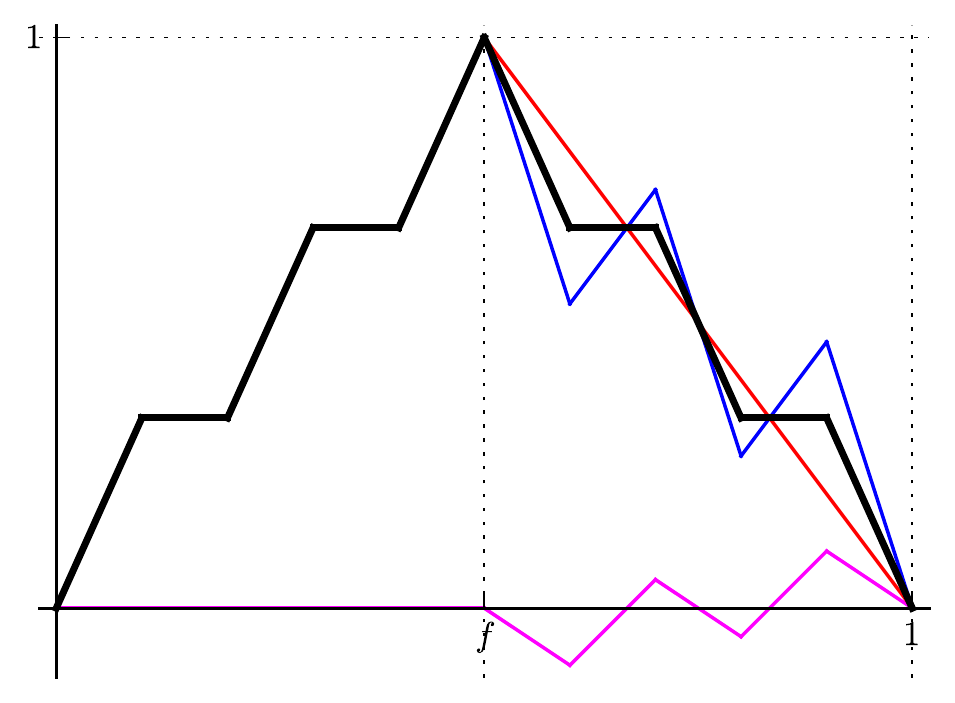}
\end{center}
\caption{This function (\sage{h = \sagefunc{not_extreme_1}()}) is
  minimal, but not extreme (and  hence also not a facet), as proved by
  \sage{\sagefunc{extremality_test}(h, show\underscore{}plots=True)}.
  The procedure first shows that
  for any distinct minimal $\pi^1 = \pi + \bar\pi$ (\emph{blue}), $\pi^2 = \pi
  - \bar\pi$ (\emph{red}) such that $\pi = \tfrac{1}{2}\pi^1
  + \tfrac{1}{2} \pi^2$, the functions $\pi^1$ and $\pi^2$ are continuous
  piecewise linear with the same breakpoints as $\pi$ (in the terminology of
  \cite{basu-hildebrand-koeppe:equivariant}, $\pi$ is \emph{affine imposing}
  on all intervals between breakpoints).  A finite-dimensional extremality
  test then finds two linearly independent perturbations $\bar\pi$ (\emph{magenta}), as shown.
}
\label{fig:minimalNotExtreme}
\end{figure}


\subsubsection{Extreme functions}

In polyhedral combinatorics, one is interested in classifying the
facet-defining inequalities of a polytope, which are the strongest
inequalities and provide a finite minimal description. In the infinite group
problem literature, three notions analogous to that of a facet-defining
inequality have been proposed, which are not known
to be equivalent.  We start with the notion of an {\em extreme function}. 

A~valid function~$\pi$ is \emph{extreme}
for $R_{\ve f}(G,S)$ if it cannot be written as a convex combination of two other
valid functions for $R_{\ve f}(G,S)$, i.e., $\pi = \tfrac12(\pi^1 + \pi^2)$
implies $\pi = \pi^1 = \pi^2$ (see \autoref{fig:hierarchy} and \autoref{fig:minimalNotExtreme}).  
Extreme functions are easily seen to be minimal. In fact we may view this definition from a convex geometry perspective. By Theorem~\ref{thm:minimal}, the set of minimal valid functions is a convex subset of the infinite-dimensional space $\R^G$ of real-valued functions on $G$; this follows from the observation that all the properties in Theorem~\ref{thm:minimal} are preserved under taking convex combinations of functions. 

\begin{proposition}[\TheoremNEWRESULT]\label{obs:compact}
The set of minimal valid functions is a compact convex set under the product topology on the space $\R^G$ of real-valued functions on $G$.
\end{proposition}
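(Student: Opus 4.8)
The plan is to use \autoref{thm:minimal} to realize the set of minimal valid functions as a closed subset of the compact product space $[0,1]^G$, and then invoke Tychonoff's theorem. Convexity has already been observed in the paragraph preceding the statement—it follows from the fact that each defining property in \autoref{thm:minimal} is preserved under convex combinations—so the remaining work is entirely in establishing compactness.

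First I would record the relevant bounds. By \autoref{thm:minimal}, every minimal valid function $\pi$ satisfies $0 \le \pi(\x) \le 1$ for all $\x \in G$. Hence the set $M$ of minimal valid functions is contained in the cube $[0,1]^G \subseteq \R^G$. By Tychonoff's theorem, $[0,1]^G$ is compact in the product topology. It therefore suffices to show that $M$ is closed in $[0,1]^G$ (equivalently, closed in $\R^G$), since a closed subset of a compact space is compact.

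To prove closedness, I would use the characterization from \autoref{thm:minimal}: $M$ is precisely the set of $\pi$ satisfying (i)~$\pi(\x) \ge 0$ for all $\x \in G$, (ii)~$\pi(\ve z) = 0$ for all $\ve z \in S$, (iii)~$\pi(\x + \y) \le \pi(\x) + \pi(\y)$ for all $\x, \y \in G$, and (iv)~$\pi(\x) + \pi(\f - \x) = 1$ for all $\x \in G$. The key observation is that each coordinate projection $\pi \mapsto \pi(\x)$ is continuous in the product topology, and hence so is every finite linear combination of such evaluations. Consequently, each of the conditions (i)--(iv) cuts out a closed subset of $\R^G$: for a fixed pair $(\x, \y)$, the subadditivity constraint is the preimage of the closed half-line $(-\infty, 0]$ under the continuous map $\pi \mapsto \pi(\x+\y) - \pi(\x) - \pi(\y)$, and the full subadditivity condition is the intersection over all pairs of these closed sets; the three other families of conditions are handled identically. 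Thus $M$ is an intersection of closed sets, hence closed in $\R^G$, and in particular closed in $[0,1]^G$.

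There is no real obstacle here; the only point requiring care is the observation that all the defining conditions are \emph{local}, in the sense that each one involves evaluation at only finitely many points. This is exactly what makes them continuous with respect to the product topology (i.e., pointwise convergence), so that they pass to pointwise limits. Once this is noted, compactness is a direct application of Tychonoff's theorem together with the fact that closed subsets of compact spaces are compact, and combining this with the already-established convexity completes the proof.
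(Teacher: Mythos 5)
Your proof is correct, and its overall skeleton matches the paper's: bound minimal functions in $[0,1]^G$ via \autoref{thm:minimal}, invoke Tychonoff's theorem, and conclude by showing the set is closed. The difference lies in how closedness is established. The paper derives it from \autoref{prop:sequence-minimal} (pointwise limits of minimal functions are minimal), which it must then extend from sequences to nets, since sequential closedness does not imply closedness in the non-metrizable product topology on $\R^G$ when $G$ is uncountable; the paper handles this extension with only a brief remark. You instead observe directly that each defining condition of \autoref{thm:minimal}—nonnegativity, vanishing on $S$, subadditivity, symmetry—involves evaluation at finitely many points, hence is the preimage of a closed set under a continuous map $\R^G \to \R$, so the set of minimal functions is an intersection of closed sets. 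This sidesteps nets entirely and is arguably the cleaner route: it makes the "finitely many coordinates per constraint" structure explicit, which is exactly the fact the paper's net argument implicitly relies on. What the paper's route buys in exchange is reuse: \autoref{prop:sequence-minimal} is a result it wants anyway for the discussion of limits of minimal and extreme functions in \autoref{s:limits}, so deducing compactness from it avoids a separate closedness argument.
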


The proof appears in~\autoref{s:minimal-limits}.
 In the light of \autoref{obs:compact}, it is natural to study the
 extreme points of this compact convex set of minimal valid functions. These
 are precisely the extreme functions. By an application of the Krein--Milman
 theorem, all minimal valid functions are either convex combinations of extreme functions or pointwise limits of such convex combinations (i.e., limits in the product topology).

\subsubsection{Facets and weak facets}
A related notion is that of a \emph{facet}.  
Let $P(\pi)$ denote the set of all feasible solutions $y \in \R^{(G)}$ satisfying~\eqref{GP} such that $\sum_{r \in G} \pi(r) y(r) = 1$.  A valid function $\pi$ is called a \emph{facet} if for every valid function $\pi'$ such that $P(\pi) \subseteq P(\pi')$ we have that $\pi' = \pi$, as defined in~\cite{tspace}. Equivalently, a valid function $\pi$ is a facet if this condition holds for all such \emph{minimal} valid functions $\pi'$  (cf.~\cite{bhkm}).

%

A similar facet definition, which we call a \emph{weak facet}, is given in~\cite{dey3} and in fact was used in an erroneous proof of the so-called {\em Facet Theorem} in~\cite[Theorem 3]{tspace} (see \autoref{thm:facet})\footnote{In a proof by contradiction, they say that if $\pi$ is not a facet, then there exists a valid function $\pi^*$ and a $y^* \in R_\f(G,S)$ such that $y^* \in P(\pi^*)\setminus P(\pi)$. This works when $\pi$ is not a weak facet, but does not work if we assume that $\pi$ is not a facet.}. In particular, a valid function $\pi$ is called a weak facet if for every valid function $\pi'$ such that $P(\pi) \subseteq P(\pi')$ we have that $P(\pi) = P(\pi')$.  
\subsubsection{Relation between the three notions}
\label{subsec:relation-between-three-notions}
Facets are extreme functions (cf.~\cite[Lemma 1.3]{bhkm}), but it is unknown if all
extreme functions are facets.  A facet is also a weak facet, but it is unknown
if all weak facets are facets. Thus, facets are a subset of the intersection
of extreme functions and weak facets, but nothing further is known in general;
see \autoref{fig:hierarchy-with-facets}\,(a). When $G$ is a finite abelian group, the set of minimal functions is a finite-dimensional polyhedron (given by constraints coming from Theorem~\ref{thm:minimal}); see~\autoref{s:finite-group}.  
In this setting, it is well known that the three notions of weak facets,
facets and extreme inequalities are equivalent, and form the extreme points of
this polyhedron; see \autoref{fig:hierarchy-with-facets}\,(b).  In the one-row infinite group problem, we can also establish some equivalence as stated below, which is a consequence of~\autoref{thm:extreme-facet-and-restriction}. The result is new and has not been published before.

\begin{proposition}[\TheoremNEWRESULT]
Suppose $\pi\colon \R \to \R$ is a continuous piecewise linear
function\footnote{See \autoref{sec:def-complex-piecewise} for the definition that we use.} with rational breakpoints in $\frac{1}{q} \Z$ for some $q\in \N$.  Then $\pi$ is extreme if and only if $\pi$ is a facet.
\end{proposition}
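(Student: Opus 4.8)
The plan is to transport the question to the finite group problem on $\frac{1}{q}\Z/\Z$, where extremality and facetness are already known to coincide, and to move the equivalence back and forth with the restriction/interpolation correspondence recorded in \autoref{thm:extreme-facet-and-restriction}. One implication needs no work and holds in full generality: every facet is extreme, as recalled in \autoref{subsec:relation-between-three-notions} (cf.\ \cite[Lemma~1.3]{bhkm}). So the entire content is the converse, that an extreme $\pi$ of the stated form is a facet.

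First I would normalize the data. An extreme function is minimal, so by \autoref{thm:minimal} it is $\Z$-periodic and satisfies the symmetry condition $\pi(x)+\pi(f-x)=1$. Since $\pi$ is continuous piecewise linear with all breakpoints in $\frac{1}{q}\Z$, the reflection $x\mapsto f-x$ must map the ($\Z$-periodic) breakpoint set onto itself; a short check shows this forces $f\in\frac{1}{q}\Z$, so that $f$ lies on the grid and the finite cyclic group $\frac{1}{q}\Z/\Z$ of order $q$ contains the coset of $f$. This is what makes the finite restriction $R_f(\frac{1}{q}\Z,\Z)$ meaningful.

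Next I would invoke \autoref{thm:extreme-facet-and-restriction}, applied to both notions at once. For a continuous piecewise linear function whose breakpoints lie in $\frac{1}{q}\Z$, it identifies the infinite-dimensional properties of $\pi$ with the corresponding properties of its restriction $\pi_q$ to $R_f(\frac{1}{q}\Z,\Z)$: $\pi$ is extreme for $R_f(\R,\Z)$ if and only if $\pi_q$ is extreme for the finite group problem, and $\pi$ is a facet for $R_f(\R,\Z)$ if and only if $\pi_q$ is a facet for it. The reverse map is interpolation---the continuous piecewise linear interpolant of $\pi_q$ with breakpoints in $\frac{1}{q}\Z$ returns $\pi$. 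Now the finite-dimensional theory closes the loop: on $\frac{1}{q}\Z/\Z$ the minimal valid functions form a polytope cut out by the finitely many subadditivity and symmetry inequalities of \autoref{thm:minimal}, and its vertices are simultaneously the extreme functions, the facets, and the weak facets (see \autoref{s:finite-group} and \autoref{fig:hierarchy-with-facets}\,(b)); in particular $\pi_q$ is extreme if and only if $\pi_q$ is a facet. Chaining $\pi$ extreme $\Leftrightarrow$ $\pi_q$ extreme $\Leftrightarrow$ $\pi_q$ facet $\Leftrightarrow$ $\pi$ facet gives the result.

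The real obstacle is \autoref{thm:extreme-facet-and-restriction} itself, and precisely its facet half. Transporting extremality through restriction and interpolation is comparatively soft: any decomposition $\pi=\tfrac12(\pi^1+\pi^2)$ into minimal functions can be taken continuous piecewise linear with the same breakpoints (the affine-imposing analysis of \cite{basu-hildebrand-koeppe:equivariant}), so extremality is genuinely a finite-dimensional statement. The facet property is more delicate because it is phrased through the tight set $P(\pi)\subseteq\R^{(\R)}$, whose feasible points may be supported on irrational reals. The key to the reduction is that, for a function linear on each interval of $\frac{1}{q}\Z$, the additivity relations $\pi(x)+\pi(y)=\pi(x+y)$ defining membership in $P(\pi)$ are already governed by the finite grid together with the affine behavior inside each piece---an Interval-Lemma type argument (cf.\ \autoref{sec:foundations}). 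Establishing that this reduction is faithful in both directions is where the work concentrates; granting it, the chain of equivalences above is immediate.
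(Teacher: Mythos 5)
Your high-level plan (push the question into a finite group problem, where extremality, facetness, and weak facetness coincide) is the right family of ideas, and both the easy direction (facets are extreme) and the normalization $f\in\frac1q\Z$ are fine. But the load-bearing step is false as stated: \autoref{thm:extreme-facet-and-restriction} does \emph{not} identify extremality (or facetness) of $\pi$ with the corresponding property of the restriction $\pi|_{\frac1q\Z}$ to the \emph{same} grid; it requires restriction to the strictly finer grid $\frac{1}{mq}\Z$ with oversampling factor $m\geq 3$. The same-grid equivalence you rely on genuinely fails: the function \sagefunc{drlm_not_extreme_1} of \autoref{fig:drlm_not_extreme_1} is minimal, continuous piecewise linear with breakpoints in $\frac17\Z$, \emph{not} extreme for $R_f(\R,\Z)$, yet its restriction to $\frac17\Z$ \emph{is} extreme for $R_f(\frac17\Z,\Z)$ --- and hence, by the finite-dimensional equivalence you invoke, also a facet of that finite problem. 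So both the first and the last links of your chain ``$\pi$ extreme $\Leftrightarrow$ $\pi|_{\frac1q\Z}$ extreme $\Leftrightarrow$ $\pi|_{\frac1q\Z}$ facet $\Leftrightarrow$ $\pi$ facet'' break. The auxiliary claim used to justify the first link --- that any decomposition $\pi=\frac12(\pi^1+\pi^2)$ into minimal functions can be taken piecewise linear with the \emph{same} breakpoints --- fails for the same example: there the perturbations necessarily have more breakpoints than $\pi$. Moreover, by K\"oppe--Zhou \cite{koeppe-zhou:extreme-search} (\sagefunc{kzh_2q_example_1}), even oversampling by $m=2$ is insufficient, so this is not cured by a slightly finer grid; $m\geq3$ is best possible.

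The repair is immediate, and it is exactly the paper's route: quote \autoref{thm:extreme-facet-and-restriction} correctly. After noting that both facets and extreme functions are minimal, and that for $k=1$ minimality plus piecewise linearity forces $f\in\verts(\P)\subseteq\frac1q\Z$, the theorem (with any fixed $m\geq3$) states that ``$\pi$ is a facet,'' ``$\pi$ is extreme,'' and ``$\pi|_{\frac{1}{mq}\Z}$ is extreme for $R_f(\frac{1}{mq}\Z,\Z)$'' are all equivalent; the equivalence of the first two items \emph{is} the proposition, with no detour through the finite problem needed. If you prefer your conceptual chain through ``finite extreme $=$ finite facet,'' run it on $\frac{1}{3q}\Z$ rather than $\frac1q\Z$. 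Be aware, though, that the hard content --- getting from extremality of the restriction on the oversampled grid back to facetness of $\pi$ --- is the paper's Steps 1--4 in the proof of that theorem (Facet Theorem, decomposition of $E(\pi)$ by dimension, and the orbit/transport argument), and cannot be dispatched with the one-sentence Interval-Lemma remark at the end of your proposal.
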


\begin{openquestion}
  Are the definitions of facets, weak facets, and extreme functions equivalent?
\end{openquestion}

\subsection{A roadmap for proving extremality and facetness}\label{s:roadmap}

An understanding of the set of points for which the subadditivity relations of a minimal function hold at equality is crucial to the study of both extreme functions and facets. This motivates the following definition.\begin{definition}
  Define the \emph{subadditivity slack} of~$\pi$ as
  \begin{equation}
    \label{eq:delta-pi-definition}
    \Delta \pi(\x, \y) := \pi(\x) + \pi(\y) -\pi(\x + \y)
  \end{equation}
  and the \emph{additivity domain} of~$\pi$ as
  \begin{equation}
    \label{eq:Epi}
    E(\pi) := \setcond{(\x, \y)} { \Delta \pi(\x, \y) = 0}.
  \end{equation}
\end{definition}

Additivity domains are used by Gomory and Johnson to define the notion of {\em merit index} in~\cite{tspace}. The merit index is the volume of $E(\pi)$ (modulo $\Z^n$) and can be taken as a quantitative measure of strength of minimal valid functions. Work on the merit index also appears in~\cite{dey1}. We will not discuss the merit index in this survey; however, the set $E(\pi)$ will be crucial in what follows.


The main technique used to show a function $\pi$ is extreme is to assume that $\pi = \tfrac{1}{2}(\pi^1 + \pi^2)$ where $\pi^1, \pi^2$ are valid functions, and then show that $\pi = \pi^1 = \pi^2$. One then employs the following lemma to infer important properties of $\pi^1, \pi^2$. These following facts can be found in the literature for the one-row problem; the extension to general $k$ is straightforward. 
\begin{lemma}
\label{lem:tightness}\label{lem:minimality-of-pi1-pi2}\label{lemma:tight-implies-tight}\label{Theorem:functionContinuous} \label{lem:lipschitz}
  Let $\pi : \R^k \to \R_+$ be minimal, $\pi = \frac12(\pi^1+\pi^2)$, and $\pi^1,\pi^2$ 
  valid functions.  Then the following hold:
  \begin{enumerate}[(i)]
  \item \label{lem:tightness-minimal} $\pi^1,\pi^2$ are minimal~\cite[Lemma 1.4]{infinite}.
  \item \label{lem:tightness-subadditive} All subadditivity relations $\pi(\x + \y) \le \pi(\x) + \pi(\y)$  
  that are tight for~$\pi$ are also tight for
  $\pi^1,\pi^2$, i.e., $E(\pi) \subseteq E(\pi^1) \cap E(\pi^2)$~\cite[proof of Theorem 3.3]{infinite}.\footnote{When $\pi$ is a discontinuous piecewise linear function, subadditivity gives certain relations on the limit values of the function. We omit this more subtle discussion in this survey; see~\cite{basu-hildebrand-koeppe:equivariant} for more details.}
  \item Suppose there exists a real number $M$ such that $\limsup_{h \to 0} \left| \frac{\pi(h \rx)}{h}\right| \leq M$ for all $\rx \in \R^k$ such that $\lVert \rx \rVert = 1$. 
  Then $\pi$ is Lipschitz continuous.  Furthermore, this
      condition holds for $\pi^1$ and $\pi^2$ and $\pi^1, \pi^2$ are Lipschitz
      continuous~{\cite[Theorem 2.9]{basu-hildebrand-koeppe:equivariant}}.
  \item If $\pi$ is continuous piecewise linear\footnote{See \autoref{sec:def-complex-piecewise} for the definition that we
  use.}, then $\pi, \pi^1, \pi^2$ are
    all Lipschitz continuous {\cite[\autoref{equi3:lemma:tight-implies-tight}]{bhk-IPCOext}}. 
      \item Suppose $k=1$, i.e., $\pi \colon \R\to \R_+$ and $\pi$ is piecewise linear\footnote{See \autoref{sec:def-complex-piecewise} for the definition that we
  use, which includes certain discontinuous functions.} and continuous from
the right at $0$
or continuous from the left at~$0$.\footnote{This condition is also not always
  true for piecewise linear functions. See Table~\ref{tab:compendium} for
  examples of extreme functions that are discontinuous on both sides of the
  origin.  The condition of one-sided continuity at the origin cannot be
  removed from the hypothesis of \autoref{lem:lipschitz}\,(v) (\TheoremNEWRESULT). This is
  illustrated by example
  \sagefunc{zhou_two_sided_discontinuous_cannot_assume_any_continuity},
  constructed by Zhou (2014, unpublished).}
Then $\pi^1$ and $\pi^2$ are continuous at all points at which $\pi$ is continuous~{\cite[Theorem 2]{dey1}}.   \end{enumerate}
\end{lemma}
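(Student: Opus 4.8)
The plan is to handle the five items in order, since each later one leans on the earlier ones, and to isolate (v) as the real difficulty. Items (i) and (ii) are elementary. For (i) I would argue by contradiction from the minimality of~$\pi$: if, say, $\pi^1$ failed to be minimal, there would be a valid function $\pi' \le \pi^1$ with $\pi' \ne \pi^1$. Convex combinations of valid functions are again valid (normalize to $\alpha=1$ and add), so $\tfrac12(\pi'+\pi^2)$ is valid; it satisfies $\tfrac12(\pi'+\pi^2) \le \tfrac12(\pi^1+\pi^2) = \pi$ with strict inequality at any point where $\pi' < \pi^1$, contradicting minimality of~$\pi$. Hence $\pi^1$, and symmetrically $\pi^2$, is minimal. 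For (ii) I would use that the slack is affine in the function, $\Delta\pi = \tfrac12(\Delta\pi^1 + \Delta\pi^2)$; by (i) both $\pi^1,\pi^2$ are minimal, hence subadditive, so $\Delta\pi^1,\Delta\pi^2 \ge 0$ pointwise. If $(\x,\y)\in E(\pi)$ then the average of these two nonnegative numbers vanishes, forcing each to vanish, so $E(\pi)\subseteq E(\pi^1)\cap E(\pi^2)$.

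Item (iii) is the engine. The $\limsup$ hypothesis says $\pi$ grows at most linearly near the origin along each unit direction. I would first upgrade this to a global linear bound: for a unit vector~$\u$ and $t>0$, writing $t = n\cdot(t/n)$ and using subadditivity gives $\pi(t\u) \le n\,\pi((t/n)\u) = t\cdot \pi((t/n)\u)/(t/n)$ for every~$n$; since $\pi\ge 0$ we have $\inf_n \pi((t/n)\u)/(t/n) \le \liminf_n(\cdots) \le M$, and letting $n\to\infty$ yields $\pi(t\u)\le Mt$, i.e. $\pi(\z)\le M\lVert\z\rVert$ for all~$\z$. Then subadditivity in both orientations, $\pi(\x)-\pi(\y)\le\pi(\x-\y)$ and $\pi(\y)-\pi(\x)\le\pi(\y-\x)$, gives $\lvert\pi(\x)-\pi(\y)\rvert\le M\lVert\x-\y\rVert$, so $\pi$ is Lipschitz. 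To transfer to $\pi^1,\pi^2$: by (i) they are minimal (subadditive, vanishing at~$\0$), and $\pi=\tfrac12(\pi^1+\pi^2)$ with $\pi^1,\pi^2\ge0$ forces $0\le\pi^i\le 2\pi$, whence $\limsup_{h\to0}\lvert\pi^i(h\rx)/h\rvert\le 2M$; rerunning the argument makes $\pi^1,\pi^2$ Lipschitz.

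For (iv), if $\pi$ is continuous piecewise linear then periodicity modulo~$\Z^k$ together with local finiteness of the underlying complex leaves only finitely many distinct gradients, so $\pi$ is globally Lipschitz; in particular the hypothesis of (iii) holds and (iii) delivers Lipschitz continuity of $\pi^1,\pi^2$ as well. Item (v) is where I expect the real work. Here $\pi$ may be discontinuous, so the Lipschitz machinery is unavailable and I would instead track one-sided limits. The idea is to exploit (ii): every tight additivity relation of~$\pi$ is inherited by $\pi^1$ and $\pi^2$, and for piecewise linear~$\pi$ there are enough such relations, in the limit, to tie the one-sided limits of the function at a point of continuity to its value there. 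The one-sided continuity of~$\pi$ at the origin serves as an \emph{anchor}: it controls $\pi^i$ on arbitrarily small intervals at~$0$, and this control propagates along tight relations of the form $\Delta\pi(\x,h)\to0$ as $h\to0^+$ to force the left and right limits of~$\pi^i$ to coincide at every~$\x$ where $\pi$ is continuous. The main obstacle is exactly this limiting bookkeeping for discontinuous piecewise linear functions, and it is genuinely necessary: without the anchor at~$0$ the propagation step breaks and $\pi^1,\pi^2$ can develop new discontinuities, which is precisely what the cited two-sided discontinuous example exhibits.
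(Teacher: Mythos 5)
Your arguments for items (i)--(iv) are correct and are essentially the standard ones behind the citations in the lemma: the averaging/domination argument for minimality, linearity of $\Delta$ plus nonnegativity for the inheritance of tight relations, the subadditivity bootstrap $\pi(t\u)\le n\,\pi((t/n)\u)$ giving $\pi(\z)\le M\lVert \z\rVert$ and hence Lipschitz continuity, the transfer to $\pi^1,\pi^2$ via $0\le\pi^i\le 2\pi$, and the reduction of (iv) to (iii) through finiteness of the gradient set for a periodic continuous piecewise linear function.

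Item (v), however, is a genuine gap: you do not prove it, and the mechanism you sketch is not the one that works. Inheritance of \emph{tight} additivity relations (part (ii)) cannot drive the argument, because for a generic minimal piecewise linear $\pi$ there is no reason that any relation $\Delta\pi(x,h)=0$ holds for small $h>0$; there is simply nothing to ``propagate along.'' What the proof actually uses are the subadditivity \emph{inequalities} for $\pi^1,\pi^2$ (available by (i)) together with the decomposition identity $\pi^1+\pi^2=2\pi$. Concretely: since $\pi$ is piecewise linear, right-continuous at $0$, and $\pi(0)=0$, one has $\pi(h)\le ch$ for small $h>0$; combined with $0\le\pi^i\le 2\pi$ this gives $\pi^i(h)\to 0$ as $h\to 0^+$. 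Subadditivity then yields, for \emph{every} point $y$, the one-sided bounds $\limsup_{h\to 0^+}\pi^i(y+h)\le \pi^i(y)$ (from $\pi^i(y+h)\le\pi^i(y)+\pi^i(h)$) and $\liminf_{h\to 0^+}\pi^i(y-h)\ge \pi^i(y)$ (from $\pi^i(y)\le \pi^i(y-h)+\pi^i(h)$). These are only ``half'' of continuity, and the missing halves cannot be obtained from tightness; instead, at a point $x$ where $\pi$ is continuous one writes $\pi^1=2\pi-\pi^2$ and uses continuity of $\pi$ at $x$ together with the semicontinuity bounds for $\pi^2$: $\liminf_{h\to 0^+}\pi^1(x+h)=2\pi(x)-\limsup_{h\to 0^+}\pi^2(x+h)\ge 2\pi(x)-\pi^2(x)=\pi^1(x)$, and similarly $\limsup_{h\to 0^+}\pi^1(x-h)=2\pi(x)-\liminf_{h\to 0^+}\pi^2(x-h)\le \pi^1(x)$. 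Combining with the first set of bounds forces both one-sided limits of $\pi^1$ (and symmetrically $\pi^2$) at $x$ to exist and equal the function value, i.e., continuity at $x$. Your observation that the one-sided continuity hypothesis at the origin is indispensable (Zhou's two-sided discontinuous example) is correct, but it is a remark about sharpness, not a substitute for this argument.
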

To prove that a valid inequality is a facet, the main tool is the so-called \emph{Facet Theorem}, originally proved by Gomory and Johnson~\cite{tspace}  for the one-row case; it extends verbatim to
the $k$-row case.\footnote{Gomory and Johnson's original proof actually holds only for weak facets, and not for facets as claimed in~\cite{tspace}.} We present a stronger version of the theorem, which
first appeared in \cite{bhkm}.\footnote{In contrast to Gomory--Johnson's Facet Theorem, 
  the condition that $E(\pi) \subseteq E(\pi')$ implies $\pi' = \pi$ only
  needs to be tested on minimal valid functions, not all valid functions.} 
\begin{theorem}[Facet Theorem \cite{tspace}, {\cite[Theorem 3.1]{bhkm}}]
\label{thm:facet}
Let $\pi$ be a minimal valid function.  Suppose for every minimal valid function $\pi'$, $E(\pi) \subseteq E(\pi')$ implies $\pi' = \pi$.  Then $\pi$ is a facet.
\end{theorem}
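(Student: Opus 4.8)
The plan is to use the equivalent characterization of facetness recorded just above the statement (cf.~\cite{bhkm}): it suffices to show that for every \emph{minimal} valid function $\pi'$ with $P(\pi) \subseteq P(\pi')$ one has $\pi' = \pi$. Granting the hypothesis of the theorem, the whole argument then reduces to establishing the single implication
\[
  P(\pi) \subseteq P(\pi') \quad\Longrightarrow\quad E(\pi) \subseteq E(\pi')
\]
for minimal $\pi,\pi'$; once this is in hand, the hypothesis forces $\pi' = \pi$ and we are done.

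To prove that implication, I would exhibit, for each additivity relation of~$\pi$, a tight feasible solution that transfers to~$\pi'$. Fix $(\x,\y) \in E(\pi)$ and consider the finite-support function $y := e_{\x} + e_{\y} + e_{\f - \x - \y}$, where $e_{\rx}$ denotes the indicator of $\{\rx\}$ (with multiplicities added when two or three of these points coincide). This $y$ is feasible for~\eqref{GP}: it is $\Z_+$-valued, has finite support, and $\sum_{\rx \in G} \rx\, y(\rx) = \x + \y + (\f - \x - \y) = \f \in \f + S$. Using $(\x,\y) \in E(\pi)$ together with the symmetry condition of the minimal function~$\pi$ (\autoref{thm:minimal}),
\[
  \sum_{\rx \in G} \pi(\rx) y(\rx) = \pi(\x) + \pi(\y) + \pi(\f - \x - \y) = \pi(\x + \y) + \pi(\f - (\x + \y)) = 1,
\]
so $y \in P(\pi)$. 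By the assumption $P(\pi) \subseteq P(\pi')$ we get $y \in P(\pi')$, hence $\pi'(\x) + \pi'(\y) + \pi'(\f - \x - \y) = 1$. Applying the symmetry of the minimal function~$\pi'$ to $\x + \y$ gives $\pi'(\f - \x - \y) = 1 - \pi'(\x + \y)$, and substituting yields $\pi'(\x) + \pi'(\y) = \pi'(\x + \y)$, i.e.\ $(\x,\y) \in E(\pi')$. This establishes $E(\pi) \subseteq E(\pi')$, and the symmetry step is robust: it goes through verbatim even when $\f - \x - \y \in S$ or $\x,\y \in S$, since the identities used are exactly those guaranteed for minimal functions by \autoref{thm:minimal}.

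The step I expect to be the genuine obstacle — and the one where the original Gomory--Johnson argument is flawed (as noted above) — is the very first reduction, not the additivity-transfer computation, which is routine. If one instead argues by contradiction starting from an \emph{arbitrary} valid $\pi'$ with $P(\pi) \subseteq P(\pi')$ and $\pi' \neq \pi$, then the symmetry identity needed in the last display is unavailable, because symmetry is a property of minimal functions only. Passing to a minimal $\pi'' \le \pi'$ does restore symmetry and preserves $P(\pi) \subseteq P(\pi'')$ — indeed, for $y \in P(\pi)$ validity of $\pi''$ forces $\sum_{\rx} \pi''(\rx) y(\rx) \ge 1$ while $\pi'' \le \pi'$ forces $\le 1$, so equality holds — but it may collapse $\pi''$ onto $\pi$ and thereby destroy the intended contradiction. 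The clean way around this is precisely to invoke the minimal-only characterization of facets from~\cite{bhkm} at the outset, so that $\pi'$ is minimal from the start; that equivalence is doing the real work, and it is exactly the distinction between facets and weak facets flagged in the footnote to the erroneous proof.
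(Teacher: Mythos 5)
Your proof is correct and follows essentially the same route as the argument in \cite{bhkm} that the paper cites for \autoref{thm:facet}: first reduce to \emph{minimal} $\pi'$ via the minimal-only characterization of facets recorded just before the theorem (whose proof you correctly sketch, including that passing to a minimal $\pi''\le\pi'$ preserves $P(\pi)\subseteq P(\pi'')$), then transfer each additivity $(\x,\y)\in E(\pi)$ to $\pi'$ through the tight feasible solution $e_{\x}+e_{\y}+e_{\f-\x-\y}$, using the subadditivity/symmetry properties of minimal functions from \autoref{thm:minimal}. Your treatment of coinciding support points and of the degenerate cases ($\x,\y\in S$ or $\f-\x-\y\in S$) is sound, so there is nothing to add.
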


In the light of \autoref{lem:tightness} and \autoref{thm:facet}, if one can establish that for a minimal valid function $\pi$, $E(\pi) \subseteq E(\pi')$ implies $\pi' = \pi$ for every minimal valid function $\pi'$, then $\pi$ is extreme, as well as a facet. Indeed, if $\pi = \tfrac{1}{2}(\pi^1 + \pi^2)$ where $\pi^1, \pi^2$ are valid functions, by Lemma~\ref{lem:tightness} (i), $\pi^1$ and $\pi^2$ are minimal and by \autoref{lem:tightness} (ii), $E(\pi) \subseteq E(\pi^1) \cap E(\pi^2)$, and so $\pi = \pi^1 = \pi^2$. The facetness follows directly from~\autoref{thm:facet}, and gives an alternate proof of extremality since all facets are extreme.

The condition that $E(\pi) \subseteq E(\pi')$ implies $\pi' = \pi$ for every minimal valid function $\pi'$ is established along the following lines. First, structural properties of $\pi$ can be used to obtain a structured description of $E(\pi)$.  For example, the fact that $\pi$ is piecewise linear often shows that $E(\pi)$ is the union of many full-dimensional convex sets. $E(\pi')$ shares this structure with $E(\pi)$ because of the assumption that $E(\pi) \subseteq E(\pi')$. Then, results such as the {\em Interval Lemma}, discussed in \autoref{sec:foundations}, are used to show that $\pi'$ must be affine on the set of points contributing to $E(\pi')$. Finally, the conditions that all minimal valid functions are $0$ at the origin and $1$ at $\f + \Z^k$ puts further restrictions on the values that $\pi'$ can take, and ultimately force $\pi' = \pi$.\footnote{Sometimes certain continuity arguments need to be made, where results like Lemma~\ref{lem:tightness} (iii), (iv) and (v) are helpful. In such situations, the proof of extremality is usually slightly simpler than a proof for facetness, owing to Lemma~\ref{lem:tightness} (iii); see \autoref{rem:facet-extreme} and \autoref{rem:facet-non-piece}.}

%
%
%
%

\subsection{Classification and taxonomy of facets and extreme functions}\label{s:taxonomy}

The main goal in the study of the infinite group problem is to obtain a classification of facets and extreme valid functions. We do not believe that a simple classification exists like Theorem~\ref{thm:minimal} for minimal valid functions. In spite of this, several beautiful theorems have been obtained regarding the structure of facets and extreme valid functions, and there is a lot more to be discovered. This survey attempts to highlight the most important known results in this research area and outline some of the challenging open problems. 

Inspired by the survey by Richard and Dey
\cite[p.~786]{Richard-Dey-2010:50-year-survey}, we provide an updated
compendium, or ``taxonomy,'' of known extreme functions at the end of this
survey (\autoref{s:appendix-compendium}).
The focus lies on the case of the one-row ($k=1$) infinite group problem, $R_f(\R, \Z)$, for which many types of
extreme functions have been discovered and analyzed
(\autoref{tab:compendium-1}, \ref{tab:compendium-1b}, \ref{tab:compendium-2}, \ref{tab:compendium-3}). Also a number of ``procedures'' (operations) have been studied in the literature
that preserve extremality under some conditions; we present these in \autoref{tab:compendium-procedures}.

We do not provide explicit constructions or descriptions of these functions
here.  Instead, we invite the interested reader to investigate the functions
in an interactive companion program \cite{infinite-group-relaxation-code},
including the electronic compendium of extreme functions
\cite{electronic-compendium}. 
The program and the electronic compendium are implemented in the free
(open-source) computer algebra package Sage 
\cite{sage}.\footnote{%
  The program \cite{infinite-group-relaxation-code} can be run on a local installation of Sage, or online via
  SageMathCloud.  The help system provides a discussion of parameters of the
  extreme functions, bibliographic information, etc.  It is accessed 
  by typing the function name as shown in the table,
  followed by a question mark. Example:
  \sage{gmic?}}

Most facets and extreme functions described in the literature are piecewise linear
functions.\footnote{See \autoref{sec:def-complex-piecewise} for the definition that we
  use, which includes certain discontinuous functions.}  The number of slopes (i.e., the different values of the
derivative) of a function is a statistic that has received much attention in the
literature.  
In fact, one of the classic results in the study of extreme functions for the single-row problem is the following:\footnote{See \autoref{thm:k+1slope} for a general $k$-row result.}
\begin{theorem}[Gomory--Johnson 2-Slope Theorem \cite{infinite}]
  \label{th:2-slope}
  If a continuous piecewise linear minimal function of $R_f(\R,\Z)$ has
  only 2 values for the derivative wherever it exists (2 slopes), then the
  function is extreme.
\end{theorem}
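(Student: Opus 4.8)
The plan is to follow the roadmap of \autoref{s:roadmap}: assume $\pi=\tfrac12(\pi^1+\pi^2)$ with $\pi^1,\pi^2$ valid, and show $\pi^1=\pi$ (then automatically $\pi^2=2\pi-\pi^1=\pi$). By \autoref{lem:tightness}\,(i),(ii),(iv) the functions $\pi^1,\pi^2$ are minimal and Lipschitz continuous and satisfy $E(\pi)\subseteq E(\pi^1)\cap E(\pi^2)$. It is convenient to work with the perturbation $\psi:=\pi^1-\pi$, which is Lipschitz, periodic modulo $\Z$, and satisfies $\psi(0)=0$ and (from the symmetry condition for $\pi$ and for $\pi^1$, together with $\pi(f)=\pi^1(f)=1$) $\psi(f)=0$ and $\psi(x)+\psi(f-x)=0$; moreover $\Delta\psi=\Delta\pi^1-\Delta\pi$ vanishes on $E(\pi)$, and subadditivity of $\pi\pm\psi=\pi^{1,2}$ gives $|\Delta\psi|\le\Delta\pi$ everywhere. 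Since $\pi$ is continuous, periodic, nonnegative with $\pi(0)=0$ and $\pi(f)=1$, its two slopes must have opposite signs, say $g^+>0>g^-$; write $U$ and $D$ for the open unions of intervals on which $\pi'=g^+$ and $\pi'=g^-$.

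The first key step is \emph{slope rigidity}. Because $\pi\ge0$ with $\pi(0)=0$, the piece of $\pi$ immediately to the right of $0$ has slope $g^+$; for $\varepsilon$ in this initial interval and $x$ in the interior of any component of $U$ with $x+\varepsilon$ in the same component, a direct computation gives $\Delta\pi(\varepsilon,x)=0$, so the whole product region lies in $E(\pi)\subseteq E(\pi^1)$. Applying the Interval Lemma (\autoref{sec:foundations}) to $\pi^1$ on these product regions forces $\pi^1$ to be affine on the initial interval and on each component of $U$, and ties the slope on each such component to the slope on the initial interval, giving a single common slope $c^+$. The mirror argument at the piece immediately to the left of $1$ (which has slope $g^-$) shows $\pi^1$ is affine with a single slope $c^-$ on every component of $D$. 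Thus $\pi^1$ is continuous piecewise linear with the same breakpoints as $\pi$ and only the two slopes $c^+,c^-$, and is therefore completely determined by the pair $(c^+,c^-)$ together with $\pi^1(0)=0$.

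The second step is to pin down $(c^+,c^-)=(g^+,g^-)$. Let $\lambda^\pm$ be the total length of $U,D$ in one period and $\mu^\pm$ their lengths inside $[0,f]$. Periodicity ($\pi^1(1)=0$) and normalization ($\pi^1(f)=1$) give the linear system $c^+\lambda^++c^-\lambda^-=0$ and $c^+\mu^++c^-\mu^-=1$, which $(g^+,g^-)$ also satisfies; whenever this system is nonsingular we conclude at once that $\psi\equiv0$. In the degenerate case where it is singular, one invokes the additional relations coming from the \emph{two-dimensional} cells of $E(\pi)$: on a full-dimensional additivity cell $\Delta\pi$ is affine, and its vanishing forces the three slopes (in $x$, in $y$, and in $x+y$) to coincide, so the corresponding vanishing of $\Delta\psi$ becomes a linear relation among the intercepts of $\psi$ — which are themselves determined by $(c^+,c^-)$ via continuity and $\psi(0)=0$. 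These relations, together with the two above, force $c^+=g^+$ and $c^-=g^-$, hence $\psi\equiv0$, $\pi^1=\pi$, and $\pi$ is extreme; by \autoref{thm:facet} the same argument in fact shows that $\pi$ is a facet.

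The main obstacle is the \emph{coverage} underlying both steps: one must guarantee that the two-dimensional cells of $E(\pi)$ are plentiful and connected enough to reach every interval of linearity and to supply the extra intercept relations in the degenerate case. This is exactly where the two-slope hypothesis, together with minimality (subadditivity and symmetry), is indispensable — for general piecewise linear minimal functions coverage can fail, and such functions need not be extreme (see \autoref{fig:minimalNotExtreme}). I expect the bookkeeping of which cells are tight, and the verification that the induced first-order and intercept relations admit only the trivial solution, to be the technical heart of the argument.
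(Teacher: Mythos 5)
Your strategy coincides with the paper's own proof of the general $(k+1)$-Slope Theorem (\autoref{thm:k+1slope}) specialized to $k=1$: a compatibility step (the Interval Lemma applied to products of a cell touching the origin with each cell carrying the same slope, cf.\ \autoref{lem:k+1-useful}), followed by a gradient-matching step in which a linear system is shown to be solved by both $(g^+,g^-)$ and $(c^+,c^-)$ (cf.\ \autoref{lemma:linear-system}). Your slope-rigidity step is essentially correct, with one small repair: the region $\{(\varepsilon,x) : \varepsilon\in[0,\delta],\ x,\,x+\varepsilon \text{ in the same component } [a,b]\}$ is \emph{not} a Cartesian product, so to invoke \autoref{one-dim-interval_lemma} you should fix $0<\delta'<\delta$ and work on the product $[0,\delta']\times[a,b-\delta']$, then use continuity of $\pi^1$ (\autoref{lem:tightness}) to extend affineness to all of $[a,b]$.

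The genuine gap is in the gradient-matching step. You write down the correct system $c^+\lambda^+ + c^-\lambda^- = 0$, $c^+\mu^+ + c^-\mu^- = 1$, but you do not prove it is nonsingular; instead you posit a ``degenerate case'' and dispose of it by asserting that the vanishing of $\Delta\psi$ on two-dimensional additivity cells yields intercept relations forcing $(c^+,c^-)=(g^+,g^-)$. As written this is not an argument: you neither exhibit such cells nor derive the claimed relations, nor show they would be independent of the two equations you already have. Fortunately the degenerate case is vacuous, and establishing exactly this is the content of the uniqueness claim for the system \eqref{eq:linear-system} in the paper's proof; for $k=1$ it is a two-line computation. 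If the matrix with rows $(\lambda^+,\lambda^-)$ and $(\mu^+,\mu^-)$ were singular, there would exist $(\alpha,\beta)\neq(0,0)$ with $\alpha(\lambda^+,\lambda^-)+\beta(\mu^+,\mu^-)=(0,0)$; pairing this with the vector $(g^+,g^-)$, which satisfies the system, gives $\alpha\cdot 0+\beta\cdot 1=0$, so $\beta=0$, $\alpha\neq 0$, and hence $(\lambda^+,\lambda^-)=(0,0)$. This is impossible: since $g^-\mu^-\le 0$, the equation $1=g^+\mu^++g^-\mu^-$ forces $g^+\mu^+\ge 1$, so $\lambda^+\ge\mu^+>0$. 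Replacing your degenerate-case paragraph by this observation makes the proof complete, and, as you note, the same argument combined with \autoref{thm:facet} shows $\pi$ is a facet.
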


Among the types of extreme functions that are piecewise linear functions,
there are discontinuous and continuous ones.  In the single-row case ($k=1$),
continuous piecewise linear extreme functions with 2, 3 and 4 different slopes
were previously known, and discontinuous piecewise linear extreme functions
with 1 and 2 slopes were previously known. Moreover, all previously known
examples of extreme discontinuous functions were continuous on one side of the
origin.
Hildebrand (2013, unpublished) found continuous piecewise linear
extreme functions with 5~slopes using computer-based search, as well as various
discontinuous piecewise linear extreme functions.  K\"oppe and Zhou
\cite{koeppe-zhou:extreme-search} later found continuous piecewise linear extreme
functions with up to 28~slopes. 

\begin{prop}[\TheoremNEWRESULT]\label{prop:5-slope} There exist continuous
  piecewise linear extreme functions with 5, 6, 7, and 28 slopes. There exist discontinuous piecewise linear extreme functions with 3 slopes and discontinuous piecewise linear extreme functions that are discontinuous on both sides at the origin. See~\autoref{tab:compendium-3}.
\end{prop}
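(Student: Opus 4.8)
The plan is to prove this existence statement constructively: for each claimed combination of slope count and continuity behavior I would exhibit an explicit piecewise linear function, collected in \autoref{tab:compendium-3} and available in the companion program \cite{infinite-group-relaxation-code}, and then certify its extremality. Representative examples are the continuous $5$-slope functions \sagefunc{hildebrand_5_slope_22_1} and \sagefunc{hildebrand_5_slope_24_1}, the $7$-slope function \sagefunc{kzh_7_slope_1}, the $28$-slope function \sagefunc{kzh_28_slope_1}, the discontinuous $3$-slope function \sagefunc{hildebrand_discont_3_slope_1}, and the two-sided discontinuous functions \sagefunc{hildebrand_2_sided_discont_1_slope_1} and \sagefunc{hildebrand_2_sided_discont_2_slope_1}. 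Since every candidate has rational breakpoints in $\frac1q\Z$ for a suitable $q\in\N$, the entire verification reduces to finitely many checks.

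First I would verify \emph{minimality} of each candidate using \autoref{thm:minimal}. Nonnegativity and the condition $\pi(z)=0$ for all $z\in\Z$ are immediate from the data, while subadditivity and the symmetry condition $\pi(\x)+\pi(\f-\x)=1$ need only be tested on the finitely many faces of the periodic polyhedral complex generated by the breakpoints, with the limit-value refinements of subadditivity applied in the discontinuous cases. Counting the distinct values of the derivative on the linearity intervals then confirms the claimed number of slopes.

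The heart of the argument is \emph{extremality}, which I would establish via the roadmap of \autoref{s:roadmap}. Assuming $\pi=\tfrac12(\pi^1+\pi^2)$ with $\pi^1,\pi^2$ valid, \autoref{lem:tightness}\,(i) makes both minimal and \autoref{lem:tightness}\,(ii) gives $E(\pi)\subseteq E(\pi^1)\cap E(\pi^2)$. For the continuous examples, \autoref{lem:tightness}\,(iv) supplies Lipschitz continuity of $\pi^1,\pi^2$, and the Interval Lemma (\autoref{sec:foundations}), applied to the full-dimensional pieces of $E(\pi)$, forces $\pi^1,\pi^2$ to be affine on each linearity interval of $\pi$ with the same breakpoints (the \emph{affine imposing} property of \cite{basu-hildebrand-koeppe:equivariant}). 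Writing $\pi^1=\pi+\bar\pi$ and $\pi^2=\pi-\bar\pi$, the perturbation $\bar\pi$ then lives in a finite-dimensional space parameterized by its values at the breakpoints, and the constraints $\bar\pi(z)=0$ for $z\in\Z$, $\bar\pi(\x)+\bar\pi(\f-\x)=0$, together with $\Delta\bar\pi(\x,\y)=0$ for $(\x,\y)\in E(\pi)$, form a homogeneous linear system. Showing that its only solution is $\bar\pi=0$ yields $\pi=\pi^1=\pi^2$; this is precisely the finite-dimensional extremality test illustrated in \autoref{fig:minimalNotExtreme} and carried out by \sagefunc{extremality_test}.

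The main obstacle is the pair of functions discontinuous on both sides of the origin, where \autoref{lem:tightness}\,(iii)--(v) are unavailable: part (v) explicitly requires one-sided continuity at the origin, which fails here. Consequently the affine-imposing reduction must be redone with the limit-value bookkeeping for discontinuous functions developed in \cite{basu-hildebrand-koeppe:equivariant}, tracking separately the left and right limits of $\bar\pi$ at each breakpoint and at the origin. Once the correct finite-dimensional system is assembled, solving it is routine; the delicate point is ruling out a spurious one-sided perturbation surviving at the origin, which is exactly the phenomenon that makes \sagefunc{zhou_two_sided_discontinuous_cannot_assume_any_continuity} a cautionary example.
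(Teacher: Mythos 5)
Your proposal takes essentially the same route as the paper: the paper gives no separate argument for this proposition, proving it by exhibiting the computer-found functions of \autoref{tab:compendium-3} (and the electronic compendium), whose extremality is certified by exactly the finite-dimensional minimality-plus-perturbation-space verification you describe (the machinery behind \sagefunc{extremality_test}), including the extra limit-value bookkeeping needed for the two-sided discontinuous examples. The one loose end is the 6-slope claim: neither \autoref{tab:compendium-3} nor your list of representative examples names a 6-slope witness, so you should cite one explicitly from the K\"oppe--Zhou search \cite{koeppe-zhou:extreme-search,electronic-compendium} rather than leaving that case implicit.
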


This prompts the following question.

\begin{openquestion}
  For the single-row problem $R_f(\R, \Z)$, do there exist 
  continuous and discontinuous extreme functions with $s$ slopes for every $s \geq 2$?
\end{openquestion}

The additivity domain $E(\pi)$ for any minimal function $\pi$
(see~\eqref{eq:Epi}) can be decomposed as the union of its maximal convex
subsets. The first 5-slope functions found by Hildebrand (2013) have an
additivity domain which contains lower-dimensional maximal convex
components.\footnote{The functions are available in the electronic compendium \cite{electronic-compendium}
  as \sage{hildebrand\underscore{}5\underscore{}slope\dots}} 
This begs the question: 

\begin{openquestion}
  For the single-row problem $R_f(\R, \Z)$, do there exist continuous
  piecewise linear extreme functions of $R_f(\R, \Z)$ with $s$ slopes such
  that $E(\pi)$ is the union of full-dimensional convex sets for every $s \geq 2$?
\end{openquestion}
Not all facets and extreme functions are piecewise linear though.  Basu, Conforti, Cornu\'ejols, and Zambelli~\cite{bccz08222222}
constructed a family of facets that are not piecewise linear, yet
the derivatives (where they exist) only take 2 values; see \autoref{sec:bccz_counterexample}.  A function~$\tilde\pi$ from this family is absolutely continuous and therefore it is differentiable
almost everywhere (a.e.).  The derivative~$\tilde\pi'$ happens to take only
two different values a.e., so $\tilde\pi$ is a ``generalized 2-slope
function.'' This suggests the following refined version of Gomory and Johnson's original piecewise linear conjecture for extreme functions.

\begin{conjecture}\label{conj:non-piece}
For every absolutely continuous extreme function $\pi\colon \R\to \R$, the derivative $\pi'$ is a simple function. Thus, there exists a finite partition of $\R$ into measureable subsets $M_0, \ldots, M_t$ such that $M_0$ is of measure zero and $\pi'$ is constant over each of $M_1, \ldots, M_t$.
\end{conjecture}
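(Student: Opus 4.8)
The plan is to reduce the conjecture to two sub-claims about an absolutely continuous extreme function $\pi\colon\R\to\R$ for $R_f(\R,\Z)$: first, that $\pi$ is affine on each of a family of open intervals whose union $U\subseteq[0,1]$ exhausts $[0,1]$ up to a set of measure zero; and second, that the slopes of $\pi$ on these intervals realize only finitely many distinct values. Granting both, one takes $M_1,\dots,M_t$ to be the (periodic extensions of the) unions of intervals grouped by common slope, and $M_0$ to be the null remainder; since $\pi$ is periodic modulo $\Z$ it suffices to work on $[0,1]$, and the conjecture follows.

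For the first sub-claim I would work through the additivity domain $E(\pi)$. Being extreme, $\pi$ is minimal by \autoref{thm:minimal}, hence subadditive, symmetric, and (absolutely, so in particular) continuous; thus $\Delta\pi$ is continuous and $E(\pi)$ is closed. On any maximal two-dimensional box $I_1\times I_2\subseteq E(\pi)$ the relation $\pi(x)+\pi(y)=\pi(x+y)$ holds identically, so the Interval Lemma of \autoref{sec:foundations} forces $\pi$ to be affine on each of $I_1$, $I_2$, and $I_1+I_2$, all with a common slope. Collecting these intervals yields the open set $U$ on which $\pi'$ is locally constant, and the two-dimensional part of $E(\pi)$ pins $\pi'$ down precisely on $U$. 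The remaining content is that $A:=[0,1]\setminus U$ is null, and here extremality must enter: on $A$ there are, by construction, no full-dimensional additive relations, so if $A$ had positive measure I would try to build a nonzero absolutely continuous $\bar\pi$, periodic with $\bar\pi(0)=0$, additive on $E(\pi)$ and antisymmetric ($\bar\pi(x)=-\bar\pi(f-x)$, so that $\pi\pm\bar\pi$ stay symmetric), with $\bar\pi'\ne 0$ on a positive-measure subset of $A$. If such a $\bar\pi$ keeps $\pi\pm\bar\pi$ subadditive, then $\pi=\tfrac12\bigl((\pi+\bar\pi)+(\pi-\bar\pi)\bigr)$ is a nontrivial convex combination of valid functions, contradicting extremality; hence $A$ is null.

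For the second sub-claim I would exploit the relations that $E(\pi)$ imposes among the slopes. Each two-dimensional box $I_1\times I_2\subseteq E(\pi)$ equates the slopes on $I_1$, $I_2$, and $I_1+I_2$, while differentiating the symmetry condition gives $\pi'(x)=\pi'(f-x)$ wherever the derivatives exist; together these generate an equivalence relation on the intervals comprising $U$, and the distinct values of $\pi'$ are indexed by its classes. Finiteness of the slope set would then follow from finiteness of the number of equivalence classes, which one would hope to extract from the compactness of $[0,1]$ together with the closedness of $E(\pi)$ and the subadditivity and value constraints (e.g.\ $\int_0^1\pi'=0$ and $\int_0^f\pi'=1$).

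I expect the genuinely hard points to be exactly where the passage from the piecewise linear theory to the merely absolutely continuous setting ceases to be routine. The first is the perturbation construction of the second paragraph: for piecewise linear $\pi$ the slack $\Delta\pi$ is bounded below away from the finitely many faces of $E(\pi)$, so a small multiple of any candidate $\bar\pi$ is subadditive, whereas for a general absolutely continuous $\pi$ the slack can decay to zero along $E(\pi)$ (indeed $E(\pi)$ may be a fat, Cantor-like set, as the Basu--Conforti--Cornu\'ejols--Zambelli construction of \autoref{sec:bccz_counterexample} suggests), so controlling $\Delta\bar\pi$ by $\Delta\pi$ while enforcing additivity on all of $E(\pi)$ is delicate. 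The second, and in my view the true crux, is the finiteness of the number of slope classes: nothing above rules out an extreme function carrying infinitely many distinct slopes, and since the known non-piecewise-linear examples already exhibit finitely many slopes distributed across infinitely many intervals of a Cantor structure, I suspect that settling finiteness---or producing a counterexample---will require a genuinely new self-similarity or renormalization idea rather than the existing toolkit.
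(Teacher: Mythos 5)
The statement you set out to prove is Conjecture~\ref{conj:non-piece}: the paper states it as an \emph{open} conjecture and offers no proof, so there is nothing to compare your argument against --- and your proposal does not close the problem either. More seriously, your first sub-claim is false, and it is contradicted by the very function that motivates the conjecture. Take the Basu--Conforti--Cornu\'ejols--Zambelli function $\psi$ of \autoref{sec:bccz_counterexample} in the case $\mu^- < 1$: it is absolutely continuous and extreme, and the union $U$ of all open intervals on which $\psi$ is affine is exactly the negative-slope set $X^-$, because the complement $[0,1]\setminus X^-$ is nowhere dense and hence contains no interval on which $\psi$ could be affine. But that complement has Lebesgue measure $\mu^+ = 1-\mu^- > 0$ (it is a fat Cantor set), so your set $A = [0,1]\setminus U$ has positive measure for a genuine extreme function. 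The conjecture nevertheless holds for $\psi$ precisely because $\psi'$ is constant a.e.\ on the \emph{measurable} set $X^+$, which contains no interval at all; this is why the conjecture is phrased in terms of measurable sets $M_1,\dots,M_t$ rather than intervals. Any strategy that tries to exhaust $[0,1]$ up to a null set by intervals of affinity is therefore structurally wrong for the non--piecewise-linear functions the conjecture is about, not merely hard to carry out.

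Your second sub-claim (finitely many slope values) is the actual content of the conjecture, and, as you yourself concede, you have no argument for it: the equivalence classes generated by full-dimensional additivities and the symmetry relation could a priori be infinite in number, and compactness of $[0,1]$ together with closedness of $E(\pi)$ gives no bound, since the classes need not be separated in any metric sense. Your perturbation step also rests on machinery that is unavailable here: \autoref{corPerturb} requires $\pi$ to be piecewise linear so that $\Delta\pi$ is bounded away from zero off the finitely many additive faces, whereas for merely absolutely continuous $\pi$ the slack can vanish along a Cantor-like $E(\pi)$, and you propose no replacement. In short, the proposal is a program with one step that fails on known examples and one step that is the open problem itself.
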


The fact that the derivative of the counterexample from \cite{bccz08222222} happens to take only two different values a.e.\@ also gives rise to the following \emph{generalized 2-slope conjecture}. This conjecture would generalize Theorem~\ref{th:2-slope}.

\begin{conjecture}
  Let $\pi \colon \R \rightarrow \mathbb{R}$ be a minimal function that is absolutely continuous and whose derivative $\pi'$ only takes two values outside of a set of measure zero. Then $\pi$ is extreme.
\end{conjecture}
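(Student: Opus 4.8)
The plan is to follow the roadmap of \autoref{s:roadmap}: assume $\pi=\frac12(\pi^1+\pi^2)$ with $\pi^1,\pi^2$ valid and deduce $\pi=\pi^1=\pi^2$. By \autoref{lem:tightness}\,(i) the functions $\pi^1,\pi^2$ are minimal, and since $\pi$ is absolutely continuous with $|\pi'|\le L:=\max\{|s_1|,|s_2|\}$ a.e.\ it is $L$-Lipschitz, so that $|\pi(h)/h|\le L$ near the origin; hence the hypothesis of \autoref{lem:tightness}\,(iii) holds and $\pi^1,\pi^2$ are Lipschitz as well. Writing $\bar\pi=\frac12(\pi^1-\pi^2)$, the task reduces to proving $\bar\pi\equiv 0$. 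I record that $\bar\pi$ is Lipschitz (hence absolutely continuous) with $\bar\pi(0)=0$, is periodic modulo $\Z$, and is \emph{antisymmetric}, $\bar\pi(x)=-\bar\pi(f-x)$. The crucial property is that $\bar\pi$ is \emph{additive on the additivity domain} of $\pi$: since $E(\pi)\subseteq E(\pi^1)\cap E(\pi^2)$ by \autoref{lem:tightness}\,(ii), subtracting the two additivity relations gives $\bar\pi(x)+\bar\pi(y)=\bar\pi(x+y)$ for every $(x,y)\in E(\pi)$. As $\bar\pi$ is absolutely continuous with $\bar\pi(0)=0$, it then suffices to show $\bar\pi'=0$ a.e.

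Next I would make the two-slope structure explicit. Writing the larger slope as $s_1>0$ and the smaller as $s_2<0$ (the signs are forced by periodicity of $\pi$, \autoref{thm:minimal}), put $A=\{t:\pi'(t)=s_1\}$, so $\pi'=s_2+(s_1-s_2)\mathbf{1}_A$ a.e., and let $F(z)=|A\cap[0,z]|$ be the cumulative measure of the high-slope set. Then $\pi=s_2\,\mathrm{id}+(s_1-s_2)F$, and since the linear part is additive a direct computation gives
\[
  \Delta\pi(x,y)=(s_1-s_2)\bigl(F(x)+F(y)-F(x+y)\bigr).
\]
Consequently $E(\pi)=E(F)$, where $F$ is a nondecreasing, $1$-Lipschitz, subadditive function; \emph{all} the additivity of $\pi$ is encoded in this single monotone function. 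Wherever $E(\pi)$ contains a two-dimensional region, the Interval Lemma of \autoref{sec:foundations} applies verbatim to the additive Lipschitz function $\bar\pi$ and forces it to be affine with a common slope on the projecting intervals; the data $\bar\pi(0)=0$ and the antisymmetry then pin that slope to $0$, exactly as in the proof of the piecewise linear \autoref{th:2-slope}.

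The hard part is the genuinely non--piecewise-linear case, in which $E(\pi)$ contains no full-dimensional additive region --- precisely the regime exhibited by the generalized $2$-slope facets of~\cite{bccz08222222}. Indeed, $\{t\}\times\R\subseteq E(\pi)$ forces $\mathbf{1}_A(t+\cdot)=\mathbf{1}_A$ a.e., i.e.\ $t$ to be a period of $A$, and for such functions the only periods are integers; the symmetry diagonal $\{x+y\equiv f\}$ contributes only the antisymmetry already in hand. Thus the classical Interval Lemma cannot be invoked, and the plan is to replace it by a \emph{measure-theoretic Interval Lemma}: at a.e.\ Lebesgue density point $(x_0,y_0)$ of $E(\pi)$, use the identity above together with the monotonicity and subadditivity of $F$ to show that the additivity relation propagates along translations, forcing $x\mapsto\bar\pi(x+t)-\bar\pi(x)$ to be a.e.\ constant for a positive-measure set of $t$ and, with $\bar\pi(0)=0$, yielding $\bar\pi'=0$ a.e. Establishing this density-point propagation --- that a two-valued derivative together with subadditivity generates enough additivity to annihilate any Lipschitz $\bar\pi$ additive on $E(\pi)$, even when $E(\pi)$ is thin --- is the main obstacle, and is the reason the statement is still a conjecture rather than a theorem; I expect it to require the fine structure of the level set $A$ (its density points and the combinatorics of the subadditive $F$) rather than any soft argument.
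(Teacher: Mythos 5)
The statement you were asked to prove is the paper's \emph{generalized 2-slope conjecture}: it appears in \autoref{s:taxonomy} as an open conjecture, with no proof anywhere in the paper, so a complete argument here would resolve an open problem rather than reproduce a known one. Your write-up, to its credit, concedes this at the end, and indeed it is not a proof. The preparatory steps are correct and well chosen: reducing to a perturbation $\bar\pi=\tfrac12(\pi^1-\pi^2)$ that is Lipschitz (via \autoref{lem:tightness}\,(iii), using that an absolutely continuous function with essentially bounded derivative is Lipschitz), additive on $E(\pi)$ (via \autoref{lem:tightness}\,(ii)), and antisymmetric about~$f$; and the normalization $\pi=s_2\,\mathrm{id}+(s_1-s_2)F$, $\Delta\pi=(s_1-s_2)\Delta F$, with $F$ nondecreasing, $1$-Lipschitz and subadditive, is a clean reformulation. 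The case where $E(\pi)$ contains two-dimensional pieces is then handled by the classical Interval Lemma, as you say. But the remaining case is dispatched only by an unproved ``measure-theoretic Interval Lemma'' whose statement is never made precise; since that case is exactly the content of the conjecture, the proposal has a genuine, and acknowledged, gap.

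Beyond the missing lemma, two specific flaws deserve mention. First, the density-point mechanism cannot work as stated: Lebesgue density points are only guaranteed to exist for sets of positive measure, and in the problematic regime $E(\pi)$ may a priori have two-dimensional measure zero --- minimality only forces it to contain the symmetry diagonal $\{\,(x,y) : x+y\equiv f \pmod 1\,\}$ and the lines where one argument is an integer --- so ``a.e.\ density point of $E(\pi)$'' may be a statement about the empty set, and the propagation argument has nothing to run on. Second, your claim that the functions of~\cite{bccz08222222} exemplify this thin regime is incorrect: as sketched in \autoref{sec:bccz_counterexample}, their extremality proof works precisely because $E(\psi)$ \emph{does} contain two-dimensional additive sets (built from triples of negative-slope intervals), to which \autoref{one-dim-interval_lemma} is applied to force any candidate $\pi^1$ to be affine over the dense set $X^-$. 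The known non--piecewise-linear extreme functions are thus handled by the classical tools; what is genuinely open is whether a minimal function satisfying your hypotheses can have an additivity domain too thin for those tools, and what to do if it can --- and on that question the proposal offers a heuristic, not an argument.
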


%


The key difficulty in answering the above questions is that the tools of functional
equations (such as the {\em Interval Lemma} as discussed in \autoref{sec:foundations}) no longer directly apply and new tools will most likely need to be employed for the resolution. Thus, there are still substantial questions left to be explored, even for the single-row ($k=1$) problem. 

Much less is known about the $k$-row problem $R_{\ve f}(\R^k, \Z^k)$ for
general $k$.  Dey and Richard~\cite{dey2} pioneered the construction of
extreme functions for the $k$-row problem.  Their {\em sequential-merge}
procedure constructs extreme functions and facets for $k\geq 2$
dimensions by combining extreme functions and facets for
smaller~$k$; see~\autoref{s:sequential-merge}. 
As mentioned earlier, a breakthrough was made when
\autoref{thm:k+1slope} was proved in~\cite[Theorem 1.7]{bhkm}, generalizing
Gomory and Johnson's single-row result (\autoref{th:2-slope}) to the general
$k$-row problem, giving a very general sufficient condition for extremality
and facetness. 

\section{The $k$-dimensional theory of piecewise linear minimal valid functions}\label{sec:piecewise}

\subsection{Polyhedral complexes and piecewise linear functions}\label{sec:def-complex-piecewise}

We introduce the notion of polyhedral complexes, which serves two purposes.
First, it provides a framework to define piecewise linear functions,
generalizing the familiar situation of functions of a single real variable. 
Second it is a tool for studying subadditivity and additivity relations of these functions.
This exposition follows \cite{bhk-IPCOext}.

\begin{definition}
\label{def:polyhedralComplex}
A (locally finite) {\em polyhedral complex} is a collection $\P$ of polyhedra in $\R^k$ such that:
\begin{enumerate}[\rm(i)]
\item $\emptyset \in \P$,
\item if $I \in \P$, then all faces of $I$ are in $\P$,
\item the intersection $I \cap J$ of two polyhedra $I,J \in \P$ is a face of both $I$ and $J$,
\item any compact subset of $\R^k$ intersects only finitely many faces in $\P$.
\end{enumerate}
A polyhedron $I$ from $\P$ is called a {\em face} of the complex.
A polyhedral complex~$\P$ is said to be {\em pure} if all its maximal faces (with
respect to set inclusion) have the same
dimension. In this case, we call the maximal faces of $\P$ the {\em cells}
of~$\P$. The zero-dimensional faces of $\P$ are called {\em vertices} and the set of vertices of $\P$ will be denoted by $\verts(\P)$. A polyhedral complex~$\P$ is said to be {\em complete} if the union of all faces of
the complex is~$\R^k$. A pure and complete polyhedral complex $\P$ is called a {\em triangulation} of $\R^k$ if every maximal cell is a simplex.  
\end{definition}

\begin{example}[Breakpoint intervals in $\R^1$ \cite{basu-hildebrand-koeppe:equivariant}]\label{ex:1d-breakpoint-complex}
  Let $0=x_0 < x_1 < \dots < x_{n-1} < x_n=1$ be a list of 
  ``breakpoints'' in $[0,1]$.  
  We extend it periodically as
  \begin{math}
    \B = \{\, x_0 + t, x_1 + t, \dots, x_{n-1}+t\st
    t\in\Z\,\}
  \end{math}.
  Define the set of 0-dimensional faces to be the collection of singletons,
  \begin{math}
    \I_{\B,\EquiOneDimPoint} = \bigl\{\, \{ x \} \st x\in B\,\bigr\},
  \end{math}
  and the set of one-dimensional faces to be the collection of closed intervals,
  \begin{math}
    \I_{\B,\EquiOneDimEdge} = \bigl\{\, [x_i+t, x_{i+1}+t] \st i=0, \dots, {n-1} \text{
      and } t\in\Z \,\bigr\}. 
  \end{math}
  Then $\I_{\B} = \{\emptyset\} \cup \I_{\B,\EquiOneDimPoint} \cup \I_{\B,\EquiOneDimEdge}$ is a
  locally finite 
  polyhedral
  complex
  .
\end{example}

\begin{example}[Standard triangulations of~$\R^2$ \cite{bhk-IPCOext}]\label{ex:2d-standard-triangulation}
   Let $q$ be a positive integer.  Consider the
arrangement~$\mathcal H_q$ of all hyperplanes (lines) of~$\R^2$ of the form
$\ColVec{0}{1}\cdot \x = b$, $\ColVec{1}{0}\cdot \x = b$, and $\ColVec{1}{1}\cdot\x  = b$,
where $b \in \tfrac{1}{q}\Z$.  The complement of the arrangement~$\mathcal
H_q$ consists of two-dimensional cells, whose closures are the triangles
$$\FundaTriangleLower = \tfrac1q \conv(\{ \ColVec{0}{0}
, \ColVec{1}{0}
, \ColVec{0}{1}
\})\qquad\text{and}\qquad \FundaTriangleUpper = \tfrac1q \conv(\{\ColVec{1}{0}
, \ColVec{0}{1}
,
\ColVec{1}{1}
\})$$ and their translates by elements of the lattice $\smash[t]{\frac1q\Z^2}$. 
We denote by $\P_q$ the collection of these triangles and the vertices and
edges that arise as intersections of the triangles, and the empty set.  Thus $\P_q$ is a locally finite
polyhedral complex
.  Since all
nonempty faces of~$\P_q$ are simplices, it is a triangulation of the
space~$\R^2$.
\end{example}

We give a precise definition of affine linear functions over a domain, suitable for
the general $k$-dimensional case.
\begin{definition}
Let $U \subseteq \R^k$. We say $\pi\colon U \to \R$ is {\em affine (or affine linear)} over $U$
if there exists a \emph{gradient} $\cve \in \R^k$ such that 
for any $\u_1, \u_2 \in U$ we have $$\pi(\u_2) - \pi(\u_1) =  \cve\cdot (\u_2 - \u_1).$$ 
\end{definition}

Given a pure and complete polyhedral complex $\P$, we call a function $\pi\colon \R^k\to \R$ {\em piecewise linear} over $\P$ if it is affine linear over the {\em relative interior} of each face of the complex. Under this definition, piecewise linear functions can be discontinuous. We say the function $\pi$ is \emph{continuous piecewise linear over $\P$} if it is affine over each of the cells of~$\P$ (thus automatically imposing continuity). Most of the results presented in this survey will be about continuous piecewise linear functions. %
%


Motivated by Gomory--Johnson's characterization of minimal valid functions
(\autoref{thm:minimal}), we are interested in functions~$\pi\colon \R^k\to \R$
that are periodic modulo~$\Z^k$, i.e., for all $\ve x\in \R^k$ and all vectors 
$\ve t\in\Z^k$, we have $\pi(\ve x+\ve t) = \pi(\ve x)$.  If $\pi$ is periodic modulo~$\Z^k$
and continuous piecewise linear over a pure and complete complex~$\P$, then we
can assume without loss of generality that $\P$ is
also \emph{periodic modulo~$\Z^k$}, i.e., for all $I\in \P$ and all vectors
$\ve t\in\Z^k$, the translated polyhedron~$I + \ve t$ also is a face of~$\P$.
This is the case in Examples \ref{ex:1d-breakpoint-complex} and~\ref{ex:2d-standard-triangulation}.

\begin{remark}\label{rem:p-finite} If all the cells of the polyhedral complex
  are bounded, the value of a continuous piecewise linear function at any
  point $\x$ can be obtained by interpolating the values of the function at
  the vertices of the minimal face containing $\x$. This is utilized in
  \autoref{s:interpolation}. The assumption of boundedness of the cells can be
  made without loss of generality; see
  \autoref{sec:gen-k-functions}. Moreover, for a periodic continuous piecewise
  linear function over a periodic complex, we can give a finite description
  for $\pi$ by further restricting to the values in $\verts(\P) \cap  D$ where
  $ D = [0,1]^k$ or any set such that $ D + \Z^k = \R^k$. The finiteness of
  the set $\verts(\P) \cap  D$ is guaranteed by the assumption of local
  finiteness in \autoref{def:polyhedralComplex} (iv).
\end{remark}

\subsection{The extended complex $\Delta \P$} \label{section:delta-p-definition}

 For any $I,J,K\subseteq \R^k$, we define the set
\begin{equation}\label{eq:F-def}
F(I,J,K) = \setcond{(\x,\y) \in \R^k \times \R^k}{\x \in I,\, \y \in J,\, \x + \y \in K}.
\end{equation}   
When $I,J,K$ are polyhedra, $F(I,J,K)$ is also a polyhedron. Let $\P$ be a pure, complete polyhedral complex of $\R^k$ and let $\pi$ be a continuous piecewise
linear function over $\P$. In order to study the additivity domain $E(\pi)$, 
we define the family of polyhedra in $\R^k \times \R^k$,
$$\Delta\P =
\setcond{F(I,J,K)}{I, J, K \in \P},$$
which is also polyhedral complex
{\cite[\autoref{equi3:lemma:delta-p-is-complex}]{bhk-IPCOext}}; see \autoref{fig:delta-p}.
\begin{figure}[t]
  \centering
  \includegraphics[width=.44\linewidth]{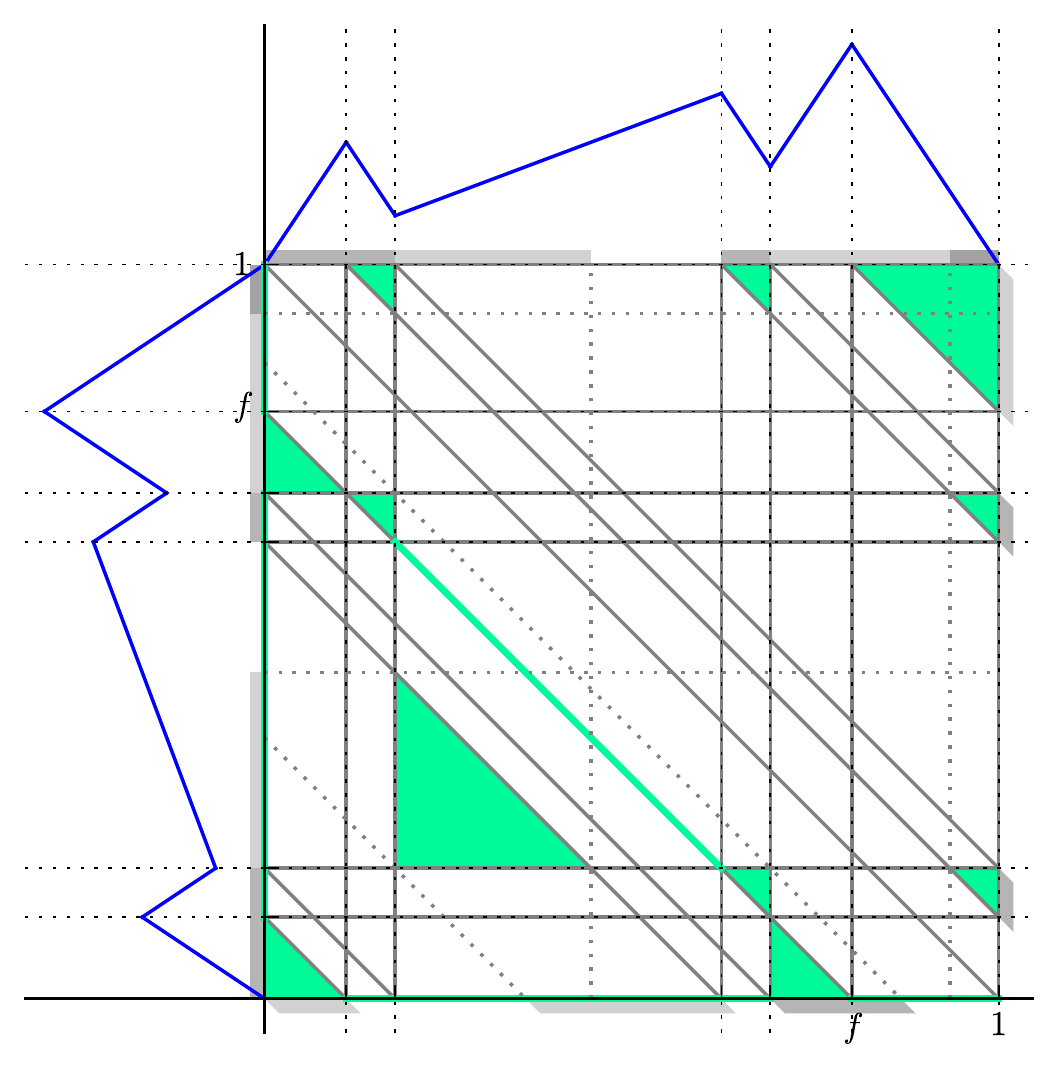}\quad
  \includegraphics[width=.44\linewidth]{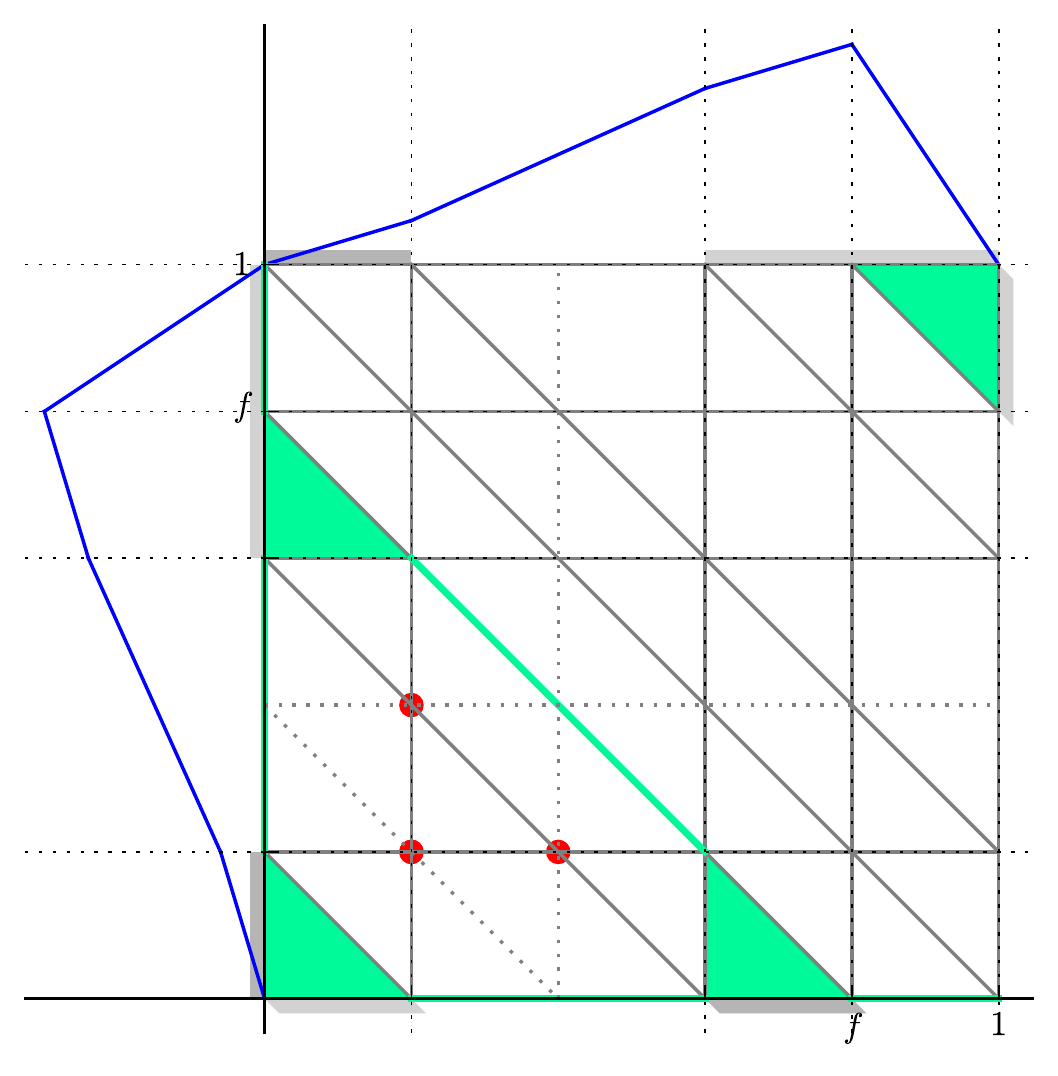}
  \caption{
    Two 
    diagrams of a function (\emph{blue graphs on the top and the left}) and its polyhedral complex $\Delta\P$ (\emph{gray
      solid lines}), as plotted by the command
    \sage{\sagefunc{plot_2d_diagram}(h)}.
    \emph{Left}, \sage{h = \sagefunc{gj_forward_3_slope}()} \emph{(left)}.
    \emph{Right},
    \sage{h = \sagefunc{not_minimal_2}()}.
    The set $E(\pi)$ in both cases is the union of the faces shaded in green.
    The \emph{heavy diagonal green line} $x + y = f$ 
    corresponds to the symmetry
    condition.  
    Vertices of $\Delta\P$ do not necessarily project (\emph{dotted gray lines}) to breakpoints; 
    compare with~\autoref{fig:uniform-spacing}.
    Vertices of the complex on which $\Delta\pi < 0$, i.e., subadditivity is
    violated, are shown as \emph{red dots}; see \autoref{minimality-check}.
    At the borders of each diagram, the projections $p_i(F)$ of two-dimensional
    additive faces are shown as \emph{gray shadows}: $p_1(F)$ at the top border, $p_2(F)$ at
    the left border, $p_3(F)$ at the bottom and the right borders.
  }
  \label{fig:delta-p}
\end{figure}

\begin{figure}[t]
\centering
\scalebox{.7}{\ifpdf
\input{figureProjection.pdftex_t}
\else
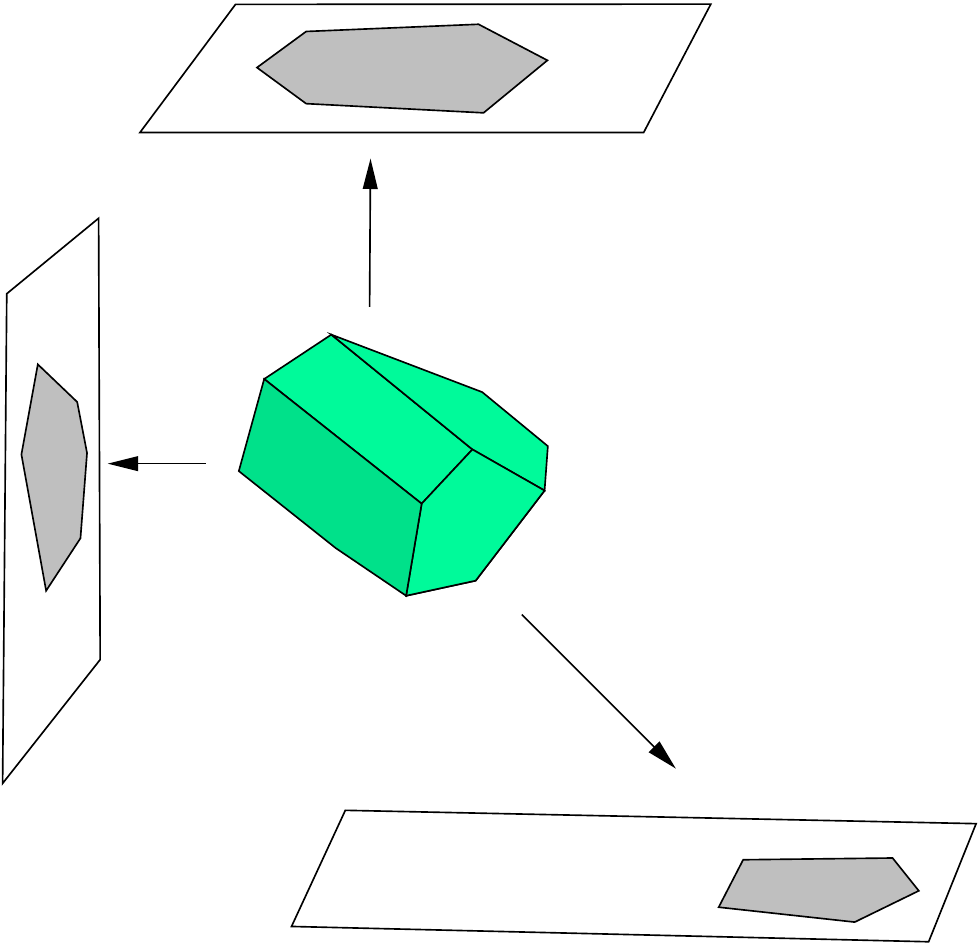
\fi}
\caption{A face $F=F(I,J,K)$ and its projections $I' = p_1(F)$, $J' = p_2(F)$,
  $K' = p_3(F)$. This is an abstract picture; note that if $I',J',K' \subseteq
  \R^2$ are full-dimensional, then $F(I,J,K)$ is actually a full-dimensional set of $\R^4$.}
\label{figProjection}
\end{figure}

Define the projections $p_1,p_2,p_3\colon \R^k\times \R^k \to \R^k$ as
\begin{equation}
\label{eq:projections}
p_1(\x,\y) = \x, \quad p_2(\x,\y) = \y, \quad  p_3(\x,\y) = \x+\y; 
\end{equation}
see \autoref{figProjection}.
Now let $I,J,K \subseteq \R^k$ and let $F = F(I,J,K)$.
Simple formulas for the projections of $F$ are available \cite[\autoref{equi3:prop:projection}]{bhk-IPCOext}:
\begin{subequations}
  \begin{alignat}{2}
    I' := p_1(F(I,J,K)) &= (K + (-J)) \cap I && \subseteq I, \\
    J' := p_2(F(I,J,K)) &= (K+ (-I)) \cap J && \subseteq J, \\
    K' := p_3(F(I,J,K)) &= (I+J) \cap K && \subseteq K.
  \end{alignat}
\end{subequations}
The inclusions $I' \subseteq I$, $J' \subseteq J$, $K' \subseteq K$  may be strict. 
This possibility is illustrated by the largest shaded triangle in
\autoref{fig:delta-p} (left). 
We see that the projections $I'$, $J'$, $K'$ give us a canonical, minimal way of
representing~$F$ as $F(I', J', K')$ \cite[\autoref{equi3:lemma:setsIJKF}]{bhk-IPCOext}.
Note that $I'$, $J'$, $K'$ are not faces of~$\P$ in general, even if $I$, $J$,
$K$ were faces; see again \autoref{fig:delta-p} (left). 

We will study the function $\Delta\pi\colon \R^k \times \R^k \to \R$, as defined in~\eqref{eq:delta-pi-definition}, which measures the slack in the subadditivity constraints.  When $\pi$ is continuous piecewise linear over $\P$, we have that $\Delta \pi$ is continuous piecewise linear over $\Delta \P$ (\autoref{equi3:lem:delta-pi-cts} in \cite{bhk-IPCOext}).


\begin{remark}\label{rem:delta-p-finite}
  If $\pi$ and $\P$ are periodic modulo~$\Z^k$, then 
  $\Delta\pi$ and $\Delta\P$ are periodic modulo~$\Z^k\times\Z^k$.  
 Echoing \autoref{rem:p-finite}, one can make the description of $\Delta\pi$ finite by recording the values of $\Delta\pi$ on a smaller set;
for example, the set $\verts(\Delta\P) \cap ([0,1]^k \times [0,1]^k)$. One may also replace $[0,1]^k\times [0,1]^k$ by $ D \times  D$ for any $ D$ satisfying $ D + \Z^k = \R^k$.
\end{remark}

\subsection{Genuinely $k$-dimensional functions}\label{sec:gen-k-functions}

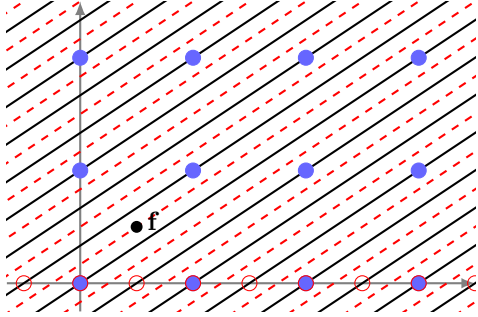
\begin{figure}[tp]
  \centering
  \begin{tikzpicture}[scale = 1.5]


    \def\xminAxis{-0.65}
    \def\xmaxAxis{3.5}
    
    \def\yminAxis{-0.25}
    \def\ymaxAxis{2.5}
    
    \def\xmin{-1}
    \def\xmax{5}
    
    \def\ymin{-2}
    \def\ymax{5}
    
    \def\latticePointSize{2}
    
    \begin{scope}
    \clip (\xminAxis,\yminAxis) -- (\xmaxAxis,\yminAxis) -- (\xmaxAxis,\ymaxAxis) -- (\xminAxis,\ymaxAxis) -- (\xminAxis,\yminAxis);

    \coordinate (Origin)   at (0,0);
    \coordinate (XAxisMin) at (\xminAxis,0);
    \coordinate (XAxisMax) at (\xmaxAxis,0);
    \coordinate (YAxisMin) at (0,\yminAxis);
    \coordinate (YAxisMax) at (0,\ymaxAxis);

    \draw [thick, gray,-latex] (XAxisMin) -- (XAxisMax);
    \draw [thick, gray,-latex] (YAxisMin) -- (YAxisMax);

     \foreach \x in {-15,...,6}{
            \pgfmathtruncatemacro{\y}{\x+15}%
           
             \draw[black,thick] (\x,-4) -- (\y,6);           
            \draw[black, thick] (\x,-5) -- (\y,5);   
            
             \draw[red, dashed, thick]   (\x,-5.5) -- (\y,4.5);   
             \draw[red, dashed, thick]   (\x,-4.5) -- (\y,5.5);   
     }
       
    \foreach \x in {\xmin,...,\xmax}{
      \foreach \y in {\ymin,...,\ymax}{
      
       		 \node[draw=blue!60,circle,inner sep=\latticePointSize pt,fill=blue!60] at (\x,\y) {};
      }
    }
    
            \foreach \x in {\xmin,...,\xmax}{
    		 \node[draw=red,circle,inner sep=\latticePointSize pt] at (\x,0) {};
    		}
          \foreach \x in {-3.5,...,7.5}{
                  \node[draw=red,circle,inner sep=\latticePointSize pt] at (\x,0) {};
                }

     \node at (0.65,0.55) {$\f$};
    \fill [black] (.5,.5) circle (1.5pt);
    
  \end{scope}  
  
%
             

  \end{tikzpicture}
  \caption{A piecewise linear function $\pi\colon \R^2 \to \R$ defined by
    interpolation between values of $0$ in on the black solid lines and $1$ on
    the dashed red lines.  The blue dots depict the lattice $\Z^2$. In
    particular, $\pi(x,0) = \min(4x, 2-4x)$ for $x \in [0,\tfrac{1}{2}]$.
    This is extended to the $x$-axis by $\pi(x,0) = \pi(x \bmod
    {\tfrac{1}{2}}, 0)$.  
    The points $(x,0)$ for $x\in\frac12\Z$ are shown as red circles.
    Finally, we can write $\pi(x,y) =\pi(\frac12(2x -3y),0)$ for
    all $(x,y) \in \R^2$.  
    This function is not genuinely two-dimensional, which is demonstrated by 
    a function $\phi\colon\R\to\R$ and a linear map $T\colon \R^2\to\R$ such
    that $\pi = \phi \circ T$.  Many choices for this pair $\phi$, $T$ are
    possible.  For $\phi(t) = \pi(\frac12t,0)$ and $T(x,y) = 2x - 3y$, 
    we have $T\Z^2 = \Z$, which satisfies the conditions in
    \autoref{prop:dim-reduction}.}
  \label{fig:not-gen-2-dim}
\end{figure}

In this subsection we show that when analyzing minimal functions it suffices to consider ``full-dimensional'' minimal functions. We formalize this in the following definition and proposition.


\begin{definition}\label{def:genk}
A function $\pi\colon \R^k \rightarrow \R$ is {\em genuinely $k$-dimensional}
if there does not exist a function $\phi \colon \R^{k-1} \rightarrow \R$ and a linear
map $T \colon \R^k \rightarrow \R^{k-1}$ such that $\pi = \phi\circ T$. 
\end{definition}

An example of a function that is not genuinely $k$-dimensional is described in Figure~\ref{fig:not-gen-2-dim}.

\begin{prop}[Dimension reduction; {\cite[\autoref{equi3:prop:dim-reduction}]{bhk-IPCOext}}]\label{prop:dim-reduction}
Let $\P$ be a pure and complete polyhedral complex in $\R^k$ that is periodic modulo $\Z^k$. Let $\pi\colon\R^k \to \R$ be a continuous piecewise linear function over $\P$, such that $\pi$ is nonnegative, subadditive, periodic modulo $\Z^k$ and $\pi(\0) = 0$. If $\pi$ is not genuinely $k$-dimensional, then there exists a natural number $0 \leq \ell < k$, a pure and complete polyhedral complex $\mathcal{X}$ in $\R^\ell$ that is periodic modulo $\Z^\ell$, a nonnegative and subadditive function $\phi\colon\R^\ell \to \R$ that is continuous piecewise linear over $\mathcal{X}$, and a point $\f' \in \R^\ell \setminus \Z^\ell$ with the following properties:
\begin{enumerate}
\item $\pi$ is minimal for $R_\f(\R^k, \Z^k)$ if and only if $\phi$ is minimal for $R_{\f'}(\R^\ell, \Z^\ell)$. 
\item $\pi$ is extreme for $R_\f(\R^k, \Z^k)$ if and only if $\phi$ is extreme for $R_{\f'}(\R^\ell, \Z^\ell)$.
\end{enumerate}
\end{prop}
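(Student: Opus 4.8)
The plan is to exploit the two kinds of invariance of $\pi$ simultaneously: its periodicity modulo $\Z^k$ and the translation invariance coming from the factorization $\pi = \phi_0 \circ T_0$. First I would assume without loss of generality that $T_0 \colon \R^k \to \R^{k-1}$ is surjective (otherwise restrict its codomain to $\operatorname{im} T_0$ and recurse), so that $\ker T_0 = \R\v$ is a line and $\pi(\x + t\v) = \pi(\x)$ for all $t \in \R$, $\x \in \R^k$. Combined with periodicity, $\pi$ is invariant under the subgroup $G_0 = \Z^k + \R\v$, and since $\pi$ is continuous it is invariant under the closure $\overline{G_0}$. By the structure theorem for closed subgroups of $\R^k$, one has $\overline{G_0} = V \oplus \Lambda'$, where $V$ is a linear subspace (the lineality) containing $\v$ and $\Lambda'$ is a lattice in a complement. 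Setting $\ell = k - \dim V$ gives $\ell \le k-1 < k$, as required.

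Next I would pass to the quotient $q \colon \R^k \to \R^k / V =: W \cong \R^\ell$. Because $\pi$ is invariant under $V$, it factors as $\pi = \bar\phi \circ q$ with $\bar\phi$ continuous. The key lattice computation is that $q(\Z^k) = q(G_0)$ is dense in its closure $q(\overline{G_0}) = q(\Lambda')$, which is discrete and spans $W$; since a dense subgroup of a discrete group is the whole group, $q(\Z^k) = q(\Lambda')$ is a full-rank lattice in $W$. Choosing a linear isomorphism $A \colon W \to \R^\ell$ with $A\,q(\Z^k) = \Z^\ell$, I set $T = A \circ q$, $\phi = \bar\phi \circ A^{-1}$, and $\f' = T\f$, so that $\pi = \phi \circ T$ with $T$ surjective and $T\Z^k = \Z^\ell$. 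Nonnegativity, subadditivity, continuity, and $\phi(\0) = 0$ transfer at once using surjectivity of $T$, and the complex $\mathcal{X}$ on which $\phi$ is continuous piecewise linear is obtained from the images under $T$ of the faces of $\P$, refined to a complex and made periodic modulo $\Z^\ell$. (In the degenerate case $T\f \in \Z^\ell$, the $V$-invariance forces $\pi(\f) = \pi(\0) = 0$, so $\pi$ is non-minimal and non-extreme, and the reduction is needed only when $\f' \notin \Z^\ell$.)

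For validity and the minimality equivalence I would set up an aggregation/disaggregation correspondence between feasible points. Given $y \in R_\f(\R^k,\Z^k)$, the aggregate $y'(\u) = \sum_{T\rx = \u} y(\rx)$ lies in $R_{\f'}(\R^\ell,\Z^\ell)$ because $T$ sends $\f + \Z^k$ into $\f' + \Z^\ell$, and it preserves the pairing, $\sum_\rx \pi(\rx) y(\rx) = \sum_\u \phi(\u) y'(\u)$. Conversely a feasible $y'$ lifts by choosing one preimage per point of its support; since $T^{-1}(\f' + \Z^\ell) = \f + \Z^k + \ker T$, the resulting sum differs from a point of $\f + \Z^k$ by a residual in $\ker T$, which is absorbed by shifting a single preimage within its fiber, legally because $\pi$ (hence $\phi$) is $\ker T$-invariant. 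This makes validity, vanishing on the lattice, subadditivity, and symmetry pass both ways, giving minimal $\iff$ minimal via \autoref{thm:minimal}.

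The heart of the argument is the extremality equivalence. The direction $\pi$ extreme $\Rightarrow$ $\phi$ extreme is easy: lift a decomposition $\phi = \tfrac12(\phi^1 + \phi^2)$ into valid functions to $\pi = \tfrac12(\phi^1\circ T + \phi^2\circ T)$, apply extremality of $\pi$, and cancel $T$ by surjectivity. The harder direction, $\phi$ extreme $\Rightarrow$ $\pi$ extreme, requires showing every decomposition $\pi = \tfrac12(\pi^1 + \pi^2)$ into valid functions \emph{descends} through $q$. Here I would use that $\pi(\x + t\v) = \pi(\x)$ forces $\Delta\pi(\x, t\v) = \pi(t\v) = 0$, so $\{(\x, t\v)\} \subseteq E(\pi)$; by \autoref{lem:tightness}(i),(ii) the $\pi^i$ are minimal with $E(\pi) \subseteq E(\pi^i)$, and minimality with nonnegativity gives $\pi^i(t\v) = 0$, hence $\pi^i(\x + t\v) = \pi^i(\x)$. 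Since $\pi$ is continuous piecewise linear, \autoref{lem:tightness}(iv) makes the $\pi^i$ continuous, so their invariance under $\R\v$ and $\Z^k$ extends to $\overline{G_0} \supseteq V$; thus each $\pi^i$ descends to a valid $\phi^i$ with $\phi = \tfrac12(\phi^1 + \phi^2)$, and extremality of $\phi$ forces $\phi^1 = \phi^2 = \phi$, whence $\pi^1 = \pi^2 = \pi$. I expect this invariance-transfer step — propagating the $\ker T$-invariance, and ultimately the full $V$-invariance, from $\pi$ to its putative components — to be the main obstacle, alongside the bookkeeping needed to build $\mathcal{X}$ and to dispose of the degenerate case $\f' \in \Z^\ell$.
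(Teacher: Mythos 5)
Your construction follows what is essentially the same route as the paper's proof: factor $\pi = \phi_0 \circ T_0$; observe that $\pi$ is invariant under $\Z^k + \ker T_0$ and hence, by continuity, under its closure $V \oplus \Lambda'$ (structure theorem for closed subgroups of $\R^k$); quotient by $V$ and note that $q(\Z^k)$ is a dense subgroup of the discrete, full-rank group $q(\Lambda')$, hence equal to it, so it can be normalized to $T\Z^k = \Z^\ell$; transfer validity and minimality by aggregation/disaggregation of variables (your fiber-shifting trick to absorb the $\ker T$ residual is the right device, noting that a weight $>1$ at the shifted point is handled by splitting off unit weight or shifting by $\ve k/y'(\u_0)$); and prove the hard direction of the extremality equivalence by propagating the invariance to $\pi^1,\pi^2$ via $E(\pi) \subseteq E(\pi^i)$ from Lemma~\ref{lem:tightness}\,(i),(ii), nonnegativity of minimal functions (evaluate the additivity $(\x,t\v)$ at $\x = -t\v$ to get $\pi^i(t\v)=0$), and the continuity supplied by Lemma~\ref{lem:tightness}\,(iv). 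The construction of $\mathcal{X}$ by refining the images $T(I)$, $I \in \P$, is only sketched, but the verification that $\phi$ is affine on each cell (using that $\pi$ is constant on fibers of $T$ and affine on cells of $\P$) is routine.

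The one step that does not discharge the statement as written is the degenerate case $T\f \in \Z^\ell$. The proposition asserts \emph{unconditionally} that there exist $\phi$, $\mathcal{X}$, and a point $\f' \in \R^\ell \setminus \Z^\ell$ satisfying both equivalences, so ``the reduction is needed only when $\f' \notin \Z^\ell$'' leaves this case open. You correctly note that then $\pi(\f) = \pi(\0) = 0$, so $\pi$ is not even valid (consider the feasible solution placing weight $1$ at $\f$), hence neither minimal nor extreme. But a witness must still be produced, and it cannot be your constructed $\phi$ paired with some other non-integral right-hand side: take $k=2$, $\pi(x_1,x_2) = g(x_1)$ where $g$ is the \sagefunc{gmic} function with $f$-parameter $\tfrac12$, and $\f = (0,\tfrac12)$. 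Then $T\f = 0 \in \Z$ and $\pi$ is invalid for $R_{\f}(\R^2,\Z^2)$, yet the descended function $\phi = g$ is extreme for $R_{1/2}(\R,\Z)$, so with the choice $\f' = \tfrac12$ both equivalences fail. The repair is trivial but must be stated: in this degenerate case output instead a function that is non-minimal and non-extreme for every non-integral right-hand side, e.g.\ $\phi \equiv 0$ over $\R^{k-1}$ with any $\f' \notin \Z^{k-1}$; then both sides of (1) and (2) are false and the equivalences hold vacuously. (There remains the corner $k=1$, where ``not genuinely $1$-dimensional'' together with $\pi(\0)=0$ forces $\pi \equiv 0$ and $\ell = 0$, so no admissible $\f'$ exists at all; that is an artifact of the statement itself rather than of your argument.)
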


The above idea first appears in~\cite[Construction 6.3]{dey3}, where the authors give a construction to obtain two-dimensional minimal functions from one-dimensional minimal functions, and show that all minimal functions for $k=2$ with $2$ slopes can be obtained using such a construction~\cite[Theorem 6.4]{dey3}. The construction is exactly via the use of a linear map as described in~\autoref{def:genk}. In fact, their result is a special case of \autoref{prop:dim-reduction} and the simple observation that subadditive, genuinely $k$-dimensional functions have at least $k+1$ slopes or gradient values (see also the conclusion of Theorem~\ref{thm:k+1slope}).
%

\begin{remark}[Dimension reduction;  {\cite[\autoref{equi3:remark:dimension-reduction}]{bhk-IPCOext}}]
\label{remark:dimension-reduction}

Using Proposition~\ref{prop:dim-reduction}, the extremality/minimality question for $\pi$ that is not genuinely $k$-dimensional can be reduced to the same question for a lower-dimensional genuinely $\ell$-dimensional function with $\ell < k$. When $\P$ is a rational polyhedral complex, this reduction can be done algorithmically. 
\end{remark}

Next, we show that genuinely $k$-dimensional functions that are continuous piecewise linear enjoy several regularity properties which can often simplify the investigation of minimal valid functions that are continuous piecewise linear functions.

%
\begin{theorem}[{\cite[\autoref{equi3:thm:faVertex-gen-k-dim}]{bhk-IPCOext}}]
\label{thm:faVertex-gen-k-dim}
Let $\P$ be a pure and complete polyhedral complex in $\R^k$ that is periodic modulo $\Z^k$. Let $\theta \colon \R^k \rightarrow \R$ be a minimal valid function for $R_\f(\R^k, \Z^k)$ that is continuous piecewise linear over $\P$, and is genuinely $k$-dimensional.  Then,
\begin{itemize}
\item[(i)] $\f \in \verts(\P)$.
\item[(ii)] The cells of $\P$ and $\Delta \P$ are full-dimensional polytopes. 
\end{itemize}
\end{theorem}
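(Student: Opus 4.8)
The plan is to derive both conclusions by showing that any failure would produce a direction $\ve v \neq \0$ along which $\theta$ is translation-invariant, i.e. $\theta(\x + s\ve v) = \theta(\x)$ for all $\x \in \R^k$ and all $s \in \R$; such a $\ve v$ gives a factorization $\theta = \phi \circ T$ through the quotient $T\colon \R^k \to \R^k/\langle \ve v\rangle \cong \R^{k-1}$, contradicting that $\theta$ is genuinely $k$-dimensional.

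For~(i) I would first record the consequences of \autoref{thm:minimal}: $0 \le \theta \le 1$, $\theta(\0)=0$, $\theta(\f)=1$, and the symmetry $\theta(\x)+\theta(\f-\x)=1$. Let $G \in \P$ be the face with $\f \in \relint G$ and suppose for contradiction that $\dim G \ge 1$. Since $\theta$ is continuous piecewise linear over $\P$, its restriction to $G$ is affine; as this affine function attains the global maximum value $1$ at the relative interior point $\f$, it must be constant, so $\theta \equiv 1$ on $G$. Picking a direction $\ve w$ of $\aff G$, we get $\theta(\f + s\ve w)=1$ for all small $s$, and symmetry then gives $\theta(s\ve w)=0$ for all small $s$. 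Subadditivity is the engine: from $\theta(s\ve w)=0$ one gets both $\theta(\x+s\ve w)\le \theta(\x)+\theta(s\ve w)=\theta(\x)$ and $\theta(\x)\le \theta(\x+s\ve w)+\theta(-s\ve w)=\theta(\x+s\ve w)$, so $\theta(\x+s\ve w)=\theta(\x)$ for all $\x$ and all small $s$; iterating in steps extends this to all $s \in \R$. This is the forbidden invariance, so $\dim G = 0$ and $\f \in \verts(\P)$.

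For~(ii), the full-dimensionality of the cells of $\P$ is immediate: a pure, complete, locally finite complex whose cells had dimension $<k$ could not cover $\R^k$. For $\Delta\P$, I would obtain full-dimensionality from the observation that $\Delta\P$ is itself a complete, locally finite complex in $\R^k \times \R^k$ (completeness holds since every $(\x,\y)$ lies in $F(I,J,K)$ for the minimal faces $I \ni \x$, $J \ni \y$, $K \ni \x+\y$): in any complete, locally finite complex a maximal face is full-dimensional, since a relative-interior point of a lower-dimensional maximal face would fail to have a neighborhood covered by the complex. Boundedness of the cells of $\Delta\P$ then reduces to that of $\P$, because $F(I,J,K) \subseteq I \times J$ and every face of $\P$ is bounded once its cells are.

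The main obstacle is the boundedness of the cells of $\P$. Here I would suppose some cell $C$ is unbounded, so $\rec C$ contains a direction $\ve v \neq \0$. Using that $\P$ is periodic modulo $\Z^k$ and locally finite, I claim there is a nonzero lattice vector $\ve t \in \Z^k$ with $C + \ve t = C$: taking an interior ball $B(\x_0,\delta)\subseteq C$ and using $\x_0 + t\ve v \in \intr C$ for $t\ge 0$, one finds (from the structure of the closure of the one-parameter subgroup $\{\,u\ve v \bmod \Z^k\,\}$, a positive-dimensional subtorus attached to a rational subspace containing $\R \ve v$) a nonzero $\ve t\in\Z^k$ and $u \ge 0$ with $\lVert u\ve v-\ve t\rVert<\delta$; then $\intr C$ and $\intr(C+\ve t)$ meet, forcing $C = C+\ve t$, so $\ve t \in \lin C$. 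Translation by a lineality vector fixes every face of $C$ setwise, so $\ve t$ propagates across shared facets; since the cell-adjacency graph of a complete complex in $\R^k$ is connected, $\ve t \in \lin C'$ for every cell $C'$. Finally, boundedness of $\theta$ forces its gradient on each cell to be orthogonal to $\ve t$, and continuity across cells upgrades this to the global invariance $\theta(\x + s\ve t)=\theta(\x)$, once more contradicting genuine $k$-dimensionality. I expect the delicate point to be precisely this one-parameter-subgroup/overlap lemma producing the lattice lineality vector, which is where periodicity and local finiteness must be combined; the subsequent propagation and the gradient argument are routine.
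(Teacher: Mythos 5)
Your proposal is correct and follows essentially the same route as the cited source for this theorem: both conclusions are reduced to producing a nonzero direction along which $\theta$ is translation-invariant (so that $\theta$ factors through a linear map to $\R^{k-1}$, contradicting genuine $k$-dimensionality), with (i) obtained from the maximum-at-a-relative-interior-point, symmetry, and subadditivity argument, and (ii) from a lattice-recurrence argument yielding a nonzero $\ve t \in \Z^k$ with $C + \ve t = C$, whose lineality then propagates across the complex and forces every gradient of $\theta$ to be orthogonal to $\ve t$. The points you leave implicit (taking $u$ large enough that $\ve t \neq \0$, local finiteness of $\Delta \P$, and connectivity of the cell-adjacency graph) are indeed routine and resolve exactly as you indicate; your Kronecker/orbit-closure step can even be replaced by a simpler pigeonhole on the finitely many cells of $\P$ modulo $\Z^k$.
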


\subsection{Finite test for minimality}\label{section:minimalityTest}
  
One of the main advantages of working with minimal valid functions that are piecewise linear is their combinatorial structure, which avoids many analytical complexities. Moreover, it is possible to give a finite description of~$\pi$.  For example, it suffices to know the values of $\pi$ on the unit hypercube $ D = [0,1]^k$, which can in turn be broken into a finite number of polytopes over which $\pi$ is simply an affine function. Of course, any choice of $ D$ such that $ D + \Z^k = \R^k$ suffices to obtain such a finite description, and $ D = [0,1]^k$ is just one such choice. In certain situations, other choices of $ D$ may be more natural, and provide a shorter description. 


%
%

By Theorem~\ref{thm:minimal}, we can test whether a periodic function is minimal by
testing subadditivity, the symmetry condition, and the value at the origin. These properties are easy to test when the function is continuous piecewise linear.  The first of such tests came from Gomory and Johnson \cite[Theorem
7]{tspace} for the case $k=1$.\footnote{Note that in \cite{tspace}, the word ``minimal'' needs to be replaced by ``satisfies the symmetry condition'' throughout the statement of their
  theorem and its proof.} 
Richard, Li, and Miller \cite[Theorem
22]{Richard-Li-Miller-2009:Approximate-Liftings} extended it to the case of
discontinuous piecewise linear functions.\footnote{They present it in a setting
  of pseudo-periodic superadditive functions, rather than periodic subadditive
  functions.
} A test for subadditivity of continuous piecewise linear functions for the two-row problem was given in~\cite[Proposition 10]{dey3} that reduces to testing subadditivity at vertices, edges, and the so-called {\em supplemental vertices.}
We present a minimality test for continuous piecewise linear functions for
general $k$.  To simplify notation, we restrict ourselves to the continuous
case.\footnote{A discontinuous version of \autoref{minimality-check}
  appears in \cite[Theorem 2.5]{basu-hildebrand-koeppe:equivariant}, where it
  is stated for the case $k=1$; it extends verbatim to general~$k$.  
  All relevant limits of the function at discontinuities are taken care of by testing
  \begin{equation}
    \Delta\pi_F(u,v) = \lim_{\substack{(x,y) \to (u,v)\\ (x,y) \in \relint(F)}} \Delta\pi(x,y)
  \end{equation}
  for all faces $F\in\Delta\P$ that contain the vertex $(u,v)$.
  For $k=1$, by analyzing the possible faces~$F$, one recovers the explicit
  limit relations stated in \cite[Theorem
  22]{Richard-Li-Miller-2009:Approximate-Liftings}.
}
The test is stated in terms of the set of vertices
$\verts(\Delta\P)$ of the complex~$\Delta\P$; see again \autoref{fig:delta-p}
for an illustration. This uses the observation made in \autoref{rem:p-finite} that the function values for a continuous piecewise linear function can be obtained by interpolating the values at $\verts(\P)$.\footnote{A different approach is taken in~\cite[Proposition 10]{dey3} where the subadditivity test uses so-called {\em supplemental vertices} which are introduced to get around the problem of unbounded cells.}

%


\begin{theorem}[Minimality test {\cite[\autoref{equi3:minimality-check}, \autoref{equi3:rem:f-break-point}]{bhk-IPCOext}}]
\label{minimality-check}
Let $\P$ be a pure, complete, polyhedral complex  in $\R^k$ that is periodic
modulo $\Z^k$ and every cell of $\P$ is bounded.%
\footnote{This is not restrictive due to \autoref{thm:faVertex-gen-k-dim}(ii)
  and \autoref{prop:dim-reduction}(1).}
Let $\Delta D = [0,1]^k \times
[0,1]^k$.\footnote{Instead of $\Delta D = [0,1]^k \times [0,1]^k$, one can choose $\Delta  D =  D \times  D$ for any $ D$ such that $ D + \Z^k = \R^k$; see the discussion in~\cite{bhk-IPCOext}.}
Let $\pi\colon \R^k
\to\R$ be a nonnegative 
continuous
piecewise linear function over $\P$ that is periodic
modulo $\Z^k$.  Let $\f \in \verts(\P)$.\footnote{
  For $k=1$, necessarily $\f\in \verts(\P)$ \cite[Lemma
  2.4]{basu-hildebrand-koeppe:equivariant}.  The same is true 
  for genuinely $k$-dimensional functions
  (\autoref{thm:faVertex-gen-k-dim}). 
  If, however, $\f \notin \verts(\P)$, then the condition
  \eqref{symmetry-test} in the symmetry test must be replaced by a slightly more complicated condition
  (as stated in \cite[\autoref{equi3:minimality-check}, \autoref{equi3:rem:f-break-point}]{bhk-IPCOext}). 
  Let $S = \{\,(\u,\v)\st \u + \v \equiv \f \textstyle\pmod{\ve1}\,\}$.
  Then $\Delta\P\cap S := \{\, F\cap S: F \in \Delta\P\, \}$ is again a
  polyhedral complex.  The condition~\eqref{symmetry-test} is then replaced by:
  $$\Delta\pi(\u,\v) = 0 \quad\text{for all}\quad
  (\u,\v)\in \Delta D \cap \verts(\Delta \P \cap S).$$
}
Then $\pi$ is minimal for $R_\f(\R^k, \Z^k)$ if and only if the following conditions hold:
\begin{enumerate}
\item $\pi(\0) = 0$,
\item Subadditivity test: $\Delta\pi(\u,\v) \geq 0$ for all $(\u,\v) \in \Delta D \cap \verts(\Delta \P)$.
\item Symmetry test: $\pi(\f) = 1$ and 
  \begin{equation}
    \Delta\pi(\u,\v) = 0 \quad\text{for all}\quad
    (\u,\v)\in \Delta D \cap \verts(\Delta \P)\quad\text{with}\quad \u + \v
    \equiv \f \textstyle\pmod{\Z^k}.\label{symmetry-test}
  \end{equation}
  Here $({\bmod}\ \Z^k)$ denotes componentwise equivalence modulo~$1$. 
\end{enumerate}
\end{theorem}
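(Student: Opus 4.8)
The plan is to derive the test from the Gomory--Johnson characterization (\autoref{thm:minimal}): a nonnegative $\pi$ is minimal for $R_\f(\R^k,\Z^k)$ if and only if $\pi$ vanishes on $\Z^k$, is subadditive, and is symmetric. Since $\pi$ is periodic modulo $\Z^k$, the requirement $\pi\equiv 0$ on $\Z^k$ collapses to the single equation $\pi(\0)=0$, which is condition~(1). So the whole content is to show that the two remaining \emph{infinite} families of conditions---global subadditivity $\Delta\pi\ge 0$, and the symmetry condition $\pi(\x)+\pi(\f-\x)=1$---are captured by the finite vertex tests (2) and~(3). The converse direction is immediate and I would dispose of it first: if $\pi$ is minimal then $\Delta\pi\ge 0$ everywhere, hence at every vertex; and plugging $\x=\0$ into symmetry gives $\pi(\f)=1$, whence for any $\u+\v\equiv\f\pmod{\Z^k}$ periodicity yields $\Delta\pi(\u,\v)=\pi(\u)+\pi(\v)-\pi(\f)=0$. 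Thus conditions (2) and (3) hold, and the work lies entirely in the forward direction.

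For subadditivity I would use the fact, cited from~\cite{bhk-IPCOext}, that $\Delta\pi$ is continuous piecewise linear over $\Delta\P$, hence affine on each cell. Because every cell of $\P$ is bounded, every face $F(I,J,K)$ of $\Delta\P$ is bounded as well, so on each cell the affine function $\Delta\pi$ attains its minimum at a vertex; as the cells cover $\R^k\times\R^k$, the global minimum of $\Delta\pi$ is attained at some vertex of $\Delta\P$. By the periodicity of $\Delta\pi$ and $\Delta\P$ modulo $\Z^k\times\Z^k$ (\autoref{rem:delta-p-finite}) it suffices to examine vertices in the fundamental domain $\Delta D$. Hence condition~(2) is equivalent to $\Delta\pi\ge 0$ everywhere, i.e.\ to global subadditivity.

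For symmetry I would first record that, by periodicity of $\pi$, the symmetry condition is equivalent to $\pi(\x)+\pi(\y)=1$ on the set $S=\{(\x,\y):\x+\y\equiv\f\pmod{\Z^k}\}$; and once $\pi(\f)=1$ is in hand (part of condition~(3)), this is in turn equivalent to $\Delta\pi\equiv 0$ on $S$, since $\pi(\x+\y)=\pi(\f)=1$ there. Having already secured global subadditivity, I know $\Delta\pi\ge 0$. On each cell of the refined complex $\Delta\P\cap S$ the restriction of $\Delta\pi$ is affine, and an affine function vanishing at all vertices of a polytope vanishes on it; so if $\Delta\pi$ vanishes at every vertex of $\Delta\P\cap S$, it vanishes on all of $S$. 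The only gap is that condition~(3) tests vertices of $\Delta\P$ lying on $S$, not the a~priori finer vertex set $\verts(\Delta\P\cap S)$.

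Closing this gap is the crux of the argument, and it is exactly where the hypothesis $\f\in\verts(\P)$ is used; I expect this transversality step to be the main obstacle, the rest being bookkeeping with periodicity and the ``affine on bounded cells'' principle. I would show every vertex $w=(\u,\v)$ of $\Delta\P\cap S$ is already a vertex of $\Delta\P$. If not, $w$ lies in the relative interior of a positive-dimensional face $F=F(I,J,K)$, and for $w$ to be isolated in $F\cap S$ the sheet $\{\x+\y=\f+\ve t\}$ must cross $F$ transversally, forcing $\dim F=1$ and $p_3$ non-constant on $F$; then $\f+\ve t=p_3(w)$ lies in the relative interior of the segment $p_3(F)=(I+J)\cap K\subseteq K$. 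But $\f\in\verts(\P)$ and periodicity make $\f+\ve t$ a vertex of $\P$, hence (by the complex axioms) a vertex, i.e.\ an extreme point, of the face $K$ containing it, and an extreme point of $K$ cannot lie in the relative interior of a positive-dimensional subset of $K$---a contradiction. Therefore testing $\verts(\Delta\P)$ suffices, completing the forward direction and the equivalence. (When $\f\notin\verts(\P)$ this step breaks, and one must instead use $\verts(\Delta\P\cap S)$, as the footnote indicates.)
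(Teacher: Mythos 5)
Your overall route is the same as the paper's: the survey states this theorem with a citation to \cite{bhk-IPCOext} rather than an in-text proof, but the structure revealed by the theorem's own footnote (the complex $\Delta\P\cap S$) is exactly what you do --- reduce to the Gomory--Johnson characterization (\autoref{thm:minimal}); get global subadditivity from the vertex test because $\Delta\pi$ is affine on the bounded cells of the complete complex $\Delta\P$ and everything is periodic modulo $\Z^k\times\Z^k$; rewrite symmetry (given $\pi(\f)=1$) as $\Delta\pi\equiv 0$ on $S=\{(\x,\y)\st \x+\y\equiv\f\pmod{\Z^k}\}$; and finally show that the hypothesis $\f\in\verts(\P)$ makes every vertex of $\Delta\P\cap S$ a vertex of $\Delta\P$, so that testing $\verts(\Delta\P)$ suffices. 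All of these steps are sound, including the reduction to the minimal face $F$ containing $w$ and the extreme-point argument in $K$.

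One claim in your crux step is false, though harmlessly so: you assert that isolation of $w$ in $F\cap S$ forces $\dim F=1$. Each sheet $H_{\ve t}=\{(\x,\y)\st \x+\y=\f+\ve t\}$ has codimension $k$ in $\R^k\times\R^k$, not codimension $1$, so for $k\ge 2$ a transversal isolated intersection can occur with $\dim F$ as large as $k$ (e.g.\ $k=2$ and $F$ a two-dimensional face whose affine hull meets $H_{\ve t}$ in a single point); consequently $p_3(F)$ need not be a segment. Fortunately your argument never uses $\dim F=1$. What isolation genuinely forces is only that $p_3$ is non-constant on $F$: otherwise $F\subseteq H_{\ve t}$, so $F\cap H_{\ve t}=F$ and the point $w\in\relint(F)$ could not be a $0$-dimensional face of it. Non-constancy makes $p_3(F)=(I+J)\cap K$ a \emph{positive-dimensional} convex subset of $K$, and since a linear image of a relative interior is the relative interior of the image, $\f+\ve t=p_3(w)\in\relint\bigl(p_3(F)\bigr)$; meanwhile $\{\f+\ve t\}\in\P$ by periodicity, so $\{\f+\ve t\}\cap K$ being a face of $K$ makes $\f+\ve t$ an extreme point of $K$ --- the same contradiction you derive. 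So replace ``forcing $\dim F=1$'' and ``the segment $p_3(F)$'' by ``forcing $p_3$ non-constant on $F$'' and ``the positive-dimensional convex set $p_3(F)$'', and your proof is complete as written.
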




\subsection{Combinatorializing the additivity domain}
\label{section:additivity-discretized}


Let $\pi \colon \R^k \to \R$ be a continuous piecewise linear function over a  pure, complete polyhedral complex $\P$.  
Recall the definition of  the \emph{additivity domain} of~$\pi$,
\begin{displaymath}
  E(\pi) 
  = \setcond{(\x, \y)}  {\Delta \pi(\x, \y) = 0}.
\end{displaymath}
We now give a combinatorial representation of this set
using the faces of~$\P$. Let 
$$
E(\pi, \P) 
= \setcond{F \in \Delta \P}{\Delta\pi|_F \equiv 0}.
$$
We consider $E(\pi, \P)$ to include $F=\emptyset$, on which $\Delta\pi|_F \equiv 0$
holds trivially.  Then $E(\pi, \P)$ is another polyhedral complex, a
subcomplex of $\Delta\P$.   
As mentioned, if $\pi$ is continuous, then $\Delta \pi$ is continuous.
Under this continuity assumption, we can consider only the set of maximal faces in $E(\pi, \P)$.  We define 
$$
E_{\max{}}(\pi,\P) = \bigl\{\,F\in E(\pi, \P)\bigst F \text{ is a maximal face by set inclusion in } E(\pi, \P)\,\bigr\}. 
$$

\begin{lemma}[{\cite[\autoref{equi3:lemma:covered-by-maximal-valid-triples}]{bhk-IPCOext}}]\label{lemma:covered-by-maximal-valid-triples}
\begin{equation*}
E(\pi) = \bigcup \{ F  \in E(\pi, \P)  \}
= \bigcup \{ F \in E_{\max{}}(\pi, \P)  \}.
\end{equation*}
\end{lemma}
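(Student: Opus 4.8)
The plan is to prove the two equalities separately, viewing $E(\pi,\P)$ and $E_{\max{}}(\pi,\P)$ as collections of faces and comparing the point sets they sweep out. Throughout I will use two facts recorded earlier: that $\Delta\pi$ is continuous piecewise linear over $\Delta\P$, so that $\Delta\pi$ restricts to an \emph{affine} function on every face of $\Delta\P$ (a face lies in a cell, on which $\Delta\pi$ is affine by definition), and that $E(\pi,\P)$ is a subcomplex of $\Delta\P$. I will also use that $\pi$ is subadditive, i.e. $\Delta\pi\ge 0$; this is the hypothesis under which the additivity domain is the object of study, and I expect it to be exactly what makes the statement true.

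For the first equality the inclusion $\bigcup\{F\in E(\pi,\P)\}\subseteq E(\pi)$ is immediate, since each such $F$ satisfies $\Delta\pi|_F\equiv 0$ and hence lies pointwise in the zero set $E(\pi)$. The reverse inclusion is the crux. Given $(\x,\y)\in E(\pi)$, I would first invoke completeness of $\Delta\P$: since $\P$ is complete, choosing faces $I,J,K\in\P$ containing $\x$, $\y$, and $\x+\y$ gives $(\x,\y)\in F(I,J,K)\in\Delta\P$, so $\Delta\P$ is itself a complete complex and there is a unique minimal face $F_0\in\Delta\P$ with $(\x,\y)\in\relint F_0$. It then remains to upgrade $\Delta\pi(\x,\y)=0$ to $\Delta\pi|_{F_0}\equiv 0$, which would give $F_0\in E(\pi,\P)$ and hence $(\x,\y)\in\bigcup\{F\in E(\pi,\P)\}$.

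This last step is where nonnegativity is essential, and I expect it to be the main (and really the only) nontrivial point. The function $\Delta\pi$ is affine on $F_0$ and satisfies $\Delta\pi\ge 0$; I would argue that an affine function that is nonnegative on a polyhedron and attains the value $0$ at a point of its relative interior must vanish identically. Indeed, if its gradient were nonzero along some direction spanned by $F_0$, then moving from $(\x,\y)\in\relint F_0$ a small amount in the decreasing direction stays inside $F_0$ and drives $\Delta\pi$ below $0$, contradicting $\Delta\pi\ge 0$. Hence $\Delta\pi|_{F_0}\equiv 0$. Without subadditivity this fails: the zero set could cross the interior of a cell transversally and would then not be a union of faces of $\Delta\P$.

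The second equality is then routine. The inclusion $\bigcup\{F\in E_{\max{}}(\pi,\P)\}\subseteq\bigcup\{F\in E(\pi,\P)\}$ holds because $E_{\max{}}(\pi,\P)\subseteq E(\pi,\P)$. For the reverse I would show every $F\in E(\pi,\P)$ sits inside some maximal element: any strictly increasing chain $F\subsetneq F_1\subsetneq\cdots$ inside the subcomplex $E(\pi,\P)$ consists of proper face relations (if two complex faces satisfy $F\subseteq F_1$ then $F=F\cap F_1$ is a face of $F_1$), so the dimensions strictly increase; since all faces lie in $\R^k\times\R^k$ their dimension is bounded by $2k$, and the chain terminates at a maximal face $F'\in E_{\max{}}(\pi,\P)$ with $F\subseteq F'$. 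Thus $\bigcup\{F\in E(\pi,\P)\}\subseteq\bigcup\{F\in E_{\max{}}(\pi,\P)\}$, which finishes the proof.
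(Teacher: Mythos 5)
Your proof is correct and follows essentially the same route as the cited source: locate the (unique) face $F_0\in\Delta\P$ with $(\x,\y)\in\relint F_0$, observe that a nonnegative affine function on a polyhedron vanishing at a relative-interior point vanishes identically, and handle the second equality by a dimension-bounded chain argument showing every face of $E(\pi,\P)$ lies in a maximal one. You were also right to supply subadditivity ($\Delta\pi\ge 0$) explicitly---the survey's local statement omits it, but it is part of the setting of the cited lemma, and as your transversal-crossing remark indicates, the statement is false without it.
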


This combinatorial
representation can then be made finite by choosing representatives 
as in \autoref{rem:delta-p-finite}.

\subsection{Perturbation functions}\label{subsec:perturbations}

We now discuss how to prove that a given minimal function is not a facet or not extreme. We consider the \emph{space of perturbation functions} with prescribed additivities $E\subseteq G \times G$
 \begin{equation}
 \arraycolsep=1.4pt
\bar \Pi^E(G,S) =  \left\{ \bar \pi\colon G \to \R \, \Bigg| \,
\begin{array}{rcll}
\bpi(\0) &=& 0\\
\bpi(\f) &=& 0\\
\bpi(\x) + \bpi(\y) &=& \bpi(\x + \y) & \text{ for all } (\x,\y) \in E\\
\bpi(\x) &=& \bpi(\x + \t) & \text{ for all } \x \in G,\, \t \in S
 \end{array} \right\}.
 \end{equation}
Later we will use this notation even if $G$ is not a group and only require that $\0,\f \in G$, and $S \subseteq G$.  Clearly $\bar \Pi^E(G,S)$ is a linear space. 

The third condition implies that $E \subseteq E(\bpi)$ for all $\bar \pi \in \bar \Pi^E(G,S)$. From Lemma~\ref{lem:minimality-of-pi1-pi2} it follows that $\pi$ is not extreme if and only if there exists a $\bar \pi \in  \Pi^{E(\pi)}(G,S) \setminus\{0\}$ such that $\pi^1 = \pi + \bar \pi$ and $\pi^2 = \pi - \bar \pi$ are minimal valid functions.  In a similar vein, 
if $\pi$ is not a facet of $R_\f(G,S)$, then  by the Facet Theorem, Theorem~\ref{thm:facet},  there exists a nontrivial $\bar \pi \in \bar \Pi^{E(\pi)}(G,S)$ such that $\pi' = \pi + \bar \pi$ is a minimal valid function.   Note that this last statement is not an if and only if statement.
%

Suppose $\pi$ is piecewise linear on a polyhedral complex $\P$.  We will often consider a refinement $\P'$ of $\P$ on which we can find a continuous piecewise linear perturbation $\bar \pi$ such that $\pi$ is not extreme.

The basic idea is that if one can find a non-zero function $\bar\pi$ in the linear
subspace of functions $\bar \Pi^{E(\pi)}(\R^k, \Z^k)$ then the finite, combinatorial
description of $\Delta\pi$ (since $\pi$ and therefore $\Delta \pi$ is
piecewise linear) 
allows small perturbations from $\pi$ in the direction of $\bar\pi$ while maintaining minimality.

\begin{theorem}[Perturbation {\cite[\autoref{equi3:corPerturb}]{bhk-IPCOext}}]
\label{corPerturb}
Let $\P$ be a pure, complete, polyhedral complex  in $\R^k$ that is periodic modulo $\Z^k$ and every cell of $\P$ is bounded.
Suppose $\pi$ is minimal and continuous piecewise linear over $\P$.  Suppose
$\bar \pi \not\equiv 0$ is continuous piecewise linear over a refinement $\P'$ of $\P$, is periodic
modulo $\Z^k$ and satisfies $\bar \pi \in \bar \Pi^{E}(\R^k, \Z^k)$ where $E = E(\pi)$.  Then $\pi$ is not extreme. Furthermore, given $\bar\pi$, there
exists an $\epsilon > 0$ such that $\pi^1 = \pi + \epsilon \bar \pi$ and
$\pi^2 = \pi - \epsilon \bar \pi$ are distinct minimal functions that are
continuous piecewise linear over $\P$ such that $\pi = \tfrac12(\pi^1 +
\pi^2)$. 
\end{theorem}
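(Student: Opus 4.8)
The plan is to show that for all sufficiently small $\epsilon>0$, both $\pi^1 = \pi + \epsilon\bar\pi$ and $\pi^2 = \pi - \epsilon\bar\pi$ are minimal valid functions; since $\bar\pi\not\equiv 0$ they are then distinct, and $\pi = \tfrac12(\pi^1+\pi^2)$ exhibits $\pi$ as a proper convex combination of two valid functions, so $\pi$ is not extreme. Both $\pi^1$ and $\pi^2$ are continuous piecewise linear over the refinement $\P'$ and periodic modulo $\Z^k$, because $\pi$ (piecewise linear over $\P$, hence over $\P'$) and $\bar\pi$ both are; the cells of $\P'$ are bounded since they refine the bounded cells of $\P$. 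The strategy is therefore to verify, for $\pi^1$ and $\pi^2$, the conditions needed for minimality via the finite minimality test (\autoref{minimality-check}) applied over $\P'$.

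First I would dispatch the conditions that hold for \emph{every} $\epsilon$. Since $\bar\pi(\0)=0$ and $\bar\pi(\f)=0$, we immediately get $\pi^i(\0)=0$ and $\pi^i(\f)=1$. For the symmetry condition the key observation is that minimality of $\pi$ forces $\Delta\pi(\x,\f-\x) = \pi(\x)+\pi(\f-\x)-\pi(\f)=0$ for every $\x$ (using $\pi(\f)=1$ and the symmetry of $\pi$), so the entire symmetry locus lies in $E(\pi)=E$. The defining additivity relations of $\bar\Pi^{E}(\R^k,\Z^k)$ then give $\bar\pi(\x)+\bar\pi(\f-\x)=\bar\pi(\f)=0$, whence $\pi^i(\x)+\pi^i(\f-\x)=1$ for all $\x$. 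Thus the symmetry test passes exactly, independently of $\epsilon$.

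The crux is subadditivity. Writing $\Delta\pi^i = \Delta\pi \pm \epsilon\,\Delta\bar\pi$, I would split the finitely many vertices $(\u,\v)\in\verts(\Delta\P')\cap\Delta D$ (finite by local finiteness) into two classes. If $\Delta\pi(\u,\v)=0$, then $(\u,\v)\in E(\pi)$, so the additivity constraint in $\bar\Pi^{E}$ forces $\Delta\bar\pi(\u,\v)=0$ as well, giving $\Delta\pi^i(\u,\v)=0$ exactly. If instead $\Delta\pi(\u,\v)>0$ (the only other possibility, as $\pi$ minimal implies $\Delta\pi\ge 0$), then $\Delta\pi^i(\u,\v)>0$ whenever $\epsilon\,|\Delta\bar\pi(\u,\v)|<\Delta\pi(\u,\v)$. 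Since there are only finitely many vertices, a single $\epsilon>0$ works for all of them and for both signs. Because $\Delta\pi^i$ is continuous piecewise linear over $\Delta\P'$ and periodic, its minimum on each cell is attained at a vertex, so nonnegativity at the vertices in $\Delta D$ upgrades to global subadditivity — this is exactly the reduction underlying \autoref{minimality-check} (cf.\ \autoref{lemma:covered-by-maximal-valid-triples}). Finally, subadditivity together with $\pi^i(\0)=0$ and boundedness (periodic plus continuous) yields nonnegativity: if $\pi^i(\x)<0$, then $\pi^i(n\x)\le n\,\pi^i(\x)\to -\infty$, contradicting boundedness. With nonnegativity, periodicity, subadditivity, and symmetry established, \autoref{thm:minimal} certifies that $\pi^1$ and $\pi^2$ are minimal, completing the argument.

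I expect the subadditivity step to be the main obstacle, and its resolution hinges on the dichotomy above: the tight subadditivity relations of $\pi$ are precisely those on which $\bar\pi$ is \emph{forced} to be additive by the constraint $E=E(\pi)$ in the definition of $\bar\Pi^{E}$, so the perturbation cannot destroy them, while the strictly slack relations are robust under small perturbations. The finiteness of $\verts(\Delta\P')\cap\Delta D$, guaranteed by local finiteness of the complex and boundedness of its cells, is what makes a single uniform choice of $\epsilon$ possible; without the piecewise linear structure this uniformity — and hence the whole argument — would fail.
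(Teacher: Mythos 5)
Your proof is correct and follows essentially the same route as the paper's (the one referenced in \cite{bhk-IPCOext} and sketched in \autoref{subsec:perturbations}): verify minimality of $\pi\pm\epsilon\bar\pi$ via the finite vertex test on the refined complex, noting that tight subadditivity/symmetry relations are preserved exactly because $\bar\pi$ is additive on $E(\pi)$, while the strictly slack relations survive a sufficiently small uniform $\epsilon$ chosen over the finitely many vertices of $\Delta\P'$ in $\Delta D$. The only cosmetic remark is that $\pi^1,\pi^2$ are piecewise linear over the refinement $\P'$ (as you correctly state), not over $\P$ itself as the theorem's wording literally suggests.
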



When $\pi$ is a continuous piecewise linear function over a polyhedral complex $\P$, for certain refinements $\T$ of $\P$ we can decompose perturbation functions $\bar \pi$ into piecewise linear perturbations over $\T$ and other perturbations that vanish on the vertices of $\T$.  
For a triangulation $\T$ define the vector spaces
\begin{align*}
  \bar \Pi^E_{\T}(\R^k, \Z^k) &:= \{\, \bar \pi \in \bar \Pi^E(\R^k, \Z^k) \st
  \bar \pi \text{ is continuous piecewise linear on } {\T}\,\}\\
  \intertext{and}
  \bar \Pi^E_{\textrm{zero}(\T)}(\R^k, \Z^k) &:= \{\, \bar \pi \in \bar \Pi^E(\R^k,
  \Z^k) \st \bar \pi|_{\verts(\T)} \equiv 0\,\}.
\end{align*}

\begin{lemma}[\TheoremNEWRESULT]
\label{lem:tri-restriction}
Suppose $\pi \colon \R^k \to \R$ is a minimal valid function that is piecewise linear over $\P$.  Suppose $\T$ is a triangulation of $\R^k$ such that there exists $q \in \mathbb{N}$ such that $\verts(\T) = \tfrac{1}{q}\Z^k$ and $p_i(\verts(\Delta \T)) \subseteq \tfrac{1}{q} \Z^k$ for $i=1,2,3$ and $\f \in \tfrac{1}{q}\Z^k$.  Let $E = E(\pi)$ and $E' = E(\pi) \cap \verts(\Delta \T)$, and suppose $\T$ is a refinement of $\P$.
\begin{enumerate}
\item 
\label{lem:tri-restriction-part1}
$\bar \pi \in \bar \Pi^E_\T(\R^k, \Z^k)$ if and only if $\bar \pi|_{\frac{1}{q} \Z^k} \in \bar \Pi^{E'}( \tfrac{1}{q}\Z^k, \Z^k)$,
\item 
\label{lem:tri-restriction-part2}
For every $\bar \pi \in \bar\Pi^E(\R^k, \Z^k)$, there exist unique $\bar \pi_\T \in \Pi^E_\T(\R^k, \Z^k)$ and $\bar \pi_{\mathrm{zero}(\T)} \in \bPizero$ such that 
$$
\bar \pi = \bar \pi_\T + \bar \pi_{\mathrm{zero}(\T)}.
$$ 
\end{enumerate}
\end{lemma}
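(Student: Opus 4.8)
The plan is to read \autoref{lem:tri-restriction-part1} as the assertion that the restriction map $\bar\pi\mapsto\bar\pi|_{\frac1q\Z^k}$ and continuous piecewise linear interpolation over $\T$ are mutually inverse linear bijections between $\bar\Pi^E_\T(\R^k,\Z^k)$ and $\bar\Pi^{E'}(\tfrac1q\Z^k,\Z^k)$; the two directions of the ``iff'' are precisely the well-definedness of this map and of its inverse. The key elementary fact I would use throughout is that, because $\T$ is a \emph{triangulation}, a function that is continuous piecewise linear over $\T$ is uniquely determined by its values on $\verts(\T)=\tfrac1q\Z^k$ and, conversely, any assignment of values to $\tfrac1q\Z^k$ extends uniquely to such a function by barycentric interpolation on each simplex (this is where the simplicial hypothesis is essential). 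Once \autoref{lem:tri-restriction-part1} is in hand, \autoref{lem:tri-restriction-part2} will follow by elementary linear algebra. For the forward direction, given $\bar\pi\in\bar\Pi^E_\T(\R^k,\Z^k)$, each of the four defining conditions of $\bar\Pi^{E'}(\tfrac1q\Z^k,\Z^k)$ is simply the restriction of the corresponding condition on $\bar\pi$: here $\0,\f\in\tfrac1q\Z^k$, and $E'\subseteq E\subseteq E(\bar\pi)$ together with the hypothesis $p_i(\verts(\Delta\T))\subseteq\tfrac1q\Z^k$ guarantees that for $(\u,\v)\in E'$ all of $\u,\v,\u+\v$ lie in $\tfrac1q\Z^k$, so the additivity relation is genuinely a relation among restricted values. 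This check uses no continuity of $\bar\pi$, so it will be reusable in part~\ref{lem:tri-restriction-part2}.

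The backward direction is the crux. Given $g\in\bar\Pi^{E'}(\tfrac1q\Z^k,\Z^k)$, let $\bar\pi$ be its interpolation over $\T$. Periodicity of $\bar\pi$ modulo $\Z^k$ and the values $\bar\pi(\0)=\bar\pi(\f)=0$ follow at once from the periodicity of $\T$ and the corresponding properties of $g$ at vertices. The remaining condition is additivity, i.e.\ $E\subseteq E(\bar\pi)$, equivalently $\Delta\bar\pi\equiv 0$ on $E=E(\pi)$. I would establish this from three ingredients: (a) since $\bar\pi$ is continuous piecewise linear over $\T$, $\Delta\bar\pi$ is continuous piecewise linear over $\Delta\T$ (as recalled in \autoref{section:delta-p-definition}, from \cite{bhk-IPCOext}); (b) since $\T$ refines $\P$, every face $F(I,J,K)\in\Delta\P$ decomposes as $\bigcup\{\,F(I',J',K'): I',J',K'\in\T,\ I'\subseteq I,\ J'\subseteq J,\ K'\subseteq K\,\}$, so $\Delta\T$ refines $\Delta\P$; and (c) by \autoref{lemma:covered-by-maximal-valid-triples}, $E(\pi)$ is a union of faces of $\Delta\P$, hence by (b) a union of faces of $\Delta\T$. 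It therefore suffices to show $\Delta\bar\pi\equiv 0$ on each face $F\in\Delta\T$ with $F\subseteq E(\pi)$. On such a (bounded) face, $\Delta\bar\pi$ is affine by (a), so it vanishes identically as soon as it vanishes at the vertices of $F$; each such vertex $(\u,\v)$ lies in $\verts(\Delta\T)\cap E(\pi)=E'$, and by the projection hypothesis $\u,\v,\u+\v\in\tfrac1q\Z^k$, where $\bar\pi$ agrees with $g$, giving $\Delta\bar\pi(\u,\v)=g(\u)+g(\v)-g(\u+\v)=0$ by additivity of $g$ on $E'$. Thus $E\subseteq E(\bar\pi)$ and $\bar\pi\in\bar\Pi^E_\T(\R^k,\Z^k)$.

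For \autoref{lem:tri-restriction-part2}, given $\bar\pi\in\bar\Pi^E(\R^k,\Z^k)$, I would set $\bar\pi_\T$ to be the interpolation over $\T$ of $\bar\pi|_{\frac1q\Z^k}$ and put $\bar\pi_{\mathrm{zero}(\T)}=\bar\pi-\bar\pi_\T$. The restriction $\bar\pi|_{\frac1q\Z^k}$ lies in $\bar\Pi^{E'}(\tfrac1q\Z^k,\Z^k)$ by the (continuity-free) forward computation above, so \autoref{lem:tri-restriction-part1} yields $\bar\pi_\T\in\bar\Pi^E_\T(\R^k,\Z^k)$. Since $\bar\Pi^E(\R^k,\Z^k)$ is a linear space containing both $\bar\pi$ and $\bar\pi_\T$, the difference $\bar\pi_{\mathrm{zero}(\T)}$ is in $\bar\Pi^E(\R^k,\Z^k)$, and it vanishes on $\verts(\T)=\tfrac1q\Z^k$ because $\bar\pi_\T$ interpolates $\bar\pi$ there; hence $\bar\pi_{\mathrm{zero}(\T)}\in\bPizero$. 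Uniqueness is immediate: if $\bar\pi_\T+\bar\pi_{\mathrm{zero}(\T)}=\bar\pi_\T'+\bar\pi_{\mathrm{zero}(\T)}'$, then $\bar\pi_\T-\bar\pi_\T'$ is continuous piecewise linear over $\T$ and equals $\bar\pi_{\mathrm{zero}(\T)}'-\bar\pi_{\mathrm{zero}(\T)}$, which vanishes on all of $\verts(\T)$, forcing it to be identically $0$.

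The main obstacle is the backward direction of \autoref{lem:tri-restriction-part1}: propagating the purely combinatorial additivity of $g$ at the finitely many points of $E'$ to additivity of the interpolant $\bar\pi$ over the whole, possibly lower-dimensional and geometrically intricate, set $E(\pi)$. The argument hinges on matching $E(\pi)$ with the face structure of $\Delta\T$, which is exactly why the refinement hypothesis ``$\T$ refines $\P$'' and the lattice hypotheses ``$\verts(\T)=\tfrac1q\Z^k$'' and ``$p_i(\verts(\Delta\T))\subseteq\tfrac1q\Z^k$'' are all needed, together with the affine-on-each-face behavior of $\Delta\bar\pi$.
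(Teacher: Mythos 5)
Your proof is correct and takes essentially the same route as the paper's: both hinge on the fact that $\Delta\bar\pi$ is affine on each (bounded) face of $\Delta\T$, so that additivity on $E(\pi)$ — which is a union of faces of $\Delta\T$ via Lemma~\ref{lemma:covered-by-maximal-valid-triples} and the refinement hypothesis — is detected exactly at vertices of $\Delta\T$, whose projections lie in $\tfrac{1}{q}\Z^k$; part~(2) is then the same interpolate-and-subtract linear algebra, with the same uniqueness argument. The only cosmetic difference is that the paper applies the covering lemma to $\pi$ viewed as piecewise linear over $\T$ itself, whereas you decompose each face of $\Delta\P$ into faces of $\Delta\T$ — these are equivalent uses of the hypothesis that $\T$ refines $\P$.
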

\begin{proof}
Let $\bar \pi_\T$ be a continuous piecewise linear function over $\T$.
Since $\T$ is a refinement of $\P$, we have that $\pi$ is continuous piecewise linear over $\T$ as well.  
By Lemma~\ref{lemma:covered-by-maximal-valid-triples}, for any $\varphi$ that is continuous piecewise linear on $\T$ we have that $E(\varphi) = \bigcup \{\, F \in \Delta \T \st \Delta \varphi|_F \equiv 0\,\}$.  
Since $\Delta \varphi$ is affine on $F$, we have that  $\varphi|_F \equiv 0$ if and only if $\varphi|_{\verts(F)} \equiv 0$.  
Therefore, it follows that $E(\pi) \subseteq E(\bar \pi)$ if and only if $\Delta \pi|_{\verts(F)} \equiv 0$ implies that $\Delta \bar \pi|_{\verts(F)} \equiv 0$ for all $F \in \Delta \T$.  
Since $\verts(\T) = \tfrac{1}{q} \Z^k$, this establishes part~(1).  

Next, let $\bar \pi \in \bar \Pi^E(\R^k, \Z^k)$.   Let $\bar \pi_\T$ be the unique extension of $\bar \pi|_{\frac{1}{q}\Z^k}$ to $\R^k$ via the triangulation $\T$.  Note that $\bar \pi_\T$ is the unique piecewise linear function over $\T$ such that $(\bar \pi - \bar \pi_\T)|_{\frac{1}{q} \Z^k} \equiv 0$.  Define $\bar \pi_{\mathrm{zero}(\T)} = \bar \pi - \bar \pi_\T$.  It is left to show that $\bar \pi_\T, \bar \pi_{\mathrm{zero}(\T)} \in \bar \Pi^E(\R^k, \Z^k)$.  

Since $\bar \pi \in  \bar \Pi^E(\R^k, \Z^k)$, it follows that $\bar \pi_\T|_{\frac{1}{q}\Z^k} = \bar \pi|_{\frac{1}{q} \Z^k} \in \bar \Pi^{E'}(\tfrac{1}{q}\Z^k, \Z^k)$.  Therefore, by part~(1), $\bar \pi_\T \in  \bar \Pi^E(\R^k, \Z^k)$.  Since $\bar \Pi^E(\R^k, \Z^k)$ is a vector space containing $\bar \pi$ and $\bar \pi_\T$, we have that $\bar \pi_{\mathrm{zero}(\T)} = \bar \pi - \bar\pi_\T \in \bar\Pi^E(\R^k,\Z^k)$ which  establishes part~(2).
\end{proof}
Due to the decomposition in part~(2) of Lemma~\ref{lem:tri-restriction}, we can determine if a non-trivial perturbation function $\bar \pi \in \bar \Pi^{E(\pi)}(\R^k, \Z^k)$ exists by considering separately the spaces $\bar \Pi^{E(\pi)}_{\T}(\R^k, \Z^k)$ and $\bar \Pi^{E(\pi)}_{\textrm{zero}(\T)}(\R^k, \Z^k)$.  This is used in a procedure to test extremality described in ~\autoref{sec:direct-algorithm}.
\begin{remark}
\label{rem:tri-satisfy-lemma}
The polyhedral complexes $\P_{B}$ for $B = \tfrac{1}{q} \Z \cap [0,1)$ from Example~\ref{ex:1d-breakpoint-complex} and $\P_q$ from Example~\ref{ex:2d-standard-triangulation} are triangulations of $\R^1$ and $\R^2$, respectively, and satisfy the hypotheses of Lemma~\ref{lem:tri-restriction}.  This fact can be seen in Figure~\ref{fig:uniform-spacing} for the case of $\P_B$.  The polyhedral complex $\P_q$ will be discussed more in section~\ref{sec:one-two-dim}.
\end{remark}

\begin{figure}[t]
  \centering
  \includegraphics[width=.44\linewidth]{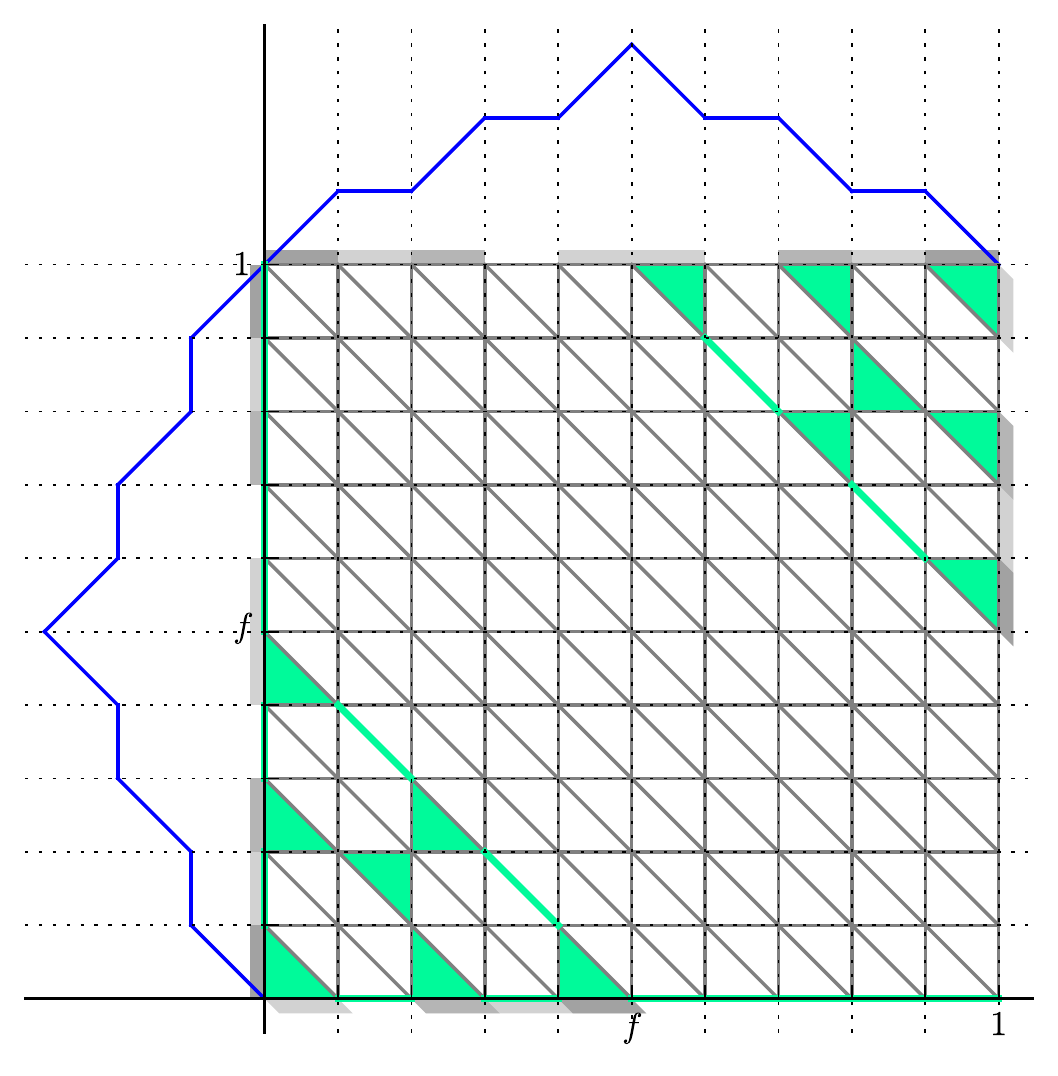}
  \caption{
    Diagram of a function (\emph{blue graphs on the top and the left}) on the
    evenly spaced complex~$\P_{\frac1{10}\Z}$ and the corresponding complex $\Delta\P_{\frac1{10}\Z}$ (\emph{gray
      solid lines}), as plotted by the command
    \sage{\sagefunc{plot_2d_diagram}(h)},
    where \sage{h = \sagefunc{not_extreme_1}()}.
    Faces of the complex on which $\Delta\pi = 0$, i.e., additivity holds, are \emph{shaded green}.
    The \emph{heavy diagonal green lines} $x + y = f$ and $x + y = 1+f$ correspond to the symmetry
    condition.  
    At the borders, the projections $p_i(F)$ of two-dimensional
    additive faces are shown as \emph{gray shadows}: $p_1(F)$ at the top border, $p_2(F)$ at
    the left border, $p_3(F)$ at the bottom and the right borders.
    Since the breakpoints of~$\P_{\frac1{10}\Z}$ are equally spaced, also $\Delta \P_{\frac1{10}\Z}$ is very
    uniform, consisting only of points, lines, and triangles, and the projections are
    either a breakpoint in $\P_{\frac1{10}\Z}$ or an interval in $\P_{\frac1{10}\Z}$; compare with
    \autoref{fig:delta-p}. 
  }
  \label{fig:uniform-spacing}
\end{figure}

\section{The Interval Lemma and its $k$-dimensional generalizations}\label{sec:foundations}


In order to prove that a given minimal valid function $\pi$ is a facet (or an
extreme function), we make use of the additivity domain $E(\pi)$ of a subadditive
function~$\pi\colon\R^k \to\R$. As discussed in the roadmap (\autoref{s:roadmap}), we would like to establish that $E(\pi) \subseteq E(\pi')$ implies $\pi = \pi'$ for every minimal valid function $\pi'$. An important ingredient in this step is to infer that $\pi'$ is an affine function when restricted to projections of $E(\pi)$. For this purpose, it is convenient to separate the additivity domain into convex sets, which we then study
independently.  In the important case of continuous piecewise linear functions, we
already know from \autoref{section:additivity-discretized} that it suffices to
study the maximal additive faces of the complex $\Delta\P$. 

The primary object of investigation is the {\em functional equation} known as the \emph{(additive) Cauchy functional equation}, which in its most general form is the study of real-valued functions $\theta$ satisfying
\begin{equation}\label{eq:cauchy-1d}
  \theta(\u)+\theta(\v) = \theta(\u+\v),\quad (\u,\v)\in F
\end{equation}
where $F$ is some subset of $\R^k \times \R^k$. We focus on convex sets $F$ that can be used as building blocks to cover $E(\pi)$ or other non-convex domains.  The simplest convex sets~$F$ of $\R^k\times\R^k$ are direct (Cartesian) products
$U\times V$, where $U$ and $V$ are convex sets of $\R^k$.  
For $k=1$, this
means we consider intervals $U\subseteq\R $ and $V\subseteq\R$ and set $F=U \times V$, i.e., we consider the functional equation $\theta(u) + \theta(v) = \theta(u+v)$ for all $u\in U$ and $v \in V$.


\subsection{The classical case: Cauchy's functional equation}
Classically (see, e.g., \cite[Chapter 2]{aczel1989functional}), \eqref{eq:cauchy-1d} is studied for the case
$F = \R \times \R$.  
In addition to the obvious \emph{regular solutions} to~\eqref{eq:cauchy-1d}, which are
the (homogeneous) linear functions $\theta(x) = cx$, there exist certain
pathological solutions, which are highly discontinuous~\cite[Chapter 2, Theorem 3]{aczel1989functional}; these were used in
Propositions~\ref{prop:neg-pi} and \ref{prop:nonneg-insufficient}. In order to rule out
these solutions, one imposes a regularity hypothesis.  Various such regularity
hypotheses have been proposed in the literature; for example, it is sufficient
to assume that the function~$\theta$ is bounded on bounded intervals
\cite[Chapter 2, Theorem 8]{aczel1989functional}.

\subsection{The bounded case: Gomory--Johnson's Interval Lemma in $\R^1$}

The so-called Interval Lemma was introduced by Gomory and Johnson in~\cite{tspace} (the result appears implicitly in the proof of \cite[Theorem 3.3]{infinite2}). This result
concerns the Cauchy functional equation~\eqref{eq:cauchy-1d} on a bounded domain,
i.e., the arguments $u$, 
$v$, and $u+v$ come from bounded intervals $U$, $V$, and their sum~$U+V$,
rather than the entire real line, i.e., additivity is on the set $F = U \times V$. In this case, we find that regular solutions are affine on these intervals; we lose homogeneity of the solutions. In fact, instead of equation~\eqref{eq:cauchy-1d}, one can consider the more general
equation~$f(u)+g(v) = h(u+v)$, with three functions~$f$, $g$, and $h$ instead
of one function~$\theta$.   
\begin{lemma}(Interval Lemma)~\cite[\autoref{equi3:one-dim-interval_lemma}]{bhk-IPCOext}\label{one-dim-interval_lemma} Given real numbers $u_1 < u_2$ and $v_1 < v_2$, let $U = [u_1, u_2]$, $V = [v_1, v_2]$, and $U + V = [u_1 + v_1, u_2 + v_2]$.
Let $f \colon U \rightarrow
\mathbb{R}$, $g \colon  V \rightarrow
\mathbb{R}$, $h \colon U+V  \rightarrow
\mathbb{R}$ be bounded functions. \\ If $f(u)+g(v) = h(u+v)$ for every
$(u, v) \in U \times V$, then there exists $c\in \R$ such that $f(u)=f(u_1)+c(u-u_1)$ for every $u\in U$, $g(v)=g(v_1)+c(v-v_1)$ for every $v\in V$, $h(w)=h(u_1+v_1)+c(w-u_1-v_1)$ for every $w\in U+V$. In other words, $f$, $g$ and $h$ are affine with gradient $c$ over $U$, $V$, and $U+V$ respectively.
\end{lemma}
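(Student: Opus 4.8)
The plan is to collapse the three unknown functions into a single one satisfying Cauchy's equation on a rectangle, and then to exploit boundedness to rule out the pathological, graph-dense solutions of Cauchy's equation (cf.~\autoref{prop:neg-pi}). Write $a = u_2 - u_1$ and $b = v_2 - v_1$. First I would normalize by setting $\tilde f(u) = f(u) - f(u_1)$, $\tilde g(v) = g(v) - g(v_1)$, and $\tilde h(w) = h(w) - h(u_1+v_1)$; since $f(u_1) + g(v_1) = h(u_1+v_1)$, the functional equation survives as $\tilde f(u) + \tilde g(v) = \tilde h(u+v)$ with $\tilde f(u_1) = \tilde g(v_1) = \tilde h(u_1+v_1) = 0$. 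Fixing $v = v_1$ gives $\tilde f(u) = \tilde h(u + v_1)$, and fixing $u = u_1$ gives $\tilde g(v) = \tilde h(u_1 + v)$; substituting both back into the equation yields a single identity for $\tilde h$ alone. After the change of variables $H(x) := \tilde h(u_1 + v_1 + x)$ for $x \in [0, a+b]$, this identity becomes the Cauchy equation on a rectangle,
\begin{equation*}
H(s) + H(t) = H(s+t) \qquad \text{for all } s \in [0,a],\ t \in [0,b],
\end{equation*}
with $H(0) = 0$ and $H$ bounded, say $|H| \le M$, because $h$ is bounded.

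Next I would extract the algebraic consequences of this restricted additivity. Set $\ell = \min(a,b)$. For $x \in [0,\ell]$ and any positive integer $n$ with $nx \le \ell$, an easy induction (adding $x \le \ell \le b$ to the partial sum $jx \le \ell \le a$ at each step) gives $H(nx) = nH(x)$, and more generally $H$ is $\mathbb{Q}$-homogeneous on $[0,\ell]$ in the sense that $H(rx) = rH(x)$ whenever $r \in \mathbb{Q}_{\ge 0}$ and $rx \le \ell$.

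The hard part is upgrading this rational homogeneity to genuine linearity, i.e., excluding the pathological additive solutions; this is exactly where boundedness enters. The key observation is that $H(nx) = nH(x)$ forces $|H(x)| = |H(nx)|/n \le M/n$ whenever $nx \le \ell$. Choosing $n = \lfloor \ell/x \rfloor$ makes $n \to \infty$ as $x \to 0^+$, so $H(x) \to 0 = H(0)$; thus $H$ is continuous at the origin. Additivity then propagates continuity to the whole interval, since $H(x+\varepsilon) - H(x) = H(\varepsilon) \to 0$ (with minor bookkeeping to keep both arguments inside $[0,a]\times[0,b]$). A continuous, $\mathbb{Q}$-homogeneous function on $[0,\ell]$ must satisfy $H(x) = cx$ there, where $c = H(\ell)/\ell$; and decomposing an arbitrary $x \in [0,a+b]$ additively into summands of length at most $\ell$ extends $H(x) = cx$ to the full domain.

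Finally I would substitute back. From $\tilde h(w) = H(w - u_1 - v_1)$ we obtain $h(w) = h(u_1+v_1) + c(w - u_1 - v_1)$; from $\tilde f(u) = \tilde h(u+v_1) = H(u - u_1)$ we obtain $f(u) = f(u_1) + c(u - u_1)$, and symmetrically $g(v) = g(v_1) + c(v - v_1)$. This is precisely the claimed conclusion, with a single common gradient $c$. I expect the only delicate points to be the domain bookkeeping in the induction and in propagating continuity across $[0,a+b]$; the conceptual crux is the boundedness-to-continuity step that kills the Cauchy pathologies.
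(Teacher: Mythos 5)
Your proof is correct, and it takes a genuinely different route from the one in the paper. The survey itself only justifies the lemma under a $C^2$ hypothesis --- differentiate $f(u)+g(v)=h(u+v)$ in $u$, then in $v$, to get $h''\equiv 0$ --- and defers the bounded case to the external reference, where the argument is a ``discrete version'' of this: one shows the increments $f(u+\epsilon)-f(u)=g(v+\epsilon)-g(v)$ are independent of $u$ and $v$, so they define an additive increment function, which boundedness then forces to be linear. Your proposal instead collapses all three functions at the outset: since $\tilde f$ and $\tilde g$ are just shifted copies of $\tilde h$, the whole problem becomes the single-function Cauchy equation $H(s)+H(t)=H(s+t)$ on the rectangle $[0,a]\times[0,b]$, which is then handled by the classical restricted-domain machinery (the setting of the Acz\'el reference the paper cites): $\mathbb{Q}$-homogeneity on $[0,\ell]$, then the key estimate $|H(x)|\le M/\lfloor \ell/x\rfloor$ giving continuity at $0$ (in fact the linear bound $|H(x)|\le 2Mx/\ell$), then linearity on $[0,\ell]$ by density, then extension to $[0,a+b]$ by stepping in increments of $\ell$. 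What your route buys is a complete, self-contained proof of the lemma as actually stated (bounded, not smooth, functions), and it isolates in one visible step exactly where boundedness kills the pathological additive solutions; what the paper's increment-based route buys is a form that generalizes more readily to the higher-dimensional versions (Theorems~\ref{theorem:generalized_interval_lemma_fulldim} and~\ref{lem:generalized_interval_lemma}), where there is no single interval to normalize along. Two small remarks: the asymmetric domain of your Cauchy equation (first slot in $[0,a]$, second in $[0,b]$) means your ``summands of length at most $\ell$'' extension should be phrased as repeatedly splitting off $\ell$ from the correct slot, e.g.\ $H(x)=H(x-\ell)+H(\ell)$ with $x-\ell\in[0,a]$ and $\ell\in[0,b]$, but this bookkeeping does close; and your full-interval continuity propagation is actually dispensable, since continuity at $0$ together with $\mathbb{Q}$-homogeneity already yields $H(x)=\lim_n\bigl(r_nH(\ell)+H(x-r_n\ell)\bigr)=cx$ on $[0,\ell]$ for rationals $r_n\uparrow x/\ell$.
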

We provide a brief justification of this result under the assumption that
$f,g$ and $h$ are in $C^2(\R)$ (continuous first and second derivatives). We
differentiate the relation $f(u) + g(v) = h(u+v)$ with respect to $u$ (holding
$v$ fixed in the interval $V$) to obtain $f'(u) = h'(u+v)$ for all $u \in
\intr(U)$. Since the choice of $v$ was arbitrary, this actually means $f'(u) =
h'(u+v)$ for all $u \in \intr(U)$ and $v\in \intr(V)$. But then
differentiating this relation with respect to $v$ we obtain $0 =
h''(u+v)$. This implies that $h$ is affine over $U+V$, and $f$ is affine with
the same slope over $U$. Similarly, fixing $u$ in $U$ and differentiating with
respect to $v$ we obtain $g'(v) = h'(u+v)$ for all $v \in \intr(V)$, implying
that $g$ is affine with the same slope over $V$. The result under the weaker
assumption of boundedness of the functions is obtained by making a discrete
version of these derivative arguments; the details are complicated and we
refer the reader
to~\cite[\autoref{equi3:one-dim-interval_lemma}]{bhk-IPCOext} for a full proof. 


\subsection{The full-dimensional Cartesian case: Higher-dimensional Interval Lemma}
\label{s:generalized-interval-lemma}

%

We now discuss generalization of the Interval Lemma (\autoref{one-dim-interval_lemma}) presented
in the previous section to the $k$-dimensional setting. 
The first higher dimensional versions of \autoref{one-dim-interval_lemma} in the
literature appear in \cite{dey3, 3slope} for the case of $k=2$ and in
\cite{bhkm} for general $k$, all of which apply when either $U$ or $V$ contains the origin.  
The result in \cite{dey3} applies allows also for so-called
\emph{star-shaped} sets that contain the origin.
We will follow the results of~\cite{bhk-IPCOext}, which all for more general types of convex sets.  Similar proofs of these results allow for star-shaped sets as well, but this is not presented here.



\begin{theorem}[Higher-dimensional Interval Lemma, full-dimensional
  version {\cite[\autoref{equi3:theorem:generalized_interval_lemma_fulldim}]{bhk-IPCOext}}]\label{theorem:generalized_interval_lemma_fulldim} 
Let $f,g,h \colon\allowbreak \R^k\to \R$ be bounded functions. Let $U$ and $V$ be convex
subsets of $\R^k$ such that $f(\u) + g(\v) = h(\u+\v)$ for all $(\u, \v) \in U\times V$. Assume that $\aff(U) = \aff(V) = \R^k$. Then there exists a vector $\cve\in \R^k$ such that
%
%
%
$f$, $g$ and $h$ are affine over $U$, $V$ and $W = U+V$, respectively, with the
same gradient $\cve$.
\end{theorem}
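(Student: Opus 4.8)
The plan is to reduce to the one-dimensional Interval Lemma (\autoref{one-dim-interval_lemma}) by slicing $U$ and $V$ along the coordinate directions and then patching the one-dimensional conclusions together using convexity. Since $\aff(U)=\aff(V)=\R^k$, both sets are full-dimensional and have nonempty interiors; fix $\u_0\in\intr(U)$ and $\v_0\in\intr(V)$, and for each coordinate direction $\e_i$ fix short nondegenerate segments $M_i\subseteq U$ and $L_i\subseteq V$ through $\u_0$ and $\v_0$ in direction $\e_i$.

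First I would establish a single vector of slopes. Applying \autoref{one-dim-interval_lemma} to the bounded one-variable restrictions $t\mapsto f(\u_0+t\e_i)$, $s\mapsto g(\v_0+s\e_i)$, $r\mapsto h(\u_0+\v_0+r\e_i)$ on $M_i\times L_i$ (where the additive relation holds) yields a common one-dimensional gradient $c_i$; in particular $f|_{M_i}$ and $g|_{L_i}$ are affine with slope $c_i$. Now, keeping $L_i$ fixed and replacing $M_i$ by the chord of $U$ in direction $\e_i$ through an arbitrary interior point, the lemma again produces a common gradient, which must equal the already determined slope $c_i$ of $g|_{L_i}$. Hence $f$ is affine with $i$-th slope $c_i$ on every $\e_i$-chord of $U$ meeting $\intr(U)$; symmetrically, fixing $M_i$, the function $g$ is affine with the same slope $c_i$ on every $\e_i$-chord of $V$ meeting $\intr(V)$. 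Set $\cve=(c_1,\dots,c_k)$.

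Next I would patch these slices together on the interiors. Put $\phi(\u)=f(\u)-\cve\cdot\u$; by the previous step $\phi$ is constant on every coordinate-parallel segment contained in $\intr(U)$. Since $\intr(U)$ is open and connected (being convex), it is connected by axis-parallel polygonal paths, so $\phi$ is constant there; that is, $f(\u)=\cve\cdot\u+a$ on $\intr(U)$, and likewise $g(\v)=\cve\cdot\v+b$ on $\intr(V)$. For $\u\in\intr(U)$ and $\v\in\intr(V)$ the additive relation gives $h(\u+\v)=\cve\cdot(\u+\v)+a+b$, so $h$ is affine with gradient $\cve$ on the open set $\Omega=\intr(U)+\intr(V)$.

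Finally I would bootstrap from interiors to the whole sets. Given $\u\in U$ and $\v\in\intr(V)$, for small $\lambda>0$ the points $\u_\lambda=(1-\lambda)\u+\lambda\u_0$ and $\v_\lambda=\v+\lambda(\u-\u_0)$ satisfy $\u_\lambda\in\intr(U)$, $\v_\lambda\in\intr(V)$, and $\u_\lambda+\v_\lambda=\u+\v$, so $\u+\v\in\Omega$; then $f(\u)=h(\u+\v)-g(\v)=\cve\cdot\u+a$, proving $f$ is affine with gradient $\cve$ on all of $U$. The same argument gives $g$ on all of $V$, and writing any $w\in W$ as $\u+\v$ with $\u\in U$, $\v\in V$ yields $h(w)=f(\u)+g(\v)=\cve\cdot w+a+b$, so $h$ is affine with gradient $\cve$ on $W=U+V$. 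I expect the main obstacle to be the second step: pinning down a single gradient globally rather than a possibly location-dependent one. The fixed-segment trick resolves this by forcing every chord's slope to agree with that of the held-fixed transverse segment, the axis-parallel connectedness of open connected sets then upgrades slice-wise affineness to genuine affineness on convex sets of arbitrary shape, and the complementary-interior perturbation in the last step removes all boundary difficulties.
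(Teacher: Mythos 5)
Your proof is correct, and it is worth noting that it is genuinely self-contained, which the survey's treatment of this statement is not: the paper states \autoref{theorem:generalized_interval_lemma_fulldim} with a citation to \cite{bhk-IPCOext} and presents it as the special case $L=\R^k$ of the subspace-relative Higher-dimensional Interval Lemma (\autoref{lem:generalized_interval_lemma}), whose proof lives in the companion paper; no argument for it appears in this survey. Your route instead builds everything on the one-dimensional Interval Lemma (\autoref{one-dim-interval_lemma}) as stated here: coordinate slicing with the fixed-transverse-segment trick to pin down a single gradient $\cve$ (the slope of every $\e_i$-chord of $U$ is forced to agree with the slope of the fixed segment $L_i\subseteq V$, and symmetrically for chords of $V$ against $M_i$), axis-parallel connectivity of the open convex set $\intr(U)$ to upgrade slice-wise affineness to affineness on the interior, and the sum-preserving perturbation $\u_\lambda=(1-\lambda)\u+\lambda\u_0$, $\v_\lambda=\v+\lambda(\u-\u_0)$ to extend to boundary points of $U$, $V$, and $U+V$. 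Each step checks out, including the two points most likely to fail: the slope obtained on a chord is well-defined and forced to equal $c_i$ because $L_i$ is nondegenerate, and the boundary step legitimately uses the hypothesis at $(\u,\v)\in U\times V$ together with the already-established formulas for $g$ on $\intr(V)$ and $h$ on $\intr(U)+\intr(V)$ (one should also note the implicit exhaustion by compact subsegments when chords are unbounded, since \autoref{one-dim-interval_lemma} is stated for compact intervals). In spirit both arguments reduce to the one-dimensional lemma along segments and patch, but what each buys is different: yours is elementary, avoids any subspace bookkeeping, and delivers the full conclusion on $U$, $V$, $U+V$ rather than only on interiors; the paper's organization through general $L$ and possibly lower-dimensional $U$, $V$ yields the relative statements (\autoref{lem:generalized_interval_lemma}, \autoref{lem:projection_interval_lemma}) that the rest of the theory needs, and to which your argument does not directly extend, since placing the segments $M_i$, $L_i$ in all $k$ coordinate directions uses full-dimensionality of both $U$ and $V$.
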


\subsection{The full-dimensional convex case: Cauchy's functional equation on convex additivity domains in~$\R^k$} The most direct generalization applies to full dimensional convex sets $F$.
The general idea of the proof is to consider a point $(\x,\y)$ in such a convex additivity domain $F$, and consider a finite set of smaller subsets $F_1, \ldots, F_k \subseteq F$ that are Cartesian products, such that $\x \in F_1$, $\y\in F_k$ and $\intr(F_i) \cap \intr(F_{i+1}) \neq \emptyset$ for each $i=1, \ldots, k=1$. Applying Theorem~\ref{theorem:generalized_interval_lemma_fulldim} on each $F_i$, we can deduce that the functions are affine over all of $F$. This idea of ``patching'' together simple additivity domains to obtain affine properties over a more complicated domain was first introduced in~\cite[Proposition 23]{dey3}, and then used again in~\cite[Lemma 10]{3slope} and~\cite[Lemmas 3.5, 3.6]{bhkm}.

%
%
%


\begin{theorem}[Convex additivity domain lemma, full-dimensional
  version {\cite[\autoref{equi3:lem:projection_interval_lemma_fulldim}]{bhk-IPCOext}}]\label{lem:projection_interval_lemma_fulldim} 
  Let $f,g,h \colon \R^k \to \R$ be bounded functions. 
  Let $F \subseteq \R^k \times \R^k$ be a full-dimensional convex set
  such that $f(\u) + g(\v) = h(\u+\v)$ for all $(\u, \v) \in F$. 
  Then there exists a vector $\cve\in \R^k$ such that $f, g$ and $h$ are
  affine with the same gradient $\cve$ over $\intr(p_1(F))$,
  $\intr(p_2(F))$ and $\intr(p_3(F))$, respectively. 
\end{theorem}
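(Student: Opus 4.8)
The plan is to reduce the statement to the already-established full-dimensional Interval Lemma (\autoref{theorem:generalized_interval_lemma_fulldim}) by covering $F$ with small product boxes and patching the resulting affine pieces together. First I would record two elementary facts about the projections. Each $p_i$ is a surjective linear map $\R^k\times\R^k\to\R^k$, hence an open map, and since $F$ is full-dimensional we have $\relint F = \intr F \ne \emptyset$. By the standard fact that a linear map commutes with taking relative interiors, $p_i(\intr F) = \relint(p_i(F))$; and since $p_i(\intr F)$ is a nonempty open set, $p_i(F)$ is full-dimensional, so $\relint(p_i(F)) = \intr(p_i(F))$. Hence $\intr(p_i(F)) = p_i(\intr F)$ for $i=1,2,3$, and in particular each of these sets is open and convex. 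This lets me phrase the desired conclusions on the images of the open convex set $\intr F$.

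Next I would localize. Since $\intr F$ is open, for each point $\z = (\x_0,\y_0)\in\intr F$ I can choose a product box $U\times V\subseteq F$ with $\z\in\intr(U\times V)$ and $\aff(U)=\aff(V)=\R^k$. Applying \autoref{theorem:generalized_interval_lemma_fulldim} on $U\times V$ yields a gradient $\cve(\z)$ such that $f$, $g$, $h$ are affine with this common gradient over $U$, $V$ and $U+V$, respectively. The heart of the argument is that $\cve(\z)$ is actually independent of $\z$. If two such boxes $U_1\times V_1$ and $U_2\times V_2$ have overlapping interiors in $\R^{2k}$, then $U_1\cap U_2$ has nonempty interior, and on this full-dimensional set $f$ is affine with gradients $\cve_1$ and $\cve_2$ at once; since an affine function has a unique gradient on a set with nonempty interior, $\cve_1=\cve_2$. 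Two boxes chosen around the same point overlap, so $\cve(\z)$ is well defined, and each box is a neighborhood on which $\cve(\cdot)$ is constant, so $\cve(\cdot)$ is locally constant. As $\intr F$ is connected, $\cve(\cdot)$ is a single constant $\cve$, common to $f$, $g$, and $h$.

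Finally I would globalize to the full projections by a chaining argument. The open box-projections $\intr(U)$ produced above cover $p_1(\intr F)=\intr(p_1(F))$, since the box around $\z$ contains $p_1(\z)$ in the interior of its $U$-factor. Given $\a,\b\in\intr(p_1(F))$, the segment $[\a,\b]$ lies in this open set; by compactness finitely many boxes cover it, so I can pick intermediate points $\a=\ve q_0, \ve q_1, \dots, \ve q_m=\b$ with each consecutive pair $\ve q_{i-1},\ve q_i$ lying in a common box $U_i$. On each $U_i$ the function $f$ is affine with gradient $\cve$, so $f(\ve q_i)-f(\ve q_{i-1})=\cve\cdot(\ve q_i-\ve q_{i-1})$; summing telescopes to $f(\b)-f(\a)=\cve\cdot(\b-\a)$, which shows $f$ is affine with gradient $\cve$ on $\intr(p_1(F))$. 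The identical argument applied to the open cover $\{\intr(V)\}$ of $\intr(p_2(F))$ handles $g$, and to $\{\intr(U+V)\}$ of $\intr(p_3(F))$ handles $h$, each with the same gradient $\cve$.

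The main obstacle is the bookkeeping in the patching step: one must verify that the common gradient supplied box-by-box by the Interval Lemma is globally consistent. This relies essentially on the full-dimensionality of $F$, so that overlaps of box-projections are full-dimensional and thereby force the gradients to agree, and on the connectedness of $\intr F$. The projection identities $\intr(p_i(F)) = p_i(\intr F)$, while routine, are precisely what allow the localized conclusions to be transported to the domains stated in the theorem.
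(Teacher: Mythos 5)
Your proof is correct and follows essentially the same route as the paper: both reduce to the full-dimensional Interval Lemma (Theorem~\ref{theorem:generalized_interval_lemma_fulldim}) by covering the additivity domain with Cartesian-product subsets whose interiors overlap, using the full-dimensional overlaps to force agreement of the gradients, and then chaining to transport affineness to the projections. Your bookkeeping (locally constant gradient plus connectedness of $\intr(F)$, then a Lebesgue-number/telescoping argument along segments in $\intr(p_i(F))$) is a careful implementation of exactly the ``patching'' idea the paper describes.
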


This theorem is obtained by applying the ``patching'' idea to subsets $F_i$
that are Cartesian
products. \autoref{theorem:generalized_interval_lemma_fulldim} is applied to
the individual subsets $F_i$ to deduce affine properties. 

It is notable that we can only deduce affine linearity over the
\emph{interiors} of the projections in Theorem~\ref{lem:projection_interval_lemma_fulldim}, as opposed to the conclusion of Theorem~\ref{theorem:generalized_interval_lemma_fulldim}.  This is best possible, as is illustrated in~\cite[Remark~\ref{equi3:remark:projection_interval_lemma_interiors_only}]{bhk-IPCOext}. If continuity is assumed for the functions, then one easily extends the affine-ness property to the boundary (\autoref{s:boundary}).

\subsection{The lower-dimensional case: Affine properties with respect to subspaces $L$}
\label{section:convex-additivity-with-L}

Theorems~\ref{theorem:generalized_interval_lemma_fulldim} and
\ref{lem:projection_interval_lemma_fulldim} can be established in a significantly more general setting, which
takes care of  situations in which the set $F$ is not full-dimensional
(Theorems~\ref{lem:generalized_interval_lemma} and
\ref{lem:projection_interval_lemma}). Affine properties are deduced with
respect to certain subspaces, which is important for the classification of extreme functions in two or more dimensions.

We start with a result obtained in~\cite{bhk-IPCOext}, in which the additivity domain is $U\times V$ for convex sets $U\subseteq\R^k$ and $V\subseteq\R^k$, which are not necessarily of the same dimension.  In this general setting we cannot expect to deduce that the solutions are affine over
$U$, $V$, and $U+V$.  
\begin{remark}
  Indeed, if $U + V$ is a direct sum,
  i.e., for every $\w \in U + V$ there is
  a unique pair $\u \in U$, $\v \in V$ with $\w = \u+\v$, then 
  $f(\u) + g(\v) = h(\u+\v)$ merely expresses a form of separability of~$h$
  with respect to certain subspaces,
  and $f$ and $g$ can be arbitrary functions; 
  see \autoref{fig:higher-dim-interval-lemma}\,(c). 
\end{remark}
\begin{figure}[t!]
 \begin{center}
  \includegraphics[scale=0.4]{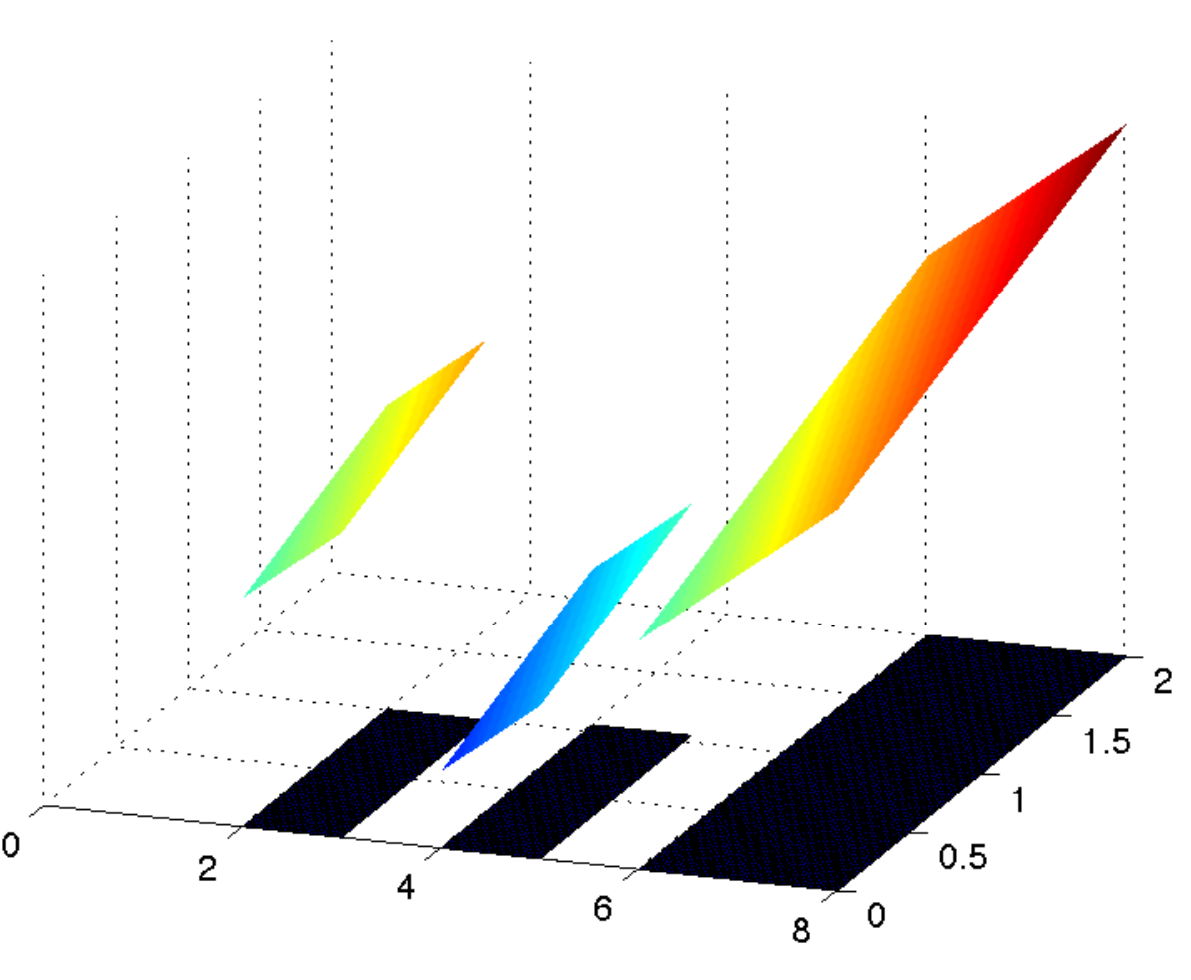}
   \includegraphics[scale=0.4]{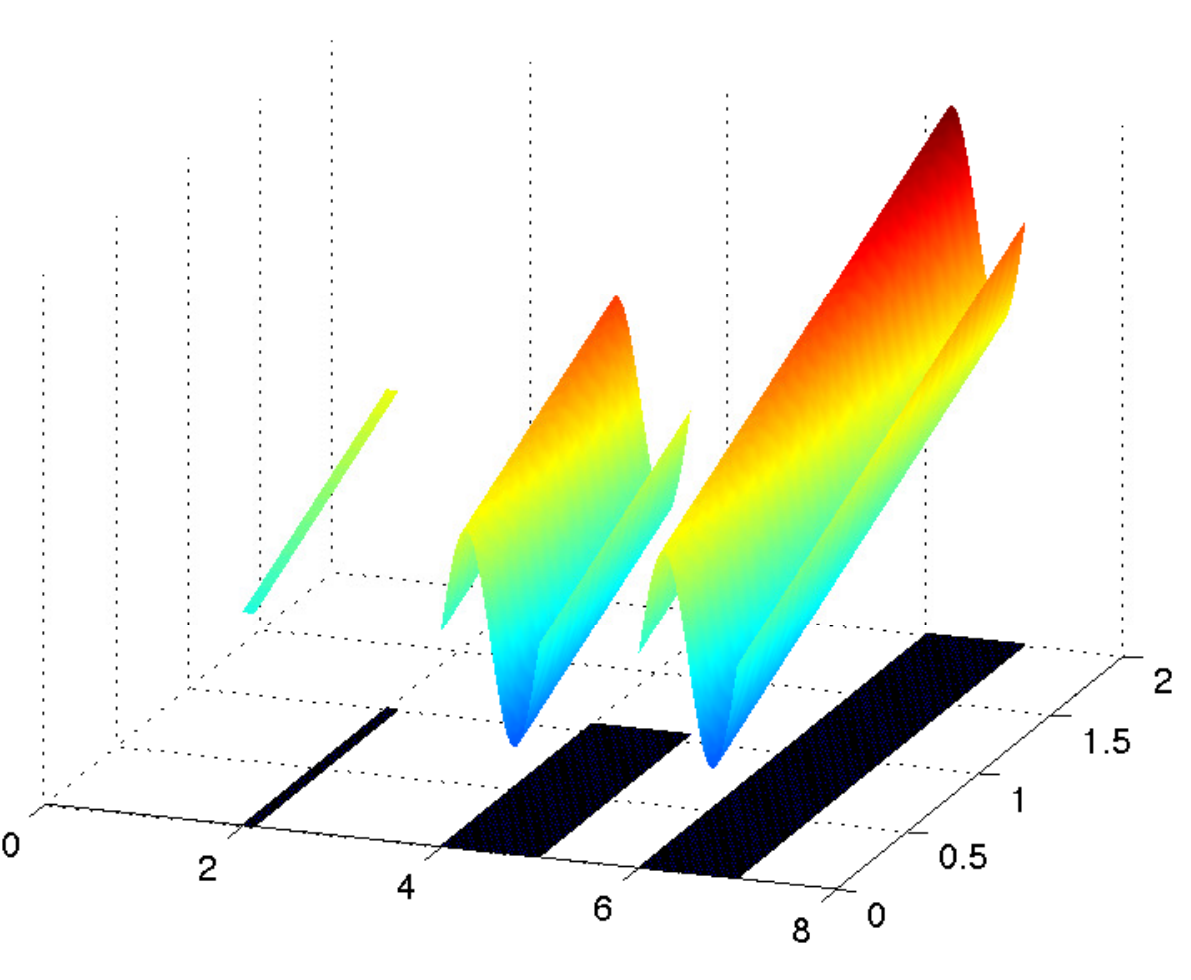}
  \includegraphics[scale=0.4]{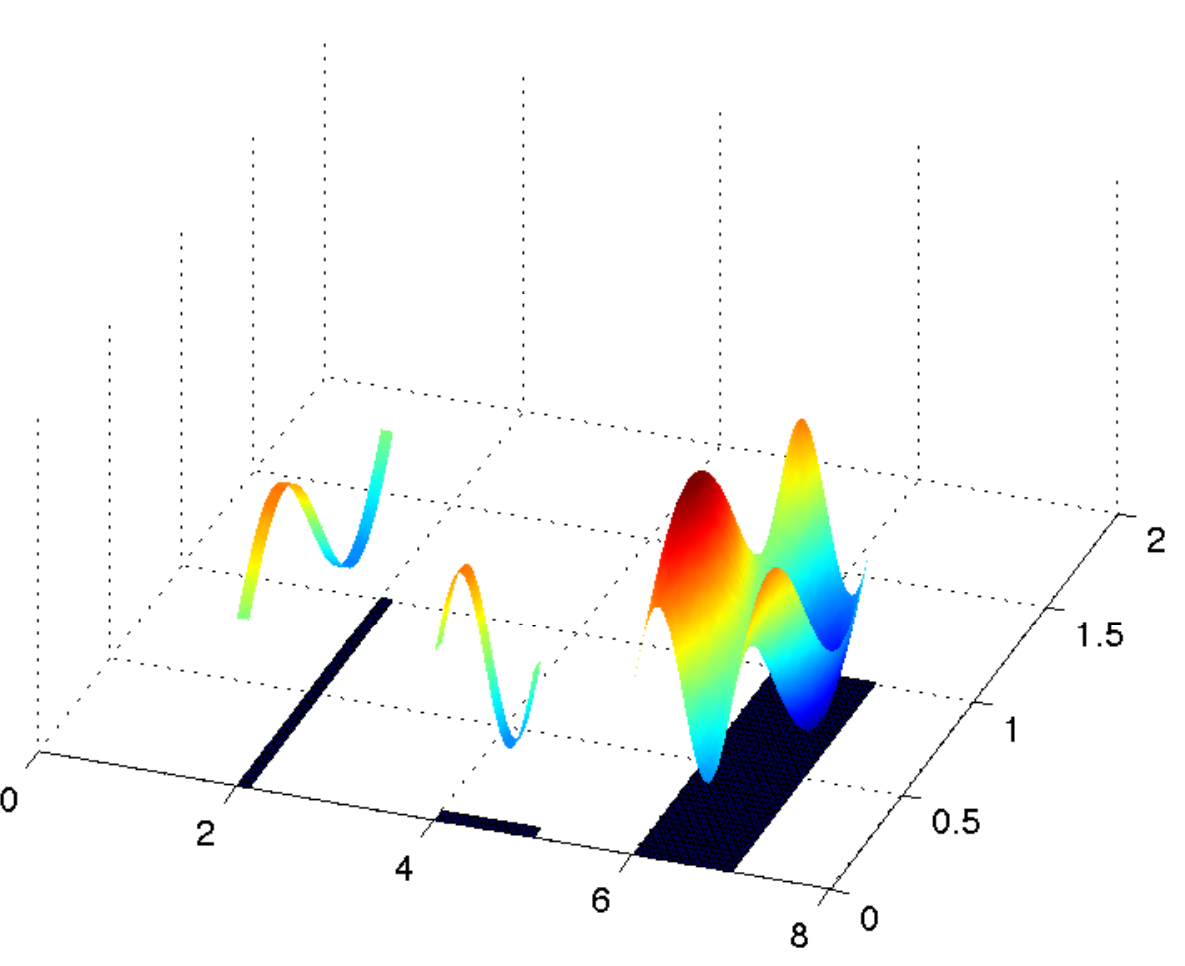} 
     \\
      \end{center}
      \vspace{-0.5cm}
      \begin{flushleft}
   \hspace{2.5cm}   (a) \hspace{4.3cm} (b) \hspace{4.3cm} (c) \hspace{5cm}
      \end{flushleft}
  \caption{Cauchy's functional equation on bounded domains. In each part (a), (b), and (c), we depict 3 domains in the plane, $U,V,U+V$, left to right, and an function that is additive over these domains.  (a)
    Full-dimensional situation. (b) Sum of a one-dimensional and a
    two-dimensional set; not a direct sum.  
   (c) Direct sum of (non-parallel) one-dimensional sets.}
  \label{fig:higher-dim-interval-lemma}
\end{figure}


\begin{definition}
Let $U \subseteq \R^k$. 
Given a linear subspace $L \subseteq \R^k$, we say $\pi\colon U \to \R$ is
{\em affine with respect to $L$ over $U$} if there exists $\cve \in \R^k$ such
that $\pi(\u^2) - \pi(\u^1) =  \cve\cdot( \u^2 - \u^1)$ for any $\u^1, \u^2
\in U$ such that $\u^2 - \u^1 \in L$.

\end{definition}

\begin{theorem}[Higher-dimensional Interval Lemma; {\cite[ \autoref{equi3:lem:generalized_interval_lemma}]{bhk-IPCOext}}]
\label{lem:generalized_interval_lemma}
Let $f,g,h \colon \R^k \to \R$ be bounded functions. Let $U$ and $V$ be convex subsets of $\R^k$ such that $f(\u) + g(\v) = h(\u+\v)$ for all $(\u,\v) \in F = U\times V$. Let $L$ be a linear subspace of $\R^k$ such that $(L + U) \times (L + V) = (L\times L) + F \subseteq \aff(F) = \aff(U) \times \aff(V)$. Then there exists a vector $\cve\in \R^k$ such that
$f$, $g$ and $h$ are affine with respect to $L$  over $p_1(F) = U$, $p_2(F) = V$ and $p_3(F) = U+V$
respectively, with gradient $\cve$.
\end{theorem}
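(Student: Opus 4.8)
The plan is to reduce this lower-dimensional statement to the full-dimensional Cartesian version, \autoref{theorem:generalized_interval_lemma_fulldim}, by slicing the domain with translates of $L$. Write $A_U = \lin(\aff U)$ and $A_V = \lin(\aff V)$ for the linear subspaces parallel to the affine hulls; unwinding the hypothesis $(L+U)\times(L+V) \subseteq \aff(U)\times\aff(V)$ shows that it is equivalent to $L \subseteq A_U \cap A_V$. Let $\ell = \dim L$; if $\ell = 0$ the conclusion is vacuous, so assume $\ell \ge 1$ and fix a linear isomorphism $\iota \colon \R^\ell \to L$. For a point $\u_0 \in U$ whose slice $\tilde U = U \cap (\u_0 + L)$ is $\ell$-dimensional and a point $\v_0 \in V$ whose slice $\tilde V = V \cap (\v_0 + L)$ is $\ell$-dimensional, the functions $f^*(\x) = f(\u_0 + \iota \x)$, $g^*(\y) = g(\v_0 + \iota \y)$, $h^*(\z) = h(\u_0 + \v_0 + \iota \z)$ are bounded and satisfy $f^*(\x) + g^*(\y) = h^*(\x+\y)$ over the convex sets $\iota^{-1}(\tilde U - \u_0)$ and $\iota^{-1}(\tilde V - \v_0)$, both of which have affine hull $\R^\ell$. \autoref{theorem:generalized_interval_lemma_fulldim} then supplies a common gradient, which transports back to a linear functional on $L$ governing $f|_{\tilde U}$, $g|_{\tilde V}$, and $h|_{\tilde U + \tilde V}$ simultaneously.

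The next step is to show that a single functional $\lambda$ works for all slices at once. Fixing reference slices $\tilde U_\ast$, $\tilde V_\ast$ through relative interior points $\u_\ast \in \relint U$, $\v_\ast \in \relint V$ (these are automatically $\ell$-dimensional because $L \subseteq A_U \cap A_V$), I would run the slice argument on each pair $(\tilde U_0, \tilde V_\ast)$. Every such application produces a gradient that in particular governs $g$ on the fixed set $\tilde V_\ast$; since an $L$-affine function on an $\ell$-dimensional convex set has a unique gradient along $L$, all these gradients coincide with one functional $\lambda$. Symmetrically $\lambda$ governs $g$ on every $\ell$-dimensional slice of $V$, and, patching the overlapping pieces $\tilde U + \tilde V$ inside a common translate $\p + L$ by the technique behind \autoref{lem:projection_interval_lemma_fulldim}, also $h$ on every $\ell$-dimensional slice of $U+V$. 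At this stage $f$, $g$, $h$ are affine with respect to $L$ with gradient $\lambda$ on every full-dimensional slice, hence on every pair $\u^1, \u^2$ lying in a common $\ell$-dimensional slice of its domain. Choosing $\cve \in \R^k$ with $\cve \cdot \w = \lambda(\w)$ for all $\w \in L$ produces the candidate gradient of the statement.

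It remains to reach the pairs that do not share a full-dimensional slice: points on the relative boundary and, when $\ell \ge 2$, points lying in degenerate slices of dimension strictly between $1$ and $\ell$. Here I would exploit the functional equation rather than any limit. Given $\u^1, \u^2 \in U$ with $\u^2 - \u^1 \in L$, pick $\v \in \relint V$, so that $f(\u^2) - f(\u^1) = h(\u^2 + \v) - h(\u^1 + \v)$. Because $\v \in \relint V$ and $L \subseteq A_V$, the slice $V \cap (\v + L)$ is $\ell$-dimensional, which forces the slice of $U + V$ through $\u^1 + \v$ to be $\ell$-dimensional as well; both $\u^1 + \v$ and $\u^2 + \v = (\u^1+\v) + (\u^2-\u^1)$ lie in it, so the right-hand side already equals $\lambda(\u^2 - \u^1)$ by the previous step. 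The symmetric choice $\u \in \relint U$ settles $g$ on its boundary, and an analogous shuttle through a decomposition of a point of $U+V$ with one summand in a relative interior settles $h$ on its degenerate slices.

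The main obstacle is precisely this last step. Since $f$, $g$, $h$ are only assumed bounded, boundary and low-dimensional-slice values cannot be recovered by continuity, and the whole burden falls on the functional equation to relocate each troublesome pair into a full-dimensional slice where \autoref{theorem:generalized_interval_lemma_fulldim} has already done the work. The delicate bookkeeping is to verify that a suitable relative-interior point (for $f$ and $g$) or a suitable decomposition (for $h$) always exists so that the relocated pair genuinely lands in a full-dimensional slice; the hypothesis $L \subseteq A_U \cap A_V$ is exactly what guarantees this, by keeping the $L$-slices of $U$, $V$, and $U+V$ full-dimensional at relative interior points.
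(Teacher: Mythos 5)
Your slicing strategy handles $f$ and $g$ correctly: restricting to $\ell$-dimensional slices $U\cap(\u_0+L)$, $V\cap(\v_0+L)$, invoking \autoref{theorem:generalized_interval_lemma_fulldim} inside each slice, and identifying all the resulting gradients with a single functional $\lambda$ through a fixed reference slice is sound, and the boundary shuttle for $f$ and $g$ can be made rigorous (most cleanly by shuttling through $g$ rather than through $h$: if $\v,\v+\d\in V$ with $\d\in L$, then $f(\u^1+\d)-f(\u^1)=g(\v+\d)-g(\v)$, and subdividing $\d$ into small steps guarantees such $\v$ exists near any point of $\relint V$). One structural caveat: in the source \cite{bhk-IPCOext} the full-dimensional statement is the special case $L=\R^k$ of the present theorem, so your reduction is non-circular only because the Cartesian full-dimensional case has independent proofs; that is a presentational issue, not the real problem.

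The genuine gap is the step for $h$. Both of your tools --- patching the pieces $\tilde U+\tilde V$ inside one slice of $U+V$, and the shuttle through ``a decomposition with one summand in a relative interior'' --- can fail simultaneously, and your closing claim that the hypothesis $L\subseteq A_U\cap A_V$ guarantees a suitable decomposition is false. Concretely, take $k=2$, $\ell=1$, $L=\R\times\{0\}$, $U=\conv\{(0,0),(10,1),(11,1)\}$, $V=\conv\{(0,0),(1,0),(-11,1)\}$; both sets are full-dimensional, so the hypothesis holds. The point $\w=(-11,1)\in U+V$ has the \emph{unique} decomposition $\w=(0,0)+(-11,1)$, a sum of two exposed vertices, so no decomposition has a summand in a relative interior; moreover the only additive piece containing $\w$, namely $\bigl(U\cap L\bigr)+\bigl(V\cap((-11,1)+L)\bigr)$, is the singleton $\{\w\}$, even though the slice $(U+V)\cap(\w+L)=[-11,12]\times\{1\}$ is $\ell$-dimensional. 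Patching the positive-length pieces $U_s+V_{1-s}$, $s>0$, establishes the gradient-$\lambda$ relation on $(-11,12]\times\{1\}$, but yields no relation whatsoever between $h(\w)=f((0,0))+g((-11,1))$ and the values on the rest of that fiber, and no shuttle of the kind you describe can even start at $\w$. The conclusion is still true at $\w$, but deriving it requires combining additivity constraints from \emph{other} $L$-fibers (heights $c<1$ and $c>1$ in this example), which force the increments of $f$ and $g$ across parallel slices to match before the corner value can be tied to its own fiber. No step of your proposal produces such cross-fiber information, so the argument as written does not prove the assertion about $h$ on $U+V$; this is precisely where the substantial work of the cited proof lies.
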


Theorem~\ref{theorem:generalized_interval_lemma_fulldim} follows when $L=\R^k$.

\begin{definition}
  For a linear space $L \subseteq \R^k$ and a set $U \subseteq \R^k$ such that
  for some $\u \in \R^k$ we have $\aff(U) \subseteq L + \u$, we will denote by
  $\intr_L(U)$ the interior of $U$ in the relative topology of $L + \u$.
\end{definition}
Note that $\intr_L(U)$ is well defined because either $\aff(U) = L + \u$, or
$\intr_L(U) = \emptyset$.   We now state our most general theorem relating to equation~\eqref{eq:cauchy-1d} on a convex domain.

\begin{theorem}[Convex additivity domain lemma; {\cite[\autoref{equi3:lem:projection_interval_lemma}]{bhk-IPCOext}}]
\label{lem:projection_interval_lemma}
Let $f,g,h \colon \R^k \to \R$ be bounded functions. Let $F \subseteq \R^k \times \R^k$ be a convex set
such that $f(\u) + g(\v) = h(\u+\v)$ for all $(\u, \v) \in F$. 
Let $L$ be a linear subspace of $\R^k$ such that $(L \times L) + F \subseteq \aff(F)$.
Let $(\u^0, \v^0) \in \relint(F)$. 
Then there exists a vector $\cve\in \R^k$ such that $f, g$ and $h$ are affine with gradient $\cve$ over $\intr_L((\u^0 + L) \cap p_1(F))$, $\intr_L((\v^0 + L) \cap p_2(F))$ and $\intr_L((\u^0 + \v^0 + L) \cap p_3(F))$, respectively.
\end{theorem}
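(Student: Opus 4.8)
The plan is to reduce the statement to the Cartesian-product case \autoref{lem:generalized_interval_lemma} by the ``patching'' technique advertised before \autoref{lem:projection_interval_lemma_fulldim}: I would cover the three relevant slices of $F$ by small product boxes $U\times V\subseteq F$ whose relative interiors overlap, apply \autoref{lem:generalized_interval_lemma} on each box, and propagate a single gradient $\cve$ across all of them.

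First I would record two preliminary facts about the convex set $F$. Fix $(\u^0,\v^0)\in\relint(F)$. Since $p_1,p_2,p_3$ are affine surjections onto their images and affine maps send relative interiors onto relative interiors, we get $\u^0\in\relint(p_1(F))$, $\v^0\in\relint(p_2(F))$, and $\u^0+\v^0\in\relint(p_3(F))$. Second, a standard line-segment argument shows that for any convex $C$ and any point in the $L$-relative interior of the slice $(\u^0+L)\cap C$ through a relative interior point $\u^0\in\relint(C)$, that point already lies in $\relint(C)$: extend the segment from $\u^0$ slightly past the given point while staying in the slice, and invoke the principle that the half-open segment from a relative interior point stays in $\relint(C)$. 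Applying this to $C=p_1(F),p_2(F),p_3(F)$ shows that every point of the three target sets $\intr_L((\u^0+L)\cap p_1(F))$, $\intr_L((\v^0+L)\cap p_2(F))$, $\intr_L((\u^0+\v^0+L)\cap p_3(F))$ is a relative interior point of the corresponding projection, hence is a first (resp.\ second, third) coordinate of some point of $\relint(F)$.

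Next I would build the boxes. The hypothesis $(L\times L)+F\subseteq\aff(F)$ says exactly that $L\times L$ lies in the direction space of $\aff(F)$; therefore at any $(\u,\v)\in\relint(F)$ and for sufficiently small $\ell_1,\ell_2\in L$ the point $(\u+\ell_1,\v+\ell_2)$ remains in $F$. Hence there is a product box $U\times V\subseteq F$ with $U=\u+(L\cap\varepsilon B)$ and $V=\v+(L\cap\varepsilon B)$, whose affine hulls are $\u+L$ and $\v+L$; this box satisfies the hypothesis $(L+U)\times(L+V)\subseteq\aff(U)\times\aff(V)$ of \autoref{lem:generalized_interval_lemma} (in fact with equality). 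That theorem then yields a gradient $\cve$, common to $f,g,h$, witnessing affineness with respect to $L$ over $U$, $V$, and $U+V$.

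The core of the argument, and the step I expect to be the main obstacle, is to show that one gradient governs all such boxes and that their projections cover the three target slices. If two boxes have overlapping relative interiors, then their $U$-parts (resp.\ $V$-parts, $(U+V)$-parts) overlap in an $L$-full-dimensional set, on which two functions that are affine with respect to $L$ with gradients $\cve,\cve'$ must agree along $L$; thus $\cve$ and $\cve'$ restrict to the same linear functional on $L$, which is all the conclusion concerns. Since $\relint(F)$ is convex, hence connected, the centers of any two boxes are joined by a segment in $\relint(F)$ covered by finitely many overlapping boxes, so every box inherits the $L$-gradient $\cve$ of the base box at $(\u^0,\v^0)$. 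Finally, by the preliminary facts each $\x\in\intr_L((\u^0+L)\cap p_1(F))$ is the first coordinate of a point of $\relint(F)$, hence lies in the $U$-part of some box; telescoping $f$ along the segment $[\u^0,\x]$ (which lies in the slice, is covered by overlapping boxes, and on each of which $f$ is affine with respect to $L$ with gradient $\cve$) gives $f(\x)-f(\u^0)=\cve\cdot(\x-\u^0)$. The identical argument for $g$ over $\intr_L((\v^0+L)\cap p_2(F))$ and for $h$ over $\intr_L((\u^0+\v^0+L)\cap p_3(F))$ finishes the proof. The delicate points are the slice/relative-interior lemma and the verification that overlaps are $L$-full-dimensional so gradients can be matched; everything else is the bounded-domain input of \autoref{lem:generalized_interval_lemma} together with routine connectedness bookkeeping.
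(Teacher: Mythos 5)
Your overall strategy---cover with small Cartesian boxes $U\times V=(\u+L\cap\varepsilon B)\times(\v+L\cap\varepsilon B)\subseteq F$, apply \autoref{lem:generalized_interval_lemma} on each box, and patch---is exactly the advertised ``patching'' route, and your preliminary facts and the box construction are correct. But the step you yourself identify as the core has a genuine gap: your only mechanism for matching gradients is that two boxes \emph{overlap in $\R^k\times\R^k$}, and your connectivity claim is that a segment in $\relint(F)$ joining two box centers ``is covered by finitely many overlapping boxes.'' When $L$ is a proper subspace this is false. Each box is contained in a single coset of $L\times L$, namely $(\u+L)\times(\v+L)$ through its center, so two boxes can intersect only if their centers lie in the same coset; and a segment in $\relint(F)$ whose direction is not in $L\times L$ meets each coset of $L\times L$ in at most one point. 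Hence every box meets such a segment in at most one point, any two boxes meeting it are disjoint, and no chain of pairwise-overlapping boxes can cover it. Concretely, take $k=2$, $L=\R e_1$, and $F=\{\,((\lambda+s,0),(t,\lambda)) : \lambda\in[0,1],\ s,t\in[-1,1]\,\}$ with base point $(\u^0,\v^0)=((0,0),(0,\tfrac12))$: the hypothesis $(L\times L)+F\subseteq\aff(F)$ holds, boxes centered at points with distinct $\lambda$ are pairwise disjoint, yet the theorem demands that $f$ be affine on all of $\intr_L((\u^0+L)\cap p_1(F))=(-1,2)\times\{0\}$, whereas the base coset only reaches $p_1\bigl(F\cap((\u^0+L)\times(\v^0+L))\bigr)=[-\tfrac12,\tfrac32]\times\{0\}$. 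So precisely in the lower-dimensional case---the only case where this theorem says more than \autoref{lem:projection_interval_lemma_fulldim}, where your argument is fine---the connectedness bookkeeping collapses.

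The missing idea is that the patching must happen in the \emph{projections}, not in the product space. Fix $\x\in\intr_L((\u^0+L)\cap p_1(F))$; by your preliminary facts every point $\u_t$ of the segment $[\u^0,\x]$ lies in $\relint(p_1(F))$, hence is the first coordinate of some $(\u_t,\y_t)\in\relint(F)$, where $\y_t$ drifts through varying cosets. The boxes at these points are typically pairwise disjoint, but their $U$-parts are $L$-balls centered at $\u_t$ that all lie in the single coset $\u^0+L$; taking a uniform $\varepsilon$ by compactness of the segment, consecutive $U$-parts overlap in $L$-open sets. Since $f$ is affine with respect to $L$ on each $U$-part with the gradient that \autoref{lem:generalized_interval_lemma} assigns to that box, the overlaps force all these gradients to agree on $L$, and including the base box at $(\u^0,\v^0)$ in the chain anchors them to $\cve$; telescoping then gives $f(\x)-f(\u^0)=\cve\cdot(\x-\u^0)$. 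The same argument with the $V$-parts (all in the coset $\v^0+L$) handles $g$, and with the $(U+V)$-parts (all in $\u^0+\v^0+L$) handles $h$; since inside any single box the gradient is shared by $f$, $g$, and $h$, the three propagated gradients all agree with $\cve$ on $L$. With this replacement of your overlap-of-boxes chain by an overlap-of-projections chain, the rest of your proof (preliminary facts, box construction, final telescoping) is sound.
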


Theorem~\ref{lem:projection_interval_lemma_fulldim} follows when $L=\R^k$.

\subsection{Continuity at the boundary}\label{s:boundary}

The one-dimensional Interval Lemma, Lemma~\ref{one-dim-interval_lemma},
includes affine properties on the boundaries.  Using this, it is easy to prove
that a similar Interval Lemma holds on all non-degenerate intervals
$U,V\subseteq \R$ that are any of open, half-open, or closed.  Only in special
cases in higher dimensions is it possible to extend affine properties in
Theorem~\ref{lem:projection_interval_lemma} to the boundary; in general this is not possible (see \cite[Remark~\ref{equi3:remark:projection_interval_lemma_interiors_only}]{bhk-IPCOext}).

Of course, if we use the stronger regularity assumption that $f$, $g$, and $h$
are continuous functions (rather than merely bounded functions), then the
affine properties extend to the boundary as well.

\begin{corollary}[Convex additivity domain lemma for continuous functions; {\cite[\autoref{equi3:lem:projection_interval_lemma-corollary}]{bhk-IPCOext}})]
\label{lem:projection_interval_lemma-corollary}
Let $f,g,h \colon \R^k \to \R$ be continuous functions. Let $F \subseteq \R^k \times \R^k$ be a convex set
such that $f(\u) + g(\v) = h(\u+\v)$ for all $(\u, \v) \in F$. 
Let $L$ be a linear subspace of $\R^k$ such that $L \times L + F \subseteq \aff(F)$.
Let $(\u^0, \v^0) \in \relint(F)$. 
Then there exists a vector $\cve\in \R^k$ such that $f, g$ and $h$ are affine with gradient $\cve$ over $(\u^0 + L) \cap p_1(F)$, $(\v^0 + L) \cap p_2(F)$ and $(\u^0 + \v^0 + L) \cap p_3(F)$, respectively.
\end{corollary}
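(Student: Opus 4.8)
The plan is to derive this corollary from its bounded-function counterpart, \autoref{lem:projection_interval_lemma}, by appending a continuity argument at the relative boundary of each sliced projection. The only gap between the two statements is that the theorem concludes affineness over the relative interiors $\intr_L((\u^0+L)\cap p_1(F))$, $\intr_L((\v^0+L)\cap p_2(F))$, $\intr_L((\u^0+\v^0+L)\cap p_3(F))$, whereas here we want affineness over the full (relatively closed) convex slices $(\u^0+L)\cap p_1(F)$, $(\v^0+L)\cap p_2(F)$, $(\u^0+\v^0+L)\cap p_3(F)$. So the substance of the corollary is precisely the passage from the relative interior to the closure, which is exactly where continuity (rather than mere boundedness) is exploited.

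First I would produce a single common gradient $\cve$ together with affineness on the three relative interiors. Since continuous functions are bounded on bounded sets, for each $n$ I set $F_n = F \cap \B_n$, where $\B_n$ is a closed ball of radius $n$ about $(\u^0,\v^0)$. For $n$ large, $\aff(F_n) = \aff(F)$ because $(\u^0,\v^0)\in\relint(F)$, and the hypothesis $(L\times L)+F\subseteq\aff(F)$ persists for $F_n$. Replacing $f,g,h$ by truncations that agree with them on the bounded sets $p_1(F_n)$, $p_2(F_n)$, $p_3(F_n)$ but are globally bounded (the additivity relation on $F_n$ is unaffected), \autoref{lem:projection_interval_lemma} applies to $F_n$ and yields a gradient and affineness on the relative interiors of the slices of $F_n$. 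Because consecutive $F_n$ overlap in a fixed neighborhood of $(\u^0,\v^0)$, these gradients coincide—the common value $\cve$ being pinned down by the local behavior of $f$ along $L$ near $\u^0$—and as $n\to\infty$ the relative interiors of the slices of $F_n$ increase to $\intr_L((\u^0+L)\cap p_1(F))$ and its companions. Thus $f,g,h$ are affine with the single gradient $\cve$ over these three relative interiors.

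Next I would extend to the boundary. Each slice, say $C_1 := (\u^0+L)\cap p_1(F)$, is convex (an affine subspace meets a convex set) and has nonempty relative interior with respect to $\u^0+L$: projecting $(\u^0,\v^0)\in\relint(F)$ gives $\u^0\in\relint(p_1(F))$, while $(L\times L)+F\subseteq\aff(F)$ forces $\u^0+L\subseteq\aff(p_1(F))$, so $\u^0\in\intr_L(C_1)$. Consequently $C_1\subseteq\cl(\intr_L(C_1))$. Given any $\u^1\in C_1$, I choose $\u^n\in\intr_L(C_1)$ with $\u^n\to\u^1$; affineness on the relative interior gives $f(\u^n)-f(\u^0)=\cve\cdot(\u^n-\u^0)$, and letting $n\to\infty$, continuity of $f$ yields $f(\u^1)-f(\u^0)=\cve\cdot(\u^1-\u^0)$. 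Hence $f$ is affine with gradient $\cve$ over all of $C_1$; the identical argument, with reference points $\v^0$ and $\u^0+\v^0$, handles $g$ on $(\v^0+L)\cap p_2(F)$ and $h$ on $(\u^0+\v^0+L)\cap p_3(F)$.

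The routine part is this final continuity extension; the step to get right is the first one, namely invoking the bounded version on an exhaustion of $F$ while verifying that the produced gradient is independent of $n$ (so that one $\cve$ serves the whole, possibly unbounded, slice) and that $\aff(F_n)=\aff(F)$ and $(L\times L)+F_n\subseteq\aff(F_n)$ genuinely persist. If one is content to assume $F$ bounded—as holds in all applications to the complex $\Delta\P$, whose cells are bounded—this first step collapses to a single application of \autoref{lem:projection_interval_lemma}, and only the continuity extension remains.
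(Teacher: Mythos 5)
Your proof is correct and takes essentially the same route as the paper: the corollary is deduced from \autoref{lem:projection_interval_lemma} by using continuity to push the affine property with the common gradient from the relative interiors $\intr_L((\u^0+L)\cap p_i(F))$ to the full slices, exactly as the survey indicates in the sentence preceding the corollary. Your truncation/exhaustion step to cope with the fact that continuous functions (and the set $F$) need not be globally bounded is a sound technical supplement rather than a different approach, and your observation that it collapses to a single application of the theorem when $F$ is bounded matches the situation in all of the paper's applications.
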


\section{Sufficient conditions for extremality in the $k$-row infinite
  group problem}\label{s:sufficient-cond}

\subsection{The $(k+1)$-Slope Theorem}

We have already mentioned the classic Gomory--Johnson 2-Slope Theorem
(\autoref{th:2-slope}), which states
that for $k=1$, if a continuous piecewise linear minimal function has only 2 slopes, then
it is extreme. An analogous 3-Slope Theorem for $k=2$ was proved by
Cornu\'ejols and 
Molinaro~\cite{3slope}.
We present here the $(k+1)$-Slope Theorem for the case of general~$k$ by Basu,
Hildebrand, K\"oppe and Molinaro~\cite{bhkm}, along
with the main ingredients of its proof. 

\begin{theorem}[{\cite[Theorem 1.7]{bhkm}}]\label{thm:k+1slope}
Let $\pi\colon \R^k \to \R$ be a minimal valid function that is continuous piecewise linear and genuinely $k$-dimensional\/\footnote{See \autoref{def:genk}.} with at most $k+1$ slopes, i.e., at most $k+1$ different values for the gradient of $\pi$ where it exists. Then $\pi$ is extreme and has exactly $k+1$ slopes.
\end{theorem}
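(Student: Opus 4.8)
\textit{The plan is to follow the roadmap of \autoref{s:roadmap}}: assume $\pi = \tfrac12(\pi^1+\pi^2)$ for valid functions $\pi^1,\pi^2$ and deduce $\pi = \pi^1 = \pi^2$. Before that, I would dispose of the ``exactly $k+1$ slopes'' claim, which is where genuine $k$-dimensionality enters. Since $\pi$ is periodic modulo $\Z^k$, integrating the gradient over the torus $[0,1]^k$ gives $\sum_i \mu_i g_i = \0$, where $g_0,\dots,g_{s-1}$ are the distinct slopes and $\mu_i>0$ is the measure of the region $P_i$ on which $\pi$ has gradient $g_i$; hence $\0\in\relint\conv\{g_0,\dots,g_{s-1}\}$. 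If the slopes spanned only a proper subspace $W\subsetneq\R^k$, then $\pi$ would be invariant along any direction normal to $W$ and so not genuinely $k$-dimensional (contradicting the hypothesis via \autoref{def:genk}); thus the slopes span $\R^k$, forcing $s\ge k+1$, and with $s\le k+1$ we get $s=k+1$. Writing $\0=\sum_{i=0}^k\lambda_i g_i$ with all $\lambda_i>0$ then exhibits the slopes as the vertices of a $k$-simplex containing $\0$ in its interior. This is the ``simple observation'' alluded to after \autoref{prop:dim-reduction}.

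For extremality I would apply \autoref{lem:tightness}: parts (i) and (iv) give that $\pi^1,\pi^2$ are minimal and Lipschitz continuous, and part (ii) gives $E(\pi)\subseteq E(\pi^1)\cap E(\pi^2)$. It then suffices to prove $\bar\pi:=\pi^1-\pi\equiv 0$ (the argument for $\pi^2$ being identical). Note that $\bar\pi$ is continuous, periodic modulo $\Z^k$, satisfies $\bar\pi(\0)=\bar\pi(\f)=0$ by \autoref{thm:minimal}, and is additive on $E(\pi)$; that is, $\bar\pi\in\bar\Pi^{E(\pi)}(\R^k,\Z^k)$, so the theorem reduces to showing that this perturbation space contains no nonzero element of the relevant form (cf.\ \autoref{corPerturb}).

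The core is an affineness-propagation argument driven by the simplex structure of the slopes. By \autoref{thm:faVertex-gen-k-dim}(ii) all cells of $\P$ and $\Delta\P$ are full-dimensional, and by \autoref{lemma:covered-by-maximal-valid-triples} the domain $E(\pi)$ is the union of the additive faces $F=F(I,J,K)\in E(\pi,\P)$. On any full-dimensional additive face, applying the convex additivity domain lemma (\autoref{lem:projection_interval_lemma_fulldim}) to $\pi$ shows its three projected cells $p_1(F),p_2(F),p_3(F)$ carry a common gradient, so full-dimensional additive faces link only cells of \emph{equal} slope; applying its continuous version (\autoref{lem:projection_interval_lemma-corollary}) to $\bar\pi$, using $F\subseteq E(\pi)\subseteq E(\pi^1)$, shows $\bar\pi$ is affine with a single gradient over those same cells. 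Thus $\bar\pi$ has a constant gradient $\d_i$ on the additively-connected union of slope-$g_i$ cells. To link different slope regions I would invoke the lower-dimensional subspace Interval Lemma (\autoref{lem:projection_interval_lemma}, with continuity): the mixed-slope additive faces, though not full-dimensional, force $\bar\pi$ to be affine with respect to various subspaces $L$ across the boundaries between the $P_i$. The hypotheses that $\pi$ is genuinely $k$-dimensional and that $\0$ lies strictly inside the slope-simplex are exactly what guarantee that these faces connect all cells and that the accumulated subspaces span $\R^k$, forcing $\bar\pi$ to be globally affine, $\d_0=\dots=\d_k=\d$. Periodicity then gives $\d=\sum_i\mu_i\d=\0$, and $\bar\pi(\0)=0$ yields $\bar\pi\equiv 0$, i.e.\ $\pi^1=\pi$.

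I expect the main obstacle to be precisely this last propagation step: showing that $E(\pi)$ is connected and rich enough to drag a single affine gradient across every cell of $\P$. Verifying this requires careful bookkeeping of which lower-dimensional faces of $\Delta\P$ are additive and of the subspaces they contribute, and it is here --- rather than in any individual Interval Lemma application --- that the genuinely-$k$-dimensional and exactly-$(k+1)$-slope structure must be used in full. A convenient way to organize it is to prove, as an intermediate covering lemma, that every cell of $\P$ lies in the projection of some additive face and that adjacent slope regions always share such a face, after which the global-affineness conclusion follows mechanically.
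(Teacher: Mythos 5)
Your slope-counting sketch and the reduction to a perturbation $\bar\pi=\pi^1-\pi\in\bar\Pi^{E(\pi)}(\R^k,\Z^k)$ are sound and match the paper's setup, but both halves of your ``affineness-propagation'' core diverge from what actually works, and the second half is a fatal gap. For the within-slope step, the paper does not rely on whatever full-dimensional faces happen to lie in $E(\pi,\P)$: it first establishes (as part of the slope-count analysis, via subadditivity --- something your torus-integration argument does not provide) that for each gradient $\gp^i$ there is a cell $C_i\in\P_i$ with $\0\in C_i$, and then \emph{manufactures} additive faces via \autoref{lem:k+1-useful}: since $\pi(\0)=0$ and $\pi$ is affine with the same gradient on $C_i$ and on any cell $P\in\P_i$, the full-dimensional face $F(C_i,P,P)$ automatically lies in $E(\pi)$, with $p_2(F)=p_3(F)=P$ an \emph{entire} cell. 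This is what makes every slope-$i$ cell additively linked to $C_i$ and lets \autoref{lem:projection_interval_lemma_fulldim} (plus continuity of $\pi^1$ from \autoref{lem:tightness}) give $\pi^1$ a single gradient $\gt^i$ on all of $\P_i$. Your version yields affineness of $\bar\pi$ only on the projections $p_j(F)$, which may be proper subsets of cells, and gives no reason that the slope-$i$ cells form one additively-connected component.

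The fatal gap is the cross-slope linkage, which you flag as ``the main obstacle'' and then dispatch by asserting that genuine $k$-dimensionality forces lower-dimensional mixed-slope additive faces to connect everything with spanning subspaces. No such faces need exist in a useful form: the lower-dimensional additive faces that are always present --- those of the form $F(I,\{\ve z\},I+\ve z)$ with $\ve z\in\Z^k$, and those inside the symmetry set $\x+\y\equiv\f \pmod{\Z^k}$ --- admit only $L=\{\0\}$ in \autoref{lem:projection_interval_lemma}, because the condition $(L\times L)+F\subseteq\aff(F)$ collapses the subspace; they therefore transfer no affine information, and a $(k+1)$-slope function need not have any other mixed-slope additivities. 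The paper links the slopes by a completely different mechanism: \emph{continuity} of $\pi^1$ across shared cell boundaries, exploited at special points $\rr^1,\dots,\rr^{k+1}$, where each $\rr^i$ lies in $k$ cells carrying the $k$ gradients $\{\gp^j\}_{j\neq i}$ and every $k$-subset of $\{\rr^1,\dots,\rr^{k+1}\}$ is linearly independent (\autoref{lemma:kkmDirections}). The existence of such points is the hard content of the theorem and is proved with the Knaster--Kuratowski--Mazurkiewicz fixed-point lemma applied to the facets of a simplex around the origin whose covering sets come from the regions $\P_i$. The resulting equations $\rr^i\cdot\gs^j=\rr^i\cdot\gs^\ell$ (for $j,\ell\neq i$), combined with the equations $\sum_{j}(\mu_{ij}\a^i)\cdot\gs^j=1$ obtained from minimality at $k+1$ affinely independent points $\a^i\in\f+\Z^k$, form a linear system with at most one solution (\autoref{lemma:linear-system}); since both $(\gp^j)_j$ and $(\gt^j)_j$ solve it, they coincide, and $\pi^1=\pi$ follows by integrating gradients along segments. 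Your periodicity argument could at best substitute for the role of the $\a^i$-equations \emph{after} the KKM-based relations are in hand; it cannot substitute for the KKM step itself, so the plan as written does not go through.
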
 

The proof will follow the basic roadmap of \autoref{s:roadmap} and use
\autoref{lem:tightness}; we give an outline here, before diving into the
details. For the rest of this section, $\pi$ is a continuous piecewise linear
minimal function that is genuinely $k$-dimensional with at most $k+1$
slopes. Let $\P$ be the associated polyhedral complex.  

\begin{enumerate}
\item Subadditivity and the property of being genuinely $k$-dimensional is used
  to first establish that $\pi$ has exactly $k+1$ gradient values $\gp^1,
  \ldots, \gp^{k+1} \in \R^k$. This is a relatively easy step, and we refer to
  the reader to~\cite[Lemma 2.11]{bhkm} for the details.
\item Consider any minimal valid functions $\pi^1, \pi^2$ such that $\pi = \frac{\pi^1 + \pi^2}{2}$. 
\item (Compatibility step) For each $i = 1, \ldots, k+1$, define
  $\P_i\subseteq \P$ to be the polyhedral complex formed by all the cells (and
  their faces) of $\P$ where the gradient of $\pi$ is $\gp^i$. Show that there
  exist $\gt^1, \ldots, \gt^{k+1}$ such that $\pi^1$ is affine over every cell
  in $\P_i$ with gradient $\gt^i$. 
\item (Gradient matching step) We then highlight certain structures of genuinely $k$-dimensional functions with $k+1$ slopes that lead to a system of $k(k+1)$ equations that are satisfied by the coefficients of $\gp^1, \ldots, \gp^{k+1}$ and $\gt^1, \ldots, \gt^{k+1}$. Then, it is established that this system of equations has a unique solution, and thus, $\gp^i = \gt^i$ for every $i=1, \ldots, k+1$.
\item For every $\rr \in \R^k$ there exist $\mu_1, \mu_2, \ldots, \mu_{k+1}$ such that $\mu_i$ is the fraction of the segment $[\0,\rr]$ that lies in $\P_i$. Thus, $$\pi(\rr) = \pi(\0) + \sum_{i=1}^{k + 1} \mu_i (\gp^i \cdot \rr) = \pi^1(\0) + \sum_{i=1}^{k+1} \mu_i (\gt^i\cdot \rr) = \pi^1(\rr).$$ This proves that $\pi = \pi^1$ and thus, $\pi = \pi^1 = \pi^2$, concluding the proof of \autoref{thm:k+1slope}. 
\end{enumerate}

\medskip

\paragraph{\em Compatibility Step.} The following observation is crucial:

\begin{lemma}\label{lem:k+1-useful}
Let $U, V \subseteq \R^k$ be full-dimensional convex sets such that $\0 \in U$.  Let $F = F(U,V,V)$\footnote{See the definition in~\eqref{eq:F-def}.}. 
Then $\0 \in p_1(F)$, $V = p_2(F) = p_3(F)$ and $p_1(F)$ is full-dimensional. 
Furthermore, if $\pi \colon \R^k \to \R$ is such that $\pi(\0) = 0$ and is affine on $U,V$ with the same slope, then $F \subseteq E(\pi)$.
\end{lemma}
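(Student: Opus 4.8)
The plan is to obtain parts (1)--(3) directly from the projection formulas recorded earlier in the excerpt, and then to settle the additivity claim (4) by a one-line computation. Specializing $p_1(F(I,J,K)) = (K + (-J))\cap I$, $p_2(F(I,J,K)) = (K+(-I))\cap J$, and $p_3(F(I,J,K)) = (I+J)\cap K$ to $I = U$ and $J = K = V$ yields
\[
p_1(F) = (V - V)\cap U, \qquad p_2(F) = (V - U)\cap V, \qquad p_3(F) = (U + V)\cap V.
\]
Since $V$ is nonempty we have $\0 = \v - \v \in V - V$ for any $\v \in V$, and $\0\in U$ by hypothesis, so $\0\in p_1(F)$; this is (1). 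For (2), the hypothesis $\0\in U$ gives $V = \0 + V\subseteq U + V$ and likewise $V = V + \0 \subseteq V - U$ (since $\0 \in -U$), so $p_2(F)$ and $p_3(F)$ both reduce to $V\cap V = V$.

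For (3), I would first observe that $\0$ is in fact an \emph{interior} point of $V-V$: full-dimensionality of $V$ provides an open ball $B(\v_0,\epsilon)\subseteq V$, and writing any $\z$ with $\lVert\z\rVert<\epsilon$ as $(\v_0 + \tfrac12\z) - (\v_0 - \tfrac12\z)$ shows $B(\0,\epsilon)\subseteq V-V$. Since $U$ is full-dimensional and $\0\in U$, picking any $\u_0\in\intr(U)$ the open segment $(\0,\u_0)$ lies in $\intr(U)$ by convexity, and its points sufficiently close to $\0$ also lie in $B(\0,\epsilon)\subseteq\intr(V-V)$; hence $\intr(V-V)\cap\intr(U)\neq\emptyset$, so $p_1(F) = (V-V)\cap U$ has nonempty interior, i.e., is full-dimensional.

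It remains to prove (4). Let $(\x,\y)\in F$, so that $\x\in U$, $\y\in V$, and $\x+\y\in V$, and let $\cve$ be the common gradient of $\pi$ on $U$ and $V$. Affineness on $U$ together with $\0\in U$ and $\pi(\0)=0$ gives $\pi(\x) = \cve\cdot\x$. Applying affineness on $V$ to the two points $\y,\,\x+\y\in V$ gives $\pi(\x+\y) - \pi(\y) = \cve\cdot\bigl((\x+\y)-\y\bigr) = \cve\cdot\x$. Subtracting, $\Delta\pi(\x,\y) = \pi(\x) + \pi(\y) - \pi(\x+\y) = \cve\cdot\x - \cve\cdot\x = 0$, so $(\x,\y)\in E(\pi)$; as $(\x,\y)\in F$ was arbitrary, $F\subseteq E(\pi)$.

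The whole argument is routine once the projection formulas are in hand; the only step that needs a moment's care is the full-dimensionality in (3), because $\0$ may sit on the boundary of $U$ rather than in its interior, so the intersection $(V-V)\cap U$ could \emph{a priori} be lower-dimensional. The resolution---noting $\0\in\intr(V-V)$ and nudging a short distance along a segment into $\intr(U)$---is exactly what guarantees the intersection has an interior point.
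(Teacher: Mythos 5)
Your proof is correct. Parts (1), (2), and the additivity claim coincide in substance with the paper's argument: the paper exhibits the pairs $(\0,\v)\in F$ for $\v\in V$ directly rather than invoking the projection formulas, and its computation for $F\subseteq E(\pi)$ --- writing $\pi(\u)=\g\cdot\u$ on $U$ and $\pi(\v)=\g\cdot\v+\delta$ on $V$ --- is the same one-line cancellation you perform with differences. The one place where you genuinely diverge is the full-dimensionality of $p_1(F)$. The paper fixes $\v\in\intr(V)$ with a ball $B(\v,\epsilon)\subseteq V$, chooses $k$ linearly independent vectors $\u^1,\dots,\u^k\in U$ of norm at most $\epsilon$ (a step that silently uses $\0\in U$ and convexity, since a general full-dimensional set far from the origin contains no short vectors), observes that $(\u^i,\v)\in F$, and concludes because $p_1(F)$ is a convex set containing $\0,\u^1,\dots,\u^k$. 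You instead use the identity $p_1(F)=(V-V)\cap U$, show $\0\in\intr(V-V)$ via the ball inside $V$, and apply the line segment principle toward a point of $\intr(U)$, which produces a nonempty open subset of $p_1(F)$. Both routes are elementary and of comparable length; yours has the small advantage of making explicit the boundary subtlety you flag at the end ($\0$ need not be interior to $U$), which in the paper's version is hidden inside the claim that arbitrarily short linearly independent vectors exist in $U$.
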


\begin{proof}
By definition, $p_1(F) \subseteq U, p_2(F), p_3(F) \subseteq V$.  Since $\0 \in U$ and $\{\0\} + V = V$, we see that $p_2(F), p_3(F) = V$ and $\0 \in p_1(F)$.  Now, let $\v \in \intr(V)$.  Therefore there exists a ball $B(\v,\epsilon) \subseteq V$.  Since $U$ is full-dimensional, there exist $k$-linearly independent vectors $\u^1, \dots, \u^k \in U$ with $\| \u^i \| \leq \epsilon$.  But then $\u^i + \v \in V$.  Therefore, $\u^i \in p_1(F)$.  Finally, since $F$ is convex and the projection of convex sets is convex, we have that $p_1(F)$ is full-dimensional.

For the second part of the lemma, observe that there exist $\g \in \R^k$ and $\delta \in \R$ be such that $\pi(\u) = \g\cdot \u$ (follows since $\0\in U$ and $\pi(\0) =0$) for all $\u \in U$ and $\pi(\v) = \g\cdot \v + \delta$ for all $\v \in V$. Then for any $\u \in U, \v\in V$ with $\u + \v \in V$, we have $\pi(\u) + \pi(\v) - \pi(\u + \v) = (\g\cdot \u) + (\g\cdot \v + \delta) - (\g\cdot(\u+\v) + \delta) = 0$.
\end{proof}

The analysis of step (1) also shows that for every $i=1, \ldots, k+1$, there exist $C_i \in \P_i$ such that $\0 \in C_i$ (in other words, for every gradient value, there is a cell containing the origin with that gradient). Fix an arbitrary $i \in \{1, \ldots, k+1\}$ and consider any cell $P \in \P_i$. By \autoref{lem:k+1-useful} with $U=C_i$ and $V=P$, we obtain that $F = F(C_i, V,V) \subseteq E(\pi)$. By \autoref{lem:tightness} (ii), $F \subseteq E(\pi^1)$ and by \autoref{lem:tightness} (iii), $\pi^1$ is continuous. By \autoref{lem:projection_interval_lemma_fulldim} and continuity of $\pi^1$, we obtain that $\pi^1$ is affine on $C_i$ and $P$ with the same gradient. Since the choice of $P$ was arbitrary, this establishes that for every cell $P\in P_i$, $\pi^1$ is affine with the same gradient; this is precisely the desired $\gt^i$.

\medskip

\paragraph{\em Gradient matching step.} The system for step (4) has two sets of constraints, the first of which follows from the condition that $\pi(\f + \w) = 1$ for every $\w \in \Z^k$.  The second set
of constraints is more involved. Consider two adjacent cells $P, P' \in
\mathcal{P}$ that contain a segment $[\x, \y] \subseteq \mathbb{R}^k$ in their
intersection. Along the line segment $[\x,\y]$, the gradients of $P$ and $P'$
projected onto the line spanned by the vector $\y - \x$ must agree; the second
set of constraints captures this observation. We will identify a set of vectors $\rr^1, \ldots, \rr^{k+1}$ such that every subset of $k$ vectors is linearly independent and such that each vector $\rr^i$ is contained in $k$ cells of $\P$ with different gradients. We then use the segment $[\0,\rr^i]$ to obtain linear equations involving the gradients of $\pi$ and $\pi'$. The fact that every subset of $k$ vectors is linearly independent will be crucial in ensuring the uniqueness of the system of equations.

      \begin{lemma}[{\cite[Lemma 3.10]{bhkm}}]\label{lemma:kkmDirections}
                There exist vectors $\rr^1, \rr^2, \ldots, \rr^{k + 1} \in \R^k$ with the following properties:
                \begin{enumerate}[\rm(i)]
                        \item For every $i,j,\ell \in \{1, \ldots, k + 1\}$ with $j, \ell$ different from $i$, the equations $\rr^i\cdot \gp^{j} = \rr^i\cdot \gp^{\ell}$ and $\rr^i\cdot \gt^{j} = \rr^i\cdot \gt^{\ell}$ hold.
                        
                        \item Every $k$-subset of $\{\rr^1, \ldots, \rr^{k+1}\}$ is linearly independent. 
                \end{enumerate}
        \end{lemma}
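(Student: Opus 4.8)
The plan is to read the directions $\rr^i$ off the local behaviour of $\pi$ at the origin, where its $k+1$ gradients arrange themselves into the normal fan of a simplex. First I would record this local picture. By the analysis of step~(1), for each $i$ there is a full-dimensional cell $C_i\ni\0$ of $\P$ on which $\pi$ is affine with gradient $\gp^i$; since $\pi(\0)=0$, in fact $\pi(\ve v)=\gp^i\cdot\ve v$ on $C_i$. By local finiteness of $\P$, on a small enough ball $B$ around $\0$ every point lies only in cells that contain $\0$, so $\pi|_B$ is positively homogeneous of degree~$1$; being also subadditive (\autoref{thm:minimal}), it is sublinear and hence convex on $B$. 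A convex piecewise linear function whose linear pieces are the $\gp^i\cdot\ve v$ is their upper envelope, so $\pi(\ve v)=\max_{m}\gp^m\cdot\ve v$ for $\ve v\in B$. Consequently each $\gp^i$ is a genuine vertex of $\conv\{\gp^1,\dots,\gp^{k+1}\}$ carrying the \emph{full-dimensional} normal cone $C_i=\setcond{\ve v}{\gp^i\cdot\ve v\ge \gp^m\cdot\ve v\ \text{for all }m}$ (if some $\gp^i$ were not a vertex, this cone would be lower-dimensional, contradicting step~(1)). A polytope in $\R^k$ with $k+1$ vertices, all of full-dimensional normal cone, is a $k$-simplex; thus the $\gp^i$ are affinely independent and the $C_i$ form its complete simplicial normal fan.

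Next I would define the directions. Let $\rr^i$ be the unique ray $\bigcap_{j\neq i}C_j$, i.e.\ the outer normal direction of the facet $\conv\{\gp^j:j\neq i\}$ of the gradient simplex (the intersection of the $k$ normal cones $\{C_j:j\neq i\}$ is exactly the $1$-dimensional normal cone of that facet). Along $\rr^i$ all gradients $\gp^j$ with $j\neq i$ agree, which is precisely $\rr^i\cdot\gp^j=\rr^i\cdot\gp^\ell$ for $j,\ell\neq i$, giving the $\gp$-part of~(i). For~(ii), the $k+1$ facet normals of a $k$-simplex satisfy a single linear dependence $\sum_i c_i\rr^i=\0$ (Minkowski's relation) with all $c_i\neq 0$; since their span is all of $\R^k$, this dependence is unique up to scaling, so no proper subset is dependent and every $k$-subset is linearly independent.

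It remains to transfer the equations to the $\gt$'s by evaluating $\pi^1$ along the same ray. Here $\pi^1$ is minimal (\autoref{lem:tightness}\,(i)), hence $\pi^1(\0)=0$, and continuous (\autoref{lem:tightness}), and by the Compatibility Step it is affine with gradient $\gt^j$ on every cell of $\P_j$. Since $\pi=\gp^j\cdot\ve v$ on $C_j\cap B$, every full-dimensional cell of $\P$ meeting $\intr(C_j)$ near $\0$ has gradient $\gp^j$, i.e.\ belongs to $\P_j$; together with continuity of $\pi^1$ this forces $\pi^1$ to be affine with gradient $\gt^j$ on the connected region $C_j\cap B$, and $\pi^1(\0)=0$ pins down $\pi^1(\ve v)=\gt^j\cdot\ve v$ there. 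Because $\rr^i\in C_j$ for every $j\neq i$, evaluating $\pi^1$ along the segment $[\0,\epsilon\rr^i]\subseteq\bigcap_{j\neq i}C_j$ from each cone gives $\epsilon\,\gt^j\cdot\rr^i=\pi^1(\epsilon\rr^i)=\epsilon\,\gt^\ell\cdot\rr^i$, hence $\rr^i\cdot\gt^j=\rr^i\cdot\gt^\ell$ for all $j,\ell\neq i$, completing~(i).

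The main obstacle is the local structural claim of the first paragraph: identifying $\pi$ near $\0$ with the support-type function $\max_m\gp^m\cdot\ve v$ and deducing that the gradients are affinely independent, so that the cells $C_i$ really are the maximal cones of a complete simplicial fan. Once this normal-fan structure is in hand, both families of equations in~(i) drop out of a one-variable continuity computation along the single ray $\rr^i$ shared by the cones $\{C_j:j\neq i\}$ (for $\pi$ and for $\pi^1$ respectively), and~(ii) is the standard fact that facet normals of a simplex are in general position. Notably, this route needs only the continuity of $\pi$ and $\pi^1$, not the Interval Lemma.
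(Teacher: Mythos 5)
The pivot of your argument is the assertion that $\conv\{\gp^1,\dots,\gp^{k+1}\}$ is a $k$-simplex, and the justification offered for it --- ``a polytope in $\R^k$ with $k+1$ vertices, all of full-dimensional normal cone, is a $k$-simplex'' --- is false. Every vertex of every polytope has a full-dimensional normal cone, so that clause adds nothing, and the remaining statement has counterexamples: the planar quadrilateral $\conv\{(0,0,0),(1,0,0),(1,1,0),(0,1,0)\}\subseteq\R^3$ has exactly $4=k+1$ vertices (the normal cone of $(0,0,0)$, for instance, is $\{\ve c : c_1\le 0,\ c_2\le 0\}$, which is full-dimensional), yet it is not a simplex and its vertices are not affinely independent. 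Nothing in your local analysis excludes this configuration: $\pi(\v)=\max_m \gp^m\cdot\v$ with these four gradients is a perfectly good convex, positively homogeneous, piecewise linear function near $\0$, with each gradient realized on a full-dimensional cone, so it is consistent with everything you derive in your first paragraph and with the ``step (1)'' facts you quote (exactly $k+1$ gradients, each attained on a cell containing $\0$). And the configuration must be excluded, because there the lemma itself fails: condition (i) for $i=1$ forces $\rr^1\cdot\gp^2=\rr^1\cdot\gp^3=\rr^1\cdot\gp^4$, i.e.\ $\rr^1$ lies on the line $\{\ve c: c_1=c_2=0\}$, and the same computation pins every $\rr^i$ to that line, so all candidate vectors are collinear and (ii) cannot hold.

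What is missing is a proof that the $k+1$ gradients are affinely independent, and this cannot come from the picture at the origin alone; it needs the global hypotheses. One way to get it: if all $\gp^m$ lay in a proper affine subspace $\gp^1+L$, then every gradient of $\pi-\gp^1\cdot\x$ (everywhere, not just near $\0$) lies in $L$, so this continuous piecewise linear function is constant along directions in $L^\perp$ and $\pi(\x)=\gp^1\cdot\x+\psi(\proj_L\x)$; boundedness of the minimal function ($0\le\pi\le1$, \autoref{thm:minimal}) then forces $\gp^1\cdot\d=0$ for all $\d\in L^\perp$, i.e.\ $\gp^1\in L$, whence $\pi$ factors through the orthogonal projection onto $L$, a linear map of rank at most $k-1$ --- contradicting genuine $k$-dimensionality. (This is essentially the stronger content of \cite[Lemma~2.11]{bhkm}, which yields $\0\in\intr(\conv\{\gp^1,\dots,\gp^{k+1}\})$; the survey's summary of step (1) is weaker and does not suffice for your purposes.) Once affine independence is in hand, the rest of your proof is sound: the facet normals of the gradient simplex satisfy (i) by construction and (ii) by uniqueness of the Minkowski dependence, and your continuity argument correctly transfers the equations to the $\gt^j$. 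It would then be a genuinely different, KKM-free alternative to the paper's proof, which invokes the Knaster--Kuratowski--Mazurkiewicz Lemma precisely to manufacture the points $\rr^i$. As written, however, the simplex claim is a genuine gap, located exactly where the paper deploys its heaviest tool.
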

        
        \begin{figure}[tp]
\centering
\vspace{-1ex}
\scalebox{0.7}{\ifpdf
\input{figureKKMcircleSmallerAdjust.pdftex_t}
\else
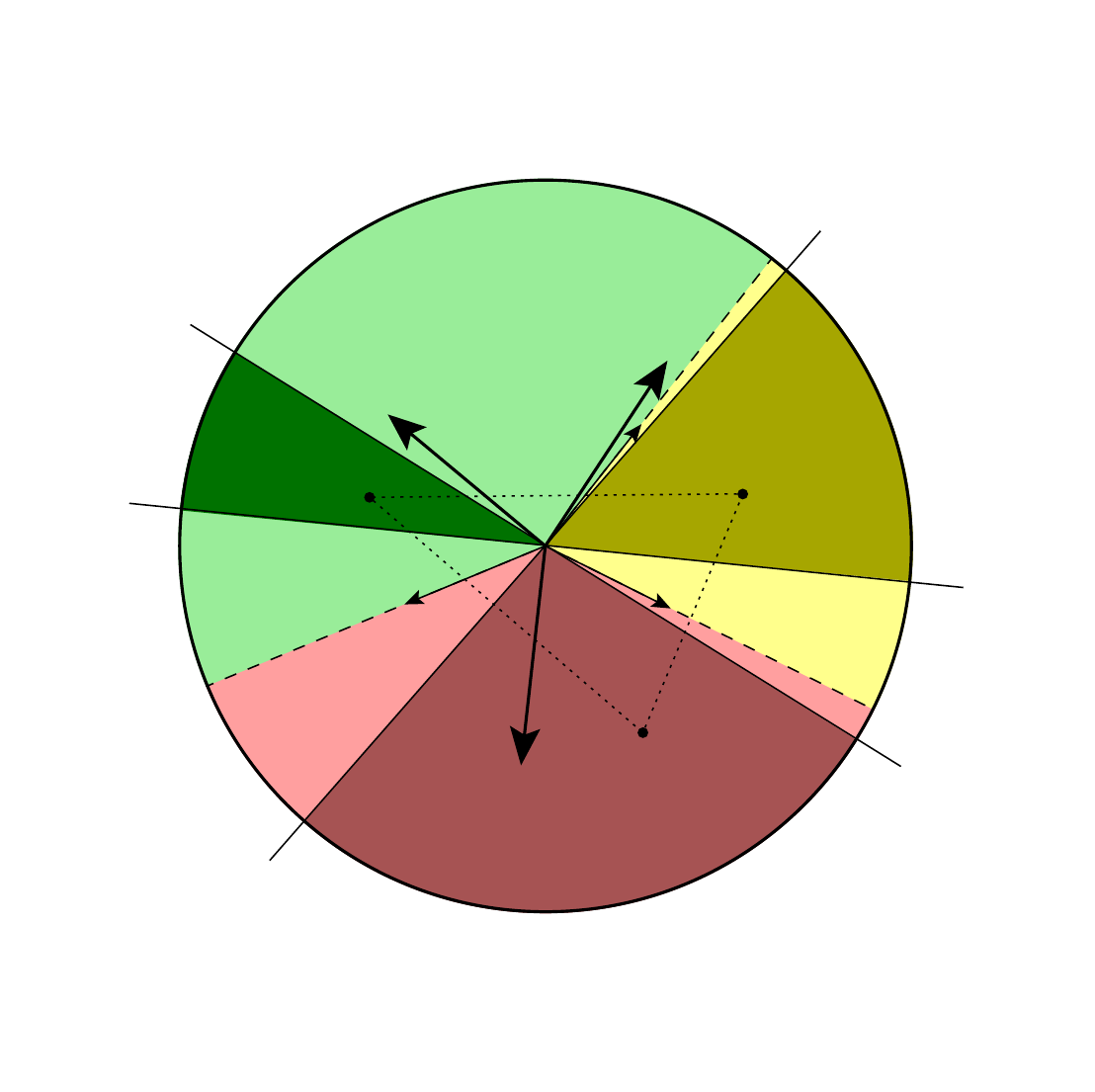
\fi}
\vspace*{-5ex}\par
\caption{The geometry of the proof of \autoref{lemma:kkmDirections}.   
  Each cone $C_i$ (shaded in dark colors) is the intersection of the
  halfspaces~$H_j$ (defined by the gradients~$\bar \g^j$) for $j\neq i$. 
  Near the origin (within the ball~$\Ball(\0,\varepsilon)$), 
  each point of~$C_i$ lies in the set $F_i$ of points
  where the function~$\pi$ has gradient~$\bar \g^i$ (shaded in light colors).
  Picking points $\ve v_i$ near the origin in the interior of~$C_i$, 
  we construct a simplex~$\Delta$ with $\0$ in its interior. 
  By applying the KKM Lemma to each of its facets~$\Delta_i$, we
  show the existence of the vectors~$\rr^i$ with the desired properties.
} 
\label{fig:KKM}
\end{figure}
        
       The proof of \autoref{lemma:kkmDirections} 
       uses a nontrivial result known as the {\em Knaster--Kuratowski--Mazurkiewicz Lemma (KKM Lemma)} from fixed point theory, which exposes a nice structure in the gradient pattern of $\pi$. The KKM lemma states that if a $d$-dimensional simplex is covered by $d+1$ closed sets satisfying certain combinatorial conditions, then there is a point in the intersection of all $d+1$ sets. This lemma is applied to the facets of a certain simplex $S$ containing the origin, where the closed sets form $S \cap \P_i$. The fixed points on the $k+1$ facets of this simplex give the vectors $\rr^1, \ldots, \rr^{k+1}$ from \autoref{lemma:kkmDirections}. The bulk of the technicality lies in showing that the hypothesis of the KKM lemma are satisfied by the gradient structure of $\pi$. A few more details are offered in \autoref{fig:KKM}.


        We finally present the system of linear equations that we consider.
        
        \begin{corollary}[{\cite[Corollary 3.13]{bhkm}}]\label{lemma:linear-system}
                Consider any $k+1$ affinely independent vectors $\a^1, \a^2, \ldots, \a^{k + 1} \in \mathbb{Z}^k + \f$. Also, let $\rr^1, \rr^2, \ldots, \rr^{k + 1}$ be the vectors given by \autoref{lemma:kkmDirections}. Then there exist $\mu_{ij} \in \R_+$, $i, j \in \{1, \ldots, k+1\}$ with $\sum_{j = 1}^{k + 1} \mu_{ij} = 1$ for all $i \in \{1, \ldots, k+1\}$ such that both $\gt^1, \ldots, \gt^{k+1}$ and $\gp^1, \ldots, \gp^{k+1}$ are solutions to the linear system 
        \begin{equation}\label{eq:linear-system}
\begin{aligned}
\textstyle\sum_{j=1}^{k+1} (\mu_{ij}\a^i)\cdot \gs^j & = 1 &\qquad& \text{for all }  i \in \{1, \ldots, k+1\}, \\
\rr^i\cdot \gs^{j} - \rr^i\cdot \gs^{\ell} & = 0 & \qquad& \text{for all } i,j, \ell \in \{1, \ldots, k+1\} \textrm{ such that } i \neq j, \ell, 
\end{aligned}
\end{equation}
with variables  $\g^1, \ldots, \g^{k+1}\in \R^k$.
        \end{corollary}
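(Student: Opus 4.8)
The plan is to notice first that the \emph{second} family of equations in~\eqref{eq:linear-system} requires no work: \autoref{lemma:kkmDirections}\,(i) asserts exactly that $\rr^i\cdot\gp^j = \rr^i\cdot\gp^\ell$ and $\rr^i\cdot\gt^j = \rr^i\cdot\gt^\ell$ whenever $j,\ell$ are both different from~$i$, so both $\gp^1,\ldots,\gp^{k+1}$ and $\gt^1,\ldots,\gt^{k+1}$ satisfy the homogeneous equations \emph{regardless} of how the $\mu_{ij}$ are chosen. All the content therefore lies in producing a single array of multipliers $\mu_{ij}\ge 0$ with $\sum_{j} \mu_{ij} = 1$ that makes the inhomogeneous equations $\a^i\cdot\bigl(\sum_j \mu_{ij}\gs^j\bigr) = 1$ hold simultaneously for $\gs=\gp$ and $\gs=\gt$.

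First I would fix $i$ and read off the multipliers from the segment $[\0,\a^i]$, exactly in the spirit of step~(5) of the roadmap for \autoref{thm:k+1slope}. Parametrizing the segment as $t\mapsto t\,\a^i$, $t\in[0,1]$, the restriction of $\pi$ to it is continuous piecewise linear, and on the portion lying in a cell of $\P_j$ its directional derivative along $\a^i$ is $\gp^j\cdot\a^i$. Letting $\mu_{ij}$ be the total length fraction of $[\0,\a^i]$ contained in cells of $\P_j$, we immediately get $\mu_{ij}\ge 0$ and $\sum_{j=1}^{k+1}\mu_{ij}=1$, and telescoping the affine pieces together with $\pi(\0)=0$ gives
\[
  \pi(\a^i) = \sum_{j=1}^{k+1}\mu_{ij}\,(\gp^j\cdot\a^i).
\]

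The crucial observation is that the \emph{same} multipliers $\mu_{ij}$ compute $\pi^1(\a^i)$ in terms of the $\gt^j$. By the Compatibility Step, $\pi^1$ is affine over every cell of $\P_j$ with gradient $\gt^j$, and by \autoref{lem:tightness}\,(iii) $\pi^1$ is continuous; hence $\pi^1$ is continuous piecewise linear over the \emph{same} complex $\P$, inducing the \emph{same} partition of $[\0,\a^i]$ into pieces, with only $\gp^j$ replaced by $\gt^j$ on each cell. Repeating the telescoping argument (now using $\pi^1(\0)=0$, valid since $\pi^1$ is minimal by \autoref{lem:tightness}\,(i)) yields
\[
  \pi^1(\a^i) = \sum_{j=1}^{k+1}\mu_{ij}\,(\gt^j\cdot\a^i)
\]
with the identical $\mu_{ij}$.

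Finally I would invoke minimality: since $\a^i \in \Z^k + \f$, the symmetry condition and periodicity modulo $\Z^k$ from \autoref{thm:minimal} give $\pi(\a^i)=\pi(\f)=1$, and likewise $\pi^1(\a^i)=1$. Substituting into the two displayed identities and rewriting $\mu_{ij}(\a^i\cdot\gs^j)=(\mu_{ij}\a^i)\cdot\gs^j$ produces exactly the first family of equations in~\eqref{eq:linear-system} for both gradient tuples, which completes the construction. I expect the only delicate point to be the bookkeeping for the segment decomposition: one must check that the transition points where $[\0,\a^i]$ passes between cells form a finite set—guaranteed by the local finiteness of $\P$ in \autoref{def:polyhedralComplex}(iv)—so that they contribute measure zero and the length fractions $\mu_{ij}$ are genuinely identical for $\pi$ and for $\pi^1$.
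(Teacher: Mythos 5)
Your proposal is correct and follows essentially the same route as the paper: the survey's own sketch derives the second family of equations directly from \autoref{lemma:kkmDirections}\,(i), and obtains the first family from the fact that $\pi(\f+\w)=1$ for $\w\in\Z^k$ combined with exactly the segment-fraction decomposition you describe (the same device used in step~(5) of the roadmap), applied to both $\pi$ and $\pi^1$ with the same multipliers $\mu_{ij}$ thanks to the Compatibility Step and the continuity of $\pi^1$. The bookkeeping caveat you flag (portions of $[\0,\a^i]$ lying in lower-dimensional faces) is present in the paper's sketch as well and is harmless, since gradients of adjacent cells agree along directions within a shared face.
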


        We remark that we can always find vectors $\a^1, \a^2, \ldots, \a^{k + 1} \in \mathbb{Z}^k + \f$ such that the set $\a^1, \ldots, \a^{k+1}$ is affinely independent, so the system above indeed exists. Property (ii) in \autoref{lemma:kkmDirections} and the fact that $\a^1, \ldots, \a^{k+1}$ are affinely independent can be used to show that \eqref{eq:linear-system} has either no solutions or a unique solution. 
        Since $\gp^1, \ldots, \gp^{k+1}$ is a solution, the conclusion is that the system has a unique solution and so $\gt^j = \gp^j$ for each $j=1, \ldots, k+1$.

\begin{remark}\label{rem:facet-extreme}
Along almost identical lines, one can show that a $(k+1)$-slope function $\pi$ is a facet -- this is done in~\cite{bhkm}. The only difference is that the continuity of $\pi^1$ in the proof above was obtained easily via Lemma~\ref{lem:tightness} (iii). For the facetness proof, this continuity argument is slightly more involved.
\end{remark}

\subsection{Construction of extreme functions with the sequential-merge procedure}

\label{s:sequential-merge}

Dey and Richard~\cite{dey2} gave the first examples of facets in higher dimensions by combining facets from lower dimensions.  We outline these concepts here.  For a more detailed discussion, we refer the reader to~\cite{dey2} and also the survey~\cite{Richard-Dey-2010:50-year-survey}. 

Let  $\varphi \colon \R\to \R$ be valid for $R_{f_{k+1}}(\R,\Z)$ and $\pi \colon \R^k \to \R$ valid for $R_\f(\R^k, \Z^k)$.  The \emph{sequential merge} of $\varphi$ and $\pi$ is the function $\varphi \merge \pi \colon \R^k\times \R \to \R$ given by
$$
(\varphi \merge \pi)(\x, x_{k+1}) = \frac{ \pi(\x) \sum_{i=1}^k f_i + f_{k+1} \varphi\Big( \sum_{i=1}^{k+1}x_i - \pi(\x) \sum_{i=1}^k f_i\Big) }{\sum_{i=1}^{k+1} f_i}.  
$$
Here we assume, without loss of generality, that $f_{k+1} \in (0,1)$, $\f \in [0,1)^k \setminus \{\0\}$.
The \emph{lifting space representation} of a function $\pi \colon \R^k \to \R$ is given by $[\pi]_\f(\x) = \sum_{i=1}^k x_i -  \pi(\x) \sum_{i=1}^k f_i$.\footnote{This is a superadditive pseudo-periodic function in the terminology of~\cite{Richard-Li-Miller-2009:Approximate-Liftings}.}

Dey and Richard showed that $(\varphi \merge \pi)(\x, x_{k+1})$ is a facet for
$R_{(\f,f_{k+1})}(\R^{k+1}, \Z^{k+1})$ provided that  $\varphi$ and $\pi$ are
facets, their lifting representations are non-decreasing, and the perturbation
spaces\footnote{See \autoref{subsec:perturbations}.} $\bar \Pi^{E(\varphi)}(\R,\Z)$ and $\bar \Pi^{E(\pi)}(\R^k, \Z^k)$ both contain only trivial solutions~\cite[Theorem 5]{dey2}.  They also show how to extend these sequential merge facets to facets of the mixed-integer problem~\cite[Proposition 15]{dey2}.
This produces a simple method to construct facets in higher dimensions from facets in lower dimensions.  

Some sequential merge functions can be projected as well.  Let $\xi \colon \R
\to \R$ be the \sagefunc{gmic} function and let $\pi \colon \R\to \R$ be a valid function for $R_f(\R,\Z)$.  For any $n \in \Z_+$ such that $0 < f < 1/n$, we define the \emph{projected sequential merge} function $\pi \mergeProj \xi \colon \R \to \R$ as $(\pi \mergeProj \xi)(x) = (\pi \merge \xi)(nx,x)$.  
Provided that $\pi$ is a facet of $R_f(\R,\Z)$ and $[\pi]_f$ is non-decreasing
and $\bar \Pi^{E(\pi)}(\R,\Z)$ has only the trivial solution, we have that $\pi \mergeProj \xi$ is a facet for
$R_{nf}(\R,\Z)$.  See~\autoref{tab:compendium-procedures} for an example of a
projected sequential merge inequality,
\sagefunc{dr_projected_sequential_merge_3_slope}. Also \sagefunc{dg_2_step_mir} from \autoref{tab:compendium-1} can be seen as the projected sequential merge function $\xi \mergeProj \xi$.  
We can state this idea in the following more general way.   Consider $(\pi_1 \merge (\pi_2 \merge \dots (\pi_{k-1} \merge \pi_k) \ldots ))$, where $\pi_i$ is a facet for $R_f(\R,\Z)$ and $[\pi_i]_f$ is non-decreasing, and $\bar\Pi^{E(\pi^i)}(\R,\Z)$ has only the trivial solution for $i=1, \dots, k$.  Let $n \in \Z_+$ such that $0 < f_k < \tfrac{1}{n}$.  Then  
$(\pi_1 \merge (\pi_2 \merge \dots \merge (\pi_{k-1} \merge (\pi_k \mergeProj \xi))\ldots ))$ is a facet for $R_{\f'}(\R^k, \Z^k)$ where $\f' = (f_1, \dots, f_{k-1}, nf_k)$~\cite[Theorem 6]{dey2}.

%

\section{Sequences of minimal valid and extreme functions}\label{s:limits}

\subsection{Minimality of limits of minimal valid functions}\label{s:minimal-limits}
The most basic topology on the space $\R^G$ of real-valued functions on $G$ is
the product topology, or the topology of pointwise convergence. We first
note that the properties in the characterization of minimal valid functions
(Theorem~\ref{thm:minimal}) are preserved under 
pointwise convergence. 
\begin{proposition}[{\cite[Proposition
4]{dey1}}]\label{prop:sequence-minimal}
Let $\pi_i \in \R^G$, $i \in \N$ be a sequence\footnote{The statement of
  \autoref{prop:sequence-minimal} remains true for generalizations of
  sequential limits; for example, we may consider the convergence of nets of
  minimal functions.} 
of minimal valid functions that converge pointwise to $\pi \in \R^G$. Then $\pi$ is a minimal valid function.
\end{proposition}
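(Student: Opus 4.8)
The plan is to invoke the Gomory--Johnson characterization of minimal valid functions (\autoref{thm:minimal}) and to verify that each of its four defining conditions is preserved under pointwise limits. The key observation I would exploit is that every such condition---nonnegativity, vanishing on~$S$, subadditivity, and the symmetry condition---is \emph{finitary}, in the sense that it is an inequality or equation among the values of the function at a bounded number of arguments. Since pointwise (equivalently, product-topology) convergence means precisely that each evaluation $\pi_i(\x)$ converges to $\pi(\x)$, and since non-strict inequalities and equalities pass to limits of convergent real sequences, each property enjoyed by all the $\pi_i$ will be inherited by~$\pi$.

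Carrying this out, I would fix arbitrary $\x,\y \in G$ and $\ve z \in S$ and pass to the limit in the four relations guaranteed by \autoref{thm:minimal} for each minimal~$\pi_i$. From $\pi_i(\x) \ge 0$ I get $\pi(\x) \ge 0$; from $\pi_i(\ve z) = 0$ I get $\pi(\ve z) = 0$; from the subadditivity relation $\pi_i(\x+\y) \le \pi_i(\x) + \pi_i(\y)$ I get $\pi(\x+\y) \le \pi(\x) + \pi(\y)$, using that all three evaluations converge simultaneously; and from the symmetry relation $\pi_i(\x) + \pi_i(\f-\x) = 1$ I get $\pi(\x) + \pi(\f-\x) = 1$. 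Because $\x,\y,\ve z$ are arbitrary, $\pi$ satisfies all four conditions, and \autoref{thm:minimal} then certifies that $\pi$ is a minimal valid function.

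I do not expect a genuine obstacle: the entire argument amounts to passing finitely many inequalities and equalities to the limit, which the product topology handles automatically, with no uniformity required. The one conceptual point I would stress is that this robustness is exactly what the finitary form of \autoref{thm:minimal} buys us, and it is also the reason the result extends verbatim from sequences to nets (as remarked in the footnote to the statement): the coordinatewise limiting step is identical for an arbitrary convergent net of minimal functions.
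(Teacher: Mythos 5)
Your proposal is correct and follows essentially the same route as the paper: both verify that nonnegativity, vanishing on~$S$, subadditivity, and the symmetry condition from \autoref{thm:minimal} pass to the pointwise limit, since each is a (non-strict) inequality or equality among finitely many function values. No gaps; the paper's proof is just the explicit limit computations you describe.
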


\begin{proof}
Since each $\pi_i$ is nonnegative, $\pi$ is nonnegative. We simply verify the conditions in Theorem~\ref{thm:minimal} for $\pi$.
\begin{enumerate}
\item For any $\ve w \in S$, $\pi(\ve w) = \lim_{i\to \infty} \pi_i(\ve w) = \lim_{i\to \infty} 0 = 0$.
\item For any $\ve x, \ve y \in G$, $\pi(\ve x + \ve y) = \lim_{i\to \infty} \pi_i(\ve x + \ve y) \leq \lim_{i 
\to \infty} (\pi_i(\ve x) + \pi_i(\ve y)) =  \lim_{i \to \infty} \pi_i(\ve x) +  \lim_{i \to \infty}\pi_i(\ve y) = \pi(\ve x) + \pi(\ve y)$.
\item For any $\ve x$, $\pi(\ve x) + \pi( \ve f - \ve x) = \lim_{i \to \infty} \pi_i(\ve x) +  \lim_{i \to \infty}\pi_i(\ve f -\ve x) = \lim_{i \to \infty} (\pi_i(\ve x) + \pi_i(\ve f - \ve x)) = \lim_{i\to \infty} 1 = 1.$\qedhere\pushQED{\relax}
\end{enumerate}\end{proof}

This result can be used to prove \autoref{obs:compact} regarding the
compactness of the set of minimal functions.

\begin{proof}[Proof of \autoref{obs:compact}] Theorem~\ref{thm:minimal}
  implies that all minimal valid functions $\pi$ satisfy $0 \leq \pi \leq
  1$. The set of functions in $\R^G$ bounded between $0$ and $1$ is compact by
  Tychonoff's theorem. \autoref{prop:sequence-minimal} applies to nets of
  minimal functions also, which is a generalization of sequences; this shows
  that the set of minimal valid functions is a closed subset of the set of
  functions in $\R^G$ bounded between $0$ and $1$. As a closed subset of a
  compact set, the set of minimal functions is compact.
\end{proof}

\subsection{Failure of extremality of limits of extreme functions}
\label{s:limits-extremality-not-preserved}


While minimality is preserved by limits, this is not true in general for
extremality. 

Dey and Wolsey~\cite[section 2.2, Example 2]{dey1} give an example where a
sequence of continuous piecewise linear extreme functions of type
\sagefunc{gj_2_slope_repeat} converges
pointwise to a 
discontinuous piecewise linear minimal valid function that is not extreme
(\autoref{fig:drlm_gj_2_slope_extreme_limit_to_nonextreme}).%
\footnote{The sequence and its limit can be constructed using \sagefunc{drlm_gj_2_slope_extreme_limit_to_nonextreme}.}
\newcommand\LimitFigures[3]{
  \begin{minipage}[c]{0.3\linewidth}\includegraphics[width=\linewidth]{#1}\end{minipage}%
  \begin{minipage}[c]{0.3\linewidth}\includegraphics[width=\linewidth]{#2} \end{minipage}%
  \begin{minipage}[c]{0.07\linewidth}{\Huge$\longrightarrow$}\end{minipage}%
  \begin{minipage}[c]{0.3\linewidth}\includegraphics[width=\linewidth]{#3} \end{minipage}
}
\newcommand\LimitFiguresFour[4]{
  \begin{minipage}[c]{0.23\linewidth}\includegraphics[width=\linewidth]{#1}\end{minipage}%
  \begin{minipage}[c]{0.23\linewidth}\includegraphics[width=\linewidth]{#2} \end{minipage}%
  \begin{minipage}[c]{0.23\linewidth}\includegraphics[width=\linewidth]{#3} \end{minipage}%
  \begin{minipage}[c]{0.07\linewidth}{\Huge$\longrightarrow$}\end{minipage}%
  \begin{minipage}[c]{0.23\linewidth}\includegraphics[width=\linewidth]{#4} \end{minipage}
}
\begin{figure}[t]
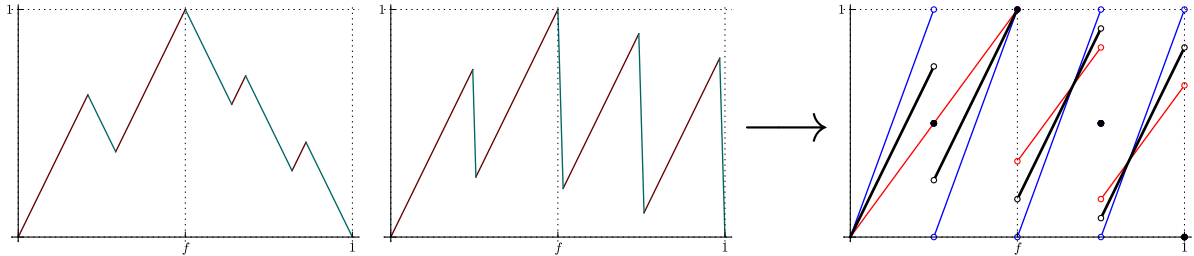

  \centering
  \LimitFigures{drlm_gj_2_slope_extreme_limit_to_nonextreme_3-covered_intervals}{drlm_gj_2_slope_extreme_limit_to_nonextreme_50-covered_intervals}{drlm_gj_2_slope_extreme_limit_to_nonextreme-perturbation-1}
  \caption{A pointwise limit of extreme functions that is not extreme
    \cite[section 2.2]{dey1}.  Consider the sequence of continuous extreme
    functions of type
    \sagefunc{gj_2_slope_repeat} set up for any $n \in \Z_+$ by
    \sage{h =
      \sagefunc{drlm_gj_2_slope_extreme_limit_to_nonextreme}(n)}.  For example,  $n = 3$ \emph{(left)} and $n=50$ \emph{(center)}.  This sequence 
    converges to a
    non-extreme discontinuous minimal valid function, set up with \sage{h =
      \sagefunc{drlm_gj_2_slope_extreme_limit_to_nonextreme}()}
    \emph{(right)}.  The limit function $\pi$ (\emph{black}) is shown with two
    minimal functions $\pi^1$ (\emph{blue}), $\pi^2$ (\emph{red}) such that
    $\pi = \frac12(\pi^1+\pi^2)$.} 
  \label{fig:drlm_gj_2_slope_extreme_limit_to_nonextreme}
\end{figure}

This convergence, of course, is not uniform. 
One may then ask whether extremality is preserved by stronger notions of
convergence.  However, even uniform convergence (i.e., convergence in the
sense of the space $C(\R)$ of continuous functions) 
or convergence in the sense of the Sobolev space\footnote{See, for example,
  \cite{hunter-nachtergaele} for an introduction to Sobolev spaces.}
$W^{1,1}_{\mathrm{loc}}(\R)$ do not suffice to ensure extremality of the limit function
(\autoref{fig:bhk_irrational_extreme_limit_to_rational_nonextreme}).

\begin{prop}[\TheoremNEWRESULT]\label{prop:uniform-conv}
There exists a sequence of continuous extreme functions of type \sagefunc{bhk_irrational} \cite[section~5]{basu-hildebrand-koeppe:equivariant} that converges uniformly to a continuous non-extreme function of the
same type.  Further, even the sequence of generalized derivatives converges
in the sense of $L^1_{\mathrm{loc}}(\R)$; thus we have convergence in
$W^{1,1}_{\mathrm{loc}}(\R)$.
\end{prop}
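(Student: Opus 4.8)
The plan is to exhibit the sequence explicitly inside the \sagefunc{bhk_irrational} family and then reduce the two convergence claims to elementary estimates on continuous piecewise linear functions whose breakpoint data depends continuously on a real parameter. First I would recall the parameterization of the \sagefunc{bhk_irrational} functions from \cite[section~5]{basu-hildebrand-koeppe:equivariant}. Each such function $\pi_t$ is continuous, periodic modulo $\Z$, and piecewise linear over a complex whose breakpoints and whose finitely many slope values are given by explicit expressions in a parameter $t$ (together with the fixed data $f$ and the remaining length parameters). The crucial feature established in \cite{basu-hildebrand-koeppe:equivariant} is a dichotomy: $\pi_t$ is extreme when a certain ratio of lengths is irrational (in which case the perturbation space $\bar\Pi^{E(\pi_t)}(\R,\Z)$ is trivial), whereas for the rational values of that ratio there is a nonzero equivariant perturbation $\bar\pi \in \bar\Pi^{E(\pi_t)}(\R,\Z)$, so that $\pi_t$ is minimal but not extreme. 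I would then fix a rational target $t_\ast$ admitting such a perturbation, and choose a sequence $t_n \to t_\ast$ of irrational parameters for which the combinatorial type of the breakpoint complex is constant and equal to that of $t_\ast$ (possible since the irrationals are dense and the ordering of breakpoints is locally constant away from coincidences). Each $\pi_{t_n}$ is then extreme, and the candidate limit is $\pi_{t_\ast}$, which is continuous, minimal by \autoref{prop:sequence-minimal}, and, by the dichotomy, not extreme.

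For uniform convergence I would exploit that, along the chosen path, $\pi_{t_n}$ and $\pi_{t_\ast}$ share a common combinatorial structure: their breakpoints $x_i(t)$ and slopes $s_j(t)$ are continuous in $t$, and all functions involved, being minimal, are bounded in $[0,1]$ by \autoref{thm:minimal}. A standard estimate then bounds $\sup_{[0,1]}|\pi_{t_n}-\pi_{t_\ast}|$ by a constant times $\max_i |x_i(t_n)-x_i(t_\ast)| + \max_j |s_j(t_n)-s_j(t_\ast)|$, which tends to $0$; periodicity upgrades this to uniform convergence on all of $\R$, i.e.\ convergence in $C(\R)$.

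The derivative claim requires a separate argument, since uniform convergence of functions does not by itself control derivatives. Here I would use that each generalized derivative $\pi_t'$ is a bounded step function taking values among the finitely many, continuously varying slopes $s_j(t)$, with jumps only at the breakpoints $x_i(t)$. On one period, $\pi_{t_n}'$ and $\pi_{t_\ast}'$ agree except on the union of the short intervals bounded by corresponding breakpoints; this exceptional set has measure $O(\max_i|x_i(t_n)-x_i(t_\ast)|) \to 0$, and there the integrand is uniformly bounded, so $\int_0^1 |\pi_{t_n}' - \pi_{t_\ast}'| \to 0$. Periodicity yields $L^1_{\mathrm{loc}}(\R)$ convergence of the derivatives, and together with the uniform convergence this gives convergence in $W^{1,1}_{\mathrm{loc}}(\R)$.

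I expect the main obstacle to lie in the first step rather than in the estimates: one must select the parameter path so that (i) the limit parameter is genuinely rational and admits a nontrivial equivariant perturbation, guaranteeing non-extremality of the limit; (ii) every $t_n$ lands in the irrational, hence extreme, regime; and (iii) the breakpoint ordering remains constant along the path, so that ``corresponding breakpoints'' are well defined and the convergence estimates are clean. Verifying the extremality and non-extremality at the two endpoints amounts to invoking the equivariant-perturbation analysis of \cite{basu-hildebrand-koeppe:equivariant}; once the path is fixed with a stable combinatorial type, both convergence statements are routine.
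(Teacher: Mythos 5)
Your proposal takes essentially the same route as the paper: invoke the dichotomy of \cite[Theorems 5.3 and 5.4]{basu-hildebrand-koeppe:equivariant} --- \sagefunc{bhk_irrational} is extreme exactly when the two real parameters are $\Q$-linearly independent (together with open inequality conditions), and admits a nontrivial equivariant perturbation, hence is not extreme, in the rational case --- and then let a sequence of $\Q$-linearly independent parameter values converge to a rational limit, so that extreme functions of the family converge to a non-extreme minimal function of the same type. The paper leaves the uniform and $W^{1,1}_{\mathrm{loc}}(\R)$ convergence estimates implicit (deferring to the explicit construction in \sagefunc{bhk_irrational_extreme_limit_to_rational_nonextreme}), and your elementary estimates for piecewise linear functions of fixed combinatorial type correctly supply them.
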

 
The functions from~\autoref{prop:uniform-conv} have the intriguing property that extremality depends, in
addition to some inequalities in the parameters, on the $\Q$-linear
independence of two real parameters \cite[Theorems 5.3 and
5.4]{basu-hildebrand-koeppe:equivariant}.\footnote{These parameters are
  collected in the list \sage{delta}, which is an argument to the function
  \sagefunc{bhk_irrational}. 
  The parameters are $\Q$-linearly independent for example when one
  parameter is rational, e.g., \sage{1/200}
  , the other irrational,
  e.g., \sage{sqrt(2)/200}
  . When the irrational number is algebraic (for example, when it is
  constructed using square roots), the code will 
  construct an appropriate real number field that is a field extension of the
  rationals.  In this field, the computations are done in exact arithmetic.}
 Thus it is easy to construct a sequence of parameters satisfying this
 condition whose limit is rational, making the limit function
 non-extreme.\footnote{Such a sequence and the limit can be constructed using
   \sagefunc{bhk_irrational_extreme_limit_to_rational_nonextreme}.} 

\begin{figure}[t]
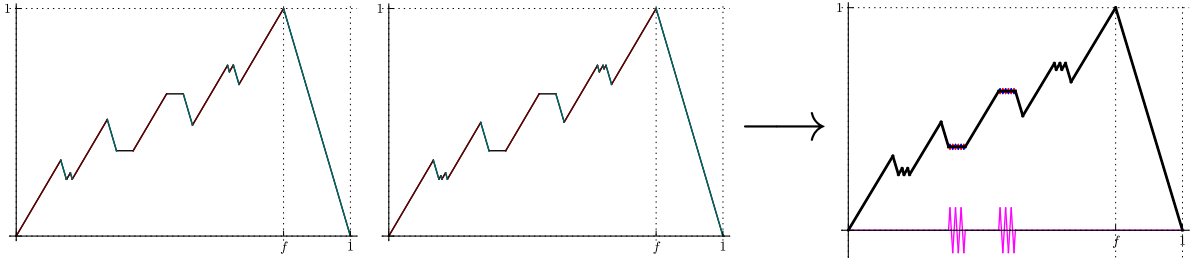

  \centering
  \LimitFigures{bhk_irrational_extreme_limit_to_rational_nonextreme_1-covered_intervals}{bhk_irrational_extreme_limit_to_rational_nonextreme_2-covered_intervals}{bhk_irrational_extreme_limit_to_rational_nonextreme-perturbation-1} 
  \caption{A uniform limit of extreme functions that is not extreme.  
    The sequence of extreme functions of type \sagefunc{bhk_irrational}, set up with \sage{h =
      \sagefunc{bhk_irrational_extreme_limit_to_rational_nonextreme}(n)} where $n =
    1$ \emph{(left)}, $n=2$ \emph{(center)}, \dots\ converges to a non-extreme
    function, set up with \sage{h =
      \sagefunc{bhk_irrational_extreme_limit_to_rational_nonextreme}()}
    \emph{(right)}.
    The limit function $\pi$ (\emph{black}) is shown with two
    minimal functions $\pi^1$ (\emph{blue}), $\pi^2$ (\emph{red}) such that
    $\pi = \frac12(\pi^1+\pi^2)$ and a scaling of the perturbation
    function~$\bar\pi = \pi^1 - \pi$ (\emph{magenta}).} 
  \label{fig:bhk_irrational_extreme_limit_to_rational_nonextreme}
\end{figure}

\subsection{Discontinuous extreme piecewise linear limit functions} 

Dey and Wolsey~\cite{dey1} give some general conditions under which the limit is indeed extreme. Recall that a function $\pi \in \R^\R$ is called {\em piecewise linear} (not necessarily continuous) if we can express $\R$ as the union of closed intervals with non-overlapping interiors such that any bounded subset of $\R$ intersects only finitely many intervals, and the function is affine linear over the {\em interior} of each interval.

\begin{theorem}[{\cite[Theorem 7]{dey1}}]
\label{thm:dey-convergence}
Let $\pi_i \in \R^\R$, $i \in \N$ be a sequence of continuous piecewise linear, extreme valid functions for $R_{f}(\R, \Z)$ and let $\phi$ be the pointwise limit of the sequence $\pi_i$, $i\in \N$ such that the following conditions hold:

\begin{itemize}
\item[(i)] $\phi$ is piecewise linear (not necessarily continuous).
\item[(ii)] $\phi$ has a finite right derivative at $0$.\footnote{This can also be done with a finite left derivative.  Note that not all extreme functions have a finite left or right derivative at the origin.  That is, there exist extreme functions that are discontinuous on both sides of the origin.  See Table~\ref{tab:compendium-3} for examples.}
\item[(iii)] There is a sequence of integers $k_i$, $i \in \N$ with $\lim_{i\to\infty} k_i = \infty$ such that for each $i\in \N$,
\begin{itemize}
\item[(a)] $\phi(u) = \pi_i(u)$ for all $u \in \frac1{k_i}\Z$ and
\item[(b)] the set of nondifferentiable points of $\pi_i$ is contained in $\frac1{k_i}\Z$.
\end{itemize}
\end{itemize}
Then $\phi$ is extreme.

\end{theorem}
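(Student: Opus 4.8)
The plan is to follow the extremality roadmap of \autoref{s:roadmap}: after recording that $\phi$ is minimal, I write $\phi = \tfrac12(\phi^1+\phi^2)$ for valid functions $\phi^1,\phi^2$ and prove that the perturbation $\bar\phi := \phi^1-\phi$ vanishes identically. First, since each $\pi_i$ is extreme and hence minimal and $\pi_i\to\phi$ pointwise, \autoref{prop:sequence-minimal} shows $\phi$ is minimal; in particular $\phi(0)=0$, $\phi(f)=1$, and $\phi$ is periodic and bounded in $[0,1]$. By \autoref{lem:minimality-of-pi1-pi2}\,(i) the functions $\phi^1,\phi^2$ are minimal, and by part~(ii) we have $E(\phi)\subseteq E(\phi^1)\cap E(\phi^2)$; consequently $\bar\phi\in\bar\Pi^{E(\phi)}(\R,\Z)$.

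The crux is to show that $\bar\phi$ vanishes on every grid $\tfrac1{k_i}\Z$. Fix $i$ and let $\T=\P_{B_i}$ be the evenly spaced complex with $B_i=\tfrac1{k_i}\Z\cap[0,1)$, which by \autoref{rem:tri-satisfy-lemma} satisfies the hypotheses of \autoref{lem:tri-restriction} with $q=k_i$; by condition~(iii)(b) (and noting $f\in\tfrac1{k_i}\Z$, since $f$ is necessarily a breakpoint of the minimal function $\pi_i$) the function $\pi_i$ is minimal and continuous piecewise linear over $\T$. Writing $E'=E(\pi_i)\cap\verts(\Delta\T)$, I observe that $\phi$ and $\pi_i$ agree on the lattice by condition~(iii)(a), so $\Delta\phi(x,y)=\Delta\pi_i(x,y)$ whenever $x,y\in\tfrac1{k_i}\Z$; hence $E'\subseteq E(\phi)$, and therefore the restriction $\bar\phi|_{\frac1{k_i}\Z}$ lies in $\bar\Pi^{E'}(\tfrac1{k_i}\Z,\Z)$. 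It remains to see that this finite-dimensional space is trivial: if it contained a nonzero element, \autoref{lem:tri-restriction}\,(1) would extend it to a nonzero $\bar\rho\in\bar\Pi^{E(\pi_i)}_\T(\R,\Z)$ that is continuous piecewise linear and periodic over $\T$, and then \autoref{corPerturb} (whose hypotheses hold, as the cells of $\T$ are bounded) would contradict the extremality of $\pi_i$. Thus $\bar\Pi^{E'}(\tfrac1{k_i}\Z,\Z)=\{0\}$ and $\bar\phi\equiv0$ on $\tfrac1{k_i}\Z$.

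Since $k_i\to\infty$, the union $D=\bigcup_i\tfrac1{k_i}\Z$ is dense in $\R$, so $\bar\phi$ vanishes on a dense set. By hypothesis~(ii) the finite right derivative of $\phi$ at $0$ forces right-continuity there, so \autoref{lem:lipschitz}\,(v) applies and $\phi^1,\phi^2$ are continuous at every point at which $\phi$ is continuous. At any continuity point $x$ of $\phi$, approximating $x$ by points of $D$ and using continuity of both $\phi$ and $\phi^1$ gives $\bar\phi(x)=0$. The finitely many discontinuity points of $\phi$ per period are then handled through the relations forced by minimality: the symmetry condition places $(x,f-x)\in E(\phi)$ for every $x$, whence $\bar\phi(x)=-\bar\phi(f-x)$, which together with the one-sided limits $\bar\phi(x^{\pm})=0$ inherited from the surrounding continuity points leaves no room for a nonzero value. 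Hence $\bar\phi\equiv0$, i.e.\ $\phi^1=\phi$, and symmetrically $\phi^2=\phi$, so $\phi$ is extreme.

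I expect the main obstacle to be the middle paragraph, namely converting the extremality of each approximating function $\pi_i$ into triviality of the restricted perturbation space via the finite-group restriction of \autoref{lem:tri-restriction} together with \autoref{corPerturb}. A secondary subtlety is the treatment of the discontinuity points of $\phi$ in the final step, where one must argue that the prescribed one-sided limits, combined with the symmetry and subadditivity relations, preclude any nonzero jump in $\bar\phi$ at those points.
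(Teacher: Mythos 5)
The survey itself does not prove \autoref{thm:dey-convergence}; it is quoted from \cite[Theorem 7]{dey1}, so I am judging your argument on its own merits against the toolkit the paper provides.

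Your first two stages are correct and well executed. Minimality of $\phi$ via \autoref{prop:sequence-minimal}; the reduction to a perturbation $\bar\phi=\phi^1-\phi\in\bar\Pi^{E(\phi)}(\R,\Z)$ via \autoref{lem:tightness}\,(i),(ii); and the key middle step, forcing $\bar\phi\equiv 0$ on each grid $\tfrac1{k_i}\Z$: your observation that $E(\pi_i)\cap\verts(\Delta\T)\subseteq E(\phi)$ because $\phi$ and $\pi_i$ agree on the grid, combined with \autoref{lem:tri-restriction}\,(1) and \autoref{corPerturb} to show that extremality of $\pi_i$ makes $\bar\Pi^{E'}(\tfrac1{k_i}\Z,\Z)$ trivial, is sound (including the side remark that $f\in\tfrac1{k_i}\Z$, which indeed follows because a continuous piecewise linear minimal function cannot be differentiable at $f$). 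The passage from density of $\bigcup_i\tfrac1{k_i}\Z$ to all continuity points of $\phi$, using that the finite right derivative at $0$ forces right-continuity there so that \autoref{lem:tightness}\,(v) applies, is also fine. (An equivalent route to the middle step would be via \autoref{thm:GJ-restrictions}\,(2) and \autoref{thm:finite-extremality}; this is cosmetic.)

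The genuine gap is your last step, the discontinuity points of $\phi$. First, the argument you give proves nothing: a function $\bar\phi$ supported on the finitely many jump points of $\phi$ per period automatically has all one-sided limits equal to zero, and automatically satisfies $\bar\phi(x)=-\bar\phi(f-x)$ once its values at $x_0$ and $f-x_0$ are chosen equal and opposite. So ``symmetry together with vanishing one-sided limits'' is perfectly consistent with $\bar\phi(x_0)=t\neq 0$, $\bar\phi(f-x_0)=-t$; no room is closed off. Second, these jump points need not lie in any grid $\tfrac1{k_i}\Z$ (condition (iii)(b) constrains the $\pi_i$, not $\phi$), so the middle step does not reach them. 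What subadditivity of $\phi\pm\bar\phi$ actually yields is a pinching: pairing $x_0$ with continuity points $v\to 0^+$, and writing $x_0=u+v$ with $u,v$ continuity points, gives
\begin{equation*}
  |\bar\phi(x_0)| \;\le\; \min\bigl\{\,\phi(x_0)-\phi(x_0^+),\ \phi(x_0^-)-\phi(x_0)\,\bigr\},
\end{equation*}
which finishes the proof only when $\phi$'s value at each jump coincides with one of its one-sided limits. But a pointwise limit need not have this property: on the grid interval $(a_i,b_i)\ni x_0$ one has $\pi_i(x_0)=(1-\theta_i)\phi(a_i)+\theta_i\phi(b_i)$ with $\theta_i=\{k_ix_0\}$, so $\phi(x_0)=(1-\theta)\phi(x_0^-)+\theta\phi(x_0^+)$ where $\theta=\lim_i\{k_ix_0\}$, and $\theta$ can lie strictly in $(0,1)$ (e.g.\ $x_0=\tfrac13$ with $k_i\equiv 1\pmod 3$). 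In that strictly-between case the pinching is inconclusive, and ruling out a perturbation supported on such jump points requires the finer limit-additivity relations at discontinuities (the machinery the survey explicitly omits in the footnote to \autoref{lem:tightness}\,(ii)). That is the actual crux of the theorem, and it is missing from your proposal.
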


The authors of~\cite{dey1} use the above theorem to construct families of
discontinuous piecewise linear extreme functions for the single-row
infinite group problem; see \autoref{tab:compendium-2} for a list.  The use of
\autoref{thm:dey-convergence} does not seem to be essential, however; the
extremality of all of these functions can also be established by following the
algorithm of \autoref{sec:direct-algorithm}. 

\subsection{Non--piecewise linear extreme limit functions} 
\label{sec:bccz_counterexample}

We now describe a construction based on limits of extreme functions that yields an extreme function that is not piecewise linear. The extremality of this limit function cannot be obtained by an application of~\autoref{thm:dey-convergence} since the limit function is not piecewise linear.


This construction is motivated by a conjecture of Gomory and Johnson from 2003 that all facets are piecewise linear~\cite[section 6.1]{tspace}. If true, this would justify focusing attention on piecewise linear minimal valid functions, for which we have developed many tools for analysis (see \autoref{sec:piecewise}). However, even for $k=1$, this conjecture was disproved by Basu, Conforti, Cornu\'ejols and Zambelli~\cite{bccz08222222}. We present their counterexample and a brief argument for its extremality. 

\begin{remark}\label{rem:facet-non-piece} The arguments for its facetness are almost identical; the only difference is that some technical continuity arguments can be avoided in the proof of extremality because of Lemma~\ref{lem:tightness} (iii). \end{remark} 

We first define a sequence of valid functions $\psi_i \colon  \R
\rightarrow \mathbb{R}$ 
that are piecewise linear, and then consider
the limit $\psi$ of this sequence, which will be extreme but not piecewise linear.

\begin{figure}[tp]
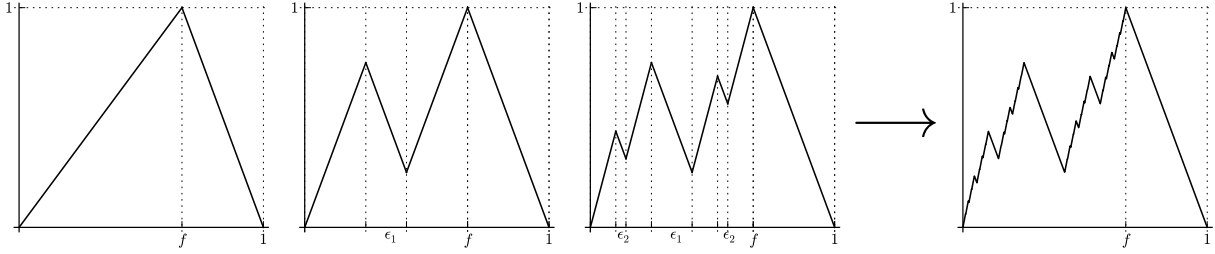

  \centering
  \LimitFiguresFour{bccz_counterexample_with_ticks-0}{bccz_counterexample_with_ticks-1}{bccz_counterexample_with_ticks-2}{bccz_counterexample_with_ticks}
  \caption{First steps ($\psi_0 = \sage{\sagefunc{gmic}()}$, $\psi_1$, $\psi_2$) in the
    construction of the continuous non--piecewise linear limit function $\psi
    = \sage{\sagefunc{bccz_counterexample}()}$.}
  \label{fig:limit_function}
\end{figure}

Let $0 < f < 1$.  Consider a geometric sequence of real numbers $\epsilon_1 > \epsilon_2 >
\dots$ such that $\epsilon_1 \leq 1-f$ and
\begin{equation} \label{series}
  \mu^- = (1-f) + \sum_{i=1}^{+\infty} 2^{i-1} \epsilon_i \leq 1
\end{equation}
holds.\footnote{The first $n$ terms of such a sequence of~$\epsilon_i$
  are generated by \sage{e = \sagefunc{generate_example_e_for_psi_n}(n=$n$)}.}
We distinguish two cases:  $\mu^- < 1$ \cite{bccz08222222} and $\mu^- =1$
\cite{zhou:extreme-notes}. 
An example for the first case is the sequence $ \epsilon_i = (\frac{1}{4})^i
f$ for $0 < f \leq \frac{4}{5}$; for the second case,
$\epsilon_i = 2 (\frac{1}{4})^i f$ for $0 < f \leq \frac12$.
Let $\psi_0$ be the \sagefunc{gmic} function with peak at~$f$. 
%
We construct $\psi_{i+1}$ from $\psi_i$ by modifying each segment
with positive slope in the graph of $\psi_i$ in the manner of the
\sagefunc{kf_n_step_mir} construction \cite{kianfar1} as follows.\footnote{The
  construction of $\psi_n$ is furnished by
  \sage{h = \sagefunc{psi_n_in_bccz_counterexample_construction}(e=e)},
  where \sage{e} is the list \sage{[$\epsilon_1, \dots, \epsilon_n$]}.
} 
For every inclusion-maximal 
interval $[a,b]$ where $\psi_i$ has constant positive slope we
replace the line segment from $(a,\psi_i(a))$ to $(b,\psi_i(b))$
with the following three segments:
\begin{itemize}
\item a positive slope segment connecting $\bigl(a,\psi_i(a)\bigr)$
and
$\bigl(\frac{(a+b)-\epsilon_{i+1}}{2},\psi_i(\frac{a+b}{2})+\frac{\epsilon_{i+1}}{2(1-f)}\bigr)$,
\item a negative slope segment connecting  $\bigl(\frac{(a+b)-\epsilon_{i+1}}{2},
\psi_i(\frac{a+b}{2})+\frac{\epsilon_{i+1}}{2(1-f)}\bigr)$ and
$\bigl(\frac{(a+b)+\epsilon_{i+1}}{2},
\psi_i(\frac{a+b}{2})-\frac{\epsilon_{i+1}}{2(1-f)}\bigr)$,
\item a positive slope segment connecting $\bigl(\frac{(a+b)+\epsilon_{i+1}}{2},
\psi_i(\frac{a+b}{2})-\frac{\epsilon_{i+1}}{2(1-f)}\bigr)$ and $\bigl(b,
\psi_i(b)\bigr)$.
\end{itemize}
Figure~\ref{fig:limit_function} shows the transformation of $\psi_0$
to $\psi_1$ and $\psi_1$ to $\psi_2$.  Each $\psi_i$ is nonnegative, subadditive and
satisfies the symmetry condition \cite[Lemma 4.5 and Fact 4.6]{bccz08222222},
and thus is a minimal valid function.  By construction, the new negative
slopes match the existing negative slopes, and the new positive slopes of each
function have all the same slope.  Thus $\psi_i$ is a (continuous piecewise
linear) 2-slope function and  
hence extreme. The functions $\psi_i$ are therefore extreme functions by the
Gomory--Johnson 2-Slope Theorem (\autoref{th:2-slope}).


The function $\psi$ which we show to be extreme but not piecewise
linear is defined as the pointwise limit of this sequence of functions, namely
\begin{equation}
\psi(x) = \lim_{i \rightarrow \infty} \psi_i(x).\label{PSI}
\end{equation}
This limit is well defined when \eqref{series} holds.\footnote{The function
  can be created by \sage{h = \sagefunc{bccz_counterexample}()}; however, \sage{h(x)} can be
  exactly evaluated only on the set $\bigcup_{i=0}^\infty \cl N_i$ defined
  below; for other values, the function will return an approximation.}  
In fact, $\psi_i$
converges uniformly to $\psi$. 
Since each $\psi_i$ is continuous, this implies
that $\psi$ is also continuous.\footnote{In fact, if $\mu^- < 1$, then $\psi$
  is actually Lipschitz continuous and thus 
  absolutely continuous and hence almost everywhere differentiable. The convergence then holds even in the sense of the
  space~$W^{1,1}_{\mathrm{loc}}(\R)$.} The limit function has the following
intriguing properties:
\begin{enumerate}
\item By \autoref{prop:sequence-minimal}, $\psi$ is minimal. 
\item For each integer $i \geq 0$, define $X^-_i$ to be the subset of points of
  $[0,1]$  
on which the function $\psi_i$ is differentiable with a negative slope. From the construction of $\psi_i$, $X^-_i$ is the union of $2^i$ open intervals \cite[Fact 4.1]{bccz08222222}. Furthermore, $X^-_i\subseteq X^-_{i+1}$ for every $i\in\mathbb{N}$. The set $X^-\subseteq [0,1]$ defined by
$X^-=\bigcup_{i=0}^\infty X^-_i$ is thus the set of points over which $\psi$ has
negative slope, and it is an open set since it is the union of open intervals.
The set $X^-$ is dense in $[0,1]$ \cite[Fact 5.4]{bccz08222222}.  Its Lebesgue measure
is~$\mu^-$. 
\item $\psi$ is not piecewise linear. This is because each $\psi_i$ is nonnegative, and therefore so is $\psi$. If $\psi$ is piecewise linear, by definition of continuous piecewise linear functions from~\autoref{sec:def-complex-piecewise} there exists $0 < \epsilon$ such that $\psi$ is affine linear on $[0,\epsilon]$. Since $X^-$ is dense, there exists a point from $X^-$ in $(0,\epsilon)$ and so $\psi$ has negative slope on this entire segment. But since $\psi(0) = 0$, this contradicts the fact that $\psi \geq 0$.
\item The complement $[0,1]\setminus X^-$ is a closed set, which does not
  contain any interval; hence $[0,1]\setminus X^-$ is a nowhere dense set.  It
  has Lebesgue measure $\mu^+ = 1 - \mu^-$.  
  Removing from $[0,1]\setminus X^-$ the countably many breakpoints of the negative-slope intervals, we
  obtain the set $X^+ = [0,1]\setminus \bigcup_{i=0}^\infty \cl X^-_i$, which is still a
  nowhere dense set of measure~$\mu^+$.
\item If $\mu^- < 1$, the set $X^+$ is of positive measure, and thus a
  \emph{fat Cantor set}; in this case the derivative of~$\psi$ exists for all
  points in $X^+$ and
  equals the limit of the positive slopes of the functions~$\psi_i$.
  Thus $\psi$ is an absolutely continuous, measurable, non--piecewise
  linear ``2-slope function.'' 
\item On the other hand, if $\mu^- = 1$, the measure of~$X^+$ is zero, and 
  so the derivative of~$\psi$ equals the negative slopes of the
  functions~$\psi_i$ Lebesgue--almost everywhere.  Thus $\psi$
  is a continuous (but not absolutely continuous), measurable, non--piecewise
  linear ``1-slope function.''  This case is discussed in \cite{zhou:extreme-notes}.
\end{enumerate}
The proof of extremality of $\psi$ proceeds along the roadmap of \autoref{s:roadmap} as follows.
\begin{enumerate}
\item Consider any minimal valid functions $\pi^1, \pi^2$ such that $\psi =
  \frac{\pi^1 + \pi^2}{2}$. Since $\psi$ is affine over the segments in $\cl X^-_i$, the additivity properties on these segments are inherited by $\pi^1$ using a one-dimensional version of \autoref{lem:k+1-useful} and Lemma~\ref{lem:tightness} (ii).
\item One uses the Interval Lemma (\autoref{one-dim-interval_lemma}) on $\pi^1$ to obtain that $\pi^1$ is affine over $X^-$, and moreover, since $\pi^1(0) = \psi(0) = 0$ and $\pi^1(f) = \psi(f) =1$, one recursively establishes that $\pi^1(x) = \psi(x)$ for all $x\in X^-$. 
\item Since $X^-$ is dense in $[0,1]$, $\psi$ is continuous and $\pi^1$ is
  continuous by Lemma~\ref{lem:tightness} (iii) 
  we obtain that $\pi^1 = \psi$. Therefore, $\pi^1 = \pi^2 = \psi$, establishing that $\psi$ is extreme.
\end{enumerate}

We end this section with a conjecture about limits of minimal functions, whose positive resolution would emphasize the importance of piecewise linear functions. 

\begin{conjecture}[{\cite[Conjecture 6.1]{bccz08222222}}] Every extreme function (resp. facet) $\pi \colon \R^k \to \R$ is either piecewise linear or the limit of a sequence of piecewise linear extreme functions (resp. facets). 
\end{conjecture}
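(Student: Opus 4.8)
The statement is a conjecture rather than a settled theorem, so what follows is the strategy I would pursue rather than a complete argument. The most promising route recasts the problem in the language of Choquet theory and exploits the compactness established in \autoref{obs:compact}. Let $K$ denote the set of minimal valid functions for $R_\f(\R^k,\Z^k)$, which by \autoref{obs:compact} is a compact convex subset of $\R^{\R^k}$ in the product topology, and let $Z$ denote the set of piecewise linear extreme functions. The plan is to prove that $\cl(\conv(Z)) = K$, i.e.\ that the piecewise linear extreme functions already suffice to generate, via closed convex hulls, every minimal valid function. Granting this, Milman's partial converse to the Krein--Milman theorem (valid for compact convex sets in any locally convex Hausdorff space, in particular for $\R^{\R^k}$ with the product topology) yields $\ext(K) \subseteq \cl(Z)$. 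Since the extreme functions are precisely the extreme points of $K$ (as noted just after \autoref{obs:compact}), every extreme function would then lie in the closure of~$Z$; as closure in the product topology means pointwise approximation, each extreme function is either itself piecewise linear (a member of $Z$) or a pointwise limit of piecewise linear extreme functions, which is exactly the assertion of the conjecture for extreme functions.

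The reduction to $\cl(\conv(Z)) = K$ splits naturally into two density statements. First I would show that continuous piecewise linear minimal functions over the standard rational complexes (\autoref{ex:2d-standard-triangulation} and its analogues) are pointwise dense in~$K$: given any minimal $\pi$, one interpolates its values on the grid $\tfrac1q\Z^k$ to obtain a candidate $\pi_q$, verifies through the minimality test (\autoref{minimality-check}) that a suitable correction of $\pi_q$ is again minimal, and lets $q\to\infty$. Second I would show that every such grid-minimal function lies in $\conv(Z)$. For a fixed $q$, the continuous piecewise linear minimal functions over the grid form a finite-dimensional polytope $M_q$, cut out by the finitely many constraints of \autoref{minimality-check}, and $M_q = \conv(\verts(M_q))$. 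The crux is then to certify that the vertices of $M_q$ are genuine (infinite-dimensional) extreme functions, i.e.\ members of~$Z$.

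This last certification is where the genuine difficulty lies, and it is the reason the statement remains a conjecture. A vertex of $M_q$ is extreme \emph{among functions piecewise linear over the grid}, but it may still be writable as $\tfrac12(\pi^1+\pi^2)$ with $\pi^1,\pi^2$ distinct minimal functions that are \emph{not} piecewise linear over that grid --- either piecewise linear over a proper refinement, or not piecewise linear at all. By \autoref{lem:tightness}, such $\pi^1,\pi^2$ inherit all the tight additivities, $E(\pi_q)\subseteq E(\pi^1)\cap E(\pi^2)$, and are Lipschitz, but this does not force them back into the fixed grid class. Bridging this gap between grid extremality (finite-dimensional) and extremality for the infinite relaxation is precisely the content of the restriction/interpolation theory of \autoref{sec:alg-restriction-finite-groups} and of the new equivalence of extremality and facetness for functions with rational breakpoints. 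I expect the hard part to be producing, for each grid-extreme $\pi_q$, a genuinely infinite-extreme piecewise linear function arbitrarily close to it; the obstruction is the fragility of extremality under limits demonstrated in \autoref{s:limits-extremality-not-preserved}, which rules out naive approximation schemes and means the approximating sequence must be engineered to \emph{actively preserve} extremality, in the spirit of the two-slope approximation $\psi_i\to\psi$ of \autoref{sec:bccz_counterexample}.

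Two further points merit attention. The facet version of the conjecture does not follow from the Krein--Milman machinery, since facets are defined through the tight-solution sets $P(\pi)$ rather than as extreme points of a convex body; I would instead transport the extreme-function argument through the implication that facets are extreme, together with the Facet Theorem (\autoref{thm:facet}), arranging that the approximating piecewise linear functions satisfy the stronger hypothesis that $E(\pi_q)\subseteq E(\pi')$ implies $\pi'=\pi_q$ for all minimal $\pi'$. Finally, the Milman argument only delivers limits of \emph{nets}, whereas the conjecture demands honest \emph{sequences}; to recover this I would restrict to the rational grids $\tfrac1q\Z^k$, whose union is countable and dense in $\R^k$, and extract a convergent sequence by a diagonal argument, so that pointwise convergence on this dense set --- combined with the uniform bound $0\le\pi\le1$ from \autoref{thm:minimal} --- upgrades to the required pointwise limit on all of $\R^k$.
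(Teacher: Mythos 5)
The statement you were asked about is an open conjecture (recorded in the paper from \cite{bccz08222222}); the paper itself contains no proof, so there is nothing to compare your argument against, and you are right to present a strategy rather than a proof. Your Choquet-theoretic reformulation is correct as far as it goes: since the extreme functions are exactly the extreme points of the compact convex set $K$ of minimal valid functions (\autoref{obs:compact}), Milman's converse to Krein--Milman would indeed give $\ext(K)\subseteq\cl(Z)$ once $\cl(\conv(Z))=K$. But note that Krein--Milman gives the reverse implication as well: if every extreme function lies in $\cl(Z)$, then $K=\cl(\conv(\ext(K)))\subseteq\cl(\conv(Z))$. So your target identity $\cl(\conv(Z))=K$ is \emph{equivalent} to (the net-limit version of) the conjecture, not a reduction of it; all of the difficulty is conserved, and your two ``density steps'' are simply a proof attempt of the same statement in disguise.

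Beyond the crux you honestly flag, there are three concrete obstructions. First, the gap you acknowledge is not merely hard but is blocked by examples already in the paper: the function \sagefunc{drlm_not_extreme_1} (\autoref{fig:drlm_not_extreme_1}) is extreme for the finite group problem $R_f(\tfrac1q\Z,\Z)$ yet not extreme for $R_f(\R,\Z)$, and \sagefunc{kzh_2q_example_1} shows that even extremality of the restriction to $\tfrac1{2q}\Z$ does not certify extremality; so vertices of your polytope $M_q$ genuinely escape $Z$, and \autoref{thm:extreme-restriction-m} (oversampling $m\geq 3$) only characterizes extremality of functions \emph{already known} to be piecewise linear with breakpoints in $\tfrac1q\Z$ --- it does not manufacture piecewise linear extreme approximations of an arbitrary extreme function. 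Second, your sequence-extraction step fails as stated: the product topology on $\R^{\R^k}$ is not first countable, and pointwise convergence on a countable dense set together with the bound $0\le\pi\le1$ does \emph{not} upgrade to pointwise convergence on all of $\R^k$; minimal and even extreme functions may be discontinuous (indeed discontinuous on both sides of the origin, \autoref{tab:compendium-3}), so without an equicontinuity hypothesis the diagonal argument controls nothing off the dense set. Third, the entire grid machinery presupposes $\f\in\tfrac1q\Z^k$ (this is a hypothesis in \autoref{thm:GJ-restrictions}, \autoref{minimality-check}, and \autoref{thm:extreme-restriction-m}); when $\f$ is irrational no rational grid is admissible, so both of your density steps are vacuous in that case --- and the paper's \sagefunc{bhk_irrational} family, whose extremality hinges on $\Q$-linear independence of parameters, warns that rational-grid approximation is exactly the kind of operation that destroys extremality (\autoref{s:limits-extremality-not-preserved}). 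Finally, as you note, even a complete Milman-type argument would say nothing about the facet version, since it remains open whether extreme functions are facets; that half of the conjecture would need an entirely separate mechanism.
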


\section{Algorithmic characterization of extreme functions
}\label{sec:one-two-dim}
In this section we discuss recent algorithmic results for proving piecewise linear functions are either extreme or not extreme for the infinite group problem $R_\f(\R^k,\Z^k)$.  In~\cite{basu-hildebrand-koeppe:equivariant}, the first algorithmic test for
extremality was given for the single-row infinite group problem $R_f(\R,\Z)$, followed by an extension to two-row infinite group problem $R_\f(\R^2,\Z^2)$ in~\cite{bhk-IPCOext}.  We summarize these algorithmic ideas here in two lights.  We will first discuss a general procedure to test for extremality and then in section~\ref{sec:alg-restriction-finite-groups} discuss specific classes of functions that have relations to finite group problems where extremality can be tested easily using linear algebra.

\subsection{General procedure outline}
\label{sec:direct-algorithm}
%

We will outline here a general procedure for testing extremality of a continuous piecewise linear function $\pi\colon \R^k \to \R$ defined on a polyhedral complex $\P$.  Similar techniques may apply to testing extremality and even facetness of discontinuous piecewise linear functions as well.

Let $E = E(\pi)$.
Recall that  $\pi$ is not extreme if and only if there exists a nontrivial function $\bar \pi$ such that $\pi \pm \bar \pi$ is minimal.
From Lemma~\ref{lem:tightness} parts~\eqref{lem:tightness-minimal} and~\eqref{lem:tightness-subadditive} it follows that $\pi$ is not extreme if and only if there exists a nontrivial continuous function $\bar \pi \in \bar \Pi^E(\R^k, \Z^k)$ such that $\pi \pm \bar \pi$ is minimal.

Let $\T$ be a triangulation of $\R^k$ that satisfies the hypotheses of Lemma~\ref{lem:tri-restriction}, i.e., there exists $q \in \mathbb{N}$ such that $\verts(\T) = \tfrac{1}{q}\Z^k$ and $p_i(\verts(\Delta \T)) \subseteq \tfrac{1}{q} \Z^k$ for $i=1,2,3$ and $\f \in \tfrac{1}{q}\Z^k$.
The following algorithmic ideas are based on the decomposition in Lemma~\ref{lem:tri-restriction} part~\eqref{lem:tri-restriction-part2} of perturbations $\bar \pi \in \bar \Pi^E(\R^k, \Z^k)$ into $\bar \pi = \bar \pi_\T + \bar \pi_{\mathrm{zero}(\T)}$ with $\bar \pi_\T \in \bar \Pi^E_\T(\R^k, \Z^k)$ and $\bar \pi_{\mathrm{zero}(\T)} \in \bPizero$.  Since $\bar \pi$ and $\bar \pi_\T$ are continuous, $\bar\pi_{\mathrm{zero}(\T)}$ is also continuous.


\subsubsection{Finite-dimensional linear algebra for $\bar \Pi^E_\T(\R^k, \Z^k)$}
\label{section:system}
We begin by looking for a perturbation function in $\bar \Pi^E_\T(\R^k, \Z^k)$.
By Lemma~\ref{lem:tri-restriction}, $\bar \pi \in \bar \Pi^E_\T(\R^k, \Z^k)$
if and only if $\bar \pi|_{\frac{1}{q}\Z^k} \in \bar \Pi^{E'}(
\tfrac{1}{q}\Z^k, \Z^k)$ where $E' = E(\pi) \cap \tfrac{1}{q}\Z^k$.  Thus we
consider the linear system $\bar \Pi^{E'}( \tfrac{1}{q}\Z^k, \Z^k)$, which is
finite-dimensional if we identify the variables $\bar \pi(\x)$ and $\bar
\pi(\x + \t)$ for all $\t \in \Z^k$.  Hence, this is a finite-dimensional
linear system and any nontrivial solution can be computed by analyzing the
null space of this system.  
If such a nontrivial solution exists, it can be interpolated to a piecewise
linear function $\bpi \in \bar \Pi^E_\T(\R^k, \Z^k)$ because $\T$ is a triangulation of $\R^k$ that satisfies the hypotheses of Lemma~\ref{lem:tri-restriction}, and so by Theorem~\ref{corPerturb} $\pi$ is not extreme.  This is demonstrated in Figure~\ref{fig:minimalNotExtreme} for the case of $\T = \P = \P_{\frac{1}{q} \Z}$ where a perturbation is found on the complex $\T$. 

Otherwise we have that $\bar \Pi^E_\T(\R^k, \Z^k) = \{ 0\}$.  This scenario is depicted in Figure~\ref{fig:drlm_not_extreme_1} where $\bar \Pi^{E'}_{\P_{\frac{1}{q} \Z}}(\tfrac{1}{q} \Z, \Z) = \{0\}$ with $E' = E(\pi) \cap \tfrac{1}{q} \Z^2$.  




\subsubsection{Projections and additivity for $\bPizero$}

From Lemma~\ref{lem:tri-restriction}, any $\bar \pi \in \bPizero$ satisfies $\bar \pi|_{\frac{1}{q}\Z^k} \equiv 0$.  

We consider full-dimensional faces $F\in E(\pi, \P)$.  By 
 Corollary~\ref{lem:projection_interval_lemma-corollary}, these full-dimensional faces imply that any $\bar \pi \in \bPizero$ is affine on the projections $p_1(F), p_2(F)$, and $p_3(F)$.  If a projection $p_i(F)$ contains $k+1$ affinely independent points in $\frac{1}{q} \Z^k$, then we conclude that $\bar \pi|_{p_i(F)} \equiv 0$ on this projection.  This is because $\bar \pi|_{\frac{1}{q} \Z^k} \equiv 0$.  Therefore, we learn certain polyhedral regions where $\bar \pi$ vanishes and we record these.

In the next step, we consider any faces $F$ of $E(\pi, \P)$ such that two of $p_1(F), p_2(F), p_3(F)$ are full-dimensional and one is zero-dimensional.  In particular, if one of these full-dimensional projections intersects a region where $\bar \pi$ is zero, then that property is transferred to the other full-dimensional projection.  
For example, the relations $\pi(\x) + \pi(\t) = \pi(\x+ \t)$ for all $\x \in I$ corresponds to the face $F = F(I, \{\t\}, I + \{\t\})$ where $p_1(F) = I$, $p_2(F) = \{\t\}$, $p_3(F) = I + \{\t\}$.  Hence, if $I$ is full-dimensional in $\R^k$ then $I + \{\t\}$ is full-dimensional in $\R^k$.
In this way the function values of $\bar \pi$ in $I + \{\t\}$ are dependent on the function values on $I$.  For example, if we know that $\bar \pi$ is affine over $I$, then it is also affine over $I + \{\t\}$.
This is the key step in this procedure.
We continue transferring properties until no new affine properties are discovered.

If the procedure terminates, it may either show that $\bar \pi \equiv 0$, in which case $\pi$ is extreme.  
Otherwise, we hope to find a perturbation function $\bar \pi$ that shows that $\pi$ is not extreme.   In fact, in certain cases, we can find a $\bar \pi$ that is piecewise linear on a refinement of $\T$.
Showing termination of this procedure is non-trivial and it is an open
question under what conditions this procedure is guaranteed to
terminate. Subsections~\ref{subsec:single-row-rational-breakpoints}
and~\ref{subsec:standard-tri-R2} discuss cases in which the procedure provably terminates.

The above procedure only considers certain faces of $E(\pi, \P)$.  Other faces
of $E(\pi,\P)$, as shown in Theorem~\ref{lem:generalized_interval_lemma},
establish other affine properties about $\bar \pi$, but not necessarily
full-dimensional affine properties.  These properties can sometimes combine to
create full-dimensional affine properties.  This effect is investigated in the
forthcoming paper~\cite{basu-hildebrand-koeppe:equivariant-general-2dim} for
the case of the two-row problem and general continuous piecewise linear functions
over the complex $\P_q$.

\subsection{One-row case with rational breakpoints}
  \label{subsec:single-row-rational-breakpoints}
%
 
We will consider the one-dimensional polyhedral
complex $\P_{B}$ for $B = \frac{1}{q} \Z \cap [0,1)$ as defined in Example~\ref{ex:1d-breakpoint-complex};  we will call this complex $\P_{\frac{1}{q} \Z}$.  Therefore, we consider piecewise linear functions (possibly discontinuous) with breakpoints in $\frac{1}{q} \Z$.

\begin{theorem}[{\cite[Theorem 1.3]{basu-hildebrand-koeppe:equivariant}}]
\label{thm:one-dim}
Consider the following problem.  
\begin{quote}
  Given a minimal 
  valid function $\pi$ for $R_f(\R,\Z)$ that is piecewise
  linear with a set of rational breakpoints with the least common
  denominator~$q$, decide if $\pi$ is extreme or not.
\end{quote}
There exists an algorithm for this problem whose running time is bounded by a
polynomial in $q$. 
\end{theorem}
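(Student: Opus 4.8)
The plan is to follow the general procedure of \autoref{sec:direct-algorithm} with the triangulation $\T = \P_{\frac1q\Z}$, which by \autoref{rem:tri-satisfy-lemma} satisfies the hypotheses of \autoref{lem:tri-restriction}, and to verify that every step runs in time polynomial in~$q$. By \autoref{lem:tightness}\,(i)--(ii) together with \autoref{corPerturb}, the function $\pi$ fails to be extreme exactly when there is a nontrivial continuous piecewise linear $\bar\pi \in \bar\Pi^{E}(\R,\Z)$ with $E = E(\pi)$; and by \autoref{lem:tri-restriction}\,(2) every such $\bar\pi$ decomposes uniquely as $\bar\pi = \bar\pi_\T + \bar\pi_{\mathrm{zero}(\T)}$ with $\bar\pi_\T \in \bar\Pi^E_\T(\R,\Z)$ and $\bar\pi_{\mathrm{zero}(\T)} \in \bar\Pi^E_{\mathrm{zero}(\T)}(\R,\Z)$. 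It therefore suffices to decide, each within a polynomial-in-$q$ time bound, whether either summand space is nontrivial. As a preliminary, I would compute $E(\pi)$ combinatorially: since all breakpoints lie in $\frac1q\Z$, the complex $\Delta\P$ has $O(q^2)$ faces whose vertices project into $\frac1q\Z$, and evaluating $\Delta\pi$ at these $O(q^2)$ vertices identifies $E_{\max}(\pi,\P)$ in polynomial time, reusing the finite additivity data underlying \autoref{minimality-check}.

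The space $\bar\Pi^E_\T(\R,\Z)$ is finite-dimensional and easy to handle. By \autoref{lem:tri-restriction}\,(1), a function lies in it if and only if its restriction to $\frac1q\Z$ lies in $\bar\Pi^{E'}(\frac1q\Z,\Z)$ with $E' = E \cap \verts(\Delta\T)$. Identifying $\bar\pi(x)$ with $\bar\pi(x+t)$ for $t\in\Z$, this is a homogeneous linear system in the $q$ unknowns $\bar\pi(0),\dots,\bar\pi(\frac{q-1}{q})$, subject to $\bar\pi(0)=\bar\pi(f)=0$ and one additivity equation per point of $E'$, hence at most $O(q^2)$ equations. A null-space computation decides nontriviality in polynomial time; if a nonzero solution exists, I would interpolate it over $\T$ and apply \autoref{corPerturb} to conclude that $\pi$ is not extreme.

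The genuinely hard part is $\bar\Pi^E_{\mathrm{zero}(\T)}(\R,\Z)$, whose elements vanish on $\frac1q\Z$. Here I would run the propagation step of \autoref{sec:direct-algorithm}: each full-dimensional additive face $F\in E(\pi,\P)$ forces $\bar\pi$ to be affine on the projections $p_1(F),p_2(F),p_3(F)$ by \autoref{lem:projection_interval_lemma-corollary}; since these projections are intervals with endpoints in $\frac1q\Z$, on which $\bar\pi$ vanishes, one gets $\bar\pi\equiv 0$ there. The remaining additive faces transfer this vanishing between intervals through the translation faces $F(I,\{t\},I+t)$ and their symmetric (reflection) analogues induced by the symmetry condition. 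The key structural fact that makes this polynomial is that every interval arising is a union of the $q$ elementary intervals of $\P_{\frac1q\Z}$, and every move is an affine map sending $\frac1q\Z$-intervals to $\frac1q\Z$-intervals; consequently the set of reachable interval configurations sits inside a fixed finite collection of size $O(q)$, so the propagation stabilizes after $O(q)$ rounds, each scanning $O(q^2)$ faces.

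The hard part, and the technical heart inherited from \cite{basu-hildebrand-koeppe:equivariant}, will be proving that this reflection/translation propagation both terminates with the claimed polynomial bound and correctly certifies the answer in the negative case. Concretely, once the covered set stabilizes, either all of $[0,1]$ is covered, forcing $\bar\pi_{\mathrm{zero}(\T)}\equiv 0$ and hence $\pi$ extreme; or an uncovered orbit of intervals remains, on which one must exhibit a \emph{consistent} nonzero equivariant perturbation that simultaneously respects all the additivity relations of $E$, the symmetry condition, and the vanishing at $\frac1q\Z$ — so that $\pi\pm\epsilon\bar\pi$ remain minimal by \autoref{corPerturb}. I expect the main obstacle to be exactly this consistency analysis: showing that the interplay of translation and reflection moves on an uncovered orbit never obstructs the existence of such a perturbation, and that detecting an obstruction (or its absence) can be reduced to a polynomial-size linear-algebraic condition over $\frac1q\Z$.
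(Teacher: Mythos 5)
Your overall architecture is the same as the paper's (and the cited original's): decompose candidate perturbations via \autoref{lem:tri-restriction} into a piecewise linear part over $\T=\P_{\frac1q\Z}$, handled by finite-dimensional linear algebra, and a part vanishing on $\frac1q\Z$, handled by propagating affine and vanishing properties through the additive faces. But there are genuine gaps. First, your opening equivalence --- ``$\pi$ fails to be extreme exactly when there is a nontrivial \emph{continuous piecewise linear} $\bar\pi\in\bar\Pi^{E}(\R,\Z)$'' --- is not a consequence of \autoref{lem:tightness}\,(i)--(ii) and \autoref{corPerturb}. Those results give only: (a) a nontrivial piecewise linear perturbation over a refinement implies non-extremality (\autoref{corPerturb}); and (b) non-extremality implies some nontrivial \emph{continuous} perturbation exists, with no piecewise linearity whatsoever (\autoref{lem:tightness}\,(iv)). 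Closing the gap between (b) and (a) --- showing that whenever \emph{any} perturbation exists, one exists that your algorithm will detect --- is precisely the content of the theorem, so as written your first paragraph assumes what must be proved.

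Second, and this is the technical heart you explicitly defer: when the propagation stabilizes with uncovered intervals remaining, your algorithm must answer ``not extreme,'' and its correctness requires actually exhibiting a nontrivial $\bar\pi\in\bar\Pi^{E}(\R,\Z)$ supported on the uncovered set. You say you ``expect'' the consistency of the translation and reflection moves to be the main obstacle, but you never resolve it, so the negative branch of the algorithm is unjustified. The paper's own complete argument in this setting (the proof of \autoref{thm:extreme-restriction-m}, Steps 3--4) supplies exactly what is missing: the $E_1$ relations in \eqref{zero} couple the value of $\bar\pi$ at $x$ only to values on the orbit $\mathcal O(x)=(\{x\}\cup\{-x\})+\frac1q\Z$, with translations preserving sign and reflections flipping it; hence a bump that is antisymmetric about the midpoints of the elementary intervals (piecewise linear with breakpoints in $\frac1{4q}\Z$ suffices, which is also why such perturbations are detectable at all) is automatically consistent with \emph{all} moves, vanishes on covered intervals and on $\frac1q\Z$, and after scaling by a small $\epsilon$ yields non-extremality via \autoref{corPerturb}. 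Alternatively, for continuous $\pi$ the paper gets a polynomial algorithm with no propagation at all: restrict to $\frac1{mq}\Z$ with $m\geq 3$ and test extremality of the finite group problem by linear algebra (\autoref{thm:1/4q-restriction}, \autoref{thm:finite-extremality}). Finally, note that \autoref{thm:one-dim} covers \emph{discontinuous} piecewise linear $\pi$ as well; your reliance on continuity of perturbations (\autoref{lem:tightness}\,(iv) and \autoref{lem:projection_interval_lemma-corollary}) silently restricts your argument to the continuous case, whereas the discontinuous case requires the limit-value bookkeeping of the original reference.
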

Since the above algorithm is polynomial in the least common
denominator~$q$, it is only a pseudo-polynomial time algorithm.

\begin{openquestion}
Does there exist a polynomial time algorithm to determine extremality of piecewise linear functions for $R_f(\R,\Z)$?
\end{openquestion}

A more general version of the above algorithm is implemented in
\cite{infinite-group-relaxation-code} for the case of piecewise linear
functions, which are allowed to be continuous or discontinuous, and whose data may be algebraic irrational numbers.\footnote{If \sage{h} is the function~$\pi$, e.g., after typing
  \sage{h = \sagefunc{dg_2_step_mir}()}, then the algorithm is invoked by typing
  \sage{\sagefunc{extremality_test}(h, show\underscore{}plots=True)}.
  In the irrational case no proof of finite convergence of the
  procedure is known.}
%
%
The implementation will be described in more detail in a forthcoming article.

\subsection{Two-row case using a standard triangulation of~$\R^2$}
\label{subsec:standard-tri-R2}

For the case of the standard triangulations~$\P_q$ of~$\R^2$
(\autoref{ex:2d-standard-triangulation}),
\cite{basu-hildebrand-koeppe:equivariant-2,bhk-IPCOext} 
describe an algorithm of the above scheme for a special class of piecewise
linear functions over this complex, which are said to be \emph{diagonally
  constrained}. 

Let 
$$
A =  \begin{bmatrix}  1 & -1 &  0 & 0 &  1 & -1 \\
 0 & 0 & 1 & -1  & 1 & -1  \end{bmatrix}^T.
$$
Then for every face $I \in \P_q$, there exists a vector $\ve b \in \frac1q\Z^6$ such that $I = \{\, \x \st A \x \leq \ve b\,\}.$ Furthermore, for every vector $\ve b \in \frac1q\Z^6$, the set $\{\, \x \st A \x \leq \ve b\,\}$ is a union of faces of $\P_q$ (possibly empty), since each inequality corresponds to a hyperplane in the arrangement $\mathcal{H}_q$.
The matrix $A$ is totally unimodular and this fact plays a key role in proving the following lemma.


%
%

\begin{lemma}
  \label{lemma:vertices}
  Let $F \in \Delta \P_q$. Then the projections $p_1(F)$, $p_2(F)$, and
    $p_3(F)$ are faces in the complex~$\P_q$.
  In particular, let $(\x,\y)$ be a vertex of $\Delta\P_q$.  Then
    $\x,\y$ are vertices of the complex $\P_q$, i.e., $\x,\y\in
    \frac{1}{q}\Z^2$.
\end{lemma}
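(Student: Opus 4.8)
The plan is to prove the first statement—that each projection $p_i(F)$ is a face of $\P_q$—and then read off the vertex statement as an immediate corollary. Throughout I work with the presentation of faces supplied just before the lemma: every face of $\P_q$ has the form $\{\x : A\x \le \ve b\}$ with $\ve b \in \frac1q\Z^6$, and conversely every such set is a union of faces of $\P_q$. The three projections are given by the formulas recalled earlier, namely $p_1(F(I,J,K)) = (K + (-J)) \cap I$, $p_2(F(I,J,K)) = (K + (-I)) \cap J$, and $p_3(F(I,J,K)) = (I+J) \cap K$, so it suffices to understand how negation, Minkowski sum, and intersection interact with the class $\mathcal C = \{\, \{\x : A\x \le \ve b\} : \ve b \in \frac1q\Z^6 \,\}$.

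First I would check the easy closure properties. Negation sends $\P_q$ to itself—the arrangement $\mathcal H_q$ is symmetric under $\x \mapsto -\x$ and the rows of $A$ come in the $\pm$ pairs $\pm(1,0)$, $\pm(0,1)$, $\pm(1,1)$—so $-J$ is again a face of $\P_q$, with right-hand side obtained from $\ve b^J$ by the permutation swapping paired rows; in particular $-J \in \mathcal C$. Intersection is immediate, since $\{A\x \le \ve b\} \cap \{A\x \le \ve d\} = \{A\x \le \min(\ve b,\ve d)\}$ with the minimum taken componentwise, which stays in $\frac1q\Z^6$. The substantive closure property is Minkowski additivity: for faces $I = \{A\x \le \ve b^I\}$ and $J = \{A\x \le \ve b^J\}$ I claim $I + J = \{A\x \le \ve b^I + \ve b^J\}$. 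The inclusion $\subseteq$ is trivial; the reverse inclusion is the heart of the argument and is exactly where total unimodularity of $A$ is used. Concretely, given $\x$ with $A\x \le \ve b^I + \ve b^J$ one must split $\x = \u + (\x - \u)$ with $\u \in I$ and $\x - \u \in J$, i.e.\ solve $A\x - \ve b^J \le A\u \le \ve b^I$; feasibility of this box-plus-diagonal system over $\R$ is certified because the facet normals of every face of $\P_q$ lie among the rows of $A$, so the support functions of $I$ and $J$ add in each of the six directions. Equivalently, one eliminates $\u$ by Fourier--Motzkin and uses total unimodularity of $A$ to see that the surviving inequalities are again the rows of $A$ with right-hand side $\ve b^I + \ve b^J$. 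Combining the three closure properties, each $p_i(F)$ equals some $\{A\x \le \ve b'\}$ with $\ve b' \in \frac1q\Z^6$, hence by the cited fact is a union of faces of $\P_q$.

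It remains to upgrade ``union of faces'' to ``single face,'' and here I would exploit that each projection formula intersects the Minkowski sum with a single cell. Write $p_3(F) = P \cap K$ with $P = I+J \in \mathcal C$; every facet of $P$ lies on a hyperplane of $\mathcal H_q$. Since $\relint(K)$ meets no hyperplane of $\mathcal H_q$ except those already containing $K$, each defining halfspace $H_i = \{\ve a_i \cdot \x \le b'_i\}$ of $P$ either contains $K$ (and is redundant), or has $\relint(K)$ strictly on its outer side—in which case $K \cap H_i$ is the face of $K$ lying on the supporting hyperplane $\partial H_i$—or is disjoint from $K$. Intersecting $K$ with all the $H_i$ therefore gives an intersection of faces of $K$, which is again a face of $K$, hence a face of $\P_q$ (the empty face being allowed). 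The identical argument applies to $p_1(F) = (K+(-J))\cap I$ and $p_2(F) = (K+(-I))\cap J$, proving the first statement.

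Finally, the vertex statement follows at once. If $(\x,\y)$ is a vertex of $\Delta\P_q$, it is a $0$-dimensional face $F = F(I,J,K)$, so $p_1(F) = \{\x\}$ and $p_2(F) = \{\y\}$ are $0$-dimensional faces of $\P_q$ by the first part; but the only $0$-dimensional faces of $\P_q$ are its vertices, which lie in $\frac1q\Z^2$. This last point is again where total unimodularity enters: a vertex of any $\{A\x \le \ve b'\}$ with $\ve b' \in \frac1q\Z^6$ solves $\ve a_i \cdot \x = b'_i$ and $\ve a_j \cdot \x = b'_j$ for two linearly independent rows, and since the $2\times2$ matrix formed by $\ve a_i, \ve a_j$ has determinant $\pm1$, Cramer's rule forces $\x \in \frac1q\Z^2$. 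The main obstacle in the whole argument is the reverse inclusion in the Minkowski additivity claim: the remaining steps are bookkeeping within the class $\mathcal C$, but additivity of the right-hand sides genuinely needs the special geometry of the three fixed directions, captured by the total unimodularity of $A$.
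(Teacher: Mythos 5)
Your proposal is correct in substance, but it takes a genuinely different route from the one the paper intends. The survey defers the proof to \cite{bhk-IPCOext} and signals that total unimodularity of $A$ is the engine: there, $F = F(I,J,K)$ is cut out in $\R^2\times\R^2$ by the system $A\x \le \ve b^I$, $A\y \le \ve b^J$, $A(\x+\y)\le \ve b^K$, whose stacked constraint matrix is again totally unimodular, so every vertex of $F$ (and hence every vertex of each projection $p_i(F)$) lies in $\frac1q\Z^2$; since $p_1(F)\subseteq I$, $p_2(F)\subseteq J$, $p_3(F)\subseteq K$ and the triangles of $\P_q$ contain no points of $\frac1q\Z^2$ besides their three vertices, each projection is forced to be a face of the face containing it. You instead prove closure of the class $\{\x : A\x\le\ve b\}$, $\ve b\in\frac1q\Z^6$, under negation, intersection, and Minkowski sum, and then upgrade ``union of faces'' to ``face'' by your supporting-hyperplane trichotomy inside the single face $I$, $J$, or $K$; total unimodularity enters only in the closing Cramer's-rule step. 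Both work; your argument is more self-contained and makes transparent \emph{why} the three directions matter (normal-fan geometry), while the paper's is shorter because unimodularity of the lifted system does everything at once.

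One step must be stated more carefully, because as written it is false: the identity $I+J=\{\x : A\x\le \ve b^I+\ve b^J\}$ holds only when the right-hand sides are the \emph{tight} ones, $b^I_i=\max_{\x\in I}\ve a_i\cdot\x$ and likewise for $J$ (these do lie in $\frac1q\Z$, since vertices of faces of $\P_q$ lie in $\frac1q\Z^2$ and $A$ is integral). The representation $I=\{\x: A\x\le\ve b\}$ guaranteed by the paper is not unique, and with a valid but slack right-hand side the formula overshoots: for instance, writing the triangle $\FundaTriangleLower$ with the redundant inequality $x_1\le 2/q$ in place of $x_1\le 1/q$ and summing against the upper triangle $\FundaTriangleUpper$ produces a polygon strictly containing the true Minkowski sum (it contains $(\tfrac{5}{2q},\tfrac1{2q})$, whereas $x_1\le 2/q$ on the sum). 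Your own justification---``the support functions of $I$ and $J$ add in each of the six directions''---is exactly the tight-rhs argument, the real content being that every facet normal of $I+J$ is again among the rows of $A$, so the six support-value inequalities suffice; thus the fix is purely a matter of wording. Note also that this Minkowski step is driven by that normal-fan fact and the subadditivity of support functions, not by total unimodularity, which your Fourier--Motzkin remark slightly misattributes.
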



%
%
%
%


Extremality is more easily studied if we restrict ourselves to a setting determined by the types of faces $F \in E_{\max{}}(\pi, \P_q)$. Recall that $$
E_{\max{}}(\pi,\P_q) 
= \setcond{F\in E(\pi, \P_q)}{F \text{ is a maximal face by set inclusion in } E(\pi, \P_q)}. 
$$

\begin{definition}  A continuous piecewise linear function $\pi$ on $\P_q$ is
  called \emph{diagonally constrained} if for all $F \in E_{\max{}}(\pi, \P_q)$ and $i=1,2,3$, the projection $p_i(F)$ is either a vertex, diagonal edge, or triangle from the complex $\P_q$.
  %
\end{definition}
The properties in Lemma~\ref{lemma:vertices} provide an easy method to compute $E(\pi, \P_q)$ and test if a function is \emph{diagonally constrained} by using simple arithmetic and set
membership operations on vertices of~$\P_q$.  



\begin{figure}[t]

\begin{center}
\includegraphics[scale=0.47]{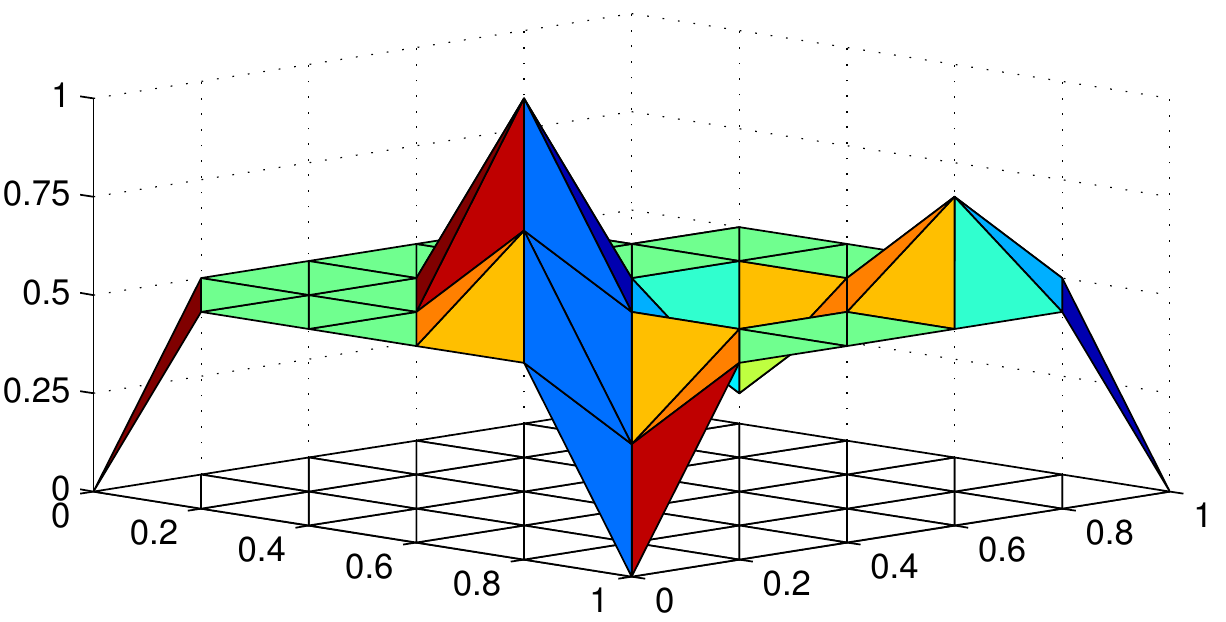} \ \ \ \ 
\begin{tikzpicture}[scale=0.7]
\begin{scope}[xshift = 2.5cm, yshift=2.5cm]
    \node {\includegraphics[scale=.18]{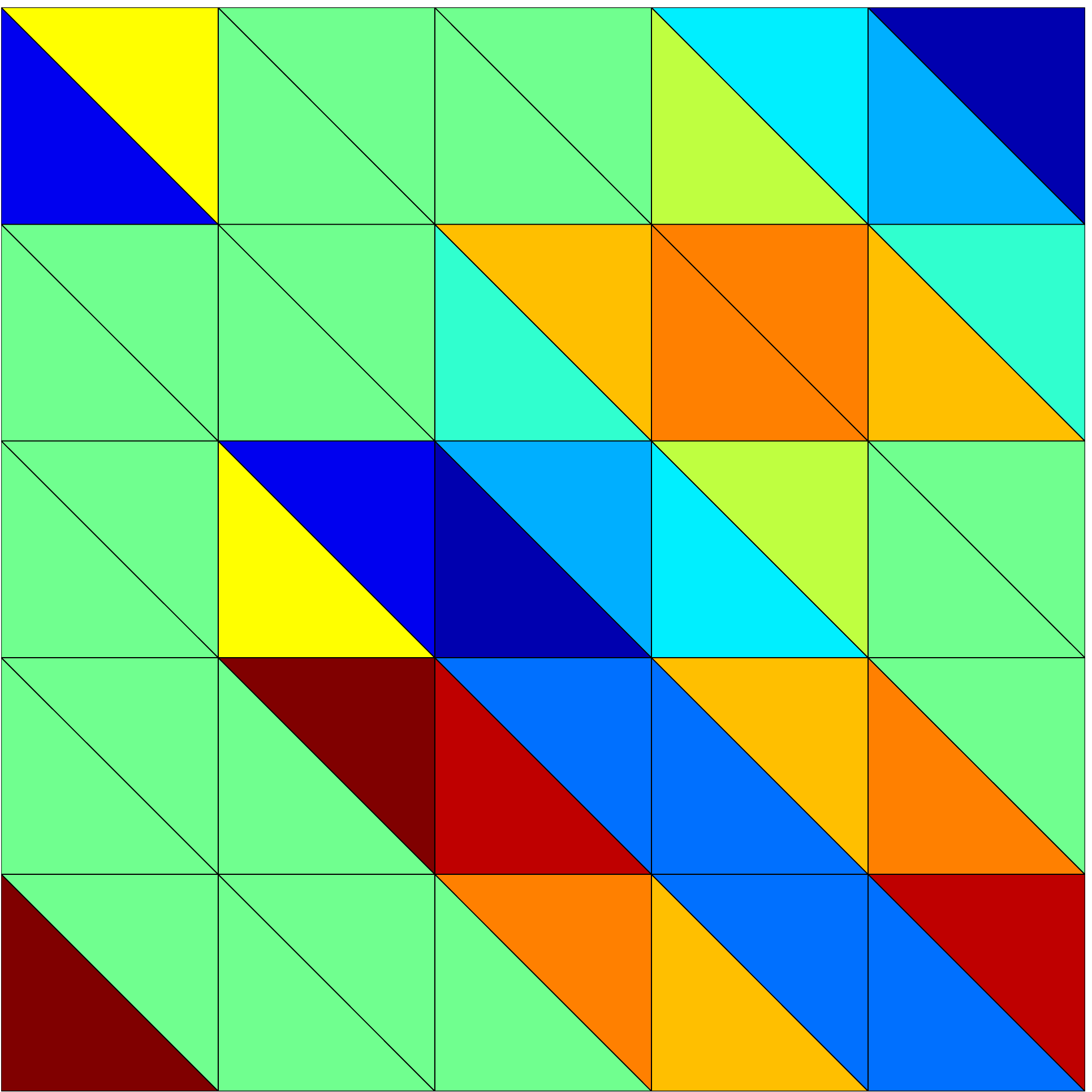}};
    \end{scope}
 
\begin{scope}[shift={(0,0)}]
\def\functionValues{{
      { 0,     2,     2,     2,     2,     0},
    { 2,     2,     2,     3,     1,     2},
    { 2,     2,     4,     2,     2,     2},
    { 2 ,    2,     2,     1,     2,     2},
    { 2,     2,     2,     2,     3,     2},
    { 0,     2,     2,     2,     2,     0}}}

\foreach \i in {0,...,5} {%
	\foreach \j in {0,...,5} {%
	\node[draw, fill = white, scale = 0.65] at (\j,\i) {\pgfmathparse{\functionValues[\i][\j]}\pgfmathresult};
	}
	}
	
	\end{scope}
\end{tikzpicture}
\end{center}
\caption{A minimal valid, continuous, piecewise linear function over the
  polyhedral complex $\P_5$, which is diagonally constrained (\autoref{subsec:standard-tri-R2}).
  \emph{Left}, the three-dimensional plot of the function on $D = [0,1]^2$. \emph{Right},
  the complex $\P_5$, restricted to $D$ and colored according to slopes to match the 3-dimensional plot, and decorated
  with values $v$ at each vertex of $\P_5$ where the function takes value
  $\tfrac{v}{4}$. }

\label{figure:diagonallyConstrained-new-figure}
\end{figure}

\begin{example}\label{ex:diag-constrained-function} \label{ex:diag-constrained-function-continued2}
  \autoref{figure:diagonallyConstrained-new-figure} shows the complex~$\P_5$
  with an example of a minimal valid continuous piecewise linear function
  on~$\P_5$ with $\f = \ColVec[5]{2}{2}$ that is periodic modulo~$\Z^2$. Note that, due the
  periodicity of the function modulo~$\Z^2$, the values of the function on the
  left and the right edge (and likewise on the bottom and the top edge) of $D = [0,1]^2$ match. 
  
  It can be checked that no relations appearing in the list of all maximal additive faces involve a vertical or
  horizontal edge; thus, the function is diagonally constrained. See
  \cite[sections~\ref{equi3:s:geometry-Pq}
  and~\ref{equi3:subsection:maximal-faces}]{bhk-IPCOext}. \end{example} 


\begin{theorem}[{\cite[\autoref{equi3:thm:main}]{bhk-IPCOext}}]
\label{thm:main}
Consider the following problem.  
\begin{quote}
  Given a minimal 
  valid function $\pi$ for $R_{\f}(\R^2,\Z^2)$ that is 
  piecewise linear continuous on~$\P_q$ and diagonally constrained with $\f \in \verts(\P_q)$, 
  decide if $\pi$ is extreme.
\end{quote}
There exists an algorithm for this problem whose running time is
bounded by a polynomial in $q$.
\end{theorem}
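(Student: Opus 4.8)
The plan is to run the general extremality-testing procedure of \autoref{sec:direct-algorithm} with the triangulation $\T=\P_q$ and to show that, under the diagonal-constraint hypothesis, every step can be carried out in time polynomial in~$q$. First I would record that $\P_q$ satisfies the hypotheses of \autoref{lem:tri-restriction} with denominator~$q$ (this is \autoref{rem:tri-satisfy-lemma}), so that \autoref{lem:tri-restriction}\,(2) decomposes any $\bar\pi\in\bar\Pi^{E(\pi)}(\R^2,\Z^2)$ uniquely as $\bar\pi=\bar\pi_\T+\bar\pi_{\mathrm{zero}(\T)}$ with the two summands in $\bar\Pi^{E(\pi)}_\T(\R^2,\Z^2)$ and $\bar\Pi^{E(\pi)}_{\mathrm{zero}(\T)}(\R^2,\Z^2)$. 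Since this is a direct-sum decomposition of linear spaces, $\bar\Pi^{E(\pi)}(\R^2,\Z^2)=\{0\}$ if and only if both summands are trivial; and by \autoref{lem:tightness}\,(i)--(ii) together with \autoref{corPerturb}, $\pi$ is extreme if and only if $\bar\Pi^{E(\pi)}(\R^2,\Z^2)=\{0\}$. Before anything else one computes the additivity data: by \autoref{lemma:vertices} every vertex of $\Delta\P_q$ lies in $\tfrac1q\Z^2$ and every projection $p_i(F)$ is a face of $\P_q$, so the additive subcomplex $E(\pi,\P_q)$ and its maximal faces $E_{\max}(\pi,\P_q)$ can be enumerated by evaluating $\Delta\pi$ on the $\mathrm{poly}(q)$ vertices of $\Delta\P_q\cap([0,1)^2\times[0,1)^2)$, exactly as in \autoref{minimality-check}.

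Second, for the $\T$-part I would invoke \autoref{lem:tri-restriction}\,(1), which identifies $\bar\Pi^{E(\pi)}_\T(\R^2,\Z^2)$ with the finite-dimensional space $\bar\Pi^{E'}(\tfrac1q\Z^2,\Z^2)$, where $E'=E(\pi)\cap\verts(\Delta\T)$. Identifying the variables $\bar\pi(\x)$ and $\bar\pi(\x+\t)$ for $\t\in\Z^2$ leaves exactly one unknown per vertex of $\verts(\P_q)\cap[0,1)^2$, i.e.\ $q^2$ unknowns, subject to the equations $\bar\pi(\0)=\bar\pi(\f)=0$ together with one additivity equation for each additive vertex of $\Delta\T$. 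Computing the null space of this $\mathrm{poly}(q)\times q^2$ linear system by Gaussian elimination decides its triviality and, when nontrivial, produces a witness $\bar\pi_\T$ that is continuous piecewise linear over $\T$ and hence, by \autoref{corPerturb}, certifies non-extremality; this step is manifestly polynomial in~$q$.

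Third, and this is the crux, I would analyse $\bar\Pi^{E(\pi)}_{\mathrm{zero}(\T)}(\R^2,\Z^2)$ by the projection-and-propagation scheme. Any such $\bar\pi$ is continuous and vanishes on all of $\tfrac1q\Z^2$. For each full-dimensional additive face $F\in E(\pi,\P_q)$, \autoref{lem:projection_interval_lemma-corollary} forces $\bar\pi$ to be affine on each projection $p_i(F)$; here the diagonal constraint is decisive, since it guarantees that every such projection is a vertex, a diagonal edge, or a triangle of $\P_q$, all of whose vertices lie in $\tfrac1q\Z^2$. A triangle carries three, and a diagonal edge two, affinely independent such vertices, so affineness together with $\bar\pi|_{\frac1q\Z^2}\equiv0$ forces $\bar\pi\equiv0$ there. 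One then propagates these vanishing conclusions across the translation relations $F(I,\{\t\},I+\{\t\})$ (faces with two full-dimensional and one zero-dimensional projection), transferring ``$\bar\pi$ vanishes'' from one diagonal or triangle to an adjacent one, and iterating until no new fact is produced. Because \autoref{lemma:vertices} confines all relevant projections to the $\mathrm{poly}(q)$ faces of $\P_q$ inside a fundamental domain, the propagation can record at most $\mathrm{poly}(q)$ distinct affine/vanishing facts and therefore stabilises after $\mathrm{poly}(q)$ rounds. If it terminates with $\bar\pi\equiv0$ on every cell, then $\bar\Pi^{E(\pi)}_{\mathrm{zero}(\T)}(\R^2,\Z^2)=\{0\}$, and combined with the second step $\pi$ is extreme; otherwise the unconstrained cells yield a nontrivial perturbation, piecewise linear over a bounded refinement of $\T$, certifying non-extremality via \autoref{corPerturb}.

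The main obstacle is the completeness and termination of this third step: one must show that the diagonal constraint genuinely prevents additivity information from ``leaking'' into the vertical and horizontal directions, so that the finite propagation over diagonals and triangles exhausts every constraint imposed on $\bar\pi$, and in particular that terminating with $\bar\pi\equiv0$ rules out even the non--piecewise-linear elements of $\bar\Pi^{E(\pi)}_{\mathrm{zero}(\T)}(\R^2,\Z^2)$. This is precisely where the total unimodularity of the matrix~$A$ (through \autoref{lemma:vertices}) and the diagonal-constraint hypothesis enter essentially: without them a projection could be a vertical or horizontal edge whose two endpoints need not both lie in $\tfrac1q\Z^2$, the Interval-Lemma conclusion would no longer pin $\bar\pi$ to zero, and neither termination nor the polynomial bound would be guaranteed---consistent with the fact that termination of the procedure in full generality remains the open question stated in \autoref{sec:direct-algorithm}.
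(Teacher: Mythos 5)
Your overall strategy coincides with the paper's: specialize the two-part procedure of \autoref{sec:direct-algorithm} to $\T=\P_q$, decide $\bar\Pi^{E(\pi)}_{\T}(\R^2,\Z^2)$ by finite linear algebra via \autoref{lem:tri-restriction} and \autoref{rem:tri-satisfy-lemma}, and attack the complementary space by propagating affine/vanishing facts, using \autoref{lemma:vertices} and the diagonal constraint. Your first two steps are sound. However, there are two genuine gaps, both located exactly where the mathematical content of the theorem lies. First, the asserted equivalence ``$\pi$ is extreme if and only if $\bar\Pi^{E(\pi)}(\R^2,\Z^2)=\{0\}$'' does not follow from \autoref{lem:tightness}\,(i)--(ii) together with \autoref{corPerturb}. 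Those results give only two one-sided implications: triviality of the space implies extremality (since non-extremality produces, by \autoref{lem:tightness}\,(i),(ii),(iv), a nonzero \emph{continuous} element with $\pi\pm\bar\pi$ minimal), and existence of a nonzero \emph{piecewise linear} element implies non-extremality (the scaling argument in \autoref{corPerturb} needs piecewise linearity to get a uniform $\epsilon$). The space $\bar\Pi^{E(\pi)}(\R^2,\Z^2)$ a priori also contains pathological elements of the Cauchy-equation type used in \autoref{prop:neg-pi}, to which neither result applies; showing that a nonzero continuous perturbation forces a nonzero piecewise linear one is essentially the content of the theorem, not a corollary of the quoted lemmas.

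Second, and decisively: in your third step, when the propagation terminates without pinning every cell to zero, you assert that ``the unconstrained cells yield a nontrivial perturbation, piecewise linear over a bounded refinement of $\T$.'' This is precisely the hard construction of \cite{bhk-IPCOext} (the equivariant perturbation): one must build a function supported on the orbit of an uncovered region under the group generated by the translations and reflections arising from the one-dimensional additive faces, and verify that it is periodic, vanishes at $\0$ and $\f$, and is additive on \emph{every} face of $E(\pi,\P_q)$, not merely on the relations consumed during propagation. The diagonal constraint is what makes this group action and its orbits compatible with a bounded refinement of $\P_q$; that, and not the reason you give, is its true role (your closing claim that without it a vertical or horizontal projection ``need not have endpoints in $\tfrac1q\Z^2$'' contradicts \autoref{lemma:vertices}, which guarantees that all projections of faces of $\Delta\P_q$ are faces of $\P_q$). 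You correctly flag completeness/termination of this step as ``the main obstacle,'' but leaving it unresolved means the proposal is an outline of the paper's approach rather than a proof: without the construction, the algorithm's ``not extreme'' answer is unjustified, and in the other branch your propagation --- which rests on \autoref{lem:projection_interval_lemma-corollary} and hence controls only continuous (or at least bounded) elements --- establishes that the continuous part of $\bar\Pi^{E(\pi)}_{\mathrm{zero}(\T)}(\R^2,\Z^2)$ vanishes, which does suffice for extremality via \autoref{lem:tightness}\,(iv) but is strictly weaker than the triviality of the whole space that you claim.
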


As before, this algorithm is only a pseudo-polynomial time algorithm.

\begin{openquestion}
For any fixed $k$, does there exist a polynomial time algorithm to determine extremality of piecewise linear functions for $R_\f(\R^k,\Z^k)$?
\end{openquestion}
Unlike in the one-row problem, even with all rational input, no algorithm is known for determining extremality of piecewise linear functions for $R_\f(\R^k, \Z^k)$ for $k\geq 3$ and, as mentioned in Theorem~\ref{thm:main}, only for certain cases is an algorithm known for $k=2$.   




\section{Algorithm using restriction to finite group problems}
\label{sec:alg-restriction-finite-groups}
In this section, we discuss connections between infinite group problems and finite group problems.  We begin with a discussion of testing extremality for finite group problems.  Later we show that in certain settings, a function is extreme for an infinite group problem if and only if its restriction to a finite group is extreme for the finite group problem.  Hence, this connection provides an alternative algorithm from those described in section~\ref{sec:one-two-dim} for testing extremality and facetness.
\subsection{Algorithm for finite group problem}\label{s:finite-group}
When $S$ has finite index in $G$, we call $R_\f(G,S)$ a \emph{finite group problem}.
As we noted in Remark~\ref{rem:finite-group}, $R_\f(G,S)$ and $R_\f(G/S,0)$
are closely related by aggregation of variables, and it is convenient to study
the finite-dimensional problem $R_{\bar\f}(G/S,0)$.  
The fundamental theorem of finitely generated abelian groups shows that $G/S
\cong (\tfrac{1}{q_1} \Z \times \dots \times \tfrac{1}{q_k} \Z)/\Z^k$ for some
$q_i \in \N$ for $i=1, \dots, k$.  Therefore, it suffices to consider
$G = \tfrac{1}{q_1} \Z \times \dots \times \tfrac{1}{q_k} \Z$ and $S = \Z^k$
where $q_i \in \N$.  
In the case of one row, $G/S = \tfrac{1}{q_1} \Z/\Z \cong \Z/q_1\Z$ is a cyclic
group.   Cyclic group problems were originally studied by
Gomory~\cite{gom} and have been the subject of many later studies.
See~\cite{Richard-Dey-2010:50-year-survey} for an excellent survey on these results.



  
 The set of minimal valid functions $\pi \colon G/S \to \R$ 
 is a (finite-dimensional) convex polytope~\cite{gom}.  Extreme functions are
 thus extreme points of this polytope.  As we noted in
 \autoref{subsec:relation-between-three-notions}, 
 standard polyhedral theory reveals that extreme functions are
 equivalent to weak facets and facets.  
 Furthermore, extreme points of
 polytopes are characterized by points where the tight inequalities are of
 full rank.  Therefore, testing extremality of a function for a finite group
 problem can be done with simple linear algebra.  

 Note that there is a bijection
 between the minimal valid functions of $R_\f(G,S)$ and minimal valid functions
 for $R_{\bar \f}(G/S,0)$.  This is because 
 minimal valid functions for $R_\f(G,S)$ are $S$-periodic functions by
 Theorem~\ref{thm:minimal}.  
 Hence the extremality test 
 translates into the following statement about $\bar \Pi^{E(\pi)}(G,\Z^k)$. 
\begin{theorem}
\label{thm:finite-extremality}
 Let $G = \tfrac{1}{q_1} \Z \times \dots \times \tfrac{1}{q_k} \Z$ and let $\f \in G$.  Let $\pi \colon G \to \R$ be a minimal valid function for $R_\f(G, \Z^k)$.  Then $\pi$ is extreme if and only if $\bar \Pi^E(G,\Z^k) = \{0\}$ where $E = E(\pi)$.
\end{theorem}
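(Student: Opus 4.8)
The plan is to read $\bar\Pi^{E}(G,\Z^k)$ as the space of feasible perturbation directions at $\pi$ of the polytope of minimal valid functions, and to show that $\pi$ is an extreme point of that polytope exactly when this space is trivial. Since $S=\Z^k$ has finite index in $G$, the quotient $G/S$ is finite, and by \autoref{thm:minimal} together with the aggregation of \autoref{rem:finite-group} the minimal valid functions for $R_\f(G,\Z^k)$ form a finite-dimensional polytope cut out by the equalities $\pi(\0)=0$ and the symmetry condition and by the subadditivity inequalities $\Delta\pi(\x,\y)\ge 0$ (nonnegativity being automatic here, as noted below). An extreme function is precisely an extreme point of this polytope.

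The ``only if'' direction is immediate. If $\bar\Pi^E(G,\Z^k)=\{0\}$ and $\pi=\tfrac12(\pi^1+\pi^2)$ with $\pi^1,\pi^2$ valid, then by \autoref{lem:tightness}\,(i)--(ii) both $\pi^1,\pi^2$ are minimal with $E(\pi)\subseteq E(\pi^1)$, so $\bar\pi:=\pi^1-\pi$ satisfies $\bar\pi(\0)=0$, $\bar\pi(\f)=\pi^1(\f)-\pi(\f)=0$, is periodic modulo $S$, and is additive on $E(\pi)$. Hence $\bar\pi\in\bar\Pi^E(G,\Z^k)=\{0\}$, forcing $\pi^1=\pi=\pi^2$. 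This is exactly the perturbation characterization recorded after \autoref{lem:minimality-of-pi1-pi2}.

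The content lies in the converse, a finite-group analogue of the Perturbation Theorem (\autoref{corPerturb}): given a nonzero $\bar\pi\in\bar\Pi^E(G,\Z^k)$, I would produce an $\epsilon>0$ for which $\pi^{\pm}:=\pi\pm\epsilon\bar\pi$ are both minimal, so that $\pi=\tfrac12(\pi^++\pi^-)$ displays $\pi$ as a nontrivial convex combination of distinct valid functions. Verifying \autoref{thm:minimal} for $\pi^\pm$ splits into three checks. The value condition $\pi^\pm(\z)=0$ for $\z\in S$ holds because $\bar\pi$ is $S$-periodic with $\bar\pi(\0)=0$. Symmetry holds because $(\x,\f-\x)\in E(\pi)$ for every $\x$ (indeed $\Delta\pi(\x,\f-\x)=1-\pi(\f)=0$), so additivity of $\bar\pi$ on $E$ gives $\bar\pi(\x)+\bar\pi(\f-\x)=\bar\pi(\f)=0$ and the perturbation leaves the symmetry equation intact. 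Nonnegativity needs no separate argument: in the finite-index setting each $\x$ has an order $n$ with $n\x\in S$, so subadditivity yields $0=\pi^\pm(n\x)\le n\,\pi^\pm(\x)$ once subadditivity is in hand.

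The crux is subadditivity of $\pi^\pm$, and here finiteness is essential. On additive pairs $(\x,\y)\in E(\pi)$ one has $\Delta\pi(\x,\y)=\Delta\bar\pi(\x,\y)=0$, hence $\Delta\pi^\pm(\x,\y)=0$. On the remaining pairs, $\Delta\pi$ and $\Delta\bar\pi$ are periodic modulo $S\times S$ and so descend to the finite set $(G/S)\times(G/S)$; thus $\delta:=\min\{\Delta\pi(\x,\y):(\x,\y)\notin E(\pi)\}>0$ is attained and $B:=\max_{\x,\y}|\Delta\bar\pi(\x,\y)|<\infty$. Choosing $0<\epsilon<\delta/B$ (any $\epsilon>0$ if $B=0$) gives $\Delta\pi^\pm(\x,\y)\ge\delta-\epsilon B>0$ off $E(\pi)$. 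I expect this uniform-gap step to be the main obstacle conceptually, since it is the one place where the infinite-group difficulties (no positive lower bound on the slacks, unbounded complexes) are sidestepped by the finiteness of $G/S$; the rest is bookkeeping against \autoref{thm:minimal}. Combining the two directions yields \autoref{thm:finite-extremality}.
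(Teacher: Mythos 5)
Your proof is correct, but it takes a more self-contained route than the paper. The paper's argument (\autoref{s:finite-group}) is purely polyhedral: using the aggregation of \autoref{rem:finite-group}, the minimal valid functions for $R_{\bar\f}(G/S,0)$ form a finite-dimensional polytope (Gomory), extreme functions are its extreme points, and the paper then simply cites the standard fact that a point of a polytope is extreme if and only if its tight constraints have full rank --- the homogeneous solution space of the tight constraints being exactly $\bar\Pi^{E(\pi)}(G,\Z^k)$. You instead prove both directions from scratch: the direction ``trivial perturbation space $\Rightarrow$ extreme'' via \autoref{lem:tightness}\,(i)--(ii), and the converse by constructing $\pi\pm\epsilon\bar\pi$ and verifying minimality against \autoref{thm:minimal}, i.e., you establish a finite-group analogue of the Perturbation Theorem (\autoref{corPerturb}). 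The two arguments exploit finiteness of $G/S$ in the same essential place --- the paper through ``finitely many constraints, hence a polytope,'' you through the uniform gap $\delta=\min\{\Delta\pi(\x,\y):(\x,\y)\notin E(\pi)\}>0$ --- so they are morally the same characterization, but yours buys self-containedness and an explicit $\epsilon$, while the paper's buys brevity and the simultaneous conclusion (via polyhedral theory) that extreme functions, facets, and weak facets coincide in the finite case (\autoref{subsec:relation-between-three-notions}). Your treatment of nonnegativity (deducing it from subadditivity and vanishing on $S$ via finite order of elements) and of symmetry (noting $(\x,\f-\x)\in E(\pi)$ for all $\x$) is sound. One cosmetic slip: you have the labels ``if'' and ``only if'' interchanged --- what you call the ``only if'' direction ($\bar\Pi^{E}=\{0\}$ implies extreme) is the ``if'' direction of the stated equivalence, and your perturbation construction proves the ``only if'' direction; since both implications are proved, nothing mathematical is affected.
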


For any discrete group $G\supseteq \Z^k$ and subgroup $G'$, the set
$R_\f(G'/\Z^k,0)$ is a face of the polyhedron $R_\f(G/\Z^k,0)$.  This
observation implies the following theorem via the above bijection.
\begin{theorem}
Let  $G = \tfrac{1}{q_1} \Z \times \dots \times \tfrac{1}{q_k} \Z$, let $G'$ be any subgroup of $G$, and let $\f \in G'$.  
Let $\pi \colon G \to \R$.  
\begin{enumerate}
\item If $\pi$ is minimal for $R_\f(G,\Z^k)$, then $\pi$ is minimal for $R_\f(G', \Z^k)$.
\item If $\pi$ is extreme for $R_\f(G,\Z^k)$, then $\pi$ is extreme for $R_\f(G',\Z^k)$.
\end{enumerate}
\end{theorem}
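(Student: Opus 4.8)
The plan is to prove the two parts separately, in each case reducing to the finite-dimensional feasible polyhedron via the aggregation bijection of \autoref{rem:finite-group}. Throughout write $H = G/\Z^k$ and $H' = G'/\Z^k$ (finite abelian groups with $H' \le H$, since $\Z^k \subseteq G' \subseteq G$), let $\bar\f$ be the image of $\f$, and let $Q = R_{\bar\f}(H,0)$ and $Q' = R_{\bar\f}(H',0)$ be the corresponding feasible polyhedra (convex hulls of the integer points satisfying \eqref{GP}) in $\R^H$ and $\R^{H'}$. The bijection of \autoref{rem:finite-group} identifies minimal (resp.\ extreme) valid functions for $R_\f(G,\Z^k)$ with those for $R_{\bar\f}(H,0)$, and similarly for $G'$, so it suffices to prove both statements for $Q$ and its restriction to $Q'$.

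Part (1) is immediate from \autoref{thm:minimal}: if $\pi$ is minimal for $R_\f(G,\Z^k)$ then $\pi \ge 0$, $\pi(\z)=0$ for $\z\in\Z^k$, $\pi$ is subadditive, and $\pi$ is symmetric. Since $\Z^k \subseteq G'$ and $\f \in G'$, each condition is inherited by $\pi|_{G'}$ (subadditivity and symmetry only involve arguments, their sums, and their $\f$-complements, all of which stay in $G'$), so $\pi|_{G'}$ satisfies the hypotheses of \autoref{thm:minimal} for $R_\f(G',\Z^k)$ and is minimal. In particular $\pi|_{G'}$ is an undominated, normalized valid function with $\pi|_{G'}(\bar\f)=1$.

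For Part (2), the key input is the observation that $Q'$ is the face of $Q$ cut out by the valid inequalities $y(\rx)\ge 0$ for $\rx \in H\setminus H'$; that is, $Q' = \{\, y \in Q : y(\rx)=0 \text{ for all } \rx\notin H'\,\}$ (a point of $Q$ supported in $H'$ is a convex combination of feasible points which, by nonnegativity, are themselves supported in $H'$). By the standard polarity for finite master group problems (cf.\ \autoref{s:finite-group} and \autoref{subsec:relation-between-three-notions}), $\pi$ is extreme for $R_{\bar\f}(H,0)$ if and only if the normalized inequality $\sum_{\rx}\pi(\rx)y(\rx)\ge 1$ is facet-defining for $Q$; recall every valid inequality automatically has $\pi\ge 0$ because $S$ has finite index. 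Thus the set $P(\pi)$ of tight feasible points is a facet of $Q$, i.e.\ $\dim P(\pi)=\dim Q-1$, and the plan is to show that restricting this facet to the face $Q'$ again yields a facet of $Q'$, whence $\pi|_{H'}$ is facet-defining—hence extreme—for $Q'$.

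The dimension bookkeeping is where the real content lies, and it is the step I would be most careful about. First, $Q$ and $Q'$ are full-dimensional: for every $\rx$ the ray $\mathrm{ord}(\rx)\,e_{\rx}$ lies in the recession cone of $Q$ (its weighted sum is $\mathrm{ord}(\rx)\,\rx=0$ in $H$), and these rays span $\R^H$, so $\dim Q=|H|$; the same argument gives $\dim Q'=|H'|$. Hence $Q'$ is obtained from $Q$ by imposing the $|H|-|H'|$ independent equalities $y(\rx)=0$, $\rx\notin H'$, and intersecting the facet $P(\pi)$ with these hyperplanes drops the dimension by at most one each, so $\dim\big(P(\pi)\cap Q'\big)\ge(\dim Q-1)-(|H|-|H'|)=|H'|-1$. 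Conversely $P(\pi)\cap Q'$ is a proper nonempty face of the full-dimensional $Q'$—nonempty since $e_{\bar\f}\in Q'$ is tight ($\pi(\bar\f)=1$), and proper since the feasible point $2e_{\bar\f}+e_{-\bar\f}\in Q'$ has $\pi$-value $2+\pi(-\bar\f)>1$—so $\dim\big(P(\pi)\cap Q'\big)\le|H'|-1$, and equality holds. As $P(\pi)\cap Q'=\{\,y\in Q':\sum_{\rx}\pi|_{H'}(\rx)y(\rx)=1\,\}$, this makes $\pi|_{H'}$ facet-defining, hence extreme, for $R_{\bar\f}(H',0)$, and transferring back through the bijection proves that $\pi|_{G'}$ is extreme for $R_\f(G',\Z^k)$. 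The full-dimensionality (the recession rays $\mathrm{ord}(\rx)e_\rx$) is indispensable here: without it a facet of $Q$ could meet $Q'$ in a lower-dimensional face and the restriction would fail to be a facet. A perturbation-based alternative via \autoref{thm:finite-extremality} would instead require extending a nontrivial perturbation from $H'$ to all of $H$, which fails naively (extension by zero breaks subadditivity across additive relations of $E(\pi)$ linking $H'$ to $H\setminus H'$) and would need a genuine lifting lemma; I would prefer the face/polarity argument above.
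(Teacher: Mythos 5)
Part (1) of your proposal is correct and is the intended argument: restriction to a subgroup $G'$ containing $\Z^k$ and $\f$ preserves nonnegativity, vanishing on $\Z^k$, subadditivity, and symmetry, so \autoref{thm:minimal} gives minimality of $\pi|_{G'}$; the paper leaves exactly this routine verification implicit behind the bijection of \autoref{rem:finite-group}.

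Part (2) has a genuine gap, located exactly where you said ``the real content lies.'' The step asserting that intersecting the facet $P(\pi)$ with the hyperplanes $\{\,y(\rx)=0\,\}$ ``drops the dimension by at most one each'' invokes a principle that is valid only when the hyperplane meets the relative interior of the set being cut. Here each $\{\,y(\rx)=0\,\}$ is a \emph{supporting} hyperplane of $Q$ (and hence of $P(\pi)$), because $y(\rx)\ge 0$ is valid; intersecting with a supporting hyperplane yields a face whose dimension can drop by arbitrarily much. The failure is not repaired by full-dimensionality or by the recession-cone structure you establish. For instance, let $Q=\{\,y\in\R^3_+ : y_1+y_2\ge 1,\ y_2+y_3\ge 1\,\}$: then $Q$ is full-dimensional with recession cone $\R^3_+$, the coordinate face $Q'=Q\cap\{\,y_3=0\,\}=\{\,(y_1,y_2,0) : y_1\ge 0,\ y_2\ge 1\,\}$ is two-dimensional, and $F=Q\cap\{\,y_1+y_2=1\,\}$ is a facet of $Q$ defined by a nonnegative inequality with right-hand side $1$; yet $F\cap Q'=\{(0,1,0)\}$ is a vertex of $Q'$, not a facet, whereas your count would force $\dim(F\cap Q')\ge 1$. (Correspondingly, the restricted inequality $y_1+y_2\ge 1$ is valid for $Q'$ but far from facet-defining---it is dominated by $y_2\ge 1$.) So the lower bound $\dim\bigl(P(\pi)\cap Q'\bigr)\ge |H'|-1$ is not a consequence of general polyhedral bookkeeping: it is essentially equivalent to the statement being proved, and it is precisely the point where the group structure of $R_{\bar\f}(H,0)$ must enter. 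The paper's one-sentence justification (the face observation plus the bijection) conceals this same step; making it rigorous requires an argument specific to group problems---for example exhibiting enough affinely independent tight feasible solutions supported on $H'$, or, dually, the lifting lemma for perturbations in $\bar\Pi^{E(\pi|_{H'})}(H',0)$ that you explicitly set aside as ``needing a genuine lifting lemma.'' As written, your part (2) establishes only that $P(\pi)\cap Q'$ is a nonempty proper face of $Q'$, not that it is a facet.
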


\subsection{Restriction and interpolation in the one-row problem}\label{s:interpolation}
Gomory and Johnson devised the infinite group problem as a way to study the finite group problem.  
They studied interpolations of valid functions of the finite group problems $R_f(\tfrac{1}{q} \Z, \Z)$ in order to connect the problems, but they never completed this program.  Due to the ease of testing extremality in the finite group problems, having this connection is useful for algorithms.
We encapsulate their results on this connection in the following theorem.

\begin{theorem}[\cite{infinite}]
\label{thm:GJ-restrictions}
Let $\pi$ be a continuous piecewise linear function with breakpoints in
$\tfrac{1}{q} \Z$ for some $q \in \Z_+$ and let $f \in \tfrac{1}{q}
\Z$.\footnote{Under these hypotheses, $\pi$ is the continuous interpolation of $\pi|_{\frac{1}{q}\Z}$.}
  Then the following hold:
\begin{enumerate}
\item $\pi$ is minimal for $R_f(\R,\Z)$ if and only if $\pi_{\frac{1}{q}\Z}$ is minimal for $R_f(\tfrac{1}{q} \Z, \Z)$.
\item If $\pi$ is extreme for $R_f(\R,\Z)$, then $\pi|_{\frac{1}{q} \Z}$ is extreme for $R_f(\tfrac{1}{q} \Z,\Z)$.
\end{enumerate}
\end{theorem}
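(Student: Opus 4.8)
The plan is to reduce both statements to the combinatorial machinery of Section~\ref{sec:piecewise}, exploiting the special \emph{uniform} structure of the complex $\P_{\frac1q\Z}$ from Example~\ref{ex:1d-breakpoint-complex}. The one fact I would lean on throughout is that $\verts(\Delta\P_{\frac1q\Z})=\frac1q\Z\times\frac1q\Z$: every vertex of the extended complex is a pair of grid points, and conversely every such pair is a vertex (as recorded in Remark~\ref{rem:tri-satisfy-lemma} and visible in Figure~\ref{fig:uniform-spacing}). Throughout, $\pi$ is the continuous interpolation of $\pi|_{\frac1q\Z}$; since linear interpolation of nonnegative grid values is nonnegative, $\pi\ge0$ on $\R$ if and only if $\pi|_{\frac1q\Z}\ge0$, so the nonnegativity hypotheses built into validity match on both sides.

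For part~(1) I would compare the two minimality criteria term by term. By the Minimality Test (Theorem~\ref{minimality-check}) with $k=1$ and $\P=\P_{\frac1q\Z}$ (applicable since $f\in\frac1q\Z=\verts(\P)$ and all cells are bounded), $\pi$ is minimal for $R_f(\R,\Z)$ exactly when $\pi(0)=0$, $\Delta\pi(u,v)\ge0$ for all $(u,v)\in\Delta D\cap\verts(\Delta\P_{\frac1q\Z})$, and $\pi(f)=1$ together with $\Delta\pi(u,v)=0$ at the vertices with $u+v\equiv f\pmod{\Z}$. Because $\verts(\Delta\P_{\frac1q\Z})=\frac1q\Z\times\frac1q\Z$, these are verbatim the hypotheses of Theorem~\ref{thm:minimal} for the finite group $G=\frac1q\Z$, $S=\Z$, namely that $\pi|_{\frac1q\Z}$ be nonnegative, vanish on $\Z$, be subadditive, and be symmetric on the grid. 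Hence the two notions of minimality coincide. (Equivalently, one may argue directly that $\Delta\pi$ is affine on each bounded cell of $\Delta\P_{\frac1q\Z}$, so grid subadditivity forces $\Delta\pi\ge0$ everywhere, and that $x\mapsto\pi(x)+\pi(f-x)$ is piecewise linear with breakpoints in $\frac1q\Z$, so equality to $1$ on $\frac1q\Z$ propagates to all of $\R$.)

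For part~(2) I would argue the contrapositive. Assume $\pi$ is extreme, hence minimal, on $\R$; by part~(1) the restriction is minimal on the finite group, and I suppose for contradiction that it fails to be extreme there. By the finite-group criterion (Theorem~\ref{thm:finite-extremality}) there is a nonzero $\bar\pi_0\in\bar\Pi^{E_0}(\tfrac1q\Z,\Z)$, where $E_0=E(\pi|_{\frac1q\Z})$ is the additivity domain of the restriction. Let $\bar\pi\colon\R\to\R$ be the continuous piecewise linear interpolation of $\bar\pi_0$ over $\T=\P_{\frac1q\Z}$; it is periodic modulo $\Z$ and $\bar\pi\not\equiv0$ since $\bar\pi|_{\frac1q\Z}=\bar\pi_0\neq0$. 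The crucial step is to upgrade the grid-level additivities of $\bar\pi_0$ to the full additivity domain $E(\pi)$ of the infinite problem, which is exactly Lemma~\ref{lem:tri-restriction}\,(\ref{lem:tri-restriction-part1}): the complex $\T=\P_{\frac1q\Z}$ satisfies its hypotheses (Remark~\ref{rem:tri-satisfy-lemma}), and because $\verts(\Delta\T)=\frac1q\Z\times\frac1q\Z$ we have $E(\pi)\cap\verts(\Delta\T)=E_0$, so $\bar\pi|_{\frac1q\Z}\in\bar\Pi^{E_0}(\tfrac1q\Z,\Z)$ gives $\bar\pi\in\bar\Pi^{E(\pi)}_{\T}(\R,\Z)\subseteq\bar\Pi^{E(\pi)}(\R,\Z)$. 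Now Theorem~\ref{corPerturb} applies verbatim: $\pi$ is minimal and continuous piecewise linear over $\P_{\frac1q\Z}$, and $\bar\pi\not\equiv0$ is a periodic continuous piecewise linear member of $\bar\Pi^{E(\pi)}(\R,\Z)$, so $\pi$ is not extreme, contradicting the assumption. Therefore $\pi|_{\frac1q\Z}$ is extreme.

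The step I expect to be the crux is the identification $E(\pi)\cap\verts(\Delta\T)=E(\pi|_{\frac1q\Z})$ together with the passage from additivity at the grid vertices $\verts(F)$ of a face $F\in\Delta\T$ to additivity on all of $F$: a perturbation affine over $\T$ that vanishes at the grid vertices of an additive face automatically inherits additivity on the whole face, which is precisely why the finite-dimensional null space computed on $\frac1q\Z$ captures all of $\bar\Pi^{E(\pi)}_{\T}(\R,\Z)$. This is the content of Lemma~\ref{lem:tri-restriction} and rests entirely on the affineness of $\Delta\bar\pi$ over the faces of the uniform complex. It is worth emphasizing that only the stated direction holds: extremality on the finite group does \emph{not} in general lift back, since the interpolation may admit perturbations supported off the grid, namely nonzero elements of $\bar\Pi^{E(\pi)}_{\mathrm{zero}(\T)}(\R,\Z)$; this is exactly why part~(2) is one-directional and why recovering a converse requires the additional analysis of the algorithmic results in Section~\ref{sec:one-two-dim}.
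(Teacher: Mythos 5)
Your proof is correct, but it cannot be compared to a proof in the survey itself: the paper states Theorem~\ref{thm:GJ-restrictions} as a cited result of Gomory--Johnson \cite{infinite} ("we encapsulate their results\dots") and gives no proof. What you have done is re-derive it from the survey's own machinery, and every step checks out: the identification $\verts(\Delta\P_{\frac1q\Z})=\tfrac1q\Z\times\tfrac1q\Z$ is exactly what Remark~\ref{rem:tri-satisfy-lemma} records; part~(1) is a correct term-by-term matching of the Minimality Test (Theorem~\ref{minimality-check}) against Theorem~\ref{thm:minimal} for $G=\tfrac1q\Z$ (your parenthetical remark handling periodicity and nonnegativity on both sides is needed and correct); and in part~(2) the chain Theorem~\ref{thm:finite-extremality} $\to$ Lemma~\ref{lem:tri-restriction}\,(\ref{lem:tri-restriction-part1}) (with $E'=E(\pi)\cap\verts(\Delta\T)=E(\pi|_{\frac1q\Z})$, which you verify) $\to$ Theorem~\ref{corPerturb} is sound, since $\P_{\frac1q\Z}$ is a refinement of itself and all cells are bounded. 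It is worth noting, though, that there is a lighter route to part~(2) that uses only part~(1): if $\pi|_{\frac1q\Z}=\tfrac12(\rho^1+\rho^2)$ with $\rho^1\neq\rho^2$ minimal for the finite problem, interpolate each $\rho^i$ over $\P_{\frac1q\Z}$; by part~(1) the interpolations $\pi^1,\pi^2$ are minimal for $R_f(\R,\Z)$, they are distinct, and by linearity of interpolation $\pi=\tfrac12(\pi^1+\pi^2)$, so $\pi$ is not extreme. This is essentially the classical Gomory--Johnson argument and avoids Theorem~\ref{thm:finite-extremality}, Lemma~\ref{lem:tri-restriction}, and Theorem~\ref{corPerturb} entirely; your heavier route has the compensating merit of exhibiting precisely where the perturbation lives (in $\bar\Pi^{E(\pi)}_{\T}(\R,\Z)$), which is the decomposition the survey's algorithmic sections actually exploit, and your closing observation about $\bar\Pi^{E(\pi)}_{\mathrm{zero}(\T)}(\R,\Z)$ correctly explains why the converse of part~(2) fails.
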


Part (1) shows that minimality can be tested on just points in $\tfrac{1}{q} \Z$, while part (2) yields a method of proving a function is not extreme.  That is, if $\pi|_{\frac{1}{q} \Z}$ is not extreme for $R_f(\tfrac{1}{q} \Z,\Z)$, then $\pi$ is not extreme for $R_f(\R,\Z)$.  However, it is not true in general that if $\pi|_{\frac{1}{q} \Z}$ is extreme
for $R_f(\frac{1}{q} \Z, \Z)$, then $\pi$ is extreme for $R_f(\R,\Z)$.  See Figure~\ref{fig:drlm_not_extreme_1} for an example.   To obtain such a characterization, it turns out that we must restrict to a finer grid.  
The first result in this direction of relating the infinite and the finite group problems appeared in \cite{dey1}; we state it in our notation.  
\begin{theorem}[{\cite[Theorem 6]{dey1}}] \label{thm:infinite_test}
Let $\pi$ be a
piecewise linear minimal valid function for $R_f(\R,\Z)$ with set $\B$ of rational
breakpoints with the least common denominator~$q$. 
Then
$\pi$ is extreme if and only if the restriction $\pi|_{ \frac{1}{2^nq} \Z}$ is extreme
for $R_f( \frac{1}{2^nq} \Z, \Z)$ for all $n\in \N$.
\end{theorem}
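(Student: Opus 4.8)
The plan is to recast extremality in the language of the perturbation spaces of \autoref{subsec:perturbations} and then play the finite-group criterion of \autoref{thm:finite-extremality} against the Gomory--Johnson interpolation result \autoref{thm:GJ-restrictions}. I treat the continuous piecewise linear case first (the discontinuous case is addressed in the closing remark). Write $G_n = \tfrac{1}{2^n q}\Z$, so that $\Z \subseteq \tfrac1q\Z \subseteq G_n \subseteq G_{n+1}$ for every $n$ and $\bigcup_{n\in\N} G_n$ is dense in $\R$. Since $\pi$ is continuous piecewise linear with breakpoints in $\tfrac1q\Z \subseteq G_n$ and $f \in \tfrac1q\Z \subseteq G_n$, \autoref{thm:GJ-restrictions}(1) shows that each $\pi|_{G_n}$ is minimal for $R_f(G_n,\Z)$, so the finite-group statements are meaningful. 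For the ``only if'' direction I would simply apply \autoref{thm:GJ-restrictions}(2) to $\pi$ regarded as a continuous piecewise linear function with breakpoint denominator $2^n q$: this gives at once that if $\pi$ is extreme for $R_f(\R,\Z)$ then $\pi|_{G_n}$ is extreme for $R_f(G_n,\Z)$, for every $n$.

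The content lies in the converse, which I would prove by contraposition. Assume $\pi$ is not extreme, so $\pi = \tfrac12(\pi^1+\pi^2)$ with $\pi^1 \neq \pi^2$ valid; by \autoref{lem:tightness}(i) both are minimal, and by \autoref{lem:tightness}(iv) both are continuous. Put $\bar\pi = \tfrac12(\pi^1-\pi^2)$, a nonzero \emph{continuous} function. From $\pi^i(0)=0$, the symmetry value $\pi^i(f)=1$, periodicity modulo $\Z$, and the tightness transfer $E(\pi) \subseteq E(\pi^1)\cap E(\pi^2)$ of \autoref{lem:tightness}(ii), a direct check gives $\bar\pi \in \bar\Pi^{E(\pi)}(\R,\Z)\setminus\{0\}$. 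Because $\bar\pi$ is continuous and not identically zero it is nonzero on some open interval, which must meet the dense set $\bigcup_n G_n$; hence $\bar\pi|_{G_n}\not\equiv 0$ for some~$n$. Restricting the defining equations of $\bar\Pi^{E(\pi)}(\R,\Z)$ to $G_n \times G_n$ and using the identity $E(\pi)\cap(G_n\times G_n) = E(\pi|_{G_n})$ (valid since $\pi$ is continuous) shows $\bar\pi|_{G_n} \in \bar\Pi^{E(\pi|_{G_n})}(G_n,\Z)\setminus\{0\}$. By \autoref{thm:finite-extremality} this certifies that $\pi|_{G_n}$ is \emph{not} extreme for $R_f(G_n,\Z)$, contradicting the hypothesis and completing the converse.

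The main obstacle is exactly the step forcing an infinite-dimensional perturbation to be witnessed on a finite grid. A single lattice $\tfrac1q\Z$ does not suffice: a nonzero $\bar\pi$ may vanish on all of $\tfrac1q\Z$, being entirely of the $\mathrm{zero}(\T)$ type in the decomposition of \autoref{lem:tri-restriction}(2), as the example in \autoref{fig:drlm_not_extreme_1} shows. Passing to the dyadic chain $G_n$ removes this difficulty, and the two features that make it work are the continuity of $\bar\pi$, supplied by \autoref{lem:tightness}(iv), and the density of $\bigcup_n G_n$ in $\R$; together they prevent a nonzero continuous perturbation from hiding from every grid at once. The one point needing extra care, should one wish to drop continuity of $\pi$, is that $\bar\pi$ is then continuous only at the continuity points of $\pi$ (\autoref{lem:tightness}(v)); there both the density argument and the identity $E(\pi)\cap(G_n\times G_n)=E(\pi|_{G_n})$ must instead be run through the one-sided limit relations of the discontinuous minimality test.
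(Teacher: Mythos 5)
The survey itself does not prove this statement; it is quoted from Dey--Richard--Li--Miller \cite[Theorem~6]{dey1}, so your proposal has to be judged on its own terms and against the survey's neighbouring proofs. For \emph{continuous} piecewise linear $\pi$ your argument is correct and economical: minimality and extremality of the restrictions come from \autoref{thm:GJ-restrictions}; the converse packages non-extremality as a nonzero $\bar\pi\in\bar\Pi^{E(\pi)}(\R,\Z)$, which is continuous by \autoref{lem:tightness}\,(iv), hence nonvanishing on the dense union $\bigcup_n \tfrac1{2^nq}\Z$, and then \autoref{thm:finite-extremality} converts the restricted perturbation into non-extremality of some $\pi|_{\frac1{2^nq}\Z}$. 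It is worth seeing why this is so much easier than the survey's own fixed-grid result, \autoref{thm:extreme-restriction-m}: there a perturbation must be made visible on \emph{one prescribed} grid $\tfrac1{mq}\Z$, which forces the orbit/transfer construction of Steps 1--4 of that proof; your statement only requires visibility on \emph{some} grid, and continuity plus density deliver that for free. That trade-off is exactly right, and your diagnosis of why a single lattice $\tfrac1q\Z$ cannot work (\autoref{fig:drlm_not_extreme_1}) is also correct.

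The genuine gap is the discontinuous case, which the statement as written includes: in this survey ``piecewise linear'' allows discontinuities (\autoref{sec:def-complex-piecewise}), and the very next result, \autoref{thm:1/4q-restriction}, pointedly adds the hypothesis ``if the function $\pi$ is continuous,'' so its absence here is not an oversight. Your closing remark does not repair this, for three concrete reasons. First, the heart of your converse is that $\bar\pi$ is continuous; without continuity of $\pi$, \autoref{lem:tightness}\,(iv) is unavailable, and part (v) only yields continuity of $\pi^1,\pi^2$ at continuity points of $\pi$ \emph{under} one-sided continuity at the origin, a hypothesis that provably cannot be dropped (Zhou's example \sagefunc{zhou_two_sided_discontinuous_cannot_assume_any_continuity}, cited in the footnote to that lemma). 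Nothing then prevents a nonzero discontinuous perturbation from vanishing on the measure-zero set $\bigcup_n \tfrac1{2^nq}\Z$, so the density step collapses outright rather than merely needing to be ``run through one-sided limit relations.'' Second, your ``only if'' direction rests on \autoref{thm:GJ-restrictions}, which is stated (and true by interpolation) only for continuous functions; for discontinuous $\pi$ the interpolant of $\pi|_{\frac1{2^nq}\Z}$ is no longer $\pi$, so neither minimality nor extremality of the restriction follows by that route. Third, for discontinuous $\pi$ the set $E(\pi)$ entering \autoref{lem:tightness}\,(ii) must itself be formulated with limit values (see the footnote to that part), so the identity $E(\pi)\cap(G_n\times G_n)=E(\pi|_{G_n})$ is no longer the relevant object. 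In short: you have a complete, correct proof for continuous $\pi$, but the discontinuous case requires the limit-value machinery of \cite{basu-hildebrand-koeppe:equivariant} and is a missing piece of the proof, not a remark.
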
 
The above condition cannot be checked in a finite number of steps and hence
cannot be converted into a computational algorithm, because it potentially needs to test
infinitely many finite group problems.  In fact, this result holds even when just considering $n=2$.

\begin{theorem}[{\cite[Theorem 1.5]{basu-hildebrand-koeppe:equivariant}}]
\label{thm:1/4q-restriction}
If the function~$\pi$ is continuous, then $\pi$ is extreme for
  $R_f(\R,\Z)$ if and only if the restriction $\pi \big|_{\frac1{4q}\Z}$ is
  extreme for the finite group problem $R_f(\frac1{4q}\Z,\Z)$.
\end{theorem}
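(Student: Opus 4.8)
The plan is to recast the whole statement in terms of the space of continuous perturbation functions and then reduce it, via the decomposition of \autoref{lem:tri-restriction}, to a single structural lemma about perturbations that vanish on the refined grid. Throughout, fix the uniform one-dimensional complex $\T := \P_{\frac1{4q}\Z}$ from \autoref{ex:1d-breakpoint-complex} with vertex set $\frac1{4q}\Z$. Since $\frac1q\Z \subseteq \frac1{4q}\Z$ and $f\in\frac1q\Z\subseteq\frac1{4q}\Z$, the complex $\T$ is a refinement of the breakpoint complex $\P = \P_{\frac1q\Z}$ of $\pi$ and, by \autoref{rem:tri-satisfy-lemma}, satisfies the hypotheses of \autoref{lem:tri-restriction} with denominator $4q$. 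Write $E = E(\pi)$ and $E' = E(\pi)\cap\verts(\Delta\T)$.

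First I would record the two facts that drive the equivalence. On the infinite side: if $\bar\Pi^{E}(\R,\Z)\cap C(\R)=\{0\}$, then $\pi$ is extreme. Indeed, if $\pi=\tfrac12(\pi^1+\pi^2)$ with $\pi^1,\pi^2$ valid, then \autoref{lem:tightness}\,(i),(ii),(iv) show that $\pi^1,\pi^2$ are minimal, continuous, and satisfy $E\subseteq E(\pi^i)$, using \autoref{thm:minimal} to fix the values at $\0$ and $f$; hence $\bar\pi:=\pi^1-\pi$ is a continuous element of $\bar\Pi^{E}(\R,\Z)$, which by hypothesis is $0$, so $\pi^1=\pi^2=\pi$. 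On the finite side, by \autoref{thm:finite-extremality}, $\pi|_{\frac1{4q}\Z}$ is extreme for $R_f(\frac1{4q}\Z,\Z)$ if and only if $\bar\Pi^{E'}(\frac1{4q}\Z,\Z)=\{0\}$, and by \autoref{lem:tri-restriction}\,(1) the restriction map identifies $\bar\Pi^{E}_{\T}(\R,\Z)$ with $\bar\Pi^{E'}(\frac1{4q}\Z,\Z)$. The direction ``extreme on $\R$ implies extreme on $\frac1{4q}\Z$'' is then immediate from the restriction statement \autoref{thm:GJ-restrictions}\,(2) applied with denominator $4q$, so the content is the converse.

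For the converse, assume $\bar\Pi^{E'}(\frac1{4q}\Z,\Z)=\{0\}$ and take any continuous $\bar\pi\in\bar\Pi^{E}(\R,\Z)$. Using \autoref{lem:tri-restriction}\,(2), decompose $\bar\pi = \bar\pi_{\T} + \bar\pi_{\mathrm{zero}(\T)}$ with $\bar\pi_{\T}\in\bar\Pi^{E}_{\T}(\R,\Z)$ and $\bar\pi_{\mathrm{zero}(\T)}\in\bar\Pi^{E}_{\mathrm{zero}(\T)}(\R,\Z)$, both continuous. The first summand restricts to an element of $\bar\Pi^{E'}(\frac1{4q}\Z,\Z)=\{0\}$, and being piecewise linear over $\T$ it vanishes identically; hence $\bar\pi = \bar\pi_{\mathrm{zero}(\T)}$ vanishes on $\frac1{4q}\Z$. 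Everything therefore hinges on the following \emph{Key Lemma}: a continuous $\bar\pi\in\bar\Pi^{E}(\R,\Z)$ with $\bar\pi|_{\frac1{4q}\Z}\equiv 0$ is identically zero. Granting it, $\bar\pi\equiv0$, so $\bar\Pi^{E}(\R,\Z)\cap C(\R) = \bar\Pi^{E}_{\T}(\R,\Z)=\{0\}$, and by the criterion of the previous paragraph $\pi$ is extreme, completing the equivalence.

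The Key Lemma is the heart of the matter, and proving it is the main obstacle. The idea is to show that a continuous $\bar\pi\in\bar\Pi^{E}(\R,\Z)$ is automatically affine over each cell $[\frac{j}{4q},\frac{j+1}{4q}]$ of $\T$; since it also vanishes at the endpoints of each such cell, it is then forced to be $0$. Affineness is extracted from the additive structure of $\pi$ via \autoref{lemma:covered-by-maximal-valid-triples} and the continuous convex additivity domain lemma (\autoref{lem:projection_interval_lemma-corollary}): each full-dimensional maximal additive face $F\in E_{\max{}}(\pi,\P)$ forces $\bar\pi$ to be affine on $p_1(F),p_2(F),p_3(F)$, whose endpoints lie in $\frac1q\Z$, so $\bar\pi$ already vanishes on every such ``covered'' interval. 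The lower-dimensional additive faces --- the translation relations coming from segments parallel to the axes and the reflection relations coming from the symmetry line $x+y=f$ --- organize the remaining ``uncovered'' intervals into orbits of a discrete group of moves (translations with step in $\frac1q\Z$ and reflections about centers in $\frac12 f + \frac1{2q}\Z\subseteq\frac1{2q}\Z$), all of which are isometries preserving the lattice $\frac1{4q}\Z$. The technical crux, and the reason the denominator is exactly $4q$ rather than $q$ or $2q$, is to verify that combining the translation moves (step $\frac1q$) with the reflection moves (centers in $\frac1{2q}\Z$) confines the breakpoints of every continuous perturbation in $\bar\Pi^E(\R,\Z)$ to $\frac1{4q}\Z$, and that the equivariance under these moves propagates the vanishing across all uncovered orbits so that no finer grid is ever needed. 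Carrying out this ``covered-interval and moves'' bookkeeping rigorously is precisely the substantial analysis of \cite{basu-hildebrand-koeppe:equivariant}, and it is where essentially all the work lies.
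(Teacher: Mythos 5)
Your overall frame is the same as the paper's: the forward direction via \autoref{thm:GJ-restrictions}, the translation of finite extremality into $\bar\Pi^{E'}(\frac1{4q}\Z,\Z)=\{0\}$ via \autoref{thm:finite-extremality}, and the decomposition of \autoref{lem:tri-restriction} to reduce everything to perturbations that vanish on $\frac1{4q}\Z$ (the paper does this for any $m\ge 3$ in \autoref{thm:extreme-restriction-m}, of which the present statement is the case $m=4$). The gap is your Key Lemma: as stated it is \emph{unconditional} (it makes no reference to the finite restriction being extreme), and in that form it is false, so the sketch you give for it cannot be repaired. Take any minimal, continuous piecewise linear $\pi$ with breakpoints in $\frac1q\Z$ whose non-extremality lives on \emph{uncovered} intervals, e.g.\ \sagefunc{drlm_not_extreme_1} (Figure~\ref{fig:drlm_not_extreme_1}). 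On an uncovered interval nothing forces a perturbation to be affine: choose any continuous profile vanishing at the interval's endpoints and at the finitely many points of $\frac1{4q}\Z$ inside it, antisymmetric about whatever reflection centers stabilize the interval, extend it equivariantly over the orbit of moves and set it to zero elsewhere. This produces a nonzero continuous element of $\bar\Pi^{E(\pi)}(\R,\Z)$ vanishing identically on $\frac1{4q}\Z$. Accordingly all three of your supporting claims fail on uncovered intervals: perturbations are \emph{not} automatically affine on the cells of $\T$; they need not be piecewise linear at all, so there is no ``confinement of breakpoints'' to $\frac1{4q}\Z$; and equivariance cannot ``propagate vanishing'' from the grid, because the moves (translations by $\frac1q\Z$, reflections about centers in $\frac1{2q}\Z$) preserve $\frac1{4q}\Z$, so the orbit of a non-grid point never meets the grid and grid-vanishing carries no information to it.

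What is true, and what the paper proves, is only the \emph{conditional} version, and the finite-extremality hypothesis must be used a second time, not merely once to kill the $\T$-interpolant. In the paper's proof of \autoref{thm:extreme-restriction-m}, after concluding that every $\bar\pi\in\bar\Pi^{E}(\R,\Z)$ vanishes on $\frac1{mq}\Z$, one assumes some $\bar\pi\not\equiv 0$ and derives a contradiction by an \emph{orbit transfer}: $\bar\pi$ is nonzero on the orbit $\mathcal O(x_0)$ of some point $x_0\notin\frac1{mq}\Z$; since the one-dimensional additive relations couple values of $\bar\pi$ only within a single orbit and have the same combinatorial structure on every orbit, one may relabel the values of $\bar\pi$ from $\mathcal O(x_0)$ onto the orbit $\mathcal O(\tfrac1{mq})$ of a grid point, obtaining a function $\bar\varphi$ that still lies in $\bar\Pi^{E}(\R,\Z)$ but is nonzero on $\frac1{mq}\Z$ --- contradicting the grid-vanishing forced by finite extremality. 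This proves your Key Lemma conditionally (indeed without assuming continuity, which is why the paper also gets facetness), but by contradiction with the hypothesis rather than by propagation; since your sketch never invokes the hypothesis after the decomposition step, it would, if correct, establish the false unconditional statement. A secondary inaccuracy: the constant $4$ plays no special role in this argument --- any $m\ge3$ works and $m\ge3$ is optimal by the K\"oppe--Zhou example --- whereas the ``$4q$'' of the original equivariant paper arises from constructing explicit piecewise linear zigzag perturbations that must vanish at reflection centers in $\frac1{2q}\Z$ as well as at endpoints, forcing quarter-grid breakpoints; your explanation matches neither mechanism.
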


This result demonstrates a tight connection between finite and infinite group problems, and in particular, yields an alternative algorithm to Theorem~\ref{thm:one-dim} for testing extremality.  That is, to test extremality of $\pi$, simply test if $\pi|_{\frac{1}{4q}
\Z}$ is extreme for $R_f(\tfrac{1}{4q}\Z,\Z)$ using linear algebra, as discussed in subsection~\ref{s:finite-group}.   
To prove Theorem~\ref{thm:1/4q-restriction}, the authors construct certain perturbations functions that are piecewise linear with breakpoints in $1/4q$.  In fact, this result can be improved by a different choice of perturbation function, to have the piecewise linear function have breakpoints in $1/3q$, or $1/mq$ for any fixed $m \in \Z_{\geq 3}$.  This observation yields the following result for which we provide a proof.

\begin{theorem}[\TheoremNEWRESULT]\label{thm:extreme-facet-and-restriction}
\label{thm:extreme-restriction-m}
  Let $m \in \Z_{\geq 3}$.  Let $\pi$ be a continuous piecewise linear  minimal
  valid function for $R_f(\R,\Z)$ with breakpoints in $\tfrac{1}{q} \Z$ and suppose $f \in \tfrac{1}{q} \Z$.    The following are equivalent:
\begin{enumerate}
\item $\pi$ is a facet for $R_f(\R,\Z)$,
\item $\pi$ is extreme for $R_f(\R,\Z)$,
\item $\pi|_{\frac{1}{mq}\Z}$ is extreme for $R_f(\tfrac{1}{mq} \Z, \Z)$.
\end{enumerate}
\end{theorem}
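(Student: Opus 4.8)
The plan is to prove the cycle of implications $(1)\Rightarrow(2)\Rightarrow(3)\Rightarrow(1)$. The implication $(1)\Rightarrow(2)$ is immediate, since facets are extreme (cf.~\autoref{subsec:relation-between-three-notions}). For $(2)\Rightarrow(3)$ I would apply \autoref{thm:GJ-restrictions}\,(2) with denominator $mq$ in place of $q$: the breakpoints of $\pi$ lie in $\tfrac1q\Z\subseteq\tfrac1{mq}\Z$ and $f\in\tfrac1q\Z\subseteq\tfrac1{mq}\Z$, so $\pi$ is the continuous interpolation of $\pi|_{\frac1{mq}\Z}$, and extremality of $\pi$ for $R_f(\R,\Z)$ forces extremality of $\pi|_{\frac1{mq}\Z}$ for $R_f(\tfrac1{mq}\Z,\Z)$. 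The genuine work is $(3)\Rightarrow(1)$, and the payoff of attacking facetness directly is that the argument will show the entire bounded perturbation space is trivial, which yields both extremality and facetness at once.

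For $(3)\Rightarrow(1)$ I would invoke the Facet Theorem (\autoref{thm:facet}): it suffices to show that every minimal valid function $\pi'$ with $E(\pi)\subseteq E(\pi')$ equals $\pi$. Fix such a $\pi'$ and set $\bar\pi=\pi'-\pi$. Since $\pi'$ is minimal, $0\le\pi'\le1$, so $\bar\pi$ is bounded; and using $\pi(0)=\pi'(0)=0$, $\pi(f)=\pi'(f)=1$, periodicity, and additivity of both functions on $E(\pi)$, one checks directly that $\bar\pi\in\bar\Pi^{E(\pi)}(\R,\Z)$. I would then prove $\bar\pi\equiv0$. First, restricting to the grid $G=\tfrac1{mq}\Z$, the values $\bar\pi|_{G}$ satisfy the defining relations of $\bar\Pi^{E'}(G,\Z)$ with $E'=E(\pi)\cap\verts(\Delta\P_{\frac1{mq}\Z})$; since $\P_{\frac1{mq}\Z}$ satisfies the hypotheses of \autoref{lem:tri-restriction} (\autoref{rem:tri-satisfy-lemma}) and the vertices of $\Delta\P_{\frac1{mq}\Z}$ project into $G$, this space coincides with $\bar\Pi^{E(\pi|_{G})}(G,\Z)$, which equals $\{0\}$ by \autoref{thm:finite-extremality} together with hypothesis~(3). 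Hence $\bar\pi$ vanishes on all of $\tfrac1{mq}\Z$.

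Second, for every two-dimensional additive face $F\in E(\pi,\P_{\frac1q\Z})$, the bounded function $\bar\pi$ is additive on $F$, so by the full-dimensional convex additivity domain lemma (\autoref{lem:projection_interval_lemma_fulldim}) it is affine on $\intr(p_i(F))$ for $i=1,2,3$. Each such projection is an open interval of length $\tfrac1q$ with endpoints in $\tfrac1q\Z$, which for $m\ge3$ contains $m-1\ge2$ interior points of $\tfrac1{mq}\Z$; at these points $\bar\pi$ vanishes by the first step, and an affine function vanishing at two points is identically zero, so $\bar\pi\equiv0$ on $\intr(p_i(F))$. Observe that this is precisely where the threshold $m\ge3$ first enters: having two interior grid points lets us conclude vanishing on the open interval \emph{without} invoking continuity of the possibly discontinuous $\pi'$ at the endpoints. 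Thus $\bar\pi$ vanishes on every covered interval.

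The main obstacle is the remaining step: ruling out uncovered intervals, so that the covered intervals together with the breakpoints exhaust $[0,1]$. I would argue the contrapositive. If some open interval between consecutive breakpoints carries no two-dimensional additive face, then—adapting and sharpening the explicit perturbation construction behind the proof of \autoref{thm:1/4q-restriction}—one builds a nonzero function that is continuous piecewise linear over $\P_{\frac1{mq}\Z}$, lies in $\bar\Pi^{E(\pi)}(\R,\Z)$ (respecting all additivities and the symmetry condition), and takes a nonzero value at an interior lattice point of that interval; by \autoref{corPerturb} this makes $\pi$ non-extreme through a perturbation detectable on the grid. Here $m\ge3$ is again essential, as the $m-1\ge2$ interior points of $\tfrac1{mq}\Z$ supply the minimum number of free values needed to produce such a grid-nonzero perturbation while keeping $\pi\pm\epsilon\bar\pi$ minimal; this is exactly the refinement that replaces the denominator $4q$ of \autoref{thm:1/4q-restriction} by $mq$. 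Its restriction to $\tfrac1{mq}\Z$ would then be a nonzero element of $\bar\Pi^{E(\pi|_{G})}(G,\Z)$, contradicting~(3). With uncovered intervals excluded, the second step gives $\bar\pi\equiv0$, hence $\pi'=\pi$, and $\pi$ is a facet by \autoref{thm:facet}, closing the cycle. I expect this covering step—excluding ``invisible'' perturbations that vanish on the grid yet live on uncovered intervals—to be the technical heart of the proof, carried by the same decomposition machinery (\autoref{lem:tri-restriction}, \autoref{corPerturb}) that underlies the algorithm of \autoref{sec:direct-algorithm}.
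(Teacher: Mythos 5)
Your outer structure and the first two stages of $(3)\Rightarrow(1)$ coincide with the paper's proof: $(1)\Rightarrow(2)$ since facets are extreme, $(2)\Rightarrow(3)$ via \autoref{thm:GJ-restrictions}, and then the Facet Theorem together with \autoref{thm:finite-extremality} and \autoref{lem:tri-restriction} to conclude that every $\bar\pi\in\bar\Pi^{E(\pi)}(\R,\Z)$ vanishes on $\tfrac{1}{mq}\Z$, followed by the convex additivity domain lemma plus the two interior grid points supplied by $m\ge 3$ to force $\bar\pi\equiv 0$ on every \emph{directly} covered interval. The gap is in your final step, and it is genuine.

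You claim that hypothesis (3) forces every open interval between consecutive breakpoints to carry a two-dimensional additive face, on the grounds that an uncovered interval would yield a continuous piecewise linear perturbation over $\P_{\frac{1}{mq}\Z}$ that is nonzero at a grid point. This dichotomy is never established, and the argument for it breaks in a scenario your proof does not handle: an interval $I$ carrying no two-dimensional additive face can still be tied, through the \emph{one-dimensional} faces of $E(\pi,\P_{\frac{1}{q}\Z})$ --- relations of the form $\bar\pi(x)=\bar\pi(x+t)$ and $\bar\pi(x)=-\bar\pi(r-x)$ with $t,r\in\tfrac{1}{q}\Z$, of which the symmetry relation is always an instance --- to intervals on which every perturbation vanishes. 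For such an ``indirectly covered'' $I$, no perturbation of the kind you want to build exists, so no contradiction with (3) can be extracted; yet your Step~2 also does not show $\bar\pi\equiv 0$ on $I$, since it treats only directly covered intervals. In this scenario neither branch of your argument concludes, so $\bar\pi=\pi'-\pi$ is not shown to vanish and facetness does not follow. Deferring the construction to ``adapting and sharpening'' the proof of \autoref{thm:1/4q-restriction} does not close this: handling the equivariance constraints imposed by those one-dimensional relations \emph{is} the technical heart of that external proof, precisely the part omitted here. The paper's own proof avoids the covered/uncovered dichotomy entirely: after writing the perturbation space in the explicit form \eqref{zero}, it observes that all remaining relations couple only points within a single orbit $\mathcal O(x)=(\{x\}\cup\{-x\})+\tfrac{1}{q}\Z$, restricts any nonzero $\bar\pi$ to one orbit, and transports it to the orbit of $\tfrac{1}{mq}$, producing an element of $\bar\Pi^{E(\pi)}(\R,\Z)$ that is nonzero on $\tfrac{1}{mq}\Z$ and thereby contradicts \eqref{eq:barpi-is-zero-on-mq}. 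To salvage your route you would need to (i) close the covered set under the one-dimensional moves and prove $\bar\pi$ vanishes on the move-closure, and (ii) carry out the equivariant piecewise linear construction on whatever remains uncovered --- which amounts to reconstructing the paper's Steps 3--4.
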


\begin{proof}
As mentioned in subsection~\ref{subsec:relation-between-three-notions}, facets are extreme functions~\cite[Lemma 1.3]{bhkm}, and hence $1 \Rightarrow 2$.
By Theorem~\ref{thm:GJ-restrictions},  $2 \Rightarrow 3$.  
We now show $3 \Rightarrow 1$. 

Set $E = E(\pi)$.  Let $\pi|_{\frac{1}{mq} \Z}$ be extreme for $R_f(\tfrac{1}{mq} \Z, \Z)$
and suppose, for the sake of deriving a contradiction, that $\pi$ is not a facet for $R_f(\R,\Z)$.  
Then, by the Facet Theorem (Theorem~\ref{thm:facet}), $\bar \Pi^E(\R, \Z)$ 
  contains a nontrivial element (see subsection~\ref{subsec:perturbations}).  
 Since $\pi|_{\frac{1}{mq} \Z}$ is extreme for $R_f(\tfrac{1}{mq} \Z,\Z)$, by Theorem~\ref{thm:finite-extremality}, $\bar \Pi^{E'}(\tfrac{1}{mq} \Z, \Z) = \{0\}$ for $E' = E \cap \tfrac{1}{mq} \Z^2$.  By Lemma~\ref{lem:tri-restriction} part~\ref{lem:tri-restriction-part1} with $\T = \P_{\frac{1}{mq} \Z}$, we have that $\bar \Pi_\T(\R,\Z) = \{0\}$.  Therefore $\bar \Pi^E(\R, \Z) = \bPizeroOne$.  Furthermore, Lemma~\ref{lem:tri-restriction} part~\ref{lem:tri-restriction-part2} shows that 
\begin{equation}\label{eq:barpi-is-zero-on-mq}
 \bar \Pi^E(\R, \Z) = \bar \Pi^E(\R,\Z) \cap  \{ \bar \pi \st \bar \pi|_{\frac{1}{mq}\Z} \equiv 0\}.
\end{equation}
We divide $E(\pi)$ by the faces of $\Delta \P_{\frac{1}{q} \Z}$ using Lemma~\ref{lemma:covered-by-maximal-valid-triples}.  For $i=1,2,3$, define 
$$
E_i := \bigcup \{ F \in E(\pi, \P_{\frac{1}{q} \Z}) \st \dim(F) = i\,\}.
$$
So $E = E_0 \cup E_1 \cup E_2$.
\smallbreak
\noindent {\emph{Step 1.  Remove $E_0$:}}   
We claim that $\bar \Pi^E(\R, \Z) = \bar \Pi^{E_1 \cup E_2} (\R, \Z) \cap  \{ \bar \pi \st \bar \pi|_{\frac{1}{q} \Z} \equiv 0 \}$.  

First, for any $\bar \pi \in  \bar \Pi^{E_1 \cup E_2} (\R, \Z) \cap \{ \bar \pi \st \bar \pi|_{\frac{1}{q} \Z} \equiv 0 \}$, we have that $\tfrac{1}{q} \Z^2 \subseteq E(\bar \pi)$.  Furthermore, since $\verts(\Delta \P_{\frac{1}{q}}) = \tfrac{1}{q} \Z^2$, we have that $E_0 \subseteq \tfrac{1}{q} \Z^2$.  Therefore, $E_0 \subseteq E(\bar \pi)$.  Hence $\bar \pi \in \bar \Pi^{E_0 \cup E_1 \cup E_2}(\R,\Z) = \bar \Pi^E(\R,\Z)$.

On the other hand, for any $\bar \pi \in \bar \Pi^E(\R,\Z)$, trivially $\bar \pi \in \bar\Pi^{E_1\cup E_2}(\R,\Z)$.  From~\eqref{eq:barpi-is-zero-on-mq}, we see that $\bar \pi \in \{ \bar \pi \st \bar \pi|_{\frac{1}{q} \Z} \equiv 0 \}$.
 \smallbreak
\noindent {\emph{Step 2. Remove $E_2$:}}  
Define $X := \bigcup \{p_i(E_2) \colon i=1,2,3\}$.  The set $X$ is called the ``covered intervals" in~\cite{basu-hildebrand-koeppe:equivariant}.
We claim that $ \bar \Pi^{E_1 \cup E_2} (\R, \Z) \cap  \{ \bar \pi \st \bar \pi|_{\frac{1}{q} \Z} \equiv 0 \} = \bar \Pi^{E_1}(\R,\Z) \cap  \{ \bar \pi \st \bar \pi|_{\frac{1}{q} \Z \cup X} \equiv 0 \}$. 

For any $\bar \pi \in \bar \Pi^{E_1}(\R,\Z) \cap  \{ \bar \pi \st \bar \pi|_{\frac{1}{q} \Z \cup X} \equiv 0 \}$, we see that $E_2 \subseteq E(\bar \pi)$ since $\bar \pi|_X \equiv 0$.  Therefore $\bar \pi \in  \bar \Pi^{E_1 \cup E_2} (\R, \Z) \cap  \{ \bar \pi \st \bar \pi|_{\frac{1}{q} \Z} \equiv 0 \}$.

On the other hand, let $\bar \pi \in \bar \Pi^{E_1 \cup E_2} (\R, \Z) \cap  \{ \bar \pi \st \bar \pi|_{\frac{1}{q} \Z} \equiv 0 \}$.  By Step 1 and~\eqref{eq:barpi-is-zero-on-mq}, $\bar \pi|_{\frac{1}{mq} \Z} \equiv 0$.  
For any $F \in E(\pi, \P_{\frac{1}{q}\Z})$ with $\dim(F) = 2$, by \autoref{lem:projection_interval_lemma} the function $\bar \pi$ is affine on the projections $\intr(p_i(F))$ for $i=1,2,3$.
The projections $p_i(F)$ are full intervals in the complex $\P_{\frac{1}{q} \Z}$ (see Figure~\ref{fig:uniform-spacing}).  In particular, their endpoints lie in $\tfrac{1}{q}\Z$.  Thus, $\intr(p_i(F)) \cap \tfrac{1}{mq}\Z$ contains at least two points since $m\geq 3$.  
Since $\bar \pi|_{\frac{1}{mq}\Z} \equiv 0$ and $\bar \pi$ is affine on $\intr(p_i(F))$, it follows that $\bar \pi|_{\intr(p_i(F))} \equiv 0$.  
Furthermore, since the endpoints of $p_i(F)$ are in $\tfrac{1}{q} \Z$, we also have that $\bar \pi|_{p_i(F)} \equiv 0$.  Finally, since $E_2$ is the union of all $F \in E(\pi, \P_{\frac{1}{q}\Z})$ with $\dim(F) = 2$, it follows that $\bar \pi|_X \equiv 0$, and hence $\bar \pi \in \bar \Pi^{E_1}(\R,\Z) \cap  \{ \bar \pi \st \bar \pi|_{\frac{1}{q} \Z \cup X} \equiv 0 \}$.
\smallbreak
\noindent {\emph{Step 3. Write down $E_1$ relations:}} The additivity set $E_1$ corresponds to one-dimensional faces in $\Delta\P_{\frac{1}{q} \Z}$.  These faces represent one the following two relations: 
\begin{equation*}
\arraycolsep=1.4pt
\begin{array}{rccclrr}
\pi(x) &+& \pi(t) &=& \pi(x + t) && \text{ for all } x \in I,\\
 \pi(x) &+& \pi(r-x) &=& \pi(r) && \text{ for all } x \in I,
\end{array}
\end{equation*} 
for some $I \in \I_q$ and $r,t \in \tfrac{1}{q} \Z$.   Since $\bar \pi|_{\frac{1}{mq}} \equiv 0$, we have $\bar \pi(t) = 0$ and $\bar \pi(r) = 0$.  Considering this, we can find   sets $R_I,T_I \subseteq \tfrac{1}{q} \Z$ for every interval $I \in \P_{\frac{1}{q} \Z, \EquiOneDimEdge}$ (see Example~\ref{ex:1d-breakpoint-complex} for notation $\P_{\frac{1}{q} \Z, \EquiOneDimEdge}$) such that 
  \begin{equation}
  \label{zero}
\bar \Pi^E(\R, \Z)  = \left\{ \bar \pi\colon \R \to \R \, \Bigg| \,
\begin{array}{lr}
 \bpi(x) = 0 & \text{ for all } x \in X \cup \frac{1}{q} \Z\\
\bpi(x) = \bpi(x+t) &   \text{ for all } x \in I, t \in T_I, I \in \P_{\frac{1}{q} \Z, \EquiOneDimEdge}\\
\bpi(x) = -\bpi(r-x) &   \text{ for all } x \in I, r \in R_I, I \in \P_{\frac{1}{q} \Z, \EquiOneDimEdge}
 \end{array} \right\}.
 \end{equation}
Note that taking $T_I \supseteq \Z$ for all $I \in \P_{\frac{1}{q} \Z, \EquiOneDimEdge}$ covers the periodicity conditions.
\smallbreak

\noindent \emph{Step 4. Derive contradiction:}
We define the orbit $\mathcal O(x) = (\{x\} \cup \{-x\}) + \frac{1}{q} \Z $.  Thus, for any interval $I \in \P_{\frac{1}{q} \Z, \EquiOneDimEdge}$ and $x \in I$, we have $x + t, r - x \in \mathcal O(x)$ for all $t \in T_I, r \in R_I$. 
Notice that $\mathcal O([0,\frac{1}{2q}]) :=  \bigcup_{x \in [0, \frac{1}{2q}]} \mathcal O(x) = \R$.  

Let $\bar \pi \in \bar \Pi^E(\R,\Z)$ such that $\bar \pi \not\equiv 0$.  By~\eqref{eq:barpi-is-zero-on-mq}, $\bar \pi|_{\frac{1}{mq} \Z} \equiv 0$.  Since $\bar\pi \not\equiv 0$ and $\mathcal O([0,\frac{1}{2q}])= \R$, there exists an $x_0 \in [0,\tfrac{1}{2q}]\setminus \tfrac{1}{mq} \Z$ such that $\bar \pi|_{\mathcal O(x_0)} \not\equiv 0$.  
Define $\bar \pi_{x_0} \colon \R \to \R$ as 
$$
\bar \pi_{x_0}(x) = \begin{cases}
\bar \pi(x) & \text{ if }  x \in \mathcal O(x_0),\\
0 & \text{ otherwise.}
\end{cases}
$$
The key idea here that we need to use is that in \eqref{zero}  the value of $\bar \pi$ at $x$ is related only to the value at points in $\mathcal O(x)$.  From that, it follows from~\eqref{zero} that $\bar \pi_{x_0} \in \bar \Pi^E(\R,\Z)$.  We next will transform $\bar \pi_{x_0}$.
By definition of $x \in \mathcal O(x_0)$ we have $x = x_0 + t$ for some $t \in \tfrac{1}{q} \Z$ or $x = - x_0 + r$ for some $r \in \tfrac{1}{q}\Z$.  If $x_0 \in \frac{1}{2q} \Z$, both decompositions are possible, but otherwise, only one such decomposition is possible.

We now consider the orbit $\mathcal O(\tfrac{1}{mq}) = \{ \tfrac{1}{mq}, -\frac{1}{mq}\} + \tfrac{1}{q} \Z$ and define $\varphi \colon \mathcal O(\tfrac{1}{mq}) \to \R$ as
$$
\varphi(\tfrac{i}{mq} + t) = \begin{cases}
\bar\pi_{x_0}(x_0 + t) & \text{ if } i = 1,\\
\bar\pi_{x_0}(-x_0 + t) & \text{ if } i = -1,
\end{cases}
$$
for all $t \in \tfrac{1}{q}\Z$.
The description of $\varphi$ transfers values of $\bar \pi$ in $\mathcal O(x_0)$ to values in $\mathcal O(\tfrac{1}{mq})$. 
Since $\bar \pi|_{\mathcal O(x_0)} \not\equiv 0$, we also have that $\varphi \not\equiv 0$.  
Finally, define $\bar \varphi \colon \R \to \R$ as
$$
\bar \varphi(x) = \begin{cases}
\varphi(x) & \text{ if } x \in \mathcal O(\tfrac{1}{mq}),\\
0 & \text{ otherwise}.
\end{cases}
$$
Then, using the representation in~\eqref{zero}, the fact that $\bar \pi_{x_0} \in \bar \Pi^E(\R,\Z)$ implies that $\bar \varphi \in \bar \Pi^E(\R,\Z)$.  But notice that $\bar \varphi|_{\frac{1}{mq} \Z} \not\equiv 0$ since $\varphi \not\equiv 0$ and $\mathcal O(\tfrac{1}{mq}) \subseteq \tfrac{1}{mq}\Z$, which contradicts~\eqref{eq:barpi-is-zero-on-mq}.
Therefore, we conclude that $3 \Rightarrow 1$.
\end{proof}

Figure~\ref{fig:drlm_not_extreme_1} gives an example of a function $\pi$ that
is not extreme for $R_f(\R,\Z)$, but $\pi|_{\frac{1}{q}\Z}$ is extreme for
$R_f(\tfrac{1}{q} \Z,\Z)$.  

Using computer-based search, K\"oppe and Zhou
\cite{koeppe-zhou:extreme-search} found a function that is not extreme for
$R_f(\R,\Z)$, but whose restriction to $\tfrac{1}{2q}\Z$ is extreme for
$R_f(\tfrac{1}{2q}\Z, \Z)$.\footnote{The function is available in the
  electronic compendium \cite{electronic-compendium}  as
  \sagefunc{kzh_2q_example_1}.}
This proves the following result.
\begin{proposition}[K\"oppe and Zhou \cite{koeppe-zhou:extreme-search}]
  The hypothesis $m \geq 3$ in \autoref{thm:extreme-restriction-m} is best
  possible. The theorem does not hold for $m=2$.
\end{proposition}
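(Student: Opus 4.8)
The plan is to prove this by exhibiting a single counterexample, after first localizing which of the three equivalences in \autoref{thm:extreme-restriction-m} can be sensitive to the value of~$m$. The implication $(1)\Rightarrow(2)$ holds for every~$m$, since facets are always extreme \cite[Lemma 1.3]{bhkm}. Likewise $(2)\Rightarrow(3)$ holds for every~$m$: if $\pi$ is extreme for $R_f(\R,\Z)$ and has breakpoints in $\tfrac{1}{q}\Z\subseteq\tfrac{1}{mq}\Z$, then \autoref{thm:GJ-restrictions}(2), applied with denominator~$mq$, gives that $\pi|_{\frac{1}{mq}\Z}$ is extreme for $R_f(\tfrac{1}{mq}\Z,\Z)$. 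Hence the only implication whose proof genuinely invoked $m\geq 3$ is $(3)\Rightarrow(2)$ (equivalently $(3)\Rightarrow(1)$). To show $m=2$ is inadmissible it therefore suffices to produce one continuous piecewise linear minimal valid function $\pi$ for $R_f(\R,\Z)$, with breakpoints in $\tfrac{1}{q}\Z$ and $f\in\tfrac{1}{q}\Z$, for which $\pi|_{\frac{1}{2q}\Z}$ is extreme for $R_f(\tfrac{1}{2q}\Z,\Z)$ while $\pi$ is not extreme for $R_f(\R,\Z)$.

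The witness is the function \sagefunc{kzh_2q_example_1} found by K\"oppe and Zhou \cite{koeppe-zhou:extreme-search}. Write $\T=\P_{\frac{1}{2q}\Z}$ and $E=E(\pi)$. For this fixed function I would verify three finite facts. First, minimality of~$\pi$, via \autoref{minimality-check} (or \autoref{thm:GJ-restrictions}(1) on the grid $\tfrac{1}{q}\Z$). Second, extremality of $\pi|_{\frac{1}{2q}\Z}$ for $R_f(\tfrac{1}{2q}\Z,\Z)$: by \autoref{thm:finite-extremality} this is the linear-algebra statement $\bar\Pi^{E'}(\tfrac{1}{2q}\Z,\Z)=\{0\}$ with $E'=E\cap\tfrac{1}{2q}\Z^2$, i.e.\ that the homogeneous system encoding the additivities in~$E'$, the values at~$\ve 0$ and~$f$, and periodicity has only the trivial solution. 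By \autoref{lem:tri-restriction}(1) this is equivalent to $\bar\Pi^E_\T(\R,\Z)=\{0\}$, whence the decomposition of \autoref{lem:tri-restriction}(2) collapses to $\bar\Pi^E(\R,\Z)=\bPizeroOne$. Third, non-extremality of~$\pi$: it now suffices to exhibit one nontrivial $\bar\pi\in\bPizeroOne$, produced by the extremality test of \autoref{thm:one-dim} (\autoref{sec:direct-algorithm}); by \autoref{corPerturb} the functions $\pi\pm\epsilon\bar\pi$ are then distinct minimal functions for small $\epsilon>0$, so $\pi$ is not extreme even though $(3)$ holds.

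It is instructive to pinpoint why such a $\pi$ can exist exactly when $m=2$, by locating the gap in the proof of \autoref{thm:extreme-restriction-m}. The sole use of $m\geq 3$ was Step~2, where for a two-dimensional additive face $F\in E(\pi,\P_{\frac{1}{q}\Z})$ one argues that a perturbation $\bar\pi$ with $\bar\pi|_{\frac{1}{mq}\Z}\equiv 0$, being affine on the open projection $\intr(p_i(F))$, must vanish on the covered set~$X$. This conclusion requires $\intr(p_i(F))\cap\tfrac{1}{mq}\Z$ to contain at least two points, so that an affine function vanishing there is zero on the whole open interval. A covered projection is a full interval $[\tfrac{a}{q},\tfrac{a+1}{q}]$ of $\P_{\frac{1}{q}\Z}$; for $m=2$ its interior meets $\tfrac{1}{2q}\Z$ in exactly the single midpoint $\tfrac{2a+1}{2q}$, so an affine perturbation vanishing only there need not vanish, and the reduction collapses. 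Accordingly, in the counterexample the certifying perturbation~$\bar\pi$ is necessarily nonzero only on the \emph{uncovered} intervals, where it is piecewise linear on a refinement of $\P_{\frac{1}{2q}\Z}$ (breakpoints strictly finer than $\tfrac{1}{2q}\Z$) and is pinned to zero on the grid $\tfrac{1}{2q}\Z$ while still respecting all the one-dimensional reflection and translation relations of~$E_1$ recorded in \eqref{zero}.

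The main obstacle is the construction itself: nothing short of an explicit function will settle sharpness of a general theorem. This is exactly what the computer-based search of \cite{koeppe-zhou:extreme-search} accomplishes, with the extremality test of \autoref{sec:one-two-dim} supplying the explicit non-extremality certificate~$\bar\pi$. Once the function is fixed, all three verifications above are finite and can be carried out in exact arithmetic, completing the proof that \autoref{thm:extreme-restriction-m} fails for $m=2$ and hence that the hypothesis $m\geq 3$ is best possible.
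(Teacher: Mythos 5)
Your proposal is correct and takes essentially the same approach as the paper: the paper proves sharpness exactly by exhibiting the computer-found counterexample \sagefunc{kzh_2q_example_1} of K\"oppe--Zhou, a function that is not extreme for $R_f(\R,\Z)$ although its restriction to $\tfrac{1}{2q}\Z$ is extreme for $R_f(\tfrac{1}{2q}\Z,\Z)$. Your extra material---localizing the only $m$-sensitive implication to $(3)\Rightarrow(1)$ and pinpointing that Step~2 of the proof of \autoref{thm:extreme-restriction-m} fails for $m=2$ because $\intr(p_i(F))\cap\tfrac{1}{2q}\Z$ is a single midpoint---is sound but goes beyond what the paper records for this proposition.
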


%
%
%
%
%

%
%

\begin{figure}[tp]
\begin{center}
\includegraphics[width=.31\linewidth]{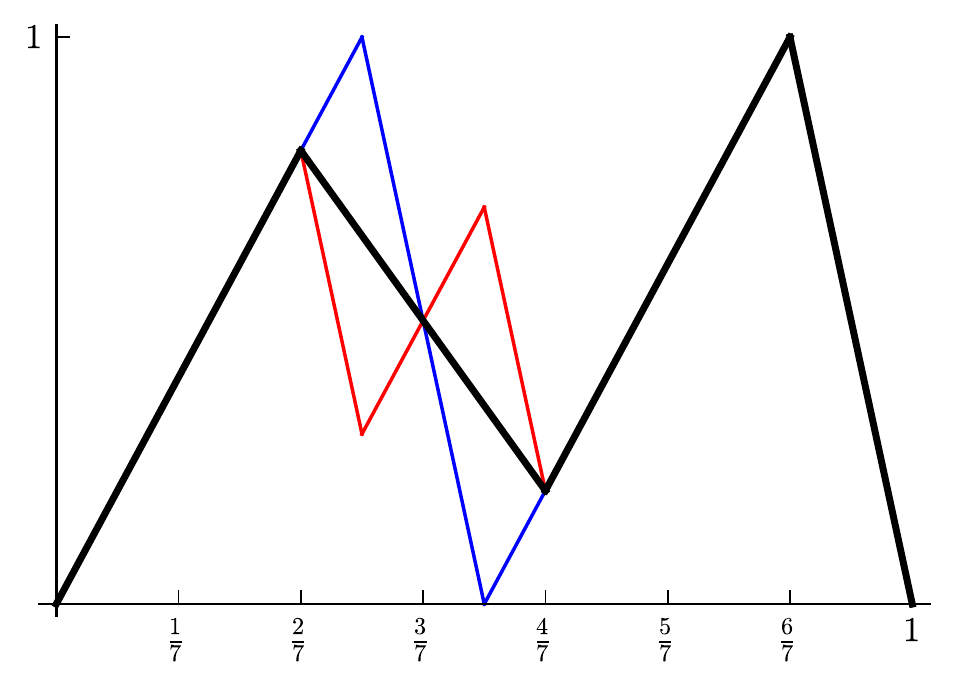}\quad
\includegraphics[width=.31\linewidth]{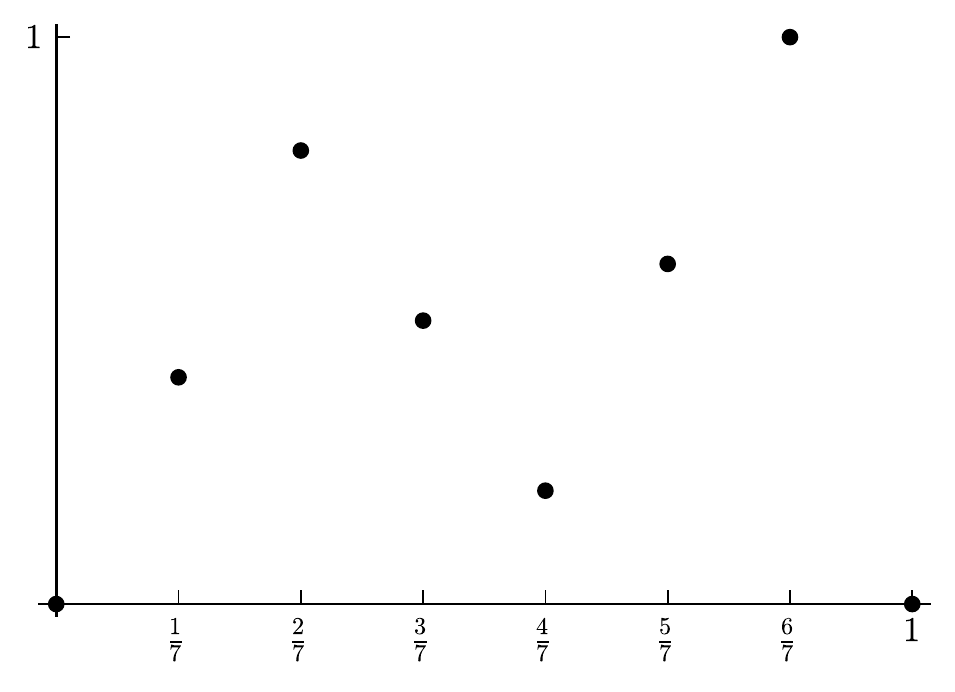}\quad
\includegraphics[width=.31\linewidth]{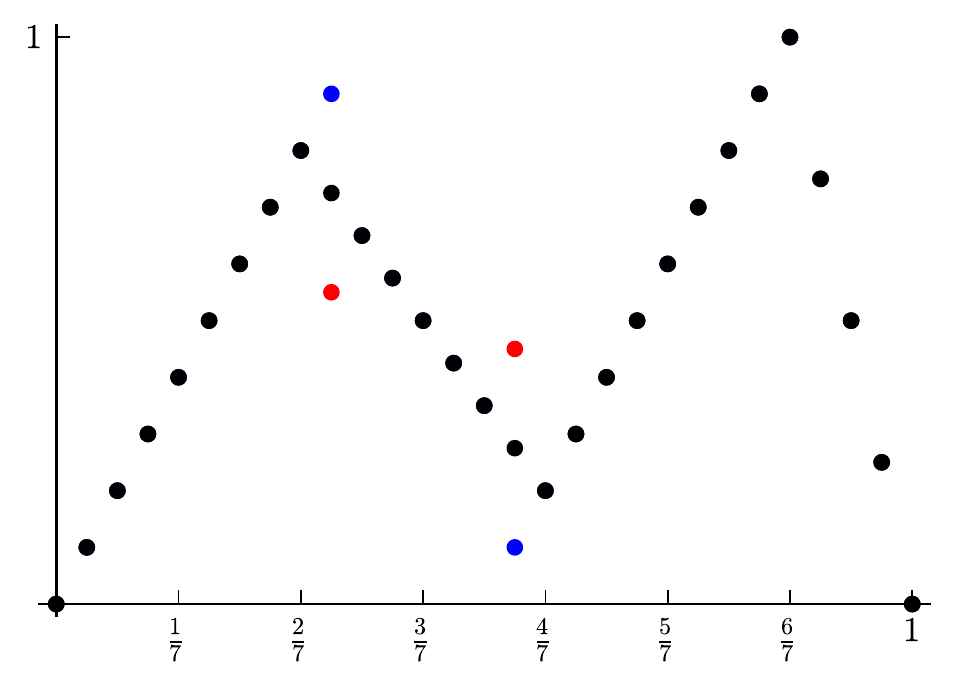}
\end{center}
\caption{This function (\sage{h = \sagefunc{drlm_not_extreme_1}()}) is
  minimal, but not extreme (and  hence also not a facet), as proved by
  \sage{\sagefunc{extremality_test}(h, show\underscore{}plots=True)}
  by demonstrating a perturbation.  The red and blue perturbations describe the minimal functions $\pi^1, \pi^2$ that verify that $\pi$ is not extreme.  These minimal functions necessarily have more breakpoints than $\pi$.  This is because $\pi|_{\frac{1}{q} \Z}$ with $q = 7$, as depicted in the middle figure, is extreme for the finite group problem $R_f(\tfrac{1}{q} \Z,\Z)$.  However, $\pi|_{\frac{1}{2q} \Z}$ is not extreme for $R_f(\tfrac{1}{2q} \Z, \Z)$.  The discrete perturbations, depicted on the right, are interpolated to obtain the continuous functions $\pi^1, \pi^2$.
}
\label{fig:drlm_not_extreme_1}
\end{figure}

\subsection{Restriction and interpolation for $k\geq 2$}

Some similar restriction results can be proved for the case of $k$ rows, but
this area is much more open.  Restrictions seem to require the use of nice
polyhedral complexes.  The only results known are for the polyhedral complex
$\P_q$ (Example~\ref{ex:2d-standard-triangulation}) in $\R^2$. 

\begin{theorem}[{\cite[Theorem~\ref{equi3:minimality-check-2d} and Theorem~\ref{equi3:theorem:systemNotUnique}]{bhk-IPCOext}}]
\label{thm:GJ-restrictions-2D}
Let $\pi\colon \R^2 \to \R$ be a continuous piecewise linear function over $\P_q$ and suppose $\f \in \tfrac{1}{q} \Z^2$.  Then the following hold:
\begin{enumerate}
\item $\pi$ is minimal for $R_\f(\R^2,\Z^2)$ if and only if $\pi_{\frac{1}{q}\Z^2}$ is minimal for $R_\f(\tfrac{1}{q} \Z^2, \Z^2)$.
\item If $\pi$ is extreme for $R_\f(\R^2,\Z^2)$, then $\pi|_{\frac{1}{q} \Z^2}$ is extreme for $R_\f(\tfrac{1}{q} \Z^2,\Z^2)$.
\end{enumerate}
\end{theorem}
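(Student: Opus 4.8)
The plan is to reduce both statements to finite, combinatorial conditions using the minimality test (\autoref{minimality-check}) together with the perturbation machinery (\autoref{corPerturb}, \autoref{lem:tri-restriction}), with \autoref{lemma:vertices} serving as the geometric engine that makes the reduction exact. The crucial feature of the standard triangulation $\P_q$ is that, by \autoref{lemma:vertices}, every vertex $(\u,\v)$ of $\Delta\P_q$ satisfies $\u,\v \in \tfrac1q\Z^2$, and hence $\u+\v \in \tfrac1q\Z^2$ as well; equivalently $\verts(\Delta\P_q) = \tfrac1q\Z^2 \times \tfrac1q\Z^2$. Thus all of the data appearing in the finite tests below live on $\tfrac1q\Z^2$, which is precisely the ground set of the finite group problem.

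For part~(1), the forward implication is immediate: if $\pi$ is minimal for $R_\f(\R^2,\Z^2)$ then by \autoref{thm:minimal} it is nonnegative, vanishes on $\Z^2$, and is subadditive and symmetric on all of $\R^2$. Restricting these properties to the subgroup $\tfrac1q\Z^2$ (using $\f \in \tfrac1q\Z^2$, so that $\f-\x \in \tfrac1q\Z^2$ whenever $\x\in\tfrac1q\Z^2$) and invoking \autoref{thm:minimal} for the finite group $G=\tfrac1q\Z^2$, $S=\Z^2$ shows $\pi|_{\frac1q\Z^2}$ is minimal. For the converse I would invoke the minimality test \autoref{minimality-check}: since $\f \in \verts(\P_q)$ and every cell of $\P_q$ is a bounded triangle, minimality of $\pi$ over $\P_q$ is equivalent to $\pi(\0)=0$ together with the subadditivity and symmetry conditions evaluated only at $(\u,\v)\in\verts(\Delta\P_q)$. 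By \autoref{lemma:vertices} each such $(\u,\v)$ has $\u,\v,\u+\v\in\tfrac1q\Z^2$, so these are exactly the subadditivity and symmetry relations of the finite group problem, which hold by hypothesis; hence $\pi$ is minimal.

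For part~(2) I would argue by contraposition within the class of minimal functions. Assuming $\pi$ is extreme (hence minimal, so $\pi|_{\frac1q\Z^2}$ is minimal by part~(1)), suppose $\pi|_{\frac1q\Z^2}$ is \emph{not} extreme for the finite group problem. By \autoref{thm:finite-extremality} this means $\bar\Pi^{E'}(\tfrac1q\Z^2,\Z^2)\neq\{0\}$, where $E'=E(\pi)\cap\verts(\Delta\P_q)$ is the additivity domain of the restriction. Picking a nonzero element $\bar\pi_0$ of this space and using that $\P_q$ is a triangulation with $\verts(\P_q)=\tfrac1q\Z^2$ satisfying the hypotheses of \autoref{lem:tri-restriction} (\autoref{rem:tri-satisfy-lemma}), part~(1) of that lemma interpolates $\bar\pi_0$ to a nonzero continuous piecewise linear $\bar\pi\in\bar\Pi^{E(\pi)}_{\P_q}(\R^2,\Z^2)\subseteq\bar\Pi^{E(\pi)}(\R^2,\Z^2)$. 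Then \autoref{corPerturb} yields an $\epsilon>0$ with $\pi^1=\pi+\epsilon\bar\pi$ and $\pi^2=\pi-\epsilon\bar\pi$ distinct minimal functions averaging to $\pi$, contradicting extremality of $\pi$. Hence $\pi|_{\frac1q\Z^2}$ must be extreme.

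The main obstacle, and the reason this works only for $\P_q$ and not for arbitrary complexes, is entirely encapsulated in \autoref{lemma:vertices}: the exact identification $\verts(\Delta\P_q)=\tfrac1q\Z^2\times\tfrac1q\Z^2$, which rests on the total unimodularity of the constraint matrix $A$. Without it, vertices of $\Delta\P$ could project to points outside $\tfrac1q\Z^2$, the finite tests would fail to capture the infinite ones, and the interpolation step of \autoref{lem:tri-restriction} would break down. I would also emphasize that part~(2) is genuinely one-directional: extremality of the restriction does \emph{not} imply extremality of $\pi$ (cf.~\autoref{fig:drlm_not_extreme_1}), precisely because a discrete perturbation on the coarse grid $\tfrac1q\Z^2$ need not survive as a perturbation over $\P_q$; recovering the converse requires passing to a finer grid, as in the one-row \autoref{thm:extreme-restriction-m}.
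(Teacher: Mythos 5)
Your proposal is correct and takes essentially the same route as the paper's proof (deferred to the cited companion results, whose structure matches exactly the tools you invoke): part~(1) reduces minimality to vertex conditions of $\Delta\P_q$ via Lemma~\ref{lemma:vertices} and Theorem~\ref{minimality-check}, and part~(2) interpolates a nontrivial finite perturbation through Lemma~\ref{lem:tri-restriction} and Theorem~\ref{corPerturb}. The only detail worth adding is that, in the converse of part~(1), the nonnegativity and periodicity of $\pi$ required as hypotheses of Theorem~\ref{minimality-check} must first be transferred from the minimal restriction $\pi|_{\frac{1}{q}\Z^2}$ to $\pi$ by interpolation over the bounded cells of the periodic complex $\P_q$.
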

For $k\geq3$ rows, it is unclear when similar results are possible.

\begin{openquestion}
Can Theorem~\ref{thm:GJ-restrictions-2D} be generalized to other
triangulations of $\R^k$ for $k\geq2$?
\end{openquestion}


In the special case of diagonally constrained functions in $\R^2$, there is a similar result to Theorem~\ref{thm:extreme-restriction-m}.

\begin{theorem}[{\cite[\autoref{equi3:thm:1/4q}]{bhk-IPCOext}}]
\label{thm:1/4q}
Let $\pi$ be a minimal continuous piecewise linear function over $\P_q$ that is diagonally constrained and $\f \in \verts(\P_q)$.  Fix $m \in \Z_{\geq 3}$.
Then $\pi$ is extreme for $R_{\ve f}(\R^2, \Z^2)$ if and only if the restriction $\pi\big|_{\frac{1}{mq}\Z^2}$ is extreme for $R_{\ve f}(\frac{1}{mq} \Z^2, \Z^2)$.  
\end{theorem}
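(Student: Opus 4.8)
The plan is to mirror, in two dimensions, the proof of \autoref{thm:extreme-restriction-m}, using the diagonally constrained hypothesis to reduce all of the perturbation analysis to the one-dimensional behavior along diagonal edges. Throughout, set $E = E(\pi)$ and write $\P_{mq}$ for the standard triangulation of \autoref{ex:2d-standard-triangulation} at scale $mq$; since $\pi$ is continuous piecewise linear over $\P_q$ it is also continuous piecewise linear over the refinement $\P_{mq}$, and $\f\in\verts(\P_q)\subseteq\verts(\P_{mq})$. The implication ``$\pi$ extreme $\Rightarrow \pi|_{\frac1{mq}\Z^2}$ extreme'' is then immediate from \autoref{thm:GJ-restrictions-2D}(2) applied with $mq$ in place of $q$.

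For the converse I would argue by contradiction, exactly as in \autoref{thm:extreme-restriction-m}. Assume $\pi$ is not extreme; then by \autoref{lem:minimality-of-pi1-pi2} and the Perturbation Theorem (\autoref{corPerturb}) there is a nonzero continuous $\bar\pi\in\bar\Pi^E(\R^2,\Z^2)$. The first step is to establish the two-dimensional analogue of~\eqref{eq:barpi-is-zero-on-mq}, namely that every element of $\bar\Pi^E(\R^2,\Z^2)$ vanishes on $\frac1{mq}\Z^2$: since $\pi|_{\frac1{mq}\Z^2}$ is extreme, \autoref{thm:finite-extremality} gives $\bar\Pi^{E'}(\frac1{mq}\Z^2,\Z^2)=\{0\}$ with $E'=E\cap\verts(\Delta\P_{mq})$, and then \autoref{lem:tri-restriction} (with $\T=\P_{mq}$, whose hypotheses hold by \autoref{rem:tri-satisfy-lemma}) forces $\bar\Pi^E_\T(\R^2,\Z^2)=\{0\}$ and hence $\bar\pi=\bar\pi_{\mathrm{zero}(\T)}$, i.e.\ $\bar\pi|_{\frac1{mq}\Z^2}\equiv 0$.

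Next I would exploit the diagonally constrained structure to produce a representation of $\bar\Pi^E(\R^2,\Z^2)$ analogous to~\eqref{zero}. Using \autoref{lemma:vertices} and $E_{\max}(\pi,\P_q)$, each maximal additive face $F$ has every projection $p_i(F)$ equal to a vertex, a diagonal edge, or a triangle of $\P_q$. For the faces with triangle projections, continuity of $\bar\pi$ together with \autoref{lem:projection_interval_lemma-corollary} shows $\bar\pi$ is affine on each such triangle; its three vertices lie in $\frac1q\Z^2\subseteq\frac1{mq}\Z^2$, where $\bar\pi=0$, so $\bar\pi$ vanishes on the entire ``covered region'' $X$ that is the union of these triangles. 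The remaining faces, whose projections are vertices and diagonal edges, contribute only translation relations $\bar\pi(\x)=\bar\pi(\x+\t)$ and reflection relations $\bar\pi(\x)=-\bar\pi(\r-\x)$ for suitable $\t,\r\in\frac1q\Z^2$, valid along diagonal edges. This yields a description of $\bar\Pi^E(\R^2,\Z^2)$ in which $\bar\pi\equiv0$ on $X\cup\frac1q\Z^2$ and is otherwise governed purely by these one-dimensional diagonal relations.

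The final, and hardest, step is the orbit-and-transfer argument. I would define the orbit $\mathcal{O}(\v)=\bigl(\{\v\}\cup\{\f-\v\}\bigr)+\frac1q\Z^2$ and observe that the diagonal relations couple $\bar\pi(\v)$ only to values on $\mathcal{O}(\v)$, so a nonzero $\bar\pi$ can be localized to a single orbit $\mathcal{O}(\v_0)$ (setting it to zero off the orbit) while remaining in $\bar\Pi^E(\R^2,\Z^2)$; here $\v_0$ is chosen in the relative interior of an uncovered diagonal edge with $\v_0\notin\frac1{mq}\Z^2$. Transferring this localized perturbation to the orbit of an interior point $\p_0\in\frac1{mq}\Z^2$ of the corresponding diagonal edge then produces a $\bar\varphi\in\bar\Pi^E(\R^2,\Z^2)$ with $\bar\varphi|_{\frac1{mq}\Z^2}\not\equiv0$, contradicting the first step. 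I expect the main obstacle to lie precisely here: one must verify that the diagonal relations genuinely decouple across orbits, that the localized and transferred functions satisfy all relations in the~\eqref{zero}-type representation, and that the hypothesis $m\ge 3$—which guarantees at least two interior $\frac1{mq}\Z^2$-points on each diagonal edge, both for the covered-region vanishing and for the existence of the transfer target $\p_0$—is used in an essential way, mirroring how $m=2$ already fails in the one-row case.
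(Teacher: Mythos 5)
Your opening steps are sound and do mirror the proof of \autoref{thm:extreme-restriction-m} correctly: the forward implication follows from \autoref{thm:GJ-restrictions-2D}\,(2) applied over the refinement $\P_{mq}$; the analogue of \eqref{eq:barpi-is-zero-on-mq} (every element of $\bar\Pi^{E}(\R^2,\Z^2)$ vanishes on $\tfrac{1}{mq}\Z^2$) follows as you say from \autoref{thm:finite-extremality} together with \autoref{lem:tri-restriction} and \autoref{rem:tri-satisfy-lemma}; and a continuous perturbation must vanish on the triangle-covered region, since \autoref{lem:projection_interval_lemma-corollary} makes it affine on each projected triangle of $\P_q$, whose three affinely independent vertices lie in $\tfrac1q\Z^2\subseteq\tfrac1{mq}\Z^2$. (Minor slip: \autoref{corPerturb} runs in the opposite direction; to extract $\bar\pi$ from non-extremality you only need \autoref{lem:tightness}\,(i),(ii),(iv).)

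The genuine gap is your claimed analogue of \eqref{zero}: it is false that the maximal additive faces with non-triangle projections ``contribute only'' translation relations $\bar\pi(\x)=\bar\pi(\x+\t)$ and reflection relations $\bar\pi(\x)=-\bar\pi(\r-\x)$ with $\t,\r\in\tfrac1q\Z^2$. The definition of diagonally constrained also permits maximal faces $F\in E_{\max{}}(\pi,\P_q)$ in which a diagonal edge appears as a projection alongside other non-vertex projections: for instance $F(I,J,K)$ with $I,J,K$ all diagonal edges (a two-dimensional face of $\Delta\P_q$), or faces with projections triangle/diagonal-edge/triangle. Such faces do occur --- e.g.\ for pullbacks $\pi(x_1,x_2)=\phi(x_1+x_2)$ of one-dimensional minimal functions $\phi$ with one-dimensional maximal additive faces, which are diagonally constrained and are covered by \autoref{thm:1/4q} since no genuinely two-dimensional hypothesis is made. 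These faces impose Cauchy-type relations $\bar\pi(\x)+\bar\pi(\y)=\bar\pi(\x+\y)$ in which the shift $\y$ (or the reflection center $\x+\y$) varies \emph{continuously} along a diagonal, not over $\tfrac1q\Z^2$. Consequently they couple values of $\bar\pi$ at points lying in \emph{different} orbits $\mathcal O(\cdot)$, and your localization step collapses: if $\x\in\mathcal O(\v_0)$ while $\y$ and $\x+\y$ lie outside $\mathcal O(\v_0)\cup\tfrac1q\Z^2$, the localized function has $\Delta\bar\pi_{\v_0}(\x,\y)=\bar\pi(\x)\neq 0$, so $\bar\pi_{\v_0}\notin\bar\Pi^{E}(\R^2,\Z^2)$ and no contradiction is obtained. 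You flag this decoupling as ``the main obstacle,'' but it is not a verification detail to be checked --- as stated it fails, and repairing it is where essentially all of the two-dimensional work lies: one must enlarge the vanishing set by diagonally covered edges (noting that \autoref{lem:projection_interval_lemma-corollary} with $L=\spann\{(1,-1)\}$ yields this only for \emph{continuous} perturbations, whereas your localized and transferred functions are discontinuous, so exactness of the representation for arbitrary functions needs a separate argument), and the transfer construction must be equivariant with respect to the full move structure including these moving-shift relations. This is precisely the additional machinery that the proof in \cite{bhk-IPCOext} supplies and that your sketch does not.
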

If we know that $\pi$ is diagonally constrained, then this theorem produces an alternative algorithm to Theorem~\ref{thm:main} to test extremality of $\pi$ by simply restricting to $\tfrac{1}{3q}\Z^2$ and testing extremality in the finite dimensional setting.
A generalization of this theorem that removes the condition of being diagonally constrained will appear in a forthcoming article~\cite{basu-hildebrand-koeppe:equivariant-general-2dim}.

\appendix
\section{Updated compendium of extreme functions}
\label{s:appendix-compendium}

The following tables contain the updated compendium of extreme functions.

\newcommand\CompendiumNEWRESULT{\hfill\par\vspace{1ex}\hfill \emph{Previously unpublished} \NEWRESULT}

\newcommand\CompendiumGraphics[1]{\vspace{0pt}\includegraphics[width=\linewidth]{#1}}
\newcommand\CompendiumGraphicsBig[1]{\multicolumn{2}{p{.25\linewidth}}{\vspace{0pt}\includegraphics[width=\linewidth]{#1}}}
\newcommand\vspace{0pt}\scalebox{.3}{\input{[}}1]{\vspace{0pt}\scalebox{.3}{\input{#1}}}

\newcommand\CompendiumSlopes[1]{\vspace{0pt}\centering#1}
\newcommand\CompendiumContinuity[1]{\vspace{0pt}\centering#1}

\newenvironment{CompendiumTabular}{%
    \begin{minipage}{\linewidth}\centering
      \let\footnoterule=\relax
    \begin{tabular}{p{.2\linewidth}*1{p{.17\linewidth}}@{}c*1{p{.05\linewidth}}*1{p{.09\linewidth}}p{.35\linewidth}}
      \toprule
      Function\footnote{A function name shown in typewriter font is the name of the constructor of this function in the
        accompanying Sage program.}
      & Graph & 
      & Slopes & Continuity & Notes \\
      \midrule
      }
      {
      \bottomrule
    \end{tabular}
  \end{minipage}
}
\newenvironment{CompendiumProceduresTabular}{%
    \begin{minipage}{\linewidth}\centering
      \let\footnoterule=\relax
    \begin{tabular}{p{.3\linewidth}*2{p{.17\linewidth}}p{.30\linewidth}}
      \toprule
      & \multicolumn{2}{c}{Graphs} \\
      \cmidrule{2-3}
      \multicolumn{1}{c}{Procedure\footnote{A procedure name shown in typewriter font is the name of the corresponding function in the
        accompanying Sage program.}}
      & \multicolumn{1}{c}{From} & \multicolumn{1}{c}{To}
      & \multicolumn{1}{c}{Notes} \\
      \midrule
      }
      {
      \bottomrule
    \end{tabular}
  \end{minipage}
}

\newcommand\sref[1]{{\bfseries \ParaSign\,\ref{#1}}}  

\renewcommand\multirowsetup{\relax}

\begin{table}[b]  
  \centering
  \caption{An updated compendium of known extreme functions for the
    infinite group problem I. Parametrized classes of continuous
    functions for the 1-dimensional case with up to two slopes.
  }
  \label{tab:compendium-1}
  \begin{CompendiumTabular}
      \vspace{0pt}
      \sagefunc{gmic}
      & \CompendiumGraphics{gmic} 
      & 
      & \CompendiumSlopes{2}
      & \CompendiumContinuity{C}
      & \vspace{0pt} 
      The famous Gomory mixed integer cut, going back to Gomory's 1960 paper
      \cite{Gom60}. Dominates the \sagefunc{gomory_fractional} cut, which
      is not minimal (\autoref{fig:hierarchy}).
      \\
      \vspace{0pt}
      \sagefunc{gj_2_slope}
      & \CompendiumGraphics{gj_2_slope}
      & 
      & \CompendiumSlopes{2}
      & \CompendiumContinuity{C}
      & \vspace{0pt}
      \multirow{2}{\linewidth}{%
          Two 
          families of continuous extreme functions with 2~slopes, 
          from 
          Gomory--Johnson 
          \cite{tspace}.
          By the
          Gomory--Johnson 2-Slope Theorem (\autoref{th:2-slope}), all
          continuous piecewise linear minimal valid functions
          with 2 slopes are extreme.
        }
      \\
      \vspace{0pt}
      \sagefunc{gj_2_slope_repeat} 
      & \CompendiumGraphics{gj_2_slope_repeat}
      & 
      & \CompendiumSlopes{2}
      & \CompendiumContinuity{C}
      & 
      \\
      \vspace{0pt}
      \sagefunc{dg_2_step_mir}
      & \CompendiumGraphics{dg_2_step_mir}
      &
      & \CompendiumSlopes{2}
      & \CompendiumContinuity{C}
      & \vspace{0pt} Described by Dash--G\"unl\"uk \cite{twoStepMIR}. 
      Extremality follows from the 2-Slope Theorem (\autoref{th:2-slope}).
      \\
      \vspace{0pt}
      \sagefunc{kf_n_step_mir} 
      & \CompendiumGraphics{kf_n_step_mir}
      & 
      & \CompendiumSlopes{2}
      & \CompendiumContinuity{C}
      & \vspace{0pt} 
      Described by Kianfar--Fathi \cite{kianfar1}. 
      Extremality follows from the 2-Slope Theorem (\autoref{th:2-slope}).
      \\
      \vspace{0pt}
      \sagefunc{bccz_counterexample}
      & \CompendiumGraphics{bccz_counterexample}
      & 
      & \CompendiumSlopes{1--2}\footnote{The function is not piecewise linear. 
        In one case ($\mu^- < 1$) \cite{bccz08222222}, it is absolutely
        continuous and thus Lebesgue--almost everywhere differentiable; the derivatives take
        one of two values where they exist.
        In a second case ($\mu^- = 1$) \cite{zhou:extreme-notes}, it is merely continuous (but not
        absolutely continuous) and Lebesgue--almost everywhere differentiable;
        the derivatives take only one value where they exist.  See
        \autoref{sec:bccz_counterexample} for more details.
      }
      & \CompendiumContinuity{C}
      & \vspace{0pt} Limit of 
      \sagefunc{kf_n_step_mir} 
      for $n\to\infty$; not a piecewise linear function. 
      Described by Basu--Conforti-Cornu\'ejols--Zambelli \cite{bccz08222222};
      see~\sref{sec:bccz_counterexample}.
      \\
    \end{CompendiumTabular}
  \end{table}

\begin{table}[tp]
  \centering
  \caption{An updated compendium of known extreme functions for the
    infinite group problem II. Parametrized classes of continuous
    functions for the 1-dimensional case with at least three slopes.
  }
  \label{tab:compendium-1b}
  \begin{CompendiumTabular}
      \vspace{0pt}
      \sagefunc{gj_forward_3_slope} 
      & \CompendiumGraphics{gj_forward_3_slope}
      & 
      & \CompendiumSlopes{3}
      & \CompendiumContinuity{C}
      & \vspace{0pt} 
      Described by Gomory--Johnson \cite{tspace}.
      \\
      \vspace{0pt}
      \sagefunc{drlm_backward_3_slope} 
      & \CompendiumGraphics{drlm_backward_3_slope}
      & 
      & \CompendiumSlopes{3}
      & \CompendiumContinuity{C}
      & \vspace{0pt} Described by Dey--Richard--Li--Miller \cite{dey1}
      based on Ar\'aoz--Evans--Gomory--Johnson~\cite{AraozEvansGomoryJohnson03}. 
      \\
      \vspace{0pt}
      \sagefunc{dr_projected_sequential_merge_3_slope}
      & \CompendiumGraphics{dr_projected_sequential_merge_3_slope}
      & 
      & \CompendiumSlopes{3}
      & \CompendiumContinuity{C}
      & \vspace{0pt} Described by Dey--Richard~\cite{dey2},
      using their \sagefunc{projected_sequential_merge}
      procedure; see \autoref{tab:compendium-procedures} and \sref{s:sequential-merge}.
      \\
      \vspace{0pt}
      \sagefunc{bhk_irrational}
      & \CompendiumGraphics{bhk_irrational}
      & 
      & \CompendiumSlopes{3}
      & \CompendiumContinuity{C}
      & \vspace{0pt} Only extreme when certain parameters are $\Q$-linearly
      independent. Described by Basu--Hildebrand--K\"oppe \cite{basu-hildebrand-koeppe:equivariant}; see
      \sref{s:limits-extremality-not-preserved}. 
      \\
    \vspace{0pt} 
    \sagefunc{chen_4_slope}\footnote{Chen \cite{chen} also
      constructs a family of 3-slope functions, which he claims to be
      extreme. However, his proof for this class is flawed, and none of the
      functions in the described family appear to be extreme, as pointed out in
      \cite{zhou:extreme-notes}. The functions are available as
      \sagefunc{chen_3_slope_not_extreme}.}  
    & \CompendiumGraphics{chen_4_slope}
    & 
    & \CompendiumSlopes{4}
    & \CompendiumContinuity{C}
    & \vspace{0pt} Described by Chen~\cite{chen}.
    \\
    \end{CompendiumTabular}
  \end{table}

\begin{table}[tp]
  \centering
  \caption{An updated compendium of known extreme functions for the
    infinite group problem III. Parametrized families of
    discontinuous functions for the 1-dimensional case.  
  }
  \label{tab:compendium-2}
  \begin{CompendiumTabular}
      \vspace{0pt}
      \sagefunc{ll_strong_fractional}\footnote{In the survey
        \cite[Table 19.4]{Richard-Dey-2010:50-year-survey}, this is called ``Improved GFC.''}
      & \CompendiumGraphics{ll_strong_fractional}
      & 
      & \CompendiumSlopes{1}
      & \CompendiumContinuity{D}
      & \vspace{0pt} 
      Described by Letchford--Lodi \cite{Letchford-Lodi-2002}\footnote{Note that
        there is a mistake in \cite[Figure 3]{Letchford-Lodi-2002}. The correct
        figure appears here.}; dominates the \sagefunc{gomory_fractional} cut
      (\autoref{fig:hierarchy}).   
      Extreme only if $f \geq \frac12$; then special case of
      \sagefunc{dg_2_step_mir_limit}, \sagefunc{drlm_2_slope_limit} (below).
      \\
      \vspace{0pt}
      \sagefunc{dg_2_step_mir_limit} 
      & \CompendiumGraphics{dg_2_step_mir_limit}
      & 
      & \CompendiumSlopes{1}
      & \CompendiumContinuity{D}
      & \vspace{0pt} 
      Described by Dash--G\"unl\"uk~\cite{twoStepMIR} (``extended 2-step MIR''). 
      Special case of \sagefunc{drlm_2_slope_limit} (below).
      Defined as a limit of \sagefunc{dg_2_step_mir} functions;  
      see \sref{s:limits} for a discussion of limits.
      \\
      \vspace{0pt}
      \sagefunc{drlm_2_slope_limit} 
      & \CompendiumGraphics{drlm_2_slope_limit}
      & 
      & \CompendiumSlopes{1}
      & \CompendiumContinuity{D}
      & \vspace{0pt}
      From Dey--Richard--Li--Miller \cite{dey1}, generalizing \sagefunc{dg_2_step_mir_limit}
      (above). 
      Defined as a limit;  
      see \sref{s:limits} for a discussion of limits.
      \\
      \vspace{0pt}
      \sagefunc{drlm_3_slope_limit}
      & \CompendiumGraphics{drlm_3_slope_limit}
      & 
      & \CompendiumSlopes{2}
      & \CompendiumContinuity{D}
      & \vspace{0pt} Described by Dey--Richard--Li--Miller \cite{dey1}.  Defined as the
      limit of \sagefunc{drlm_backward_3_slope} functions;  
      see \sref{s:limits} for a discussion of limits.
      \\
      \vspace{0pt}
      \sage{\sagefunc{rlm_dpl1_extreme_3a}}
      & \CompendiumGraphics{rlm_dpl1_extreme_3a}
      & 
      & \CompendiumSlopes{2}
      & \CompendiumContinuity{D}
      & \vspace{0pt}
      A $\mathrm{DPL}_1$-extreme function from Richard--Li--Miller \cite[case 3a]{Richard-Li-Miller-2009:Approximate-Liftings}.  
      Proved extreme in \cite{zhou:extreme-notes}.
      (All other $\mathrm{DPL}_1$-extreme functions from
      \cite{Richard-Li-Miller-2009:Approximate-Liftings} are known to be special cases of
      \sagefunc{drlm_2_slope_limit} and \sagefunc{drlm_3_slope_limit}.)
      \\
    \end{CompendiumTabular}
  \end{table}
  
  \begin{table}[tp]
  \centering
  \caption[An updated compendium of known extreme functions for the
    infinite group problem IV. ``Sporadic'' functions for the
    1-dimensional case.]
    {An updated compendium of known extreme functions for the
      infinite group problem IV. ``Sporadic'' functions for the
      1-dimensional case. 
      {These
      functions were found by computer experiments.  They have not been
      described in the literature as a member of a parametrized family; but
      there is no reason to assume this could not be done.}
  }
  \label{tab:compendium-3}
  \begin{CompendiumTabular}
    \vspace{0pt} 
    \sagefunc{hildebrand_2_sided_discont_1_slope_1}
    & \CompendiumGraphics{hildebrand_2_sided_discont_1_slope_1}
    & \vspace{0pt}  
    & \CompendiumSlopes{1}
    & \CompendiumContinuity{D}
    & \vspace{0pt} An extreme function that is
    discontinuous on both sides of the origin, from Hildebrand (2013, unpublished).\CompendiumNEWRESULT
    \\
    \vspace{0pt} 
    \sagefunc{hildebrand_2_sided_discont_2_slope_1}
    &  \CompendiumGraphics{hildebrand_2_sided_discont_2_slope_1}
    & \vspace{0pt}  
    & \CompendiumSlopes{2}
    & \CompendiumContinuity{D}
    & \vspace{0pt} An extreme function that is
    discontinuous on both sides of the origin, from Hildebrand (2013, unpublished).\CompendiumNEWRESULT
    \\
    \vspace{0pt} 
    \sagefunc{hildebrand_discont_3_slope_1}
    & \CompendiumGraphics{hildebrand_discont_3_slope_1}
    & \vspace{0pt}  
    & \CompendiumSlopes{3}
    & \CompendiumContinuity{D}
    & \vspace{0pt} A discontinuous extreme function with 3 slopes,
    from Hildebrand (2013, unpublished).\CompendiumNEWRESULT
    \\
    \vspace{0pt} 
    \sagefunc{hildebrand_5_slope_22_1}\footnote{Several examples
      are known.  Use autocompletion in Sage to obtain a list, by typing
      \sage{hildebrand\underscore{}5\underscore{}slope} and pressing the
      \textsc{tab} key.}
    & \CompendiumGraphics{hildebrand_5_slope_22_1}
    & 
    & \CompendiumSlopes{5}
    & \CompendiumContinuity{C}
    & \vspace{0pt} An extreme function with 5~slopes, from 
    Hildebrand (2013, unpublished).  Several
    examples are known.\CompendiumNEWRESULT 
    \\
    \vspace{0pt} 
    \sagefunc{kzh_7_slope_1}\footnote{Several examples
      are known.  Use autocompletion in Sage to obtain a list, by typing
      \sage{kzh\underscore{}} and pressing the
      \textsc{tab} key.}
    & \CompendiumGraphics{kzh_7_slope_1}
    & 
    & \CompendiumSlopes{7}
    & \CompendiumContinuity{C}
    & \vspace{0pt} An extreme function with 7~slopes, from
    K\"oppe--Zhou \cite{koeppe-zhou:extreme-search}.  Several 
    examples are known.\CompendiumNEWRESULT 
    \\
    \vspace{0pt} 
    \sagefunc{kzh_28_slope_1}
    & \CompendiumGraphics{kzh_28_slope_1}
    & 
    & \CompendiumSlopes{28}
    & \CompendiumContinuity{C}
    & \vspace{0pt} An extreme function with 28~slopes, from K\"oppe--Zhou
    \cite{koeppe-zhou:extreme-search}.
    The shown graph does not convey the complexity of this
    function, which has 395 breakpoints in~$[0,1]$ sampled from $\frac1{778}\Z$. 
    \CompendiumNEWRESULT 
    \\
    \end{CompendiumTabular}
  \label{tab:compendium}
\end{table}


\begin{table}[tp]
  \centering
  \caption{An updated compendium of known extreme functions for the
    infinite group problem V. Procedures.}
  \label{tab:compendium-procedures}

  \begin{CompendiumProceduresTabular}
    \vspace{0pt}
    \sagefunc{automorphism}
    & \CompendiumGraphics{automorphism-from}
    & \CompendiumGraphics{automorphism-to}
    & \vspace{0pt}
    From Johnson \cite{johnson}; see \cite[section
    19.5.2.1]{Richard-Dey-2010:50-year-survey}. 
    \\
    \vspace{0pt}
    \sagefunc{multiplicative_homomorphism}
    & \CompendiumGraphics{multiplicative_homomorphism-from}
    & \CompendiumGraphics{multiplicative_homomorphism-to}
    & \vspace{0pt}
    See \cite[sections 19.4.1, 19.5.2.1]{Richard-Dey-2010:50-year-survey}.
    \\
    \vspace{0pt}
    \sagefunc{projected_sequential_merge}
    & \CompendiumGraphics{projected_sequential_merge-from}
    & \CompendiumGraphics{projected_sequential_merge-to} 
    & \vspace{0pt}
    Operation $\mergeProj$ from Dey--Richard \cite{dey2}; see 
    \sref{s:sequential-merge}.
    \\
    \vspace{0pt}
    \sagefunc{restrict_to_finite_group}
    & \CompendiumGraphics{restrict_to_finite_group-from}
    & \CompendiumGraphics{restrict_to_finite_group-to}
    & \vspace{0pt}
    Restrictions to finite group problems $R_f(\tfrac{1}{q} \Z, \Z)$ preserve
    extremality if $f$ and all breakpoints lie in $\tfrac{1}{q} \Z$. 
    See \sref{s:interpolation}.
    \\
    \vspace{0pt}
    \sage{\sagefunc{restrict_to_finite_group}\allowbreak(oversampling=3)}
    & \CompendiumGraphics{restrict_to_finite_group_3-from}
    & \CompendiumGraphics{restrict_to_finite_group_3-to}
    & \vspace{0pt}
    If \sage{oversampling} by a factor $m \geq 3$, the restriction is extreme
    for $R_f(\tfrac{1}{mq} \Z, \Z)$ if and
    only if the original function is extreme.
    See \sref{s:interpolation}.
    \\
    \vspace{0pt}
    \sagefunc{interpolate_to_infinite_group}
    & \CompendiumGraphics{interpolate_to_infinite_group-from}
    & \CompendiumGraphics{interpolate_to_infinite_group-to}
    & \vspace{0pt}
    Interpolation from finite group problems $R_f(\tfrac{1}{q} \Z, \Z)$
    preserves minimality, but in general not extremality. 
    See \sref{s:interpolation}.
    \\
    \vspace{0pt}
    \sagefunc{two_slope_fill_in}
    & \CompendiumGraphics{two_slope_fill_in-from}
    & \CompendiumGraphics{two_slope_fill_in-to}
    & \vspace{0pt}
    Described by Gomory--Johnson \cite{infinite2}, 
    Johnson \cite{johnson}.  For $k=1$, if 
    minimal, equal to
    \sagefunc{interpolate_to_infinite_group} (above).
    For $k>1$, 
    see \cite[section 19.5.2.3]{Richard-Dey-2010:50-year-survey} 
    and
    \cite{basu2013unique,basu-cornuejols-koeppe:unique-minimal-liftings-simplicial}
    for recent developments.
    \\
  \end{CompendiumProceduresTabular}
\end{table}

\clearpage

\begin{landscape}
\begin{table}[p]
  \caption{List of notation in the infinite group problem literature}
  \label{tab:notation}
  \centering
  \begin{minipage}{\linewidth}
    \let\footnoterule=\relax
    \def\arraystretch{1.35}
  \begin{tabular}[t]{>{\RaggedRight}p{.25\linewidth}*{10}c}
    \toprule
    & \multicolumn{2}{c}{Gomory--Johnson}
    & \multicolumn{2}{c}{Dey et al.}
    & \multicolumn{3}{c}{Basu et al.}
    & \multicolumn{3}{c}{Surveys}
    \\
    \cmidrule(lr){2-3}\cmidrule(lr){4-5}\cmidrule(lr){6-8}\cmidrule(lr){9-11}
    Concept 
    & \cite{infinite,infinite2} 
    & \cite{tspace} 
    & \cite{dw2008,dey3}
    & \cite{dey1}
    & \cite{bccz08222222}
    & \cite{bhkm}
    & 
    \cite{basu-hildebrand-koeppe:equivariant,bhk-IPCOext} 
    & \cite{Richard-Dey-2010:50-year-survey}
    & \cite{corner_survey}
    & this 
    \\
    \midrule
    Additive group of reals mod 1 
    & 
    $I$ 
    & 
    $G$
    & 
    $I$
    & 
    $I$
    & 
    \relax
    & 
    \relax
    & 
    \relax
    & 
    $I$
    & 
    \relax
    & 
    \relax
    \\
    Mapping from reals to group elements 
    & 
    $u = \mathcal F(x)$
    & 
    \relax
    & 
    $\mathbb P(u)$
    & 
    $\mathbb P(u)$
    & 
    \relax
    & 
    \relax
    & 
    \relax
    & 
    $u = \mathcal F(v)$
    \\
    Mapping from group elements to canonical reals
    & 
    $x = |u|$
    & 
    \relax
    & 
    \relax
    & 
    $\mathbb P^{-1}(u)$
    & 
    \relax
    & 
    \relax
    & 
    \relax
    & 
    $v = \mathcal F^{-1}(u)$
    \\
    \midrule
    Number of rows of the group problem
    & 
    $1$
    & 
    $1$
    & 
    $m$
    & 
    $m$
    & 
    $1$
    & 
    $k$
    & 
    $k$
    & 
    $m$
    & 
    $q$
    & 
    $k$
    \\
    Group (domain of solutions, valid functions)
    & 
    $U = I$ 
    & 
    $G$
    & 
    $I^m$
    & 
    $I^m$
    & 
    \relax
    & 
    $\R^k$
    & 
    $G = \R^k$
    & 
    $G = I^m$
    & 
    $\R^q$
    & 
    $G = \R^k$
    \\
    Subgroup (periodicity)
    & 
    \relax
    & 
    \relax
    & 
    \relax
    & 
    \relax
    & 
    \relax
    & 
    $\Z^k$
    & 
    $S = \Z^k$
    & 
    \relax
    & 
    $\Z^q$
    & 
    $S = \Z^k$
    \\
    Right-hand side 
    & 
    $u_0$
    & 
    $u_0$
    & 
    $r$
    & 
    $r$
    & 
    $f$
    & 
    $-f$
    & 
    $\ve{f}$
    & 
    $r$
    & 
    $-f$
    & 
    $\ve{f}$
    \\
    Group problem 
    & 
    $\mathrm P(U, u_0)$
    & 
    \relax
    & 
    ($m$DIIGP)
    & 
    \relax
    & 
    & 
    (IR)
    & 
    \relax
    & 
    \relax
    & 
    & 
    \eqref{GP}
    \\
    Solutions to the group problem
    & 
    $t(u)$
    & 
    $\{t(u)\}$
    & 
    $t(u)$
    & 
    $t(u)$
    & 
    $s_r$
    & 
    $s_r$
    & 
    $s(\ve r)$
    & 
    $t(u)$
    & 
    $x_r$
    & 
    $y(\ve r)$
    \\
    Solution set of the group problem
    & 
    $T(U, u_0)$
    & 
    \relax
    & 
    $m$DIIGP
    & 
    $PI(r,m)$
    & 
    \relax
    & 
    \relax
    & 
    \relax
    & 
    $\mathrm{MG}(G, \emptyset, r)$
    & 
    $G_f$
    & 
    \\
    Its convex hull
    & 
    \relax
    & 
    \relax
    & 
    \relax
    & 
    \relax
    & 
    \relax
    & 
    \relax
    & 
    $R_{\ve f}(G, S)$
    & 
    \relax
    & 
    \relax
    & 
    $R_{\ve f}(G, S)$
    \\
    Its enclosing space
    & 
    \relax
    & 
    \relax
    & 
    \relax
    & 
    \relax
    & 
    \relax
    & 
    \relax
    & 
    $\mathcal V$
    & 
    \relax
    & 
    \relax
    & 
    $\R^{(G)}$
    \\
    \midrule
    Valid functions
    & 
    $\pi(u)$
    & 
    $\pi(u)$
    & 
    $\phi(u)$
    & 
    $\phi(u)$
    & 
    $\pi(r)$
    & 
    $\pi(r)$
    & 
    $\pi(\ve r)$
    & 
    $\phi(u)$
    & 
    $\pi(r)$
    & 
    $\pi(\ve r)$
    \\
    Set of tight solutions for a valid function
    & 
    \relax
    & 
    $P(\pi)$
    & 
    $P(\phi)$
    & 
    \relax
    & 
    $P(\pi)$
    & 
    $S(\pi)$
    & 
    \relax
    & 
    \relax
    & 
    \relax
    & 
    $P(\pi)$
    \\
    Subadditivity slack
    & 
    $\nabla(u,v)$  
    & 
    \relax
    & 
    \relax
    & 
    \relax
    & 
    \relax
    & 
    \relax
    & 
    $\Delta\pi(\ve u, \ve v)$
    & 
    \relax
    & 
    \relax
    & 
    $\Delta\pi(\ve{u}, \ve{v})$
    \\
    Additivity domain (equality set)
    & 
    \relax
    & 
    $E(\pi)$
    & 
    $E(\phi)$
    & 
    \relax
    & 
    $E(\pi)$ 
    & 
    $E(\pi)$ 
    & 
    $E(\pi)$
    & 
    \relax
    & 
    \relax
    & 
    $E(\pi)$
    \\
    \bottomrule
  \end{tabular}
\end{minipage}
\end{table}
\end{landscape}


\section{List of notation in the literature}\label{s:appendix-notation}

Table~\ref{tab:notation} (on page~\pageref{tab:notation}) compares the
notation in the present survey with that in selected original articles on the
infinite group problem and the surveys
\cite{Richard-Dey-2010:50-year-survey,corner_survey}. 

\section*{Acknowledgments}
Thanks go to Yuan Zhou for compiling the electronic compendium
of extreme functions in \cite{electronic-compendium}, 
and Chun Yu Hong and Yuan Zhou for their work on the software \cite{infinite-group-relaxation-code}.

The authors gratefully acknowledge partial support from the National Science
Foundation through grants 
DMS-0914873 (R.~Hildebrand, M.~K\"oppe)
and DMS-1320051 (M.~K\"oppe).

\providecommand\ISBN{ISBN }
\bibliographystyle{../amsabbrvurl}
\bibliography{../bib/MLFCB_bib}



\end{document}